\newcommand{\field}[1]{\mathbb{#1}}
\newcommand{\mc}[1]{\ensuremath{\mathcal{#1}}}
\newcommand{\ul}[1]{\underline{#1}}
\newcommand{\wt}[1]{\widetilde{#1}}
\DeclareMathOperator{\holim}{holim}
\theoremstyle{definition}
\newtheorem{definition}{\ul{Definition}}[subsection]
\newtheorem{theorem}[definition]{\ul{Theorem}}
\newtheorem*{theorem*}{\ul{Theorem}}
\newtheorem{proposition}[definition]{\ul{Proposition}}
\newtheorem{corollary}[definition]{\ul{Corollary}}
\newtheorem{lemma}[definition]{\ul{Lemma}}
\newtheorem{claim}[definition]{\ul{Claim}}
\theoremstyle{definition}
\newtheorem{construction}[definition]{\ul{Construction}}
\newtheorem{notation}[definition]{\ul{Notation}}
\newtheorem*{problem*}{\ul{Problem}}
\newtheorem{example}[definition]{\ul{Example}}
\theoremstyle{remark}
\newtheorem{remark}[definition]{\ul{Remark}}
\newcommand{\mb}[1]{{\textbf {\textit#1}}}
\newcommand\CE{\textit{C\!E}}
\newcommand{\mf}[1]{\mathfrak{#1}}
\newcommand{\ms}[1]{\mathcal{#1}}
\newcommand{\GF}{(S,K,V,\rho)}
\def\red{{\operatorname{red}}}
\def\DR{\mc{DR}}
\def\Grpd{\mc{Grpd}}
\def\DRG{\mc{DRG}}
\def\GS{\Gamma(\Sigma)}
\def\ve{\varepsilon}
\def\Q{\field{Q}}
\def\Cc{\mc{C}}
\def\C{\field{C}}
\def\VS{V_\Sigma}
\def\CP{\field{CP}}
\def\g{\mf{g}}
\def\N{\field{N}}
\def\ss{\subset}
\def\sse{\subseteq}
\def\cd{\cdot}
\def\Z{\mathbb{Z}}
\def\R{\field{R}}
\def\la{\langle}
\def\ra{\rangle}
\def\NAut{\wt{\Aut}}
\newcommand{\ol}[1]{\overline{#1}}
\newcommand{\wh}[1]{\widehat{#1}}
\DeclareMathOperator{\Lie}{Lie}
\def\bt{\bullet}
\DeclareMathOperator{\Aut}{Aut}
\DeclareMathOperator{\End}{End}
\DeclareMathOperator{\ad}{ad}
\DeclareMathOperator{\id}{id}
\DeclareMathOperator{\Sch}{Sch}
\DeclareMathOperator{\Ker}{ker}
\DeclareMathOperator{\Hopf}{Hopf}
\DeclareMathOperator{\im}{im}
\DeclareMathOperator{\Hom}{Hom}
\def\op{{\operatorname{op}}}
\def\Sc{\ensuremath{\mathcal{S}}}
\def\Top{\ensuremath{\mathcal{Top}}}
\def\In{\mf{I}}
\def\TT{\mb{T}}
\DeclareMathOperator{\Id}{Id}
\def\O{\ensuremath{\mathcal{O}}}
\def\V{\mathcal{V}}
\def\Rge{\R_{\ge 0}}
\def\vp{\varphi}
\def\Sm{\mf{S}}
\def\h{\mf{h}}
\newcolumntype{L}{>{$}l<{$}}
\def\temp{&}
\def\AA{\mathbb{A}}
\def\uSigma{\ul{\Sigma}}
\def\CT{\C^\times}
\def\vk{\varkappa}
\def\ESm{\mf{ES}}
\def\Gs{\mc{G}}
\def\tc{\pi_0^{\Top}}
\DeclareRobustCommand{\phalphnum}[1]{
	\IfEqCase{#1}{%
		{1}{{\textphnc{a}}}%
		{2}{{\textphnc{b}}}%
		{3}{\textphnc{g}}%
		{4}{\textphnc{d}}%
		{5}{\textphnc{h}}
		{6}{\textphnc{f}}
		{7}{\textphnc{w}}
		{8}{\textphnc{z}}
		{9}{\textphnc{H}}
		{10}{\textphnc{T}}
		{11}{\textphnc{y}}
		{12}{\textphnc{k}}
		{13}{\textphnc{l}}
		{14}{\textphnc{m}}
		{15}{\textphnc{n}}
		{16}{\textphnc{s}}
		{17}{\textphnc{o}}
		{18}{\textphnc{p}}
		{19}{\textphnc{x}}
		{20}{\textphnc{q}}
		{21}{\textphnc{r}}
		{22}{\textphnc{S}}
		{23}{\textphnc{t}}
	}[\PackageError{\phalphnum}{Undefined option to phalphnum: #1}{}]%
}
\setlist[enumerate,1]{label={\hspace{-4pt}\phalphnum{\arabic*})}}
\def\arXiv#1{\href{https://arxiv.org/abs/#1}{arXiv:#1}}
\begin{document}
\title[Equivariant automorphisms and applications]{Equivariant automorphisms of the Cox construction and applications}
\author{Gregory Taroyan}
\begin{abstract}
    In the present paper, we give a complete description of the group of holomorphic automorphisms of the Cox construction of a simplicial fan equivariant with respect to a large enough connected complex Lie subgroup of the large torus acting on the Cox construction. We then apply this description to find the groups of holomorphic automorphisms of rational moment-angle manifolds, including Calabi--Eckmann manifolds and Hopf manifolds. We also calculate the groups of equivariant automorphisms of complete simplicial toric varieties. This paper is the first in a series of two papers dedicated to studying the automorphism groups of toric stacks with applications to complex geometry.
\end{abstract}
\maketitle
\setcounter{tocdepth}{1}
\tableofcontents
\setcounter{tocdepth}{3}
\section{Introduction}
Toric geometry is a field that studies algebraic varieties with automorphism groups with large abelian subgroups. The theory of toric varieties was initiated independently from quite different angles by several groups of mathematicians in the 1970s. The first paper on toric geometry due to Demazure used the theory to construct certain subgroups in the Cremona group of the projective space \cite{Demazure}. The approach of Kempf--Knudsen--Mumford--Saint-Donat \cite{KKMS} considered toric varieties from the point of view of geometric invariant theory. The approach of Oda--Miyake also developed a version of the theory of toric varieties \cite{Oda_Miyake}. Finally, the \enquote{Russian School} arrived at the definition of projective toric varieties through the study of subvarieties in the algebraic torus \((\CT)^n\) and the related combinatorics and geometry of Newton polyhedra. The following papers of Khovanskii \cite{Khovanskii} and Danilov \cite{Danilov} exemplify this approach.

The automorphism groups of toric varieties were considered in several papers on the subject. When a toric variety is \emph{complete}, or compact in the classical topology, one can describe the entire group of its regular automorphisms. At various levels of generality, this was done by Demazure in \cite{Demazure}, and Cox in \cite{Cox_Aut} for simplicial toric varieties, and by Sancho--Moreno--Sancho in \cite{Sancho_aut} for general complete toric varieties. 

The essential tool in most of the calculations with automorphisms of toric varieties is the existence of the \emph{Cox construction}. The Cox construction is an abelian reductive torsor over the toric variety with a quasiaffine toric total space. The calculation of automorphisms of toric varieties using the Cox construction usually proceeds as follows. One identifies the automorphisms of the total space of the Cox construction equivariant with respect to the action of the fiber group. Then, one proves a lifting statement that asserts the existence of lifts to the Cox construction. Finally, one identifies the kernel of the descent homomorphism on the automorphism groups from the total space to the base. 

For example, this method was used by Arzhantsev--Hausen--Derenthal \cite{Arzh_Cox} to develop an immensely powerful theory of Cox rings that allows one to describe automorphisms of arbitrary complete varieties with discrete class groups. Our goal is to extend the fundamental ideas of this method to the holomorphic setting.

The present paper provides a generalization of the first step in this description of the automorphism groups. Namely, we describe the group of automorphisms of the Cox construction equivariant with respect to an arbitrarily connected complex Lie subgroup of the large torus acting on the total space. Importantly, our description applies to all \emph{holomorphic} automorphisms of the Cox construction, not just the regular ones. As we discussed above, a result of this type is a preliminary step towards a complete description of the automorphism groups of the new object we will introduce in the sequel: \emph{toric stack associated with a generalized fan}. The ultimate goal of this two-paper series is to provide a uniform method for calculating automorphism groups of complex manifolds that admit rich actions of abelian complex Lie groups. 

The results of this paper, however, are of independent interest because they provide a tool for the calculation of automorphism groups of a large class of non-K\"{a}hler complex manifolds we call \emph{rational moment-angle manifolds}. In addition, we also extend the calculation of the equivariant automorphism groups to arbitrary complete simplicial toric varieties. 
\subsection{Summary of results}
The main result of this paper is the description of the automorphisms of the Cox construction for an arbitrary fan \(\Sigma\) equivariant with respect to a connected complex Lie subgroup of the large torus. To give such a description, we introduce the notion of a \emph{generalized fan}, which is roughly an image of a classical simplicial fan under a linear map. For instance, given a fan \(\Sigma\) in a vector space \(V\), its image under the projection map along a subspace \(W\) is a generalized fan. In these terms, the main result of this paper can be stated as follows. 
\begin{theorem*}[{Theorem \ref{thm:equiv_on_Cox_constr}}]
    Let \(\Sigma=\GF\) be a generalized fan. Let \(H\) be an arbitrary connected complex Lie subgroup of the large torus acting on the Cox construction \(U(\Sigma)\) of \(\Sigma\). Denote by \(\wt{\Sigma}\) the fan defining the Cox construction of \(\Sigma\) as a toric variety. Denote by \(\h^\R\) the image of the Lie algebra of \(H\) under the map of coordinate-wise real parts. Assume that the quotient generalized fan \(\wt{\Sigma}/\h^\R\) is complete. Then, the group of holomorphic automorphisms of \(U(\Sigma)\) equivariant with respect \(H\) is isomorphic is a linear algebraic group \(\NAut(\Sigma)\). As an algebraic group, the connected component of identity in the group \(\NAut(\Sigma)\) is generated by the large torus and one-parametric subgroups corresponding to the Demazure roots of the generalized fan \(\Sigma/\h^\R\). The group of components of the group \(\NAut(\Sigma)\) is a quotient of the symmetry group of the quotient fan \(\Sigma/\h^\R\).
\end{theorem*}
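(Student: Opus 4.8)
\emph{Proof strategy.} The plan is to first strip away the holomorphic hypothesis --- showing that every $H$-equivariant holomorphic automorphism of $U(\Sigma)$ is the restriction of a polynomial automorphism of $\C^S$ of uniformly bounded degree --- and then to analyze the resulting linear algebraic group by the Cox--Demazure method \cite{Cox_Aut, Demazure}, adapted to the relative, generalized-fan setting. For the reduction, recall that $U(\Sigma) = \C^S \setminus Z(\Sigma)$ is the quasi-affine toric variety with fan $\wt\Sigma$, and that its complement $Z(\Sigma)$ has codimension at least $2$ (its components correspond to the minimal non-faces of $K$, each with at least two elements); hence $\O(U(\Sigma)) = \O(\C^S)$ by Hartogs, so any holomorphic automorphism $\varphi = (\varphi_i)_{i \in S}$ of $U(\Sigma)$ extends, together with its inverse, to a holomorphic automorphism of $\C^S$ preserving $Z(\Sigma)$. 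Expanding $\varphi_i$ as a power series, $H$-equivariance forces every monomial $z^a$ appearing in it to satisfy $a - e_i \in (\h^\R)^\perp$: since $\h$ is the Lie algebra of a connected group it is a $\C$-subspace, and an integral functional annihilates $\h$ exactly when it annihilates $\h^\R$. Completeness of $\wt\Sigma / \h^\R$ then yields, by cone duality, $\R^S_{\geq 0} \cap (\h^\R)^\perp = \{0\}$, so the set $\{a \in \Z^S_{\geq 0} : a - e_i \in (\h^\R)^\perp\}$ has trivial recession cone and is therefore finite; thus each $\varphi_i$ is a polynomial supported on a fixed finite monomial set independent of $\varphi$. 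Consequently $\NAut(\Sigma)$ embeds into a fixed finite-dimensional space of polynomial tuples, cut out there by the algebraic conditions of invertibility, $H$-equivariance, and preservation of $Z(\Sigma)$; it is therefore a linear algebraic group whose Euclidean topology coincides with the analytic topology on the automorphism group.

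Next I would replace $H$ by the algebraic subtorus $\ol H \subseteq (\CT)^S$ that it generates: for a fixed $\varphi$ the set of $h$ for which $\varphi$ is $h$-equivariant is a Zariski-closed subgroup, so it contains $H$ iff it contains $\ol H$, while $\h^\R \subseteq \Lie(\ol H)^\R$ keeps the completeness hypothesis in force. Hence $H$ may be taken algebraic and $H$-equivariance becomes exactly the grading condition with respect to the character group $\ol M = \Z^S / ((\h^\R)^\perp \cap \Z^S)$. The monomials $z^a$ allowed in $\varphi_i$ are then precisely those for which $a - e_i$ is an integral character of the generalized fan $\Sigma / \h^\R$; among these, the degree-one terms $z_j$ (with $j$ in the $H$-weight class of $i$) supply the large torus and the linear ``block'' maps, while the terms with $|a| \geq 2$ that additionally preserve $Z(\Sigma)$ are --- by the combinatorial description of the Cox irrelevant locus in terms of $K$ --- precisely the Demazure roots of $\Sigma / \h^\R$, each carrying its distinguished index $i$ and generating a one-parameter additive subgroup $U_m \subseteq \NAut(\Sigma)$.

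For the structure of $\NAut(\Sigma)$ I would argue as Cox and Demazure do for complete simplicial toric varieties. The large torus $(\CT)^S$ and the subgroups $U_m$ all lie in $\NAut(\Sigma)$; the torus normalizes each $U_m$ by rescaling, and Demazure's commutator relations (the bracket of two root derivations is supported on sums of roots) show that $R_u := \langle U_m \rangle$ is a closed unipotent subgroup, so that $\mathcal G := \langle (\CT)^S, \{U_m\} \rangle$ is a connected linear algebraic subgroup. It then remains to show that every $\varphi \in \NAut(\Sigma)$ lies in $\mathcal G$ up to a coordinate permutation realizing a symmetry of $\Sigma / \h^\R$. To do this, extract the degree-one part $\varphi^{\mathrm{lin}}$ of $\varphi$: it is block-diagonal along the $H$-weight classes, up to a permutation $\tau$ of those classes. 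The crux --- and what I expect to be the main obstacle --- is the lemma that $\varphi^{\mathrm{lin}}$ is again an automorphism of $U(\Sigma)$, which forces $\tau$ to be a symmetry of $\Sigma / \h^\R$; this is the relative, generalized-fan analogue of the corresponding step in Cox's analysis of the homogeneous coordinate ring, and it is exactly here that completeness of $\wt\Sigma / \h^\R$ enters decisively. Granting this, $(\varphi^{\mathrm{lin}})^{-1} \varphi$ is unipotent --- of the form $z_i \mapsto z_i + (\text{higher terms})$ --- and a descending filtration argument on monomial degree, each successive correction being an $H$-equivariant, $Z(\Sigma)$-preserving monomial perturbation and hence a product of root subgroups, factors it through $R_u$. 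This identifies $\NAut(\Sigma)^0$ with $\mathcal G$, exhibits the extension $1 \to \mathcal G \to \NAut(\Sigma) \to \pi_0(\NAut(\Sigma)) \to 1$, and presents $\pi_0(\NAut(\Sigma))$ as the quotient of $\Sym(\Sigma / \h^\R)$ by the symmetries already realized inside $\mathcal G$ (those permuting rays within a single $H$-weight class), as asserted.
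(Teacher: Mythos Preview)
Your outline is close in spirit to the paper's argument --- reduce holomorphic to polynomial via completeness, pass to the Zariski closure of \(H\), and then run a Cox--Demazure analysis --- but there are two genuine gaps.

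First, your Hartogs step assumes that \(Z(\Sigma)=\C^S\setminus U(\Sigma)\) has codimension \(\ge 2\), i.e.\ that every minimal non-face of \(K\) has at least two elements. This fails precisely when \(K\) has \emph{ghost vertices}: if \(\{s\}\notin K\) then \(\{z_s=0\}\) is removed and \(U(\Sigma)\) acquires a \(\C^\times\)-factor, so a holomorphic automorphism need not extend to \(\C^S\) and its Taylor expansion in \(z_s\) may have negative exponents. The paper treats this case separately (Proposition~\ref{prop:Norm_of_G}, second half, and Proposition~\ref{prop:red_of_gv}): one uses Lemma~\ref{lem:comb_ghost} to produce \emph{two} positive vectors in \(\h^\R\), one expressing \(\rho(s')\) and one expressing \(-\rho(s')\), so that the allowed exponents are trapped in a bounded region of the relevant half-lattice. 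Your argument as written simply does not cover this case.

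Second, and more structurally, your condition ``\(a-e_i\in(\h^\R)^\perp\)'' is the condition for \(\varphi\) to \emph{commute} with \(H\), not to normalize it. For a normalizing \(\varphi\) one only gets that all exponents appearing in \(\varphi_i\) lie in a single (\(\varphi\)-dependent) coset of \((\h^\R)^\perp\); finiteness for each \(\varphi\) still follows from the same recession-cone argument, but your claim of a \emph{fixed} finite monomial support independent of \(\varphi\) is false for the normalizer, and with it your direct argument that \(\NAut(\Sigma)\) is a linear algebraic group. The paper avoids this by a different route: once each \(\varphi\) is known to be regular, the centralizer \(\NAut^0(\Sigma)\) is embedded into the graded automorphism group \(\Aut_g(\Sigma)\) of the Cox ring (Lemma~\ref{lem:ext_lem} and Proposition~\ref{prop:struct_of_NAut0}), whose algebraic-group structure is established independently via Proposition~\ref{prop:cox_struct}; the component group is then identified with \(\ESm(\Sigma/\h^\R)\) by conjugating an arbitrary \(\varphi\) into the torus normalizer and invoking Oda's theorem (Proposition~\ref{prop:con_comp_of_norm}). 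Your linearization-plus-unipotent-peeling plan for the identity component is a reasonable alternative to the paper's Lie-algebraic root decomposition (Proposition~\ref{prop:Alisa_Lie}), and you are right that the step ``\(\varphi^{\mathrm{lin}}\) is again an automorphism of \(U(\Sigma)\)'' is the crux; but you should be aware that to make the full statement work you must first separate centralizer from normalizer rather than treat ``\(H\)-equivariance'' as a single homogeneous condition.
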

An almost immediate corollary of the above theorem is the following description of equivariant automorphisms of complete simplicial toric varieties.
\begin{theorem*}[{Theorem \ref{thm:auto_of_toric_var}}]
    Let \(V_\Sigma\) be a complete simplicial toric variety with the fan \(\Sigma\). Let \(H\) be an arbitrary connected complex Lie subgroup of the large torus acting on \(X\). Then the group of holomorphic automorphisms of \(V_\Sigma\) equivariant with respect to \(H\) is a linear algebraic group \(\Aut_H(V_\Sigma)\). As an algebraic group, the connected component of identity in the group \(\Aut_H(V_\Sigma)\) is generated by the large torus and one-parametric subgroups corresponding to the Demazure roots of the fan of \(\Sigma/\h^\R\). The group of components of the group \(\Aut_H(V_\Sigma)\) is a quotient of the symmetry group of the quotient fan \(\Sigma/\h^\R\).
\end{theorem*}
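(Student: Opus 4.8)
The plan is to deduce Theorem \ref{thm:auto_of_toric_var} from Theorem \ref{thm:equiv_on_Cox_constr} by the classical three-step method for computing automorphisms via the Cox construction: identify the equivariant automorphisms of the total space, prove a lifting statement, and compute the kernel of the descent homomorphism. Recall that the complete simplicial toric variety $V_\Sigma$ is the geometric quotient $U(\Sigma)/G$ of its Cox construction, where $G=\Hom(\Cl(\Sigma),\CT)$ is the diagonalizable ``fiber'' subgroup of the large torus $\mathbb{T}=(\CT)^{\Sigma(1)}$ acting on $U(\Sigma)$, and where the fan $\wt{\Sigma}$ of $U(\Sigma)$ projects onto $\Sigma$ along $\Lie(G)^\R=\ker(\R^{\Sigma(1)}\to V)$. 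The subgroup $H$ of the large torus $\mathbb{T}/G$ of $V_\Sigma$ has a preimage under $\mathbb{T}\to\mathbb{T}/G$ whose identity component $\wt{H}$ is a connected complex Lie subgroup of $\mathbb{T}$ with $\Lie(G)\subseteq\Lie(\wt{H})$. Since $\Lie(G)^\R\subseteq\Lie(\wt{H})^\R$, dividing out gives a canonical identification of generalized fans
\[
  \wt{\Sigma}/\Lie(\wt{H})^\R \;\cong\; \Sigma/\h^\R ,
\]
which is complete, being a linear image of the complete fan $\Sigma$. Hence the hypothesis of Theorem \ref{thm:equiv_on_Cox_constr} holds automatically for $U(\Sigma)$ with the group $\wt{H}$.

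By Theorem \ref{thm:equiv_on_Cox_constr} the group $\NAut(\Sigma)$ of $\wt{H}$-equivariant holomorphic automorphisms of $U(\Sigma)$ is a linear algebraic group whose identity component is generated by $\mathbb{T}$ and the one-parameter subgroups of the Demazure roots of $\Sigma/\h^\R$, and whose group of components is a quotient of $\Sym(\Sigma/\h^\R)$. Every such automorphism normalizes the $G$-action: $\mathbb{T}$ is abelian; the Demazure-root characters of $\Sigma/\h^\R$ are pulled back from the torus $\mathbb{T}/G$ of $V_\Sigma$ and hence restrict trivially to $G\subseteq\mathbb{T}$, so the corresponding one-parameter subgroups even centralize $G$; and the remaining finitely many components normalize $G$ by direct inspection. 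Consequently every element of $\NAut(\Sigma)$ descends to a holomorphic $H$-equivariant automorphism of $V_\Sigma=U(\Sigma)/G$, which defines a homomorphism $q\colon\NAut(\Sigma)\to\Aut_H(V_\Sigma)$.

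It remains to prove that $q$ is surjective with kernel $G$. For the kernel, an automorphism of $U(\Sigma)$ lying over $\id_{V_\Sigma}$ and commuting with $G$ is the same as a $\Cl(\Sigma)$-graded $\mathcal{O}_{V_\Sigma}$-algebra automorphism of the Cox sheaf $\bigoplus_{[D]}\mathcal{O}_{V_\Sigma}(D)$ that is the identity in degree $0$; on each graded piece it is multiplication by a global holomorphic unit of $V_\Sigma$, hence---$V_\Sigma$ being compact and connected---by a nonzero constant, and these constants assemble into a homomorphism $\Cl(\Sigma)\to\CT$, i.e.\ an element of $G$; conversely $G$ acts trivially on the quotient, so $\ker q=G$. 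Surjectivity is the lifting statement: a holomorphic $H$-equivariant automorphism $\varphi$ of $V_\Sigma$ acts on the Cox sheaf compatibly with its $\Cl(\Sigma)$-grading (up to the induced automorphism of $\Cl(\Sigma)$), and because $V_\Sigma$ is complete the obstruction to choosing this action compatibly with the polynomial ring structure of the Cox sheaf vanishes, so $\varphi$ lifts to an automorphism of $U(\Sigma)=\C^{\Sigma(1)}\setminus Z(\wt{\Sigma})$ normalizing $G$. This is the step I expect to be the main obstacle: in the holomorphic rather than algebraic category one cannot invoke the universal property of the Cox ring, so the lift must be constructed by hand from the explicit realization of $U(\Sigma)$ as the complement of a coordinate subspace arrangement, and the vanishing of the obstruction genuinely uses completeness---concretely, the fact that $\operatorname{Pic}(V_\Sigma)$ and the group $\mathcal{O}^\times(V_\Sigma)$ of global holomorphic units are no larger than their algebraic counterparts.

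Granting the lifting statement, we obtain a short exact sequence $1\to G\to\NAut(\Sigma)\xrightarrow{q}\Aut_H(V_\Sigma)\to 1$. Since $G$ is a closed normal subgroup of the linear algebraic group $\NAut(\Sigma)$ contained in the large torus $\mathbb{T}\subseteq\NAut(\Sigma)^\circ$, the quotient $\Aut_H(V_\Sigma)\cong\NAut(\Sigma)/G$ is again a linear algebraic group; its identity component is the image of $\NAut(\Sigma)^\circ$---generated by $\mathbb{T}/G$, which is precisely the large torus of $V_\Sigma$, together with the images of the Demazure-root one-parameter subgroups of $\Sigma/\h^\R$ (which inject, meeting $G\subseteq\mathbb{T}$ trivially)---and its group of components is a quotient of $\pi_0\NAut(\Sigma)$, hence a quotient of $\Sym(\Sigma/\h^\R)$. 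This establishes all assertions of the theorem.
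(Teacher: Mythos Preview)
Your overall architecture is correct and matches the paper's, but you have correctly identified your own gap: the holomorphic lifting step is not actually carried out. Your sketch via the Cox sheaf is vague, and the appeal to ``the obstruction vanishing because $V_\Sigma$ is complete'' is not a proof. Since this is the only nontrivial step, the argument as written is incomplete.

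The paper resolves this in one line by a route you have overlooked: GAGA. Because $\Sigma$ is complete, the toric variety $V_\Sigma$ is compact, so by Serre's result (\cite[Proposition~15]{GAGA}) every holomorphic self-map of $V_\Sigma$ is already regular. This immediately reduces the lifting problem to the algebraic category, where it is precisely Cox's theorem \cite[Theorem~4.2]{Cox_Aut}: every regular automorphism of $V_\Sigma$ lifts to a $G_\Sigma$-normalizing regular automorphism of $U(\Sigma)$. A lift of an $H$-normalizing automorphism then automatically normalizes the full preimage $\pi^{-1}(H)\supseteq G_\Sigma$, so Theorem~\ref{thm:equiv_on_Cox_constr} applies to give the exact sequence $1\to\pi^{-1}(H)\to\NAut(\Sigma/\h^\R)\to\Aut_H(V_\Sigma)\to 1$ and hence all the structural assertions. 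In short, you were trying to prove a genuinely hard holomorphic lifting statement by hand, when completeness of $V_\Sigma$ lets you borrow the algebraic one for free.
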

Finally, we obtain a result describing the holomorphic automorphism groups of the \emph{rational moment-angle manifolds}. These are a class of non-K\"{a}hler complex manifolds that correspond to fans with ray generators in a discrete lattice of the vector space.
\begin{theorem*}[{Theorem \ref{thm:ma_aut_group_structure}}]
    Let \(\Sigma\) be a rational, complete simplicial fan. Denote by \(\NAut(\Sigma)\) the group of regular automorphisms of the Cox construction \(U(\Sigma)\) normalizing the canonical quasitorus \(G_\Sigma\). Let \(Z_\Sigma=U(\Sigma)/H_\Sigma\) be the complex moment-angle manifold associated with \(\Sigma\). Then, the group of holomorphic automorphisms \(\Aut(Z_\Sigma)\) of the moment-angle manifold \(Z_\Sigma\) associated with \(\Sigma\) fits into the following exact sequence.
    \[
        1\to H_\Sigma\to \NAut(\Sigma)\to \Aut(Z_\Sigma)\to 1.
    \]
\end{theorem*}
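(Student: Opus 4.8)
The strategy is to read off the displayed sequence from the descent theory of the holomorphic principal bundle $q\colon U(\Sigma)\to Z_\Sigma$. I recall that $H_\Sigma$ is a closed connected complex Lie subgroup of the canonical quasitorus $G_\Sigma$, isomorphic as a complex Lie group to a vector group $\C^{\ell}$, and that it acts freely and properly on $U(\Sigma)$ with quotient the compact complex manifold $Z_\Sigma$; hence $q$ is a holomorphic principal $H_\Sigma$-bundle, in particular a surjective submersion, and the induced action of the connected commutative complex Lie group $\mathbb{T}=(\CT)^m/H_\Sigma$ (which contains the compact torus $(S^1)^m$, because $H_\Sigma\cap(S^1)^m=\{1\}$, and has dimension $\dim_{\C}Z_\Sigma$) exhibits $Z_\Sigma$ as almost homogeneous with canonical analytic divisors $Z_i=q\bigl(U(\Sigma)\cap\{z_i=0\}\bigr)$.

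The first task is to produce the descent homomorphism. Every $\phi\in\NAut(\Sigma)$ preserves $H_\Sigma$: the large torus $(\CT)^m$ does because it is abelian, the Demazure root subgroups do because the Demazure relations make them centralise all of $G_\Sigma$ (this is precisely why they descend to automorphisms of $\VS$ in Cox's theory) and a fortiori $H_\Sigma\subset G_\Sigma$, and the finitely many remaining components are checked directly from the explicit form of $H_\Sigma$ that underlies the complex structure of $Z_\Sigma$. So each $\phi$ carries $H_\Sigma$-orbits to $H_\Sigma$-orbits and descends along $q$ to a holomorphic automorphism $\ol{\phi}$ of $Z_\Sigma$, giving a homomorphism $p\colon\NAut(\Sigma)\to\Aut(Z_\Sigma)$. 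Here $H_\Sigma\subset\NAut(\Sigma)$ (it is inside the large torus) and is a normal subgroup contained in $\ker p$. For the reverse inclusion: if $\ol{\phi}=\id$ then $q\circ\phi=q$, so $\phi(x)=h(x)\cdot x$ for a unique holomorphic $h\colon U(\Sigma)\to H_\Sigma$; such a $\phi$ lies in the identity component of $\NAut(\Sigma)$ (a nontrivial component would permute the divisors $Z_i$ nontrivially, hence would not descend to $\id$), and on the identity component $\phi$ centralises $G_\Sigma\supseteq H_\Sigma$, which forces $h$ to be constant along the fibres of $q$, hence to descend to a holomorphic map $Z_\Sigma\to H_\Sigma$; as $Z_\Sigma$ is compact connected and $H_\Sigma\cong\C^{\ell}$ is a vector group, $h$ is constant, so $\phi\in H_\Sigma$. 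Therefore $\ker p=H_\Sigma$.

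The remaining, and hardest, step is the surjectivity of $p$: every holomorphic $f\in\Aut(Z_\Sigma)$ lifts to an element of $\NAut(\Sigma)$. Since $Z_\Sigma$ is compact, $\Aut(Z_\Sigma)$ is a complex Lie group. I would first reduce to the case in which $f$ is compatible with the canonical torus action: conjugating $\mathbb{T}$ (equivalently the moment-angle torus $(S^1)^m$) by $f$ produces another such subgroup, and — invoking conjugacy of maximal compact subgroups of $\Aut(Z_\Sigma)$ together with the already established linear-algebraic structure of $\NAut(\Sigma)$ from Theorem~\ref{thm:equiv_on_Cox_constr}, so that the argument is not circular — after composing $f$ with an element already known to lie in the image of $p$ one may assume $f$ normalises $\mathbb{T}$. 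Then $f$ preserves the open $\mathbb{T}$-orbit and permutes the divisors $Z_i$, hence is pinned down by combinatorial data: a symmetry of the quotient fan, a translation by $\mathbb{T}$, and root-type corrections, exactly as in Cox's description of automorphisms of simplicial toric varieties \cite{Cox_Aut}. This data lifts tautologically to an element $\phi\in\NAut(\Sigma)$ with $p(\phi)=f$, and combined with the previous step the displayed sequence is exact.

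I expect the real difficulty to sit in that last step — in verifying that the family $\{Z_i\}$, equivalently the open $\mathbb{T}$-orbit, is canonical enough to be $f$-invariant after the reduction, and in extending the torus-normalising $f$ from the open orbit across the $Z_i$. A cleaner substitute would be to show directly that $q\colon U(\Sigma)\to Z_\Sigma$ is the canonical vector-group bundle attached to the divisorial data of $Z_\Sigma$, so that its classifying class in $H^{1}\bigl(Z_\Sigma,\O_{Z_\Sigma}^{\oplus\ell}\bigr)$ is automatically preserved by $f$; the lift is then an isomorphism $U(\Sigma)\xrightarrow{\ \sim\ }f^{*}U(\Sigma)$ of $H_\Sigma$-bundles, and one checks afterwards that it normalises $G_\Sigma$.
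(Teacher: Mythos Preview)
Your treatment of the kernel is close in spirit to the paper's Proposition~\ref{prop:ker_of_desc_from_uni}, though the paper takes a slightly different route: rather than arguing directly that the gauge map $h\colon Z_\Sigma\to H_\Sigma$ is constant, it first passes to the toric variety $V_\Sigma=Z_\Sigma/F_\Sigma$, identifies the kernel of $\Aut(Z_\Sigma)\to\Aut(V_\Sigma)$ with the compact torus $F_\Sigma$ (Proposition~\ref{prop:desc_ker}), and then reads off $\ker(\pi^H_*)=H_\Sigma$ from the known kernel $G_\Sigma$ of $\NAut(\Sigma)\to\Aut(V_\Sigma)$. One remark on your step~1: justifying that the finite components of $\NAut(\Sigma)$ normalise $H_\Sigma$ ``directly from the explicit form of $H_\Sigma$'' is not a case-by-case check --- a fan symmetry permutes coordinates and need not preserve the chosen complex subspace $\h_\Sigma\subset\C^S$ on the nose. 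The paper handles this uniformly via Proposition~\ref{prop:zariski_closure} and Corollary~\ref{cor:normalizers_coincide}: since $\Sigma$ is rational the Zariski closure of $H_\Sigma$ is $G_\Sigma$, so the algebraic normaliser of $H_\Sigma$ coincides with $\NAut(\Sigma)$.

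The surjectivity argument, however, has a genuine gap, and you have correctly located it. Your reduction ``compose $f$ with an element already known to lie in the image of $p$ so that $f$ normalises $\mathbb T$'' is circular: conjugacy of maximal compact tori takes place inside $\Aut(Z_\Sigma)$, and you do not yet know that the conjugating element lies in $\im p$. Knowing the structure of $\NAut(\Sigma)$ from Theorem~\ref{thm:equiv_on_Cox_constr} does not help, because the question is about $\Aut(Z_\Sigma)$, whose structure is precisely what is at stake. Your alternative via $H^1(Z_\Sigma,\O^{\oplus\ell})$ is promising but would require an independent computation of that cohomology and an identification of the bundle class.

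The paper bypasses this entirely by inserting the toric variety $V_\Sigma$ between $U(\Sigma)$ and $Z_\Sigma$. The key external input is the Ishida--Kasuya result \cite[Lemma~2.1]{ishida_19} (recorded here as Proposition~\ref{prop:desc_well_def}): the $F_\Sigma$-orbit foliation on $Z_\Sigma$ is \emph{intrinsic} --- it is generated by $\mf t\cap\ol{\mf t}$ for any maximal torus $\mf t$ of $\Aut^0(Z_\Sigma)$, and this is independent of the choice of $\mf t$. Hence every holomorphic automorphism of $Z_\Sigma$ normalises $F_\Sigma$ automatically, with no conjugation needed, and descends to a holomorphic (hence by GAGA algebraic) automorphism of the complete toric variety $V_\Sigma$. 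Cox's Theorem~\ref{thm:aut_struct_Cox} then lifts that automorphism to $\NAut(\Sigma)$, and one checks the resulting element of $\Aut(Z_\Sigma)$ differs from $f$ by something in $F_\Sigma$, which is already in the image (Proposition~\ref{prop:desc_surj}). So the missing idea is precisely this detour through $V_\Sigma$ together with the intrinsic characterisation of the canonical foliation.
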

We then apply this result to show that the automorphism groups of moment-angle manifolds detect the complex structure. That is, two rational moment-angle manifolds with the same fan are biholomorphic if and only if their automorphism groups are isomorphic. This is shown in Corollary \ref{cor:aut_detect_complex_struct}.
\subsection{Organization of the paper} The paper is divided into two parts. In Part \ref{part:gen_theory}, we develop the theory of generalized fans and calculate the automorphism groups of Cox constructions of generalized fans. In Part \ref{part:applications}, we apply the results of Part \ref{part:gen_theory} to the study of the automorphisms moment-angle manifolds and simplicial toric varieties.

Section \ref{sec:gen_fans} introduces the notion of a generalized fan and explores the basic properties of and constructions with these objects. In \S \ref{subsec:fans_and_gen_fans}, we explain how one can enhance a purely combinatorial notion of a simplicial complex with additional combinatorial data corresponding to a map from the set of vertices of the simplicial complex to a vector space to define a generalized fan. We also compare this new definition with the classical notion of a simplicial fan and show that any simplicial fan with a choice of ray generators gives rise to a generalized fan. In \S \ref{subsec:complete_fans}, we explore the notion of completeness for the generalized fans. In \S \ref{subsec:fan_tori}, \S \ref{subsec:complex_struct_on_tori}, we explain how one associates to a generalized fan several abelian Lie groups. Then, in \S \ref{subsec:Cox_construction} we show that each generalized fan admits a Cox construction. Finally, in \S \ref{subsec:rationality}, we explore the rationality properties of generalized fans and show that any generalized fan admits a canonical \emph{rationalization}.

In section \ref{sec:cox_ring}, we define Cox rings of generalized fans and calculate their graded automorphisms in the case when the generalized fan is complete. The exposition there is very similar to the theory developed by Cox in \cite{Cox_Aut}, and we include it mostly for the sake of completeness. In particular, we show that for a generalized fan, one can define the notion of a \emph{Demazure root} and the corresponding one-parametric subgroup of the large torus acting on the Cox construction. We also show that the Cox ring of a generalized fan is a finitely generated \(\C\)-algebra and that the graded automorphisms of this algebra form a linear algebraic group. Finally, we describe the root decomposition for the Lie algebra of this algebraic group, see \S \ref{subsec:lie_theor_pic}.

Section \ref{sec:Norm_of_GH} contains proof of a GAGA-type result for the holomorphic automorphisms of the Cox construction that normalize the tori associated to a complete fan. Concretely, we show that all such automorphisms are algebraic using Newton polyhedra. Section \ref{sec:equi_aut_of_cox} summarizes the results of Part \ref{part:gen_theory} and contains the paper's main result: a complete description of the group of holomorphic automorphisms of the Cox construction of a generalized fan equivariant with respect to a connected complex Lie subgroup of the large torus.

Part \ref{part:applications} starts with Section \ref{sec:aut_toric} in which we apply the result of Section \ref{sec:equi_aut_of_cox} to calculate the equivariant automorphisms of complete simplicial toric varieties. In Section \ref{sec:moment_angle}, we define moment-angle manifolds and quickly recall several important results of Ishida \cite{ishida_13}, \cite{ishida_15}, and \cite{ishida_19} required to calculate automorphisms. The calculation of automorphisms is done in Section \ref{sec:aut_mam}, where we leverage the description of the automorphisms of the Cox construction to obtain a description of the automorphisms of the moment-angle manifolds. In Section \ref{sec:CE_aut}, we apply the results of Section \ref{sec:aut_mam} to describe the automorphism groups of Calabi--Eckmann manifolds and Hopf manifolds. In addition, we also provide a comparison with the results of Namba \cite{namba} on automorphisms of Hopf surfaces. 

Finally, in Section \ref{sec:conclusion}, we discuss how the paper's results fit into the theory of toric stacks we aim to provide in the sequel. In particular, we define a toric stack as a sheaf of groupoids and show how to use generalized fans to encode toric stacks. We also sketch the relationship with other theories of generalized toric varieties that have emerged over the last years. However, the list of possible connections we provide is in no way exhaustive.
\subsection*{Some common notation}
\begin{itemize}
    \item Let \(S\) be a set. Then \(\Re, \Im\) denote the maps \(\C^S\to \R^S\) that assign the real and imaginary parts to each vector coordinate-wise, respectively.
    \item Let \(S\) be a set. Then \(\exp_\R\) denotes the map \(\R^S\to \R_{>0}^S\) is the coordinate-wise exponential map.
    \item Let \(S\) be a set. Then \(\exp_\C\) denotes the map \(\C^S\to (\C^\times)^S\) is the coordinate-wise exponential map.
    \item We denote by \(\Aut(X)\) the group of automorphisms of either algebraic variety or complex-analytic space \(X\). If \(X\) is an algebraic variety, we consider regular automorphisms, and if \(X\) is a complex analytic space, we consider biholomorphisms. 
\end{itemize}
\subsection*{Acknowledgements} The present paper was prepared with various degrees of intensity over the last five years. This, of course, necessitates quite a long list of acknowledgements.

First and foremost, we would like to thank our undergraduate advisor, Taras Panov, for posing the problem and providing many helpful comments and suggestions. We are deeply grateful to Ivan Arzhantsev for explaining the extreme importance of the Cox construction in toric geometry. In addition, we would like to thank Askold Khovanskii for many helpful and inspiring discussions on toric geometry. We also thank Dima Kaledin and Sergey Gorchinsky for discussing the paper's subject. 

Alexander Perepechko and Roman Krutowski deserve our special gratitude for reading the manuscript at various stages of completeness and providing many helpful insights. We also want to express our appreciation to Kostya Loginov and Alexey Gorinov for reigniting our interest in the topic and encouraging the completion of the paper. In addition, we thank Vasya Rogov for his clarifying remarks on the GAGA principle. We are also grateful to Marco Gualtieri for inspiring lectures that motivated us to turn again to the study of toric stacks. 

Finally, we would like to thank our partner, Alisa Chistopolskaya, for her constant moral support and key insights into Lie theory.
\part{Equivariant automorphisms of the Cox construction}\label{part:gen_theory}
\section{Generalized fans}\label{sec:gen_fans}
\subsection{Fans and generalized fans}\label{subsec:fans_and_gen_fans} 
Conventions and notation of this section mostly follow exposition in \cite[\S 2.1]{bupa15}.
\begin{definition}[{\cite[Definition 2.1.1]{bupa15}}]\label{def:conv_cone}
    Let \(V\) be a finite-dimensional vector space over \(\R\). Then, a finite collection of vectors \(v_1,\ldots v_k\in V\) defines a \emph{convex polyhedral cone}, or just a \emph{cone} as follows.
    \[
        C=\R_{\ge 0}\left\la v_1,\ldots, v_k\right\ra=\left\{\lambda_1 v_1+\ldots+\lambda_k v_k\mid \lambda_i\in \R_{\ge 0}\right\}.
    \]
    A cone is \emph{strongly convex} if it does not contain a line. A cone is \emph{simplicial} if it is generated by a part of a basis of \(V\).
\end{definition}
\begin{definition}\label{def:supp}
    Consider a subset \(S\sse V\) of a real finite-dimensional vector space \(V\) and an affine hyperplane \(A\) in \(V\) given by the following formula. \[A=\{v\in V\mid \la v, u\ra =b\}.\] 
    Define a pair of closed affine half-spaces in \(V\) denoted by \(A_+\) and \(A_-\) as follows.
    \[
        A_+=\{v\in V\mid \la v, u\ra \ge b\},\quad A_-=\{v\in V\mid \la v, u\ra \le b\}.
    \]
    Then the hyperplane \(A\) is called a  \emph{support hyperplane} for the subset \(S\) if \(S\cap A\) is nonempty, and either \(S\sse A_+\) or~\(S\sse A_-\). A half-space as above that contains \(S\) is called a \emph{support half-space}.
\end{definition}
\begin{remark}
    Note that a convex cone of Definition \ref{def:conv_cone} is an intersection of finitely many support half-spaces in the sense of Definition \ref{def:supp}.
\end{remark}
\begin{definition}\label{def:emb_fan}
    An \emph{embedded fan} (or simply a \emph{fan}) is a finite collection \(\Sigma\) of \emph{strongly convex cones} in \(V\) such that the following two conditions are satisfied.
    \begin{enumerate}
        \item For any two cones \(C_1,C_2\in \Sigma\) their intersection \(C_1\cap C_2\) is a face of both \(C_1\) and \(C_2\);
        \item If \(C_1\in \Sigma\) is a cone, then all its faces belong to \(\Sigma\).
    \end{enumerate}
    A fan is \emph{simplicial} if all its cones are simplicial. Given a collection of strongly convex cones \(\Sc\), a fan \(\Sigma\) is said to be \emph{generated} by \(\Sc\) if it is the smallest (with respect to inclusion relation) fan that contains \(\Sc\). 
\end{definition}
The following definition is a slight refinement of the notion of a fan.
\begin{definition}\label{def:marked_fan}
    A \emph{marked embedded fan} (or simply a \emph{marked fan}) is a fan \(\Sigma\) together with a choice of a generating vector for each one-dimensional cone in \(\Sigma\). All properties of unmarked fans are immediately extended to marked fans by forgetting the marking. For example, if the underlying unmarked fan is simplicial, a marked fan is simplicial.
\end{definition}
In the sequel, all fans are assumed to be simplicial.
\begin{example}\label{ex:CP2_fan}
    We define a fan \(\Sigma_{\CP^2}\) in \(\R^2\) with the standard basis \(\{e_1,e_2\}\) to be generated by the following collection of cones. \[\Sc=\left\{C_1=\R_{\ge 0}\la e_1,e_2\ra,\quad C_2=\Rge\la e_2,-e_1-e_2\ra,\quad C_3=\Rge\la e_1,-e_1-e_2\ra\right\}.\]
    It is well known (see, e.g., \cite[\S 2]{CoxLittleSchenk}) that in toric geometry, this fan characterizes toric variety \(\CP^2\) with the standard torus action. 
    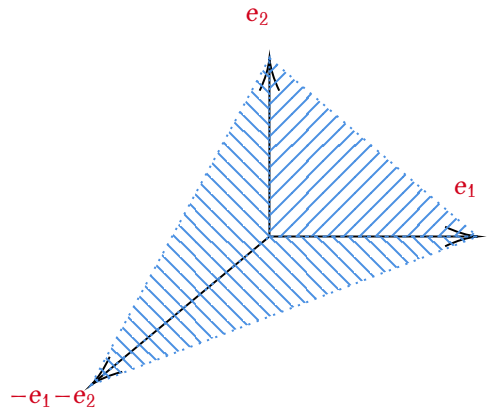
\begin{figure}[H]

 
\tikzset{
pattern size/.store in=\mcSize, 
pattern size = 5pt,
pattern thickness/.store in=\mcThickness, 
pattern thickness = 0.3pt,
pattern radius/.store in=\mcRadius, 
pattern radius = 1pt}
\makeatletter
\pgfutil@ifundefined{pgf@pattern@name@_j1h0aaw8v}{
\pgfdeclarepatternformonly[\mcThickness,\mcSize]{_j1h0aaw8v}
{\pgfqpoint{0pt}{0pt}}
{\pgfpoint{\mcSize+\mcThickness}{\mcSize+\mcThickness}}
{\pgfpoint{\mcSize}{\mcSize}}
{
\pgfsetcolor{\tikz@pattern@color}
\pgfsetlinewidth{\mcThickness}
\pgfpathmoveto{\pgfqpoint{0pt}{0pt}}
\pgfpathlineto{\pgfpoint{\mcSize+\mcThickness}{\mcSize+\mcThickness}}
\pgfusepath{stroke}
}}
\makeatother

 
\tikzset{
pattern size/.store in=\mcSize, 
pattern size = 5pt,
pattern thickness/.store in=\mcThickness, 
pattern thickness = 0.3pt,
pattern radius/.store in=\mcRadius, 
pattern radius = 1pt}
\makeatletter
\pgfutil@ifundefined{pgf@pattern@name@_hg9lkcmgt}{
\pgfdeclarepatternformonly[\mcThickness,\mcSize]{_hg9lkcmgt}
{\pgfqpoint{0pt}{-\mcThickness}}
{\pgfpoint{\mcSize}{\mcSize}}
{\pgfpoint{\mcSize}{\mcSize}}
{
\pgfsetcolor{\tikz@pattern@color}
\pgfsetlinewidth{\mcThickness}
\pgfpathmoveto{\pgfqpoint{0pt}{\mcSize}}
\pgfpathlineto{\pgfpoint{\mcSize+\mcThickness}{-\mcThickness}}
\pgfusepath{stroke}
}}
\makeatother

 
\tikzset{
pattern size/.store in=\mcSize, 
pattern size = 5pt,
pattern thickness/.store in=\mcThickness, 
pattern thickness = 0.3pt,
pattern radius/.store in=\mcRadius, 
pattern radius = 1pt}
\makeatletter
\pgfutil@ifundefined{pgf@pattern@name@_gjnyeutxh}{
\pgfdeclarepatternformonly[\mcThickness,\mcSize]{_gjnyeutxh}
{\pgfqpoint{0pt}{-\mcThickness}}
{\pgfpoint{\mcSize}{\mcSize}}
{\pgfpoint{\mcSize}{\mcSize}}
{
\pgfsetcolor{\tikz@pattern@color}
\pgfsetlinewidth{\mcThickness}
\pgfpathmoveto{\pgfqpoint{0pt}{\mcSize}}
\pgfpathlineto{\pgfpoint{\mcSize+\mcThickness}{-\mcThickness}}
\pgfusepath{stroke}
}}
\makeatother
\tikzset{every picture/.style={line width=0.75pt}} 

\begin{tikzpicture}[x=0.75pt,y=0.75pt,yscale=-1,xscale=1,scale=1.5]

\draw    (240,200) -- (308,200) ;
\draw [shift={(310,200)}, rotate = 180] [color={rgb, 255:red, 0; green, 0; blue, 0 }  ][line width=0.75]    (10.93,-3.29) .. controls (6.95,-1.4) and (3.31,-0.3) .. (0,0) .. controls (3.31,0.3) and (6.95,1.4) .. (10.93,3.29)   ;
\draw    (240,200) -- (240,142) ;
\draw [shift={(240,140)}, rotate = 90] [color={rgb, 255:red, 0; green, 0; blue, 0 }  ][line width=0.75]    (10.93,-3.29) .. controls (6.95,-1.4) and (3.31,-0.3) .. (0,0) .. controls (3.31,0.3) and (6.95,1.4) .. (10.93,3.29)   ;
\draw    (240,200) -- (181.54,248.72) ;
\draw [shift={(180,250)}, rotate = 320.19] [color={rgb, 255:red, 0; green, 0; blue, 0 }  ][line width=0.75]    (10.93,-3.29) .. controls (6.95,-1.4) and (3.31,-0.3) .. (0,0) .. controls (3.31,0.3) and (6.95,1.4) .. (10.93,3.29)   ;
\draw  [color={rgb, 255:red, 74; green, 144; blue, 226 }  ,draw opacity=0.13 ][pattern=_j1h0aaw8v,pattern size=6pt,pattern thickness=0.75pt,pattern radius=0pt, pattern color={rgb, 255:red, 74; green, 144; blue, 226}][dash pattern={on 0.84pt off 2.51pt}] (240,140) -- (310,200) -- (240,200) -- (240,140) -- cycle ;
\draw  [color={rgb, 255:red, 74; green, 144; blue, 226 }  ,draw opacity=0.13 ][pattern=_hg9lkcmgt,pattern size=6pt,pattern thickness=0.75pt,pattern radius=0pt, pattern color={rgb, 255:red, 74; green, 144; blue, 226}][dash pattern={on 0.84pt off 2.51pt}] (240,200) -- (310,200) -- (180,250) -- cycle ;
\draw  [color={rgb, 255:red, 74; green, 144; blue, 226 }  ,draw opacity=0.11 ][pattern=_gjnyeutxh,pattern size=6pt,pattern thickness=0.75pt,pattern radius=0pt, pattern color={rgb, 255:red, 74; green, 144; blue, 226}][dash pattern={on 0.84pt off 2.51pt}] (240,200) -- (240,140) -- (180,250) -- cycle ;

\draw (231,122.4) node [anchor=north west][inner sep=0.75pt]    {$\textcolor[rgb]{0.82,0.01,0.11}{e}\textcolor[rgb]{0.82,0.01,0.11}{_{2}}$};
\draw (301,180.4) node [anchor=north west][inner sep=0.75pt]    {$\textcolor[rgb]{0.82,0.01,0.11}{e}\textcolor[rgb]{0.82,0.01,0.11}{_{1}}$};
\draw (151,250.4) node [anchor=north west][inner sep=0.75pt]    {$\textcolor[rgb]{0.82,0.01,0.11}{-e}\textcolor[rgb]{0.82,0.01,0.11}{_{1}}\textcolor[rgb]{0.82,0.01,0.11}{-e}\textcolor[rgb]{0.82,0.01,0.11}{_{2}}$};

\end{tikzpicture}
    \caption{The standard fan of the toric variety \(\CP^2\).}
    \end{figure}
\end{example}
\begin{definition}
    A simplicial complex on a \emph{finite} set \(S\) is a collection of subsets \(K\) of the set \(S\) such that for any \(\sigma \in K\) any subset \(\tau\sse \sigma\) also belongs to \(K.\) The elements of \(S\) are called \emph{vertices} of \(K.\) The elements of \(K\) are called \emph{faces} of \(K\). If for \(s\in S\) the one-element subset \(\{s\}\) does not belong to \(K,\) then we call \(s\) a \emph{ghost vertex}. A face of a simplicial complex \(K\) is simply an element of \(K.\)
\end{definition}
The following definition is a generalization of the notion of a fan of Definition \ref{def:emb_fan}.
\begin{definition}\label{def:gen_fan}
    Let \(V\) be a finite-dimensional real vector space. Let \(K\) be a simplicial complex on a set \(S.\) Then, a \emph{generalized fan} \(\Sigma\) is a data of a map \(\rho\) that assigns to each vertex \(s\) of \(K\) a vector \(\rho(s)\) in \(V.\) We denote this object by a quadruple \(\Sigma=(S,K,V,\rho).\) A \emph{cone} of a generalized fan \(\Sigma=(S,K,V,\rho)\) associated to a \emph{face} \(\sigma\in K\) is a convex subset of \(V\) of the form
    \[
        C(\sigma)=\R_{\ge 0}\left\la \rho(s) \right\ra_{s\in \sigma}.
    \]
\end{definition}
\begin{definition}
    Let \(\Sigma=(S,K,V,\rho)\) and \(\Sigma'=(S',K',V',\rho')\) be two generalized fans.
    A \emph{morphism of generalized fans} \(\Phi:\Sigma\to \Sigma'\) is the following datum.
    \begin{enumerate}
        \item A simplicial morphism \(\Phi^c:(S,K)\to (S',K')\);
        \item A map of vector spaces \(\Phi^v:V\to V'\) which is linear on each cone of the fan \(\Sigma\);
        \item A compatibility condition on these two maps: \(\Phi_v(C(s))\sse C(\rho'(\Phi_c(s)))\) for all \(s\in S\).
    \end{enumerate} 
\end{definition}

\begin{construction}
    Given a marked fan \(\Sigma\), one associates with it a generalized fan in the following way. The set \(S\) is defined to be the set of \(1\)-dimensional cones of \(\Sigma\). Simplicial complex \(K\) consists of subsets of \(1\)-dimensional cones that lie in the same cone of \(\Sigma\). The map \(\rho\) is given by associating to a \(1\)-dimensional cone its generator given by the marked structure. In the sequel, we will not distinguish between a marked fan and the corresponding generalized fan.
\end{construction}
\begin{construction}\label{con:pushforward}
    Given a generalized fan \(\Sigma=(S,K,V,\rho)\) in the vector space \(V\) and a linear map \(A:V\to W\) one defines a \emph{pushforward generalized fan} \(A_*\Sigma\) as follows.
    \[
        A_*\Sigma=(S,K,W,A\circ \rho).
    \]
    Note that the map \(A\) gives a canonical morphism of generalized fans \(\Sigma\to A_*\Sigma\).
\end{construction}
\begin{example}\label{ex:CP2_fan_proj}
    Consider the fan of Example \ref{ex:CP2_fan} and a linear map \(P\) that projects \(\R^2\) along the line \(l\) generated by~\(e_1+\sqrt{2}e_2\).
    \begin{figure}[H]

 
\tikzset{
pattern size/.store in=\mcSize, 
pattern size = 5pt,
pattern thickness/.store in=\mcThickness, 
pattern thickness = 0.3pt,
pattern radius/.store in=\mcRadius, 
pattern radius = 1pt}
\makeatletter
\pgfutil@ifundefined{pgf@pattern@name@_p94sq5pyz}{
\pgfdeclarepatternformonly[\mcThickness,\mcSize]{_p94sq5pyz}
{\pgfqpoint{0pt}{0pt}}
{\pgfpoint{\mcSize+\mcThickness}{\mcSize+\mcThickness}}
{\pgfpoint{\mcSize}{\mcSize}}
{
\pgfsetcolor{\tikz@pattern@color}
\pgfsetlinewidth{\mcThickness}
\pgfpathmoveto{\pgfqpoint{0pt}{0pt}}
\pgfpathlineto{\pgfpoint{\mcSize+\mcThickness}{\mcSize+\mcThickness}}
\pgfusepath{stroke}
}}
\makeatother

 
\tikzset{
pattern size/.store in=\mcSize, 
pattern size = 5pt,
pattern thickness/.store in=\mcThickness, 
pattern thickness = 0.3pt,
pattern radius/.store in=\mcRadius, 
pattern radius = 1pt}
\makeatletter
\pgfutil@ifundefined{pgf@pattern@name@_yxf10snax}{
\pgfdeclarepatternformonly[\mcThickness,\mcSize]{_yxf10snax}
{\pgfqpoint{0pt}{-\mcThickness}}
{\pgfpoint{\mcSize}{\mcSize}}
{\pgfpoint{\mcSize}{\mcSize}}
{
\pgfsetcolor{\tikz@pattern@color}
\pgfsetlinewidth{\mcThickness}
\pgfpathmoveto{\pgfqpoint{0pt}{\mcSize}}
\pgfpathlineto{\pgfpoint{\mcSize+\mcThickness}{-\mcThickness}}
\pgfusepath{stroke}
}}
\makeatother

 
\tikzset{
pattern size/.store in=\mcSize, 
pattern size = 5pt,
pattern thickness/.store in=\mcThickness, 
pattern thickness = 0.3pt,
pattern radius/.store in=\mcRadius, 
pattern radius = 1pt}
\makeatletter
\pgfutil@ifundefined{pgf@pattern@name@_nwi0sm1gc}{
\pgfdeclarepatternformonly[\mcThickness,\mcSize]{_nwi0sm1gc}
{\pgfqpoint{0pt}{-\mcThickness}}
{\pgfpoint{\mcSize}{\mcSize}}
{\pgfpoint{\mcSize}{\mcSize}}
{
\pgfsetcolor{\tikz@pattern@color}
\pgfsetlinewidth{\mcThickness}
\pgfpathmoveto{\pgfqpoint{0pt}{\mcSize}}
\pgfpathlineto{\pgfpoint{\mcSize+\mcThickness}{-\mcThickness}}
\pgfusepath{stroke}
}}
\makeatother
\tikzset{every picture/.style={line width=0.75pt}} 

\begin{tikzpicture}[x=0.75pt,y=0.75pt,yscale=-1,xscale=1,scale=1.5]

\draw    (240,200) -- (308,200) ;
\draw [shift={(310,200)}, rotate = 180] [color={rgb, 255:red, 0; green, 0; blue, 0 }  ][line width=0.75]    (10.93,-3.29) .. controls (6.95,-1.4) and (3.31,-0.3) .. (0,0) .. controls (3.31,0.3) and (6.95,1.4) .. (10.93,3.29)   ;
\draw    (240,200) -- (240,142) ;
\draw [shift={(240,140)}, rotate = 90] [color={rgb, 255:red, 0; green, 0; blue, 0 }  ][line width=0.75]    (10.93,-3.29) .. controls (6.95,-1.4) and (3.31,-0.3) .. (0,0) .. controls (3.31,0.3) and (6.95,1.4) .. (10.93,3.29)   ;
\draw    (240,200) -- (181.54,248.72) ;
\draw [shift={(180,250)}, rotate = 320.19] [color={rgb, 255:red, 0; green, 0; blue, 0 }  ][line width=0.75]    (10.93,-3.29) .. controls (6.95,-1.4) and (3.31,-0.3) .. (0,0) .. controls (3.31,0.3) and (6.95,1.4) .. (10.93,3.29)   ;
\draw  [color={rgb, 255:red, 74; green, 144; blue, 226 }  ,draw opacity=0.13 ][pattern=_p94sq5pyz,pattern size=6pt,pattern thickness=0.75pt,pattern radius=0pt, pattern color={rgb, 255:red, 74; green, 144; blue, 226}][dash pattern={on 0.84pt off 2.51pt}] (240,140) -- (310,200) -- (240,200) -- (240,140) -- cycle ;
\draw  [color={rgb, 255:red, 74; green, 144; blue, 226 }  ,draw opacity=0.13 ][pattern=_yxf10snax,pattern size=6pt,pattern thickness=0.75pt,pattern radius=0pt, pattern color={rgb, 255:red, 74; green, 144; blue, 226}][dash pattern={on 0.84pt off 2.51pt}] (240,200) -- (310,200) -- (180,250) -- cycle ;
\draw  [color={rgb, 255:red, 74; green, 144; blue, 226 }  ,draw opacity=0.11 ][pattern=_nwi0sm1gc,pattern size=6pt,pattern thickness=0.75pt,pattern radius=0pt, pattern color={rgb, 255:red, 74; green, 144; blue, 226}][dash pattern={on 0.84pt off 2.51pt}] (240,200) -- (240,140) -- (180,250) -- cycle ;
\draw [color={rgb, 255:red, 65; green, 117; blue, 5 }  ,draw opacity=1 ][line width=1.5]    (240,200) -- (280,140) ;
\draw [color={rgb, 255:red, 65; green, 117; blue, 5 }  ,draw opacity=1 ][line width=1.5]    (240,200) -- (200,260) ;

\draw (231,122.4) node [anchor=north west][inner sep=0.75pt]    {$\textcolor[rgb]{0.82,0.01,0.11}{e}\textcolor[rgb]{0.82,0.01,0.11}{_{2}}$};
\draw (301,180.4) node [anchor=north west][inner sep=0.75pt]    {$\textcolor[rgb]{0.82,0.01,0.11}{e}\textcolor[rgb]{0.82,0.01,0.11}{_{1}}$};
\draw (141,250.4) node [anchor=north west][inner sep=0.75pt]    {$\textcolor[rgb]{0.82,0.01,0.11}{-e}\textcolor[rgb]{0.82,0.01,0.11}{_{1}}\textcolor[rgb]{0.82,0.01,0.11}{-e}\textcolor[rgb]{0.82,0.01,0.11}{_{2}}$};
\draw (271,132.4) node [anchor=north west][inner sep=0.75pt]    {$\textcolor[rgb]{0.25,0.46,0.02}{l}$};

\end{tikzpicture}
        \caption{Fan \(\Sigma_{\CP^2}\) of Example \ref{ex:CP2_fan} with an irrational line \(l\).}
    \end{figure}
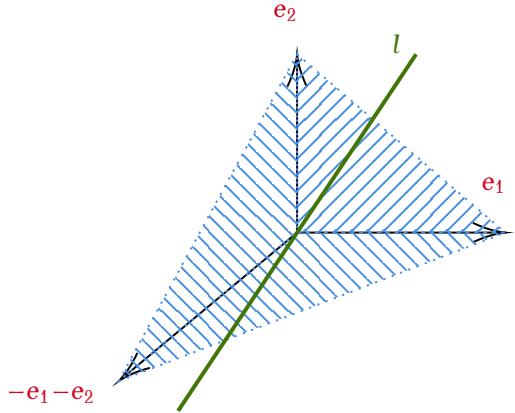 
    Then, the resulting pushforward generalized fan has the following structure.
    \begin{align*}
        P_*\Sigma=\left(\{0,1,2\}, \Delta^2, \R^2/ l, P\circ \rho(s)=\begin{cases}
            e_1+l,& s=0;\\
            e_2+l,& s=1;\\
            -e_1-e_2+l,& s=2.
        \end{cases}\right)
    \end{align*}
    The generalized fan \(P_*\Sigma\) does not arise from an embedded fan. Indeed, consider the cone \(C(\{0,1\})\) it is generated by \(e_1+l\) and \(e_2+l\) and as such contains the whole line \(l\). On the other hand, any embedded fan can only contain strongly convex cones.
    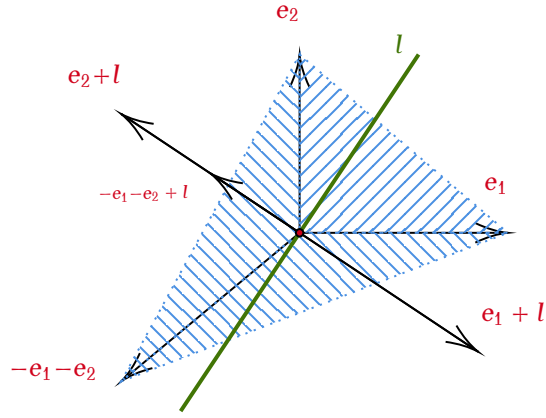
\begin{figure}[H]

 
\tikzset{
pattern size/.store in=\mcSize, 
pattern size = 5pt,
pattern thickness/.store in=\mcThickness, 
pattern thickness = 0.3pt,
pattern radius/.store in=\mcRadius, 
pattern radius = 1pt}
\makeatletter
\pgfutil@ifundefined{pgf@pattern@name@_t9wrkvyx1}{
\pgfdeclarepatternformonly[\mcThickness,\mcSize]{_t9wrkvyx1}
{\pgfqpoint{0pt}{0pt}}
{\pgfpoint{\mcSize+\mcThickness}{\mcSize+\mcThickness}}
{\pgfpoint{\mcSize}{\mcSize}}
{
\pgfsetcolor{\tikz@pattern@color}
\pgfsetlinewidth{\mcThickness}
\pgfpathmoveto{\pgfqpoint{0pt}{0pt}}
\pgfpathlineto{\pgfpoint{\mcSize+\mcThickness}{\mcSize+\mcThickness}}
\pgfusepath{stroke}
}}
\makeatother

 
\tikzset{
pattern size/.store in=\mcSize, 
pattern size = 5pt,
pattern thickness/.store in=\mcThickness, 
pattern thickness = 0.3pt,
pattern radius/.store in=\mcRadius, 
pattern radius = 1pt}
\makeatletter
\pgfutil@ifundefined{pgf@pattern@name@_i0ji3lfp4}{
\pgfdeclarepatternformonly[\mcThickness,\mcSize]{_i0ji3lfp4}
{\pgfqpoint{0pt}{-\mcThickness}}
{\pgfpoint{\mcSize}{\mcSize}}
{\pgfpoint{\mcSize}{\mcSize}}
{
\pgfsetcolor{\tikz@pattern@color}
\pgfsetlinewidth{\mcThickness}
\pgfpathmoveto{\pgfqpoint{0pt}{\mcSize}}
\pgfpathlineto{\pgfpoint{\mcSize+\mcThickness}{-\mcThickness}}
\pgfusepath{stroke}
}}
\makeatother

 
\tikzset{
pattern size/.store in=\mcSize, 
pattern size = 5pt,
pattern thickness/.store in=\mcThickness, 
pattern thickness = 0.3pt,
pattern radius/.store in=\mcRadius, 
pattern radius = 1pt}
\makeatletter
\pgfutil@ifundefined{pgf@pattern@name@_vtjutosti}{
\pgfdeclarepatternformonly[\mcThickness,\mcSize]{_vtjutosti}
{\pgfqpoint{0pt}{-\mcThickness}}
{\pgfpoint{\mcSize}{\mcSize}}
{\pgfpoint{\mcSize}{\mcSize}}
{
\pgfsetcolor{\tikz@pattern@color}
\pgfsetlinewidth{\mcThickness}
\pgfpathmoveto{\pgfqpoint{0pt}{\mcSize}}
\pgfpathlineto{\pgfpoint{\mcSize+\mcThickness}{-\mcThickness}}
\pgfusepath{stroke}
}}
\makeatother
\tikzset{every picture/.style={line width=0.75pt}} 

\begin{tikzpicture}[x=0.75pt,y=0.75pt,yscale=-1,xscale=1,scale=1.5]

\draw    (240,200) -- (308,200) ;
\draw [shift={(310,200)}, rotate = 180] [color={rgb, 255:red, 0; green, 0; blue, 0 }  ][line width=0.75]    (10.93,-3.29) .. controls (6.95,-1.4) and (3.31,-0.3) .. (0,0) .. controls (3.31,0.3) and (6.95,1.4) .. (10.93,3.29)   ;
\draw    (240,200) -- (240,142) ;
\draw [shift={(240,140)}, rotate = 90] [color={rgb, 255:red, 0; green, 0; blue, 0 }  ][line width=0.75]    (10.93,-3.29) .. controls (6.95,-1.4) and (3.31,-0.3) .. (0,0) .. controls (3.31,0.3) and (6.95,1.4) .. (10.93,3.29)   ;
\draw    (240,200) -- (181.54,248.72) ;
\draw [shift={(180,250)}, rotate = 320.19] [color={rgb, 255:red, 0; green, 0; blue, 0 }  ][line width=0.75]    (10.93,-3.29) .. controls (6.95,-1.4) and (3.31,-0.3) .. (0,0) .. controls (3.31,0.3) and (6.95,1.4) .. (10.93,3.29)   ;
\draw  [color={rgb, 255:red, 74; green, 144; blue, 226 }  ,draw opacity=0.13 ][pattern=_t9wrkvyx1,pattern size=6pt,pattern thickness=0.75pt,pattern radius=0pt, pattern color={rgb, 255:red, 74; green, 144; blue, 226}][dash pattern={on 0.84pt off 2.51pt}] (240,140) -- (310,200) -- (240,200) -- (240,140) -- cycle ;
\draw  [color={rgb, 255:red, 74; green, 144; blue, 226 }  ,draw opacity=0.13 ][pattern=_i0ji3lfp4,pattern size=6pt,pattern thickness=0.75pt,pattern radius=0pt, pattern color={rgb, 255:red, 74; green, 144; blue, 226}][dash pattern={on 0.84pt off 2.51pt}] (240,200) -- (310,200) -- (180,250) -- cycle ;
\draw  [color={rgb, 255:red, 74; green, 144; blue, 226 }  ,draw opacity=0.11 ][pattern=_vtjutosti,pattern size=6pt,pattern thickness=0.75pt,pattern radius=0pt, pattern color={rgb, 255:red, 74; green, 144; blue, 226}][dash pattern={on 0.84pt off 2.51pt}] (240,200) -- (240,140) -- (180,250) -- cycle ;
\draw [color={rgb, 255:red, 65; green, 117; blue, 5 }  ,draw opacity=1 ][line width=1.5]    (240,200) -- (280,140) ;
\draw [color={rgb, 255:red, 65; green, 117; blue, 5 }  ,draw opacity=1 ][line width=1.5]    (240,200) -- (200,260) ;
\draw [color={rgb, 255:red, 0; green, 0; blue, 0 }  ,draw opacity=1 ]   (180,160) -- (300,240) ;
\draw    (240,200) -- (298.34,238.89) ;
\draw [shift={(300,240)}, rotate = 213.69] [color={rgb, 255:red, 0; green, 0; blue, 0 }  ][line width=0.75]    (10.93,-3.29) .. controls (6.95,-1.4) and (3.31,-0.3) .. (0,0) .. controls (3.31,0.3) and (6.95,1.4) .. (10.93,3.29)   ;
\draw    (240,200) -- (181.66,161.11) ;
\draw [shift={(180,160)}, rotate = 33.69] [color={rgb, 255:red, 0; green, 0; blue, 0 }  ][line width=0.75]    (10.93,-3.29) .. controls (6.95,-1.4) and (3.31,-0.3) .. (0,0) .. controls (3.31,0.3) and (6.95,1.4) .. (10.93,3.29)   ;
\draw    (240,200) -- (211.66,181.11) ;
\draw [shift={(210,180)}, rotate = 33.69] [color={rgb, 255:red, 0; green, 0; blue, 0 }  ][line width=0.75]    (10.93,-3.29) .. controls (6.95,-1.4) and (3.31,-0.3) .. (0,0) .. controls (3.31,0.3) and (6.95,1.4) .. (10.93,3.29)   ;
\draw  [color={rgb, 255:red, 0; green, 0; blue, 0 }  ,draw opacity=0 ][fill={rgb, 255:red, 208; green, 2; blue, 27 }  ,fill opacity=1 ] (241.25,200) .. controls (241.25,199.31) and (240.69,198.75) .. (240,198.75) .. controls (239.31,198.75) and (238.75,199.31) .. (238.75,200) .. controls (238.75,200.69) and (239.31,201.25) .. (240,201.25) .. controls (240.69,201.25) and (241.25,200.69) .. (241.25,200) -- cycle ;

\draw (231,122.4) node [anchor=north west][inner sep=0.75pt]    {$\textcolor[rgb]{0.82,0.01,0.11}{e}\textcolor[rgb]{0.82,0.01,0.11}{_{2}}$};
\draw (301,180.4) node [anchor=north west][inner sep=0.75pt]    {$\textcolor[rgb]{0.82,0.01,0.11}{e}\textcolor[rgb]{0.82,0.01,0.11}{_{1}}$};
\draw (141,242.4) node [anchor=north west][inner sep=0.75pt]    {$\textcolor[rgb]{0.82,0.01,0.11}{-e}\textcolor[rgb]{0.82,0.01,0.11}{_{1}}\textcolor[rgb]{0.82,0.01,0.11}{-e}\textcolor[rgb]{0.82,0.01,0.11}{_{2}}$};
\draw (271,132.4) node [anchor=north west][inner sep=0.75pt]    {$\textcolor[rgb]{0.25,0.46,0.02}{l}$};
\draw (300,222.4) node [anchor=north west][inner sep=0.75pt]    {$\textcolor[rgb]{0.82,0.01,0.11}{e_{1} +l}$};
\draw (161,142.4) node [anchor=north west][inner sep=0.75pt]    {$\textcolor[rgb]{0.82,0.01,0.11}{e}\textcolor[rgb]{0.82,0.01,0.11}{_{2}}\textcolor[rgb]{0.82,0.01,0.11}{+l}$};
\draw (171.04,183.42) node [anchor=north west][inner sep=0.75pt]  [font=\scriptsize,rotate=-358.99]  {$\textcolor[rgb]{0.82,0.01,0.11}{-e_{1}}\textcolor[rgb]{0.82,0.01,0.11}{-e_{2} +l}$};

\end{tikzpicture}
        \caption{Projection generalized fan \(\Sigma_{\CP^2}/l\) of the fan \(\Sigma_{\CP^2}\) along the line \(l\).}
    \end{figure}
\end{example}
\begin{construction}\label{con:reduced_fan}
    Let \(\Sigma=\GF\) be a generalized fan. Let \(S^\red\) be the set of non-ghost vertices of \(K\). That is for all \(s\in S^\red\) there exists some \(\sigma\in K\) such that \(s\in \sigma\). Then, we define the \emph{reduced fan} \(\Sigma^\red\) associated to \(\Sigma\) as follows.
    \begin{align*}
        \Sigma^\red=(S^\red, K\vert_{S^\red}, V,\rho\vert_{S^\red}).
    \end{align*}
\end{construction}
\subsection{Complete fans}\label{subsec:complete_fans}
\begin{definition}
    A generalized fan \(\Sigma=(S,K,V,\rho)\) is \emph{complete}. If for any point \(v\in V\) there exists a face \(\sigma\in K\) such that~\(v\in C(\sigma).\)
\end{definition}
\begin{lemma}\label{lemma:positive_combination}
    Given a complete generalized fan $\Sigma=(S,K,V,\rho).$ The following identity holds for some positive real numbers $\lambda_s$.
    $$
        \sum_{s\in S}\lambda_s \cd \rho(s)=0,\quad\quad \lambda_s>0,\; \forall s\in S.
    $$
\end{lemma}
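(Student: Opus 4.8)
The plan is to reduce Lemma \ref{lemma:positive_combination} to an elementary fact about convex cones: a finite family of vectors that positively spans a finite-dimensional real vector space admits a strictly positive linear relation summing to zero.

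First I would rewrite the completeness hypothesis in cone-theoretic terms. By definition \(V = \bigcup_{\sigma \in K} C(\sigma)\), and each cone \(C(\sigma) = \R_{\ge 0}\la \rho(s)\ra_{s \in \sigma}\) is contained in the convex cone \(\R_{\ge 0}\la \rho(s)\ra_{s \in S}\) generated by the entire family. Hence \(\R_{\ge 0}\la\rho(s)\ra_{s\in S} = V\); that is, the vectors \(\{\rho(s)\}_{s\in S}\) positively span \(V\). Ghost vertices are harmless here: they only enlarge the generating family, while the cones — and therefore already the non-ghost vertices alone — span all of \(V\).

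Next I would prove the convex-geometry fact and read off the coefficients directly. Since \(\{\rho(s)\}_{s\in S}\) positively spans \(V\), for each fixed \(t\in S\) the vector \(-\rho(t)\) lies in \(\R_{\ge 0}\la\rho(s)\ra_{s\in S}\), so there exist reals \(a_{ts}\ge 0\) with \(\rho(t) + \sum_{s\in S} a_{ts}\,\rho(s) = 0\). Summing these \(\#S\) identities over all \(t\in S\) yields \(\sum_{s\in S}\bigl(1 + \sum_{t\in S} a_{ts}\bigr)\rho(s) = 0\), and each coefficient \(\lambda_s := 1 + \sum_{t\in S} a_{ts}\) satisfies \(\lambda_s \ge 1 > 0\). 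This is precisely the claimed identity, with a strictly positive weight attached to every index of \(S\).

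Finally I would dispose of the degenerate case \(V = \{0\}\), in which every \(\rho(s) = 0\) and one may simply take \(\lambda_s = 1\); when \(V \neq \{0\}\), completeness forces the existence of a nonempty face, so \(S\) is nonempty and the relation is non-vacuous. I do not expect a genuine obstacle. The only point needing a little care is the bookkeeping around ghost vertices — making sure the final relation carries a strictly positive coefficient on every element of \(S\) and not merely on the non-ghost ones — and summing the relations \(\rho(t) + \sum_{s} a_{ts}\rho(s) = 0\) over all \(t \in S\), rather than over a spanning subset, handles this automatically.
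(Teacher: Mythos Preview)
Your proof is correct and follows essentially the same approach as the paper: for each \(t\in S\) use completeness to write \(-\rho(t)\) as a nonnegative combination of the \(\rho(s)\)'s, then sum these relations over all \(t\in S\) so that every coefficient picks up at least a \(+1\). The paper phrases the first step as ``\(-\rho(s)\) lies in some cone \(C(\sigma)\)'' rather than passing through the intermediate observation that the \(\rho(s)\) positively span \(V\), but the argument is the same.
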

\begin{proof}
    Let \(s\in S\) be an element of the support set for the generalized fan \(\Sigma\). Since the fan $\Sigma$ is complete the opposite vector $-\rho(s)$ lies in some cone $C(\sigma)$ for \(\sigma\in K\). Thus $$
    -\rho(s)=\sum_{t\in S}\vk^{\rho(s)}_{t}\cd \rho(t),\quad \vk^\nu_t\ge 0,\quad \forall t\in S.$$
    Now we have a linear dependency with non-negative coefficients:
    $$
    \rho(s)+\sum_{t\in S}\vk^{\rho(s)}_{t}\cd \rho(t)=0.
    $$
    We can obtain such a dependency for each element \(s'\in S\). Thus for any $s'\in S$ we have:
    $$
        \rho(s')+\sum_{t\in S}\vk^{\rho(s')}_{t}\cd \rho(t)=0.
    $$
    Taking the sum of all such dependencies, we obtain the desired dependency with strictly positive coefficients:
    \[
        \sum_{s\in S}\left[\rho(s)+\sum_{t\in S}\vk^{\rho(s)}_t \cd\rho(t)\right]=0. \qedhere
    \]
    
\end{proof}
\begin{lemma}\label{lem:comb_ghost}
    Let \(\Sigma=\GF\) be a complete generalized fan. Then, for any ghost vertex \(s'\in S\setminus S^\red\), there exists a linear combination with non-negative coefficients of the following form for some non-negative real numbers \(\lambda^{s'}_s.\)
    \[
        \sum_{s\in S^\red} \lambda_s^{s'} \rho(s)=\rho(s'),\quad \lambda_s^{s'}\ge 0.
    \]
\end{lemma}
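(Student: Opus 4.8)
The plan is to apply completeness directly to the single vector $\rho(s')$ and then to observe that every cone of $K$ is spanned by non-ghost vertices only. First I would recall that $s'$ being a ghost vertex means precisely that $\{s'\}\notin K$; this does not affect the fact that $\rho(s')$ is a well-defined element of $V$. Since $\Sigma$ is complete, there is a face $\sigma\in K$ with $\rho(s')\in C(\sigma)=\Rge\la \rho(s)\ra_{s\in\sigma}$, and therefore we may write
\[
    \rho(s')=\sum_{s\in\sigma}\lambda_s\cd\rho(s),\qquad \lambda_s\ge 0.
\]

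The second — and really the only — point to verify is that every $s\in\sigma$ lies in $S^\red$. This is immediate from the simplicial complex axiom: if $s\in\sigma\in K$, then $\{s\}\sse\sigma$, hence $\{s\}\in K$, so $s$ is not a ghost vertex. Extending the coefficients by zero, i.e.\ setting $\lambda^{s'}_s=\lambda_s$ for $s\in\sigma$ and $\lambda^{s'}_s=0$ for $s\in S^\red\setminus\sigma$, yields the required identity $\sum_{s\in S^\red}\lambda^{s'}_s\cd\rho(s)=\rho(s')$ with non-negative coefficients. (Equivalently, one can phrase this by noting that $K|_{S^\red}=K$ as a collection of faces, so $\Sigma^\red$ has the same cones as $\Sigma$ and is again complete, and then quote completeness of $\Sigma^\red$.)

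I do not expect a genuine obstacle here: the statement is essentially a restatement of completeness plus the bookkeeping remark that cones only involve non-ghost vertices. The one thing to be careful about is to \emph{not} claim strict positivity of the coefficients, in contrast to Lemma \ref{lemma:positive_combination}: the vector $\rho(s')$ may lie on a proper face of every cone containing it, or may even be zero, so some of the $\lambda^{s'}_s$ genuinely can vanish, which is exactly why the conclusion only asserts $\lambda^{s'}_s\ge 0$.
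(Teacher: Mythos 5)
Your proposal is correct and follows exactly the paper's argument: completeness places \(\rho(s')\) in some cone \(C(\sigma)\), giving a non-negative combination over \(\sigma\), which is then extended by zero to all of \(S^\red\). The additional observation that every vertex of \(\sigma\) is automatically non-ghost is a harmless (and accurate) piece of bookkeeping that the paper leaves implicit.
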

\begin{proof}
    By definition of a complete fan the vector \(\rho(s')\) belongs to some cone \(C(\sigma)\) for \(\sigma \in K\). Consequently, a non-negative linear combination of the following form exists.
    \[
        \sum_{s\in\sigma}\tau_s\rho(s)=\rho(s'),\quad \tau_s\ge 0.
    \]
    The desired linear combination is now obtained by defining \(\lambda_s^{s'}\) as follows.
    \[
        \lambda_s^{s'}=\begin{cases}
            \tau_s,& s\in \sigma;\\
            0,& s\notin \sigma.
        \end{cases}
        \qedhere
    \]
\end{proof}
\begin{remark}
    By the same argument, the vector \(-\rho(s)\) can also be presented as a non-negative combination of \(\rho(s)\) for~\(s\in S^\red\). 
\end{remark}
\begin{definition}\label{def:strongly_complete}
    A generalized fan \(\Sigma=\GF\) is \emph{strongly complete} if for any \(m\in V^\vee\) such that there exists a unique \(s\in S\) with \(\la m,\rho(s)\ra>0\) and any \(\sigma\in K\) with \(s\notin \sigma\) there exists \(\tau\in K\) such that 
    \[
        s\cup \{s'\in \sigma\mid \la m,\rho(s')\ra =0\}\sse \tau.
    \]
\end{definition}
\begin{lemma}
    Let \(\Sigma=\GF\) be a complete generalized fan associated to an embedded complete fan. Then for any linear operator \(A:V\to W\), the pushforward fan \(A_*\Sigma\) is strongly complete.
\end{lemma}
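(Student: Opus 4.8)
\medskip
\noindent\emph{Proof plan.} The first move is to reduce the assertion to a statement about embedded complete fans only. Note that $A_*\Sigma=(S,K,W,A\circ\rho)$ has the same underlying simplicial data $(S,K)$ as $\Sigma$, and for every $m\in W^\vee$ one has $\la m,(A\circ\rho)(s)\ra=\la A^\vee m,\rho(s)\ra$. Hence the defining condition of strong completeness for $A_*\Sigma$ at a functional $m$ is literally the defining condition for $\Sigma$ at the pulled-back functional $\tilde m:=A^\vee m\in V^\vee$: the ``unique positive ray'' hypothesis transfers verbatim, the set $\{s'\in\sigma\mid\la m,(A\circ\rho)(s')\ra=0\}$ coincides with $\{s'\in\sigma\mid\la\tilde m,\rho(s')\ra=0\}$, and the face $\tau\in K$ being sought is the same object in both pictures. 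So it is enough to prove that \emph{an embedded complete simplicial fan, viewed as a generalized fan, is strongly complete}, and then apply this with $\tilde m$. (An embedded fan has no ghost vertices, so $\{s\}\in K$ for every $s\in S$; there is nothing further to check on that side.)

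For the core statement I would fix an embedded complete simplicial fan $\Sigma=\GF$, a functional $m\in V^\vee$ with a unique $s_0\in S$ satisfying $\la m,\rho(s_0)\ra>0$, and a cone $\sigma\in K$ with $s_0\notin\sigma$, and argue as follows. First, any cone $C(\mu)$ of the fan with $s_0\notin\mu$ lies in $\{m\le 0\}$, since all of its generators $\rho(s')$ with $s'\neq s_0$ pair non-positively with $m$; consequently every vector on which $m$ is strictly positive lies in some cone $C(\mu)$ with $s_0\in\mu$. Put $\sigma_0:=\{s'\in\sigma\mid\la m,\rho(s')\ra=0\}$, a face of $\sigma$, so $\sigma_0\in K$ and $C(\sigma_0)\sse\{m=0\}$; choose $x$ in the relative interior of $C(\sigma_0)$, taking $x=0$ if $\sigma_0=\emptyset$. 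Then for $\varepsilon>0$ the point $x+\varepsilon\,\rho(s_0)$ satisfies $\la m,\,x+\varepsilon\,\rho(s_0)\ra=\varepsilon\la m,\rho(s_0)\ra>0$, so by completeness it lies in $C(\mu_\varepsilon)$ for some cone of the fan with $s_0\in\mu_\varepsilon$. Since the fan is finite, there is a sequence $\varepsilon_n\downarrow 0$ along which $\mu_{\varepsilon_n}$ equals a fixed cone $\mu$; because $C(\mu)$ is closed and $x+\varepsilon_n\rho(s_0)\to x$, we get $x\in C(\mu)$, while $s_0\in\mu$ gives $\rho(s_0)\in C(\mu)$. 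Now $C(\sigma_0)\cap C(\mu)$ is a common face of two cones of the fan and contains the relative-interior point $x$ of $C(\sigma_0)$, hence equals $C(\sigma_0)$, so $C(\sigma_0)\sse C(\mu)$. Finally each ray $\R_{\ge 0}\rho(s')$ with $s'\in\{s_0\}\cup\sigma_0$ is a one-dimensional cone of the fan contained in the simplicial cone $C(\mu)$, hence is one of its generating rays; as distinct vertices give distinct rays, $\{s_0\}\cup\sigma_0\sse\mu$, and $\tau:=\mu$ does the job.

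I expect the only genuinely non-formal point to be the limiting step that produces a single cone $C(\mu)$ of the fan containing both $x$ and $\rho(s_0)$: this is where completeness (to locate each perturbed point) and finiteness of the fan (to stabilize the containing cone along a subsequence) have to be combined, using only the elementary fact that polyhedral cones are closed. The remaining ingredients — the reduction along $A$, the identification of $C(\sigma_0)\cap C(\mu)$ with $C(\sigma_0)$ through faces, and the passage from ``ray inside a simplicial cone'' to ``ray among its generators'' — are routine consequences of the fan axioms and the standing simpliciality assumption.
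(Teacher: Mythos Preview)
Your proposal is correct and follows the same overall route as the paper: both reduce the statement for $A_*\Sigma$ to the corresponding statement for the embedded complete fan $\Sigma$ by pulling back the functional, $\tilde m=A^\vee m$, noting that the combinatorial data $(S,K)$ and the relevant pairings are unchanged. The only difference is that the paper then invokes \cite[Proposition~1.1]{Oda} as a black box for the embedded case, whereas you supply a self-contained elementary argument via the perturbation $x+\varepsilon\rho(s_0)$, a pigeonhole/limit to land in a fixed cone $C(\mu)$, and the fan axiom that intersections are faces to conclude $\{s_0\}\cup\sigma_0\subseteq\mu$. Your argument is a clean direct substitute for the cited result; nothing further is needed.
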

\begin{proof}
    Let \(m\in W^\vee\) be such that there exists a unique \(s\in S\) with \(\la m,A\circ\rho(s)\ra>0\). And let \(\sigma\in K\) be an arbitrary face that does not contain \(s.\) Clearly, the pullback covector \(A^*m\) satisfies the property that for all elements \(s'\in S\setminus \{s\}\) the value of \(A^*m\) is non-positive. Thus, by assumption and \cite[Proposition 1.1]{Oda} there exists a face \(\tau\) with the desired properties. 
\end{proof}
\begin{definition}
    Let \(\Sigma=\GF\) be a generalized fan. We say that \(\wh{\Sigma}\) is a \emph{development} of \(\Sigma\) if it is an embedded fan with a morphism of generalized fans \(f:\wh{\Sigma}\to \Sigma\) such that \(f_*\wh{\Sigma}=\Sigma\).
\end{definition}
\begin{remark}
    The terminology here is borrowed from the theory of complexes of groups, see \cite[\S 12.15]{Bridson_Haefliger}.
    There, a complex of groups is called developable if it can be presented as a quotient of a genuine simplicial complex with respect to a group action.
\end{remark}
\begin{construction}\label{con:can_developement}
    Any generalized fan \(\Sigma=\GF\) admits a canonical developement \(\wt{\Sigma}=(\wt{S},\wt{K},\wt{V},\wt{\rho})\) defined as follows.
    \begin{itemize}
        \item The set \(\wt{S}\) is just the set \(S\). 
        \item The simplicial complex \(\wt{K}\) is just \(K\).
        \item The vector space \(\wt{V}\) is the vector space \(\R^S\).
        \item The map \(\wt{\rho}\) sends \(s\) to \(e_s\).
    \end{itemize}
\end{construction}
\begin{lemma}
    The generalized fan \(\wt{\Sigma}\) of Construction \ref{con:can_developement} is a development of the generalized fan \(\Sigma.\)
\end{lemma}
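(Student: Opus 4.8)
The plan is to verify the two conditions in the definition of a development directly: first that $\wt{\Sigma}$ underlies an honest embedded simplicial fan in $\R^S$, and second that the obvious ``evaluation'' map $\R^S\to V$, $e_s\mapsto\rho(s)$, is a morphism of generalized fans whose pushforward recovers $\Sigma$.

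For the first step I would argue as follows. For each face $\sigma\in K$ the cone $\wt{C}(\sigma)=\R_{\ge 0}\la e_s\ra_{s\in\sigma}$ is generated by a subset of the standard basis $\{e_s\}_{s\in S}$ of $\R^S$, hence is simplicial, and in particular strongly convex. The collection $\{\wt{C}(\sigma)\}_{\sigma\in K}$ is closed under faces: every face of $\wt{C}(\sigma)$ is of the form $\wt{C}(\tau)$ for some $\tau\sse\sigma$, and $\tau\in K$ because $K$ is a simplicial complex. Finally, for $\sigma_1,\sigma_2\in K$ one has $\wt{C}(\sigma_1)\cap\wt{C}(\sigma_2)=\wt{C}(\sigma_1\cap\sigma_2)$, which is a common face of both: this is immediate because the $e_s$ are linearly independent, so a non-negative combination of $\{e_s\}_{s\in\sigma_1}$ that also lies in the span of $\{e_s\}_{s\in\sigma_2}$ is supported on $\sigma_1\cap\sigma_2$. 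Hence $\wt{\Sigma}$ satisfies Definition \ref{def:emb_fan} and is a (marked) embedded simplicial fan.

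For the second step, take $f^c\colon(\wt S,\wt K)\to(S,K)$ to be the identity simplicial morphism and $f^v\colon\R^S\to V$ the linear map determined by $f^v(e_s)=\rho(s)$. It is linear on all of $\R^S$, a fortiori on each cone of $\wt{\Sigma}$; and the compatibility condition is satisfied with equality, since $f^v(\R_{\ge 0}e_s)=\R_{\ge 0}\rho(s)=C(\{s\})$. Thus $f=(f^c,f^v)$ is a morphism of generalized fans $\wt{\Sigma}\to\Sigma$. By Construction \ref{con:pushforward}, $f_*\wt{\Sigma}=(\wt S,\wt K,V,f^v\circ\wt\rho)$, and $f^v\circ\wt\rho(s)=f^v(e_s)=\rho(s)$, so $f_*\wt{\Sigma}=\GF=\Sigma$ on the nose. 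This is exactly the requirement that $\wt{\Sigma}$ be a development of $\Sigma$.

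I do not expect a genuine obstacle here; the only point requiring a small argument is the intersection-of-faces axiom for $\wt{\Sigma}$, and that is precisely where linear independence of the standard basis enters and is what makes the coordinate cones into a fan. Everything else is unwinding the definitions of morphism and pushforward.
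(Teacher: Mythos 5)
Your proof is correct and follows the same route as the paper, which simply takes the canonical linear map $\R^S\to V$, $e_s\mapsto\rho(s)$, and declares the rest immediate from the definitions. You have merely written out in full the verifications (coordinate cones form an embedded fan, the map is a morphism, the pushforward is $\Sigma$) that the paper leaves implicit.
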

\begin{proof}
    Define the map \(\wt{\Sigma}\to \Sigma\) to be the canonical linear map \(\R^S\to V\) associated with \(\rho.\)
    The result is immediate from the definitions. 
\end{proof}
\begin{example}
    Any generalized fan \(\Sigma=\GF\) can be pushed forward to a strongly complete fan along the zero linear map. Indeed, the condition of Definition \ref{def:strongly_complete} becomes void when the ambient space is zero because there are no covectors \(m\) such that \(\la m,0\ra>0\).
\end{example}
\subsection{Fan operators and tori}\label{subsec:fan_tori}
\begin{definition}
    Given a generalized fan \(\Sigma=(S,K,V,\rho).\) We associate with it two linear operators.
    \begin{align}
        A_\Sigma^\R:\bigoplus_{s\in S}\R\la e_s\ra=\R^S\to V,\quad e_s\mapsto \rho(s);\\
        A_\Sigma^\R\otimes \C=A_\Sigma^\C:\bigoplus_{s\in S}\C\la e_s\ra=\C^S\to V\otimes\C,\quad e_s\mapsto \rho(s).
    \end{align}
    These two operators give rise to two vector spaces:
    \begin{align}
        \wt{H}^\R_\Sigma:=\ker A_\Sigma^\R;\label{eq:HStilde}\\
        \wt{G}_\Sigma:=\ker A_\Sigma^\C.\label{eq:GStilde}
    \end{align}
\end{definition}
\begin{definition}
    There are two natural exponential homomorphisms: the real one and the complex one. They are defined coordinatewise as follows.
    \begin{align}
        \exp_\R:\R^S\to \R_{>0}^S,\quad (x_s)_{s\in S}\mapsto (e^{x_s})_{s\in S}\label{eq:exp_R};\\
        \exp_\C:\C^S\to (\CT)^S,\quad (x_s)_{s\in S}\mapsto (e^{x_s})_{s\in S}.\label{eq:exp_C}
    \end{align}
\end{definition}
\begin{definition}\label{def:fan_lattice}
    Let \(\Sigma=(S,K,V,\rho)\) be a generalized fan. Denote by \(\Gamma(\Sigma)\) the additive subgroup of \(V\) generated by the set \(\rho(S).\) We call this group the generalized lattice of the generalized fan \(\Sigma.\) The automorphism group of \(\Gamma(\Sigma)\) is defined as follows. \[\Aut(\Gamma(\Sigma))=\left\{g\in GL(V)\mid g\Gamma(\Sigma)=\Gamma(\Sigma)\right\}\]
\end{definition}
\begin{definition}\label{def:tori}
    The two vector spaces \(\wt{H}_\Sigma^\R, \wt{G}_\Sigma\) (see \eqref{eq:HStilde}, \eqref{eq:GStilde}) together with the exponential homomorphisms defined above (see \eqref{eq:exp_R}, \eqref{eq:exp_C}) give rise to two Lie groups:
    \begin{align}
        H^\R_\Sigma:=\exp_\R(\wt{H}_\Sigma)\sse (\R^\times)^S;\\
        G_\Sigma:=\exp_\C(\wt{G}_\Sigma)\sse (\CT)^S.
    \end{align}
    These two groups are referred to as \emph{fan tori}. More specifically, we refer to \(H_\Sigma^\R\) as the \emph{real fan torus}, and to \(G_\Sigma\) as the complex fan torus. This notation comes from the fact that both groups are Lie subgroups of algebraic tori over \(\R\) and~\(\C\), respectively. 
\end{definition}
\begin{remark}
    Note that the group \(H_\Sigma^\R\) is a \emph{real form} of the group \(G_\Sigma.\)
\end{remark}
\begin{construction}
    An alternative description of \(\Gamma(\Sigma)\) of Definition \ref{def:fan_lattice} is the that \(\Gamma(\Sigma)\) is the image of \(\Z^S\) under the map \(A_\Sigma^\R\). In particular, we see that \(\Gamma(\Sigma)\) is a topological abelian group with the topology coming from the ambient real vector space \(V\).

    We define the \emph{dual lattice} \(\Gamma(\Sigma)^\vee\) of a generalized fan \(\Sigma\). As the topological abelian group of \emph{continuous homomorphisms} from \(\Gamma(\Sigma)\) into the group \(\Z\) with the discrete topology. 

    We also denote by \(A_\Sigma^\Z\) the map \(A_\Sigma^\R\) restricted to \(\Z^S\) and corestricted to \(\Gamma(\Sigma)\).
\end{construction}
\begin{lemma}\label{lem:integral_tori_dual_map}
    The following formula gives the map of abelian groups dual to \(A_\Sigma^\Z\).
    \[
        (A_\Sigma^Z)^\vee:\Gamma(\Sigma)^\vee\to (\Z^S)^\vee,\quad \gamma\xmapsto{(A_\Sigma^\Z)^\vee} \sum_{s\in S}\la \gamma, A_{\Sigma}^\R(e_s)\ra e_s^*\in (\Z^S)^\vee.
    \]
\end{lemma}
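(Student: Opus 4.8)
The plan is to unwind the categorical definition of the dual homomorphism and evaluate on the standard basis of $\Z^S$. Recall that for a homomorphism $f\colon A\to B$ of (topological) abelian groups, the dual map $f^\vee\colon B^\vee\to A^\vee$ is precomposition, $f^\vee(\varphi)=\varphi\circ f$. Applying this with $f=A_\Sigma^\Z\colon \Z^S\to\Gamma(\Sigma)$, for every $\gamma\in\Gamma(\Sigma)^\vee$ we obtain the homomorphism $\gamma\circ A_\Sigma^\Z\colon \Z^S\to\Z$, and the task is to identify it with the explicit expression in the statement.

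First I would record that, by the very definition of $A_\Sigma^\Z$ as the restriction of $A_\Sigma^\R$ to $\Z^S$ corestricted to $\Gamma(\Sigma)$, it sends the standard basis vector $e_s$ to $\rho(s)=A_\Sigma^\R(e_s)\in\Gamma(\Sigma)$. Hence $(\gamma\circ A_\Sigma^\Z)(e_s)=\gamma\bigl(A_\Sigma^\R(e_s)\bigr)=\langle\gamma,A_\Sigma^\R(e_s)\rangle$, and this is an integer because $\gamma$ takes values in $\Z$; this is the one point worth spelling out, since it is exactly where one uses that $A_\Sigma^\R(e_s)$ lies in the domain $\Gamma(\Sigma)$ of the continuous homomorphism $\gamma$, so the possibly dense (irrational) position of $\Gamma(\Sigma)$ inside $V$ plays no role. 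Next, since $S$ is finite, $\Z^S$ is free abelian with basis $\{e_s\}_{s\in S}$, so a homomorphism $\Z^S\to\Z$ is uniquely determined by its values on this basis; writing those values in the dual basis $\{e_s^*\}$ gives
\[
    \gamma\circ A_\Sigma^\Z=\sum_{s\in S}\langle\gamma,A_\Sigma^\R(e_s)\rangle\, e_s^*,
\]
which is precisely the claimed formula. Conversely, this formula manifestly defines an additive map $\Gamma(\Sigma)^\vee\to(\Z^S)^\vee$ (each coefficient is integral, and $\Z^S$ being finitely generated free, it is discrete so $(\Z^S)^\vee=\Hom(\Z^S,\Z)$ with no continuity to check), and the computation above shows it coincides with $(A_\Sigma^\Z)^\vee$.

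I expect no genuine obstacle here: the argument is a short diagram chase through the definitions of $A_\Sigma^\Z$, of the linearity map $A_\Sigma^\R$, and of $\Gamma(\Sigma)^\vee$ as continuous $\Z$-valued characters. The only subtlety — ensuring the pairing $\langle\gamma,A_\Sigma^\R(e_s)\rangle$ is defined and lands in $\Z$ — is handled by the observation that $A_\Sigma^\R(e_s)=\rho(s)$ is a generator of $\Gamma(\Sigma)$, on which $\gamma$ is defined and integer-valued by hypothesis.
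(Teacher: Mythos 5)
Your proposal is correct and amounts to spelling out the paper's own proof, which simply states that the result is immediate from the definitions: the dual map is precomposition with \(A_\Sigma^\Z\), which sends \(e_s\) to \(\rho(s)=A_\Sigma^\R(e_s)\), and evaluating on the basis of \(\Z^S\) gives the stated formula. Your extra remark that the pairing lands in \(\Z\) because \(\gamma\) is a \(\Z\)-valued homomorphism on \(\Gamma(\Sigma)\) is a fine (if unnecessary) elaboration of the same argument.
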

\begin{proof}
    The result is immediate from the definitions.
\end{proof}
\subsection{Complex structures on real tori}\label{subsec:complex_struct_on_tori}
\begin{construction}\label{con:complex_struct}
    If the dimension of the vector space \(\wt{H}_\Sigma^\R\) is even, one can introduce a complex structure on it. Concretely, let \(\wt{H}_\Sigma^\R\) be of \emph{real dimension} \(2l\). We consider a complex linear subspace \(\mf{h}_\Sigma\sse \C^S\) of \emph{complex dimension} \(l\) that project isomorphically to~\(\wt{H}_\Sigma\) under the map \(\Re\).  We denote the image of \(\mf{h}_\Sigma\) under the complex exponential map by~\(H_\Sigma\).
\end{construction}
\begin{remark}
    Note that as a real Lie group \(H_\Sigma\) is isomorphic to \(H_\Sigma^\R\) via the composite map \(\exp_\R\circ\Re\).
\end{remark}
\begin{definition}
    Groups \(H_\Sigma\) and \(G_\Sigma\) give rise to the quotient complex Lie group that we denote \(F_\Sigma:=G_\Sigma/H_\Sigma.\)
\end{definition}
The following example is due to Buchstaber--Panov, see \cite[Example 6.6.2]{bupa15}.
\begin{example}
    Consider a generalized fan of the form \((S,\emptyset,0,\ul{0})\) where \(S\) is an arbitrary finite set with an even number of elements. Then the fan torus \(G_\Sigma\) is isomorphic to the complex torus \((\CT)^S.\) The real torus \(H_\Sigma^\R\) is isomorphic to the real torus \((\R^\times)^S.\) The quotient torus \(F_\Sigma\) is isomorphic to a compact torus \((\CT)^S/(\R^\times)^S.\) Moreover, by varying the complex structure on \(H_\Sigma^\R\) in Construction \ref{con:complex_struct} one can obtain all compact complex tori of dimension \(\frac{|S|}{2}\). A proof of this fact can be found in \cite[Example 6.6.2]{bupa15}.
\end{example}
\subsection{The Cox construction of a fan}\label{subsec:Cox_construction}
\begin{construction}\label{con:uni_space}
    Let \(\Sigma=(S,K,V,\rho)\) be a generalized fan. Below we describe the universal space of the generalized fan \(\Sigma\) and a collection of open subsets of it associated to the faces of the simplicial complex \(K.\)
    \begin{enumerate}
        \item\label{con:uni_space_1} Then, we associate a quasiaffine complex variety with it as follows.
        \begin{align}
                U(\Sigma)=\C^S\setminus \bigcup_{\sigma\in K} \left\{(z_s)_{s\in S}\mid z_s=0 \iff s\notin \sigma\right\}.
        \end{align}
        \item\label{con:uni_space_2} Let \(\sigma\in K\) be a face of the simplicial complex \(K.\) Then we define the following open subset of \(U(\Sigma)\).
         \[
            U(\Sigma,\sigma)=\left\{(z_s)_{s\in S}\in U(\Sigma)\mid  s\notin \sigma \Rightarrow z_s\neq 0\right\}.\]
    \end{enumerate}
    By a slight abuse of notation, we will often refer to the variety (analytical space) \(U(\Sigma)\) as the Cox construction of the generalized fan \(\Sigma.\)
\end{construction}
Observe that the groups \(H_\Sigma, G_\Sigma\) act on \(\C^S\) via the embeddings \(H_\Sigma\hookrightarrow G_\Sigma\hookrightarrow (\CT)^S.\)
Note that the subset \(U(\Sigma)\sse \C^S\) is invariant with respect to these actions.
\begin{lemma}\label{lem:uni_space_struct}
    Let \(\Sigma=(S,K,V,\rho)\) be a generalized fan. Then the universal space \(U(\Sigma)\) of Construction \ref{con:uni_space}.\ref{con:uni_space_1} splits into a union of affine algebraic varieties \(U(\Sigma,\sigma)\) of part \ref{con:uni_space_2} as follows.
    \[
        U(\Sigma)=\bigcup_{\sigma\in K} U(\Sigma,\sigma).
    \]
\end{lemma}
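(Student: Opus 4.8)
The plan is to prove the asserted equality by two inclusions and then to observe that each chart \(U(\Sigma,\sigma)\) is affine; the whole argument is a direct unwinding of Construction \ref{con:uni_space}. The inclusion \(\bigcup_{\sigma\in K}U(\Sigma,\sigma)\subseteq U(\Sigma)\) needs nothing: by part \ref{con:uni_space_2} of Construction \ref{con:uni_space}, each \(U(\Sigma,\sigma)\) is by definition carved out as a subset of \(U(\Sigma)\).

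For the reverse inclusion we proceed as follows. Fix \(z=(z_s)_{s\in S}\in U(\Sigma)\) and let \(\tau_z:=\{s\in S\mid z_s=0\}\) be its vanishing set. The key step is to check, by negating the condition defining the removed locus in Construction \ref{con:uni_space}.\ref{con:uni_space_1}, that a point lies in \(U(\Sigma)\) precisely when its vanishing set is a face of \(K\); in particular \(\tau_z\in K\). Taking \(\sigma=\tau_z\), every coordinate \(z_s\) with \(s\notin\tau_z\) is nonzero by the choice of \(\tau_z\), so \(z\in U(\Sigma,\tau_z)\), which exhibits \(z\) in the right-hand union. This gives \(U(\Sigma)=\bigcup_{\sigma\in K}U(\Sigma,\sigma)\); moreover this is an open cover, since each \(U(\Sigma,\sigma)\) is Zariski open in \(\C^S\).

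It remains to see that \(U(\Sigma,\sigma)\) is an affine variety, and here we would first simplify its description by showing \(U(\Sigma,\sigma)=\{(z_s)_{s\in S}\in\C^S\mid z_s\neq 0\text{ for all }s\notin\sigma\}\). The nontrivial inclusion \(\supseteq\) holds because any such \(z\) has vanishing set contained in \(\sigma\), hence a face of \(K\) since a simplicial complex is closed under passing to subsets; therefore \(z\in U(\Sigma)\) and then \(z\in U(\Sigma,\sigma)\). Consequently \(U(\Sigma,\sigma)\) is the principal open subset \(D(f_\sigma)\) of the affine space \(\AA^S=\C^S\) defined by the nonvanishing of the monomial \(f_\sigma=\prod_{s\notin\sigma}z_s\), which is affine with coordinate ring \(\C[z_s:s\in S][f_\sigma^{-1}]\) and is isomorphic to \(\C^{\sigma}\times(\CT)^{S\setminus\sigma}\). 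The only delicate point will be the second paragraph — correctly reading off from Construction \ref{con:uni_space}.\ref{con:uni_space_1} that membership in \(U(\Sigma)\) is governed by the vanishing set being a face — together with the appeal to the simplicial-complex axiom in the last paragraph; everything else is formal.
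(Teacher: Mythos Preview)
Your argument is correct and is precisely the standard elementary proof; the paper itself does not give a proof but simply cites the monographs \cite[\S 5.1]{CoxLittleSchenk} and \cite[\S 5.4]{bupa15}, where exactly this argument appears. So there is no divergence in method---you have just written out what the references contain.

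One caution worth recording, since you flag the ``key step'' yourself: the characterization \(z\in U(\Sigma)\Longleftrightarrow \tau_z\in K\) is the intended content of Construction~\ref{con:uni_space}.\ref{con:uni_space_1}, but the displayed formula there appears to contain a typo. As literally written, the removed locus is \(\bigcup_{\sigma\in K}\{z\mid \tau_z=S\setminus\sigma\}\), which for instance makes \(U(\Sigma)\) empty when \(K\) is the full simplex. The standard (and clearly intended) definition is the complement of \(\bigcup_{\tau\notin K}\{z\mid z_s=0\text{ for all }s\in\tau\}\), equivalently \(\{z\mid \tau_z\in K\}\). Your proof is correct for this intended definition; just be aware that ``negating the condition defining the removed locus'' will not literally produce your characterization from the formula as printed.
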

\begin{proof}
    This result is standard in toric geometry. The proof can be found, for instance, in monographs \cite[\S 5.1]{CoxLittleSchenk} and \cite[\S 5.4]{bupa15}.
\end{proof}
\subsection{Rational fans and algebraicity}\label{subsec:rationality}
\begin{definition}\label{def:rat_fan}
    Let \(\Sigma=(S,K,V,\rho)\) be a generalized fan. Then \(\Sigma\) is \emph{rational} if the image of \(\Z^S\) under \(\rho\) is a discrete subgroup of \(V\). Here, we consider \(V\) as a real Lie group.
\end{definition}
We now describe the quotient construction from toric geometry in terms of the above definitions. The following result appeared in many sources; see \cite[\S 2]{Cox_Aut} for an early instance and a discussion of other references. For a textbook treatment see \cite[\S 5.1]{CoxLittleSchenk}, \cite[\S 5.4]{bupa15}.
\begin{theorem}
    Let \(\Sigma\) be a rational marked embedded fan in the sense of Definition \ref{def:marked_fan}. Then, the quotient analytic space~\(U(\Sigma)/G_\Sigma\) has a natural structure of a (possibly singular) algebraic variety. It is denoted by \(V_\Sigma\) and is called the \emph{toric variety} associated to the fan \(\Sigma.\)
\end{theorem}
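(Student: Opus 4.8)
The plan is to realize $\VS$ by forming an affine GIT quotient on each chart $U(\Sigma,\sigma)$ and gluing the results; the two inputs that make this work are that $G_\Sigma$ is a reductive algebraic group and that it acts on $U(\Sigma)$ with \emph{closed} orbits, so that the categorical quotient is an honest orbit space.

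First I would record the algebraicity of $G_\Sigma$. Since $\Sigma$ is rational, the integral kernel $\ker A_\Sigma^\R\cap\Z^S$ spans $\ker A_\Sigma^\C$ over $\C$, so $G_\Sigma=\exp_\C(\ker A_\Sigma^\C)$ is a closed diagonalizable subgroup of $(\CT)^S$, cut out by the Laurent equations $z^{(A_\Sigma^\Z)^\vee(\gamma)}=1$, $\gamma\in\Gamma(\Sigma)^\vee$, with $(A_\Sigma^\Z)^\vee$ as in Lemma \ref{lem:integral_tori_dual_map}; in particular $G_\Sigma$ is linearly reductive. Next, using Lemma \ref{lem:uni_space_struct}, I would argue chart by chart. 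Each $U(\Sigma,\sigma)\cong\C^{\sigma}\times(\CT)^{S\setminus\sigma}$ is affine and $G_\Sigma$-invariant, and $U(\Sigma,\sigma)\cap U(\Sigma,\tau)=U(\Sigma,\sigma\cap\tau)$ (immediate from Construction \ref{con:uni_space}, using that $K$ is a simplicial complex). Set $X_\sigma:=\operatorname{Spec}\bigl(\C[U(\Sigma,\sigma)]^{G_\Sigma}\bigr)$; reductivity makes the invariant ring finitely generated, so $X_\sigma$ is an affine variety, and functoriality of invariants under the localizations $\C[U(\Sigma,\sigma)]\hookrightarrow\C[U(\Sigma,\sigma\cap\tau)]$ lets the $X_\sigma$ glue along the $X_{\sigma\cap\tau}$ into a variety $\VS$ with a morphism $\pi\colon U(\Sigma)\to\VS$.

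The crux is then to prove that $\pi$ is a geometric quotient, i.e. that its fibres are exactly the $G_\Sigma$-orbits and $\VS$ has the quotient topology. Over each chart this reduces to showing that every $G_\Sigma$-orbit in $U(\Sigma,\sigma)$ is closed, so that $\operatorname{Spec}$ of the invariants separates orbits. This is precisely where the shape of $U(\Sigma)$ is used: a point $z\in\C^S$ whose vanishing set $\{s:z_s=0\}$ is not a face of $K$ has a $G_\Sigma$-orbit whose closure meets a smaller stratum, and these are exactly the points excised in Construction \ref{con:uni_space}, whereas on $U(\Sigma)$ the remaining orbits are closed. I would verify this with the Hilbert--Mumford criterion, testing one-parameter subgroups of $G_\Sigma$ and reading limits directly off the monomial coordinates; for simplicial $\Sigma$ one gets in addition that stabilizers are finite, so $\VS$ is at worst an orbifold, matching the ``possibly singular'' clause. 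Finally, to identify $\VS$ with the classical toric variety, I would dualize the surjection $A_\Sigma^\Z\colon\Z^S\to\Gamma(\Sigma)$ to obtain $0\to M\to(\Z^S)^\vee\to\Cl\to 0$ with $M=\Gamma(\Sigma)^\vee$, observe that a monomial $z^a$ regular on $U(\Sigma,\sigma)$ (meaning $a_s\ge 0$ for $s\in\sigma$) is $G_\Sigma$-invariant iff $a$ comes from $M$, and that the inequalities $a_s\ge 0$, $s\in\sigma$, translate via $A_\Sigma$ into $a\in\sigma^\vee$; hence $\C[U(\Sigma,\sigma)]^{G_\Sigma}\cong\C[\sigma^\vee\cap M]$ compatibly with restriction to faces, which exhibits $\VS$ as the usual gluing of affine toric charts.

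I expect the geometric-quotient step to be the main obstacle: it is the one place where the combinatorial definition of $U(\Sigma)$ does essential work, forcing the excision of the unstable locus so that the categorical quotient becomes an orbit space. A secondary point is that the algebraic $\VS$ so produced also computes the quotient analytic space $U(\Sigma)/G_\Sigma$ named in the statement; this follows because geometric quotients by reductive groups are compatible with analytification, or by a GAGA argument once $\VS$ is known to be separated.
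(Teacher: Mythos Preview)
Your proposal outlines the standard proof of the Cox quotient construction, and the strategy is correct: identify $G_\Sigma$ as a diagonalizable (hence reductive) subgroup of $(\CT)^S$ using rationality, take affine GIT quotients on the charts $U(\Sigma,\sigma)$, glue, and verify that the categorical quotient is geometric by checking closedness of orbits. Your identification $\C[U(\Sigma,\sigma)]^{G_\Sigma}\cong\C[\sigma^\vee\cap M]$ via the dualized exact sequence is the key computation, and it is carried out correctly.

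However, you should be aware that the paper does not prove this theorem at all. It is stated as a known result and simply referred to the literature: \cite[\S 2]{Cox_Aut}, \cite[\S 5.1]{CoxLittleSchenk}, and \cite[\S 5.4]{bupa15}. Your argument is essentially the one found in those sources (particularly the Cox--Little--Schenck treatment), so in that sense you have reconstructed the standard proof rather than offered an alternative. The one point you flag as the ``main obstacle''---closedness of orbits on $U(\Sigma)$---is indeed where the combinatorics of $K$ enters, and in the simplicial case it is often handled more directly than via Hilbert--Mumford: one checks that on $U(\Sigma,\sigma)$ the $G_\Sigma$-action is equivalent, after a monomial change of coordinates, to a finite group acting on $\C^\sigma$ times a free torus action on $(\CT)^{S\setminus\sigma}$, from which closedness and finiteness of stabilizers are immediate.
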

\begin{proposition}
    The toric variety associated to the canonical developement \(\wt{\Sigma}\) of a generalized fan \(\Sigma\), see Construction~\ref{con:can_developement}, is precisely the Cox construction of the generalized fan \(\Sigma.\)
\end{proposition}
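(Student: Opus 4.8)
The plan is to unwind the definitions: the canonical development $\wt\Sigma$ is built precisely so that its complex fan torus is trivial, while its Cox construction is, on the nose, the very same quasiaffine variety as that of $\Sigma$. So the proposition is essentially a bookkeeping exercise, and I do not expect a real obstacle.

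First I would record that $\wt\Sigma=(\wt S,\wt K,\wt V,\wt\rho)$ of Construction~\ref{con:can_developement} is a rational marked embedded fan (so that ``the toric variety associated to $\wt\Sigma$'' is defined), \emph{regardless} of whether the original $\Sigma$ was rational. Indeed, the cones of $\wt\Sigma$ are $C(\sigma)=\R_{\ge 0}\langle e_s\rangle_{s\in\sigma}$ for $\sigma\in K$; since the $e_s$ form part of the standard basis of $\wt V=\R^S$, each $C(\sigma)$ is strongly convex and simplicial, $C(\sigma)\cap C(\tau)=C(\sigma\cap\tau)$ is a common face of both, and every face of $C(\sigma)$ is $C(\tau)$ for some $\tau\subseteq\sigma$ — which lies in the collection because $K$ is a simplicial complex — so $\{C(\sigma)\}_{\sigma\in K}$ is an embedded simplicial fan (the ghost vertices of $\Sigma$ simply contribute no cones). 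It is marked by the lattice vectors $e_s$ and rational because $\wt\rho(\Z^S)=\Z^S$ is discrete in $\R^S$.

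Next I would compute the fan torus of $\wt\Sigma$ using the fan operators of \S\ref{subsec:fan_tori}. The real operator $A_{\wt\Sigma}^\R\colon\R^{\wt S}=\R^S\to\wt V=\R^S$ sends $e_s\mapsto\wt\rho(s)=e_s$, hence $A_{\wt\Sigma}^\R=\id_{\R^S}$ and therefore $A_{\wt\Sigma}^\C=\id_{\C^S}$; consequently $\wt G_{\wt\Sigma}=\ker A_{\wt\Sigma}^\C=0$ (see \eqref{eq:GStilde}), so $G_{\wt\Sigma}=\exp_\C(0)=\{1\}$ is trivial (and likewise $H^\R_{\wt\Sigma}=\{1\}$). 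On the other hand, the Cox construction $U(-)$ of Construction~\ref{con:uni_space}.\ref{con:uni_space_1} depends only on the combinatorial pair $(S,K)$, not on the ambient vector space or on $\rho$; since $(\wt S,\wt K)=(S,K)$, the quasiaffine variety $U(\wt\Sigma)$ is literally equal to $U(\Sigma)$, carrying the very structure exhibited in Lemma~\ref{lem:uni_space_struct}.

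Combining the two facts, $V_{\wt\Sigma}=U(\wt\Sigma)/G_{\wt\Sigma}=U(\Sigma)/\{1\}=U(\Sigma)$, which is the assertion. The only point that requires any care — and it is minor — is making sure $\wt\Sigma$ falls within the scope of the toric-variety theorem (equivalently, when $\Sigma$ has ghost vertices, reading ``toric variety of $\wt\Sigma$'' directly as $U(\wt\Sigma)/G_{\wt\Sigma}$ via the general construction for generalized fans); once the torus has been identified as trivial, the identification of the two spaces, together with their quasiaffine algebraic (equivalently analytic) structures, is immediate.
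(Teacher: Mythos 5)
Your argument is correct and is exactly the intended one: the paper's own proof simply declares the statement evident from the definitions, and your unwinding — $U(\wt{\Sigma})=U(\Sigma)$ because the Cox construction depends only on $(S,K)$, while $A^\C_{\wt{\Sigma}}=\id$ forces $G_{\wt{\Sigma}}$ to be trivial, so $U(\wt{\Sigma})/G_{\wt{\Sigma}}=U(\Sigma)$ — is precisely that bookkeeping made explicit. No gap; your extra care about ghost vertices and about $\wt{\Sigma}$ being a rational marked embedded fan only spells out what the paper leaves implicit.
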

\begin{proof}
    This is evident from the definitions. 
\end{proof}
\begin{construction}[Rationalization]\label{con:rationalization}
    Given a generalized fan \(\Sigma\) one defines its \emph{rationalization} \(\ul{\Sigma}\) as follows. 
    \begin{itemize}
        \item Let \(W\sse \R^S\) be a linear subspace, we denote by \(\ol{W}\) the smallest subspace generated by rational vectors which contains \(W\). 
        \item The generalized fan \(\ul{\Sigma}\) is defined as the pushforward fan (see Construction \ref{con:pushforward} for the definition) with respect to the projection map along the subspace \(\ol{\ker A^\R_\Sigma}\).
    \end{itemize}
    By Construction \ref{con:pushforward} there is a canonical morphism \(\Sigma\to \ul{\Sigma}\). We call this morphism the \emph{rationalizing projection} of the generalized fan \(\Sigma\), and denote it by \(\ul{\pi}_\Sigma\). By abuse of notation we will often denote by \(\ul{\pi}_\Sigma\) the map \(\ul{\pi}_\Sigma:V\to \ul{V}\) as well.
    
    More concretely, the fan \(\ul{\Sigma}\) can be described as follows. Let \(\Sigma=(S,K,V,\rho)\) be a generalized fan in the space \(V\). Then \(\ul{\Sigma}\) is given by the formula below.
    \begin{align}
        \ul{\Sigma}=\left(S,K,\R^S/\ol{\ker A^\R_\Sigma}, \ul{\rho}(s)=e_s+\ol{\ker A^\R_\Sigma}\quad \forall s\in S\right)
    \end{align}
\end{construction}
\begin{example}\label{ex:CP2_fan_proj_rat}
    Consider the fan \(\Sigma_{\CP^2}\) of Example \ref{ex:CP2_fan} then the image of \(\Z^{0,1,2}\) under the map \(A_\Sigma^\Z\) is already discrete (it is the standard lattice \(\Z^2\) in \(\R^2\)). Hence, the rationalization \(\ul{\Sigma_{\CP^2}}\) is naturally isomorphic to \(\Sigma_{\CP^2}\). 
    
    Now consider the projected fan \(\Sigma_{\CP^2}/l\) of Example \ref{ex:CP2_fan_proj}. Denote by \(L\) the vector subspace of \(\R^3\) spanned by \(\left\{
    \begin{pmatrix}
        1\\
        1\\
        1
    \end{pmatrix},
    \begin{pmatrix}
        1\\
        \sqrt{2}\\
        0
    \end{pmatrix}
    \right\}\). By Construction \ref{con:rationalization}, the rationalization of the fan \(\Sigma_{\CP^2}/l\) can be described as the generalized fan with the same simplicial complex as \(\Sigma_{\CP^2}/l\) and the vector space given by the follow quotient. \[\ul{V}=\R^{\{0,1,2\}}/\ol{L}.\]
    Clearly, the space \(\ol{L}\) coincides with \(\R^3\). Hence, the rationalization \(\ul{\Sigma_{\CP^2}/l}\) has a \(0\)-dimensional vector space, and all fan operators are zero.
    \begin{figure}[H]

 
\tikzset{
pattern size/.store in=\mcSize, 
pattern size = 5pt,
pattern thickness/.store in=\mcThickness, 
pattern thickness = 0.3pt,
pattern radius/.store in=\mcRadius, 
pattern radius = 1pt}
\makeatletter
\pgfutil@ifundefined{pgf@pattern@name@_cwn3ev3md}{
\pgfdeclarepatternformonly[\mcThickness,\mcSize]{_cwn3ev3md}
{\pgfqpoint{0pt}{0pt}}
{\pgfpoint{\mcSize+\mcThickness}{\mcSize+\mcThickness}}
{\pgfpoint{\mcSize}{\mcSize}}
{
\pgfsetcolor{\tikz@pattern@color}
\pgfsetlinewidth{\mcThickness}
\pgfpathmoveto{\pgfqpoint{0pt}{0pt}}
\pgfpathlineto{\pgfpoint{\mcSize+\mcThickness}{\mcSize+\mcThickness}}
\pgfusepath{stroke}
}}
\makeatother

 
\tikzset{
pattern size/.store in=\mcSize, 
pattern size = 5pt,
pattern thickness/.store in=\mcThickness, 
pattern thickness = 0.3pt,
pattern radius/.store in=\mcRadius, 
pattern radius = 1pt}
\makeatletter
\pgfutil@ifundefined{pgf@pattern@name@_sovdf35p2}{
\pgfdeclarepatternformonly[\mcThickness,\mcSize]{_sovdf35p2}
{\pgfqpoint{0pt}{-\mcThickness}}
{\pgfpoint{\mcSize}{\mcSize}}
{\pgfpoint{\mcSize}{\mcSize}}
{
\pgfsetcolor{\tikz@pattern@color}
\pgfsetlinewidth{\mcThickness}
\pgfpathmoveto{\pgfqpoint{0pt}{\mcSize}}
\pgfpathlineto{\pgfpoint{\mcSize+\mcThickness}{-\mcThickness}}
\pgfusepath{stroke}
}}
\makeatother

 
\tikzset{
pattern size/.store in=\mcSize, 
pattern size = 5pt,
pattern thickness/.store in=\mcThickness, 
pattern thickness = 0.3pt,
pattern radius/.store in=\mcRadius, 
pattern radius = 1pt}
\makeatletter
\pgfutil@ifundefined{pgf@pattern@name@_qbzyr2g37}{
\pgfdeclarepatternformonly[\mcThickness,\mcSize]{_qbzyr2g37}
{\pgfqpoint{0pt}{-\mcThickness}}
{\pgfpoint{\mcSize}{\mcSize}}
{\pgfpoint{\mcSize}{\mcSize}}
{
\pgfsetcolor{\tikz@pattern@color}
\pgfsetlinewidth{\mcThickness}
\pgfpathmoveto{\pgfqpoint{0pt}{\mcSize}}
\pgfpathlineto{\pgfpoint{\mcSize+\mcThickness}{-\mcThickness}}
\pgfusepath{stroke}
}}
\makeatother
\tikzset{every picture/.style={line width=0.75pt}} 

\begin{tikzpicture}[x=0.75pt,y=0.75pt,yscale=-1,xscale=1,scale=1]

\draw    (240,200) -- (308,200) ;
\draw [shift={(310,200)}, rotate = 180] [color={rgb, 255:red, 0; green, 0; blue, 0 }  ][line width=0.75]    (10.93,-3.29) .. controls (6.95,-1.4) and (3.31,-0.3) .. (0,0) .. controls (3.31,0.3) and (6.95,1.4) .. (10.93,3.29)   ;
\draw    (240,200) -- (240,142) ;
\draw [shift={(240,140)}, rotate = 90] [color={rgb, 255:red, 0; green, 0; blue, 0 }  ][line width=0.75]    (10.93,-3.29) .. controls (6.95,-1.4) and (3.31,-0.3) .. (0,0) .. controls (3.31,0.3) and (6.95,1.4) .. (10.93,3.29)   ;
\draw    (240,200) -- (181.54,248.72) ;
\draw [shift={(180,250)}, rotate = 320.19] [color={rgb, 255:red, 0; green, 0; blue, 0 }  ][line width=0.75]    (10.93,-3.29) .. controls (6.95,-1.4) and (3.31,-0.3) .. (0,0) .. controls (3.31,0.3) and (6.95,1.4) .. (10.93,3.29)   ;
\draw  [color={rgb, 255:red, 74; green, 144; blue, 226 }  ,draw opacity=0.13 ][pattern=_cwn3ev3md,pattern size=6pt,pattern thickness=0.75pt,pattern radius=0pt, pattern color={rgb, 255:red, 74; green, 144; blue, 226}][dash pattern={on 0.84pt off 2.51pt}] (240,140) -- (310,200) -- (240,200) -- (240,140) -- cycle ;
\draw  [color={rgb, 255:red, 74; green, 144; blue, 226 }  ,draw opacity=0.13 ][pattern=_sovdf35p2,pattern size=6pt,pattern thickness=0.75pt,pattern radius=0pt, pattern color={rgb, 255:red, 74; green, 144; blue, 226}][dash pattern={on 0.84pt off 2.51pt}] (240,200) -- (310,200) -- (180,250) -- cycle ;
\draw  [color={rgb, 255:red, 74; green, 144; blue, 226 }  ,draw opacity=0.11 ][pattern=_qbzyr2g37,pattern size=6pt,pattern thickness=0.75pt,pattern radius=0pt, pattern color={rgb, 255:red, 74; green, 144; blue, 226}][dash pattern={on 0.84pt off 2.51pt}] (240,200) -- (240,140) -- (180,250) -- cycle ;
\draw [color={rgb, 255:red, 65; green, 117; blue, 5 }  ,draw opacity=1 ][line width=1.5]    (240,200) -- (280,140) ;
\draw [color={rgb, 255:red, 65; green, 117; blue, 5 }  ,draw opacity=1 ][line width=1.5]    (240,200) -- (200,260) ;
\draw [color={rgb, 255:red, 155; green, 155; blue, 155 }  ,draw opacity=1 ] [dash pattern={on 0.84pt off 2.51pt}]  (180,250) .. controls (173.35,295.12) and (205.77,324.41) .. (209.88,368.65) ;
\draw [shift={(210,370)}, rotate = 265.46] [color={rgb, 255:red, 155; green, 155; blue, 155 }  ,draw opacity=1 ][line width=0.75]    (10.93,-3.29) .. controls (6.95,-1.4) and (3.31,-0.3) .. (0,0) .. controls (3.31,0.3) and (6.95,1.4) .. (10.93,3.29)   ;
\draw [color={rgb, 255:red, 155; green, 155; blue, 155 }  ,draw opacity=1 ] [dash pattern={on 0.84pt off 2.51pt}]  (240,140) .. controls (266.73,172.1) and (243.47,335.68) .. (210.99,369.03) ;
\draw [shift={(210,370)}, rotate = 316.79] [color={rgb, 255:red, 155; green, 155; blue, 155 }  ,draw opacity=1 ][line width=0.75]    (10.93,-3.29) .. controls (6.95,-1.4) and (3.31,-0.3) .. (0,0) .. controls (3.31,0.3) and (6.95,1.4) .. (10.93,3.29)   ;
\draw [color={rgb, 255:red, 155; green, 155; blue, 155 }  ,draw opacity=1 ] [dash pattern={on 0.84pt off 2.51pt}]  (310,200) .. controls (319.19,231.54) and (260.76,383.07) .. (211.49,370.43) ;
\draw [shift={(210,370)}, rotate = 18.04] [color={rgb, 255:red, 155; green, 155; blue, 155 }  ,draw opacity=1 ][line width=0.75]    (10.93,-3.29) .. controls (6.95,-1.4) and (3.31,-0.3) .. (0,0) .. controls (3.31,0.3) and (6.95,1.4) .. (10.93,3.29)   ;
\draw [color={rgb, 255:red, 155; green, 155; blue, 155 }  ,draw opacity=1 ] [dash pattern={on 0.84pt off 2.51pt}]  (240,200) .. controls (255.25,215.84) and (216.67,326.68) .. (212.42,368.15) ;
\draw [shift={(212.25,370)}, rotate = 274.63] [color={rgb, 255:red, 155; green, 155; blue, 155 }  ,draw opacity=1 ][line width=0.75]    (10.93,-3.29) .. controls (6.95,-1.4) and (3.31,-0.3) .. (0,0) .. controls (3.31,0.3) and (6.95,1.4) .. (10.93,3.29)   ;
\draw  [color={rgb, 255:red, 0; green, 0; blue, 0 }  ,draw opacity=0 ][fill={rgb, 255:red, 208; green, 2; blue, 27 }  ,fill opacity=0.57 ] (207.75,370) .. controls (207.75,368.76) and (208.76,367.75) .. (210,367.75) .. controls (211.24,367.75) and (212.25,368.76) .. (212.25,370) .. controls (212.25,371.24) and (211.24,372.25) .. (210,372.25) .. controls (208.76,372.25) and (207.75,371.24) .. (207.75,370) -- cycle ;
\draw  [color={rgb, 255:red, 0; green, 0; blue, 0 }  ,draw opacity=0 ][fill={rgb, 255:red, 208; green, 2; blue, 27 }  ,fill opacity=1 ] (237.75,200) .. controls (237.75,198.76) and (238.76,197.75) .. (240,197.75) .. controls (241.24,197.75) and (242.25,198.76) .. (242.25,200) .. controls (242.25,201.24) and (241.24,202.25) .. (240,202.25) .. controls (238.76,202.25) and (237.75,201.24) .. (237.75,200) -- cycle ;

\draw (231,122.4) node [anchor=north west][inner sep=0.75pt]    {$\textcolor[rgb]{0.82,0.01,0.11}{e}\textcolor[rgb]{0.82,0.01,0.11}{_{2}}$};
\draw (301,180.4) node [anchor=north west][inner sep=0.75pt]    {$\textcolor[rgb]{0.82,0.01,0.11}{e}\textcolor[rgb]{0.82,0.01,0.11}{_{1}}$};
\draw (141,250.4) node [anchor=north west][inner sep=0.75pt]    {$\textcolor[rgb]{0.82,0.01,0.11}{-e}\textcolor[rgb]{0.82,0.01,0.11}{_{1}}\textcolor[rgb]{0.82,0.01,0.11}{-e}\textcolor[rgb]{0.82,0.01,0.11}{_{2}}$};
\draw (271,132.4) node [anchor=north west][inner sep=0.75pt]    {$\textcolor[rgb]{0.25,0.46,0.02}{l}$};
\draw (151.57,362.97) node [anchor=north west][inner sep=0.75pt]    {$\underline{\Sigma _{\mathbb{CP}^{2}} /l}$};

\end{tikzpicture}
        \caption{Rationalization of the fan \(\Sigma_{\CP^2}/l\).}
    \end{figure}
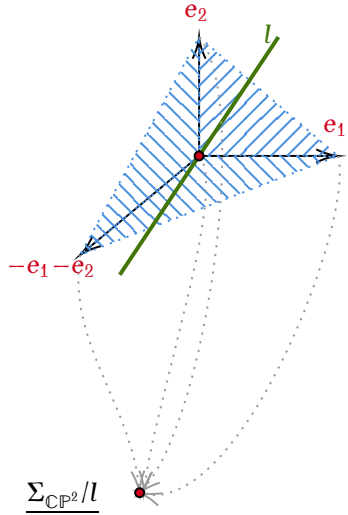
\end{example}
\begin{lemma}\label{lem:image_of_complete_is_complete}
    Let \(\Sigma=(S,K,V,\rho)\) be a complete generalized fan. Then its rationalization \(\ul{\Sigma}\) is also also a complete generalized fan.
\end{lemma}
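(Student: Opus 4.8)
The plan is to realize the rationalization $\ul{\Sigma}$ as the pushforward of $\Sigma$ itself along a \emph{surjective} linear map, and then to prove the general fact that pushing a complete generalized fan forward along a surjection yields a complete generalized fan. The first reduction needs one small observation: since $\Sigma$ is complete, every cone $C(\sigma)$ lies in the $\R$-span of $\rho(S)=\im A^\R_\Sigma$, and these cones cover $V$; hence $\im A^\R_\Sigma=V$, i.e. $A^\R_\Sigma\colon\R^S\to V$ is surjective and induces an isomorphism $\R^S/\ker A^\R_\Sigma\xrightarrow{\sim} V$ sending $e_s+\ker A^\R_\Sigma$ to $\rho(s)$. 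Because $\ker A^\R_\Sigma\sse\ol{\ker A^\R_\Sigma}$, the projection $\R^S\twoheadrightarrow\R^S/\ol{\ker A^\R_\Sigma}=\ul{V}$ kills $\ker A^\R_\Sigma$, so it factors through $A^\R_\Sigma$ as $q\circ A^\R_\Sigma$ for a surjective linear map $q\colon V\to\ul{V}$. By construction $q(\rho(s))=e_s+\ol{\ker A^\R_\Sigma}=\ul{\rho}(s)$ for every $s\in S$, so comparing with Construction \ref{con:rationalization} we obtain $\ul{\Sigma}=q_*\Sigma$ (equivalently, the canonical morphism $\wt{\Sigma}\to\ul{\Sigma}$ factors through $\wt{\Sigma}\to\Sigma$).

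It then remains to prove: if $\Sigma=\GF$ is a complete generalized fan and $q\colon V\to W$ is a surjective linear map, then $q_*\Sigma$ is complete. Let $w\in W$. Since $q$ is surjective, choose $v\in V$ with $q(v)=w$; since $\Sigma$ is complete there is $\sigma\in K$ with $v\in C(\sigma)=\R_{\ge 0}\la\rho(s)\ra_{s\in\sigma}$, say $v=\sum_{s\in\sigma}\lambda_s\rho(s)$ with all $\lambda_s\ge 0$. Applying $q$ gives $w=\sum_{s\in\sigma}\lambda_s\,q(\rho(s))$, exhibiting $w$ as an element of $\R_{\ge 0}\la q(\rho(s))\ra_{s\in\sigma}$, which by Construction \ref{con:pushforward} is exactly the cone of $q_*\Sigma=(S,K,W,q\circ\rho)$ attached to the face $\sigma$. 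Hence every point of $W$ lies in a cone of $q_*\Sigma$, so $q_*\Sigma$ is complete. Applying this to the surjection $q\colon V\to\ul{V}$ from the first paragraph yields that $\ul{\Sigma}=q_*\Sigma$ is complete.

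The argument is entirely formal, so there is no serious obstacle; the one place where the hypothesis is genuinely used is the surjectivity of $A^\R_\Sigma$, hence of $q$ — without it $q$ need not hit all of $\ul{V}$ and the conclusion could fail. It is also worth stressing that one cannot run the pushforward argument directly from the canonical development $\wt{\Sigma}$, which is never complete unless $S=\emptyset$ (for instance $-e_s$ lies in no cone $\R_{\ge 0}\la e_t\ra_{t\in\tau}$); routing the pushforward through $\Sigma$ itself, via the factorization above, is therefore essential.
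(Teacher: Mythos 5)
Your proof is correct and follows essentially the same route as the paper's: lift a point of $\ul{V}$ along the rationalizing projection, use completeness of $\Sigma$ to place the lift in a cone $C(\sigma)$, and push that cone forward by linearity. The only difference is that you explicitly construct the projection $q\colon V\to\ul{V}$ and justify its surjectivity (via completeness forcing $A^\R_\Sigma$ to be onto), a point the paper simply takes for granted from Construction \ref{con:rationalization}; this extra care, and your remark that one cannot argue directly from the never-complete development $\wt{\Sigma}$, are welcome but do not change the argument.
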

\begin{proof}
    Let \(W\) be the vector space of the rationalization \(\ul{\Sigma}\). Let \(w\in W\) be an arbitrary vector of \(W\). By construction, there exists a vector \(v\in V\) mapped to \(w\) under the canonical rationalizing projection \(\ul{\pi}_\Sigma:V\to W\). Since \(\Sigma\) is complete \(v\) belongs to \(C(\sigma)\) for some face \(\sigma\in K\). Now, by linearity of \(\ul{\pi}_\Sigma\) the vector \(w\) belongs to \(\ul{\pi}_\Sigma(C(\sigma))\). Hence, the generalized fan \(\ul{\Sigma}\) is complete.
\end{proof}
The following result describes the relationship between the dual lattices of a fan and its rationalization.
\begin{proposition}\label{prop:cont_hom}
    There is the following canonical isomorphism induced by the rationalizing projection. 
    \begin{align*}
        \ul{\pi}_\Sigma^*:\Gamma(\ul{\Sigma})^\vee\to \Gamma(\Sigma)^\vee.
    \end{align*}
\end{proposition}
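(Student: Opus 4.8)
The plan is to realize both dual lattices as the \emph{same} explicit subgroup of $(\Z^S)^\vee=\Hom(\Z^S,\Z)$ and to check that $\ul\pi_\Sigma^*$ is compatible with those realizations. Write $A:=A^\R_\Sigma:\R^S\to V$ and let $B:\R^S\to\R^S/\ol{\ker A}$ be the quotient projection, so that by Construction \ref{con:rationalization} we have $\ul\rho(s)=B(e_s)$ and $\Gamma(\ul\Sigma)=B(\Z^S)$. Since $\ker A\subseteq\ol{\ker A}=\ker B$, the rule $A(z)\mapsto B(z)$ is a well-defined \emph{surjective} homomorphism $\Gamma(\Sigma)\to\Gamma(\ul\Sigma)$, and $\ul\pi_\Sigma^*$ is precomposition with it. Because $\ol{\ker A}$ is a rational subspace of $\R^S$, the lattice $\Z^S\cap\ol{\ker A}$ is a full lattice of it and $\Gamma(\ul\Sigma)\cong\Z^S/(\Z^S\cap\ol{\ker A})$ is discrete and torsion-free; hence $\Gamma(\ul\Sigma)^\vee=\Hom(\Gamma(\ul\Sigma),\Z)$, the composite of any such homomorphism with the continuous map $\ul\pi_\Sigma$ is continuous, and $\ul\pi_\Sigma^*$ is well defined and injective (precomposition with a surjection). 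From $A^\Z_{\ul\Sigma}=(\ul\pi_\Sigma|_{\Gamma(\Sigma)})\circ A^\Z_\Sigma$ and Lemma \ref{lem:integral_tori_dual_map} I obtain the commuting triangle $(A^\Z_\Sigma)^\vee\circ\ul\pi_\Sigma^*=(A^\Z_{\ul\Sigma})^\vee$ into $(\Z^S)^\vee$, in which $(A^\Z_{\ul\Sigma})^\vee$ is injective with image $\{m\in(\Z^S)^\vee:m|_{\Z^S\cap\ol{\ker A}}=0\}$, while $(A^\Z_\Sigma)^\vee$ is injective with image the set of covectors vanishing on $\Z^S\cap\ker A$ for which the induced homomorphism $\Gamma(\Sigma)\to\Z$ is continuous. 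So the proposition reduces to showing these two images coincide, equivalently: \emph{a homomorphism $\Gamma(\Sigma)\to\Z$ with associated covector $m$ is continuous iff $m$ vanishes on $\Z^S\cap\ol{\ker A}$.}

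The "if" direction is immediate: such an $m$ descends through $B$ to a homomorphism $\Gamma(\ul\Sigma)\to\Z$, and the corresponding homomorphism on $\Gamma(\Sigma)$ factors as $\ul\pi_\Sigma|_{\Gamma(\Sigma)}$ followed by a homomorphism out of the discrete group $\Gamma(\ul\Sigma)$, hence is continuous. For the "only if" direction I pass to the closure $W:=\overline{\Gamma(\Sigma)}$ inside $V$. As a closed subgroup of a finite-dimensional real vector space, $W$ is a Lie group whose identity component $W^0$ is the largest linear subspace it contains. A continuous homomorphism $\phi:\Gamma(\Sigma)\to\Z$ has open kernel, hence is uniformly continuous, hence extends uniquely to a continuous homomorphism $\bar\phi:W\to\Z$; since $W^0$ is connected and $\Z$ is discrete, $\bar\phi(W^0)=0$, so $\phi$ vanishes on $\Gamma(\Sigma)\cap W^0$. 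It therefore suffices to prove $A(\Z^S\cap\ol{\ker A})\subseteq W^0$, for then $m$ vanishes on $\Z^S\cap\ol{\ker A}$.

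This last inclusion is the crux, and it is exactly where the minimality in the definition of $\ol{\ker A}$ enters. I would first establish that $\ker A$ together with the lattice $\Z^S\cap\ol{\ker A}$ is \emph{dense} in $\ol{\ker A}$: otherwise their sum lies in a proper closed subgroup of $\ol{\ker A}$, and since $\Z^S\cap\ol{\ker A}$ is a full lattice of the rational space $\ol{\ker A}$, one extracts a nonzero covector on $\ol{\ker A}$ — rational with respect to that lattice — vanishing on $\ker A$; its kernel would then be a strictly smaller rational subspace of $\R^S$ containing $\ker A$, contradicting minimality. Given this density, apply the continuous map $A$: the subgroup $A(\Z^S\cap\ol{\ker A})\subseteq\Gamma(\Sigma)$ is dense in the linear subspace $A(\ol{\ker A})$, so $A(\ol{\ker A})\subseteq\overline{\Gamma(\Sigma)}=W$, and being a linear subspace through the origin it lies in $W^0$; in particular $A(\Z^S\cap\ol{\ker A})\subseteq W^0$. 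This proves the claim, hence — running back through the commuting triangle, whose two legs into $(\Z^S)^\vee$ now have equal image — that $\ul\pi_\Sigma^*$ is an isomorphism. I expect the density statement to be the only genuinely non-formal input; the rest (reduction to subgroups of $(\Z^S)^\vee$, uniform continuity and extension to the closure, killing the identity component, discreteness of $\Gamma(\ul\Sigma)$) is soft. One could instead argue directly that the subgroup of $\Z^S$ generated by $\{z:|A(z)|<\varepsilon\}$ already contains $\Z^S\cap\ol{\ker A}$, but this reduces to the same density fact, so I would route the argument through the closure for cleanliness.
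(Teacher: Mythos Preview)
Your proof is correct and rests on the same idea as the paper's: a density statement forced by the minimality of $\ol{\ker A}$, proved by contradiction via a rational linear functional (what the paper cites as a ``folklore result'' from \cite{Tausk_subgroups}). The packaging differs slightly --- the paper factors continuous homomorphisms through $\pi_0^{\Top}(\Gamma(\Sigma))$ and runs the density argument directly in $V$, while you route through the closure $W=\overline{\Gamma(\Sigma)}$ and its identity component $W^0$ and establish density upstairs in $\R^S$ before pushing down by $A$. Your version is arguably cleaner: since $\Gamma(\Sigma)$ is countable it is totally disconnected, so the paper's literal appeal to ``the connected component $[0]$'' is loosely worded, whereas $W^0$ is an honest linear subspace and your extension-to-the-closure step makes the continuity hypothesis do visible work.
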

\begin{proof}
    We observe that since \(\Z\) has the discrete topology, any continuous homomorphism \(\Gamma(\Sigma)\to \Z\) factors in a canonical manner through the group \(\tc(\Gamma(\Sigma)\) of topologically connected components of the group \(\Gamma(\Sigma)\).

    Now, we prove that \(\tc(\Gamma(\Sigma))\) is canonically isomorphic to \(\Gamma(\ul{\Sigma})\) via the rationalizing projection map. To see this, we first observe that there is a canonical map \(\tc(\Gamma(\Sigma))\to \Gamma(\ul{\Sigma})\) since the latter group is discrete. Now, by homogeneity, it is enough to prove that the preimage of \(0\in \Gamma(\ul{\Sigma})\) consists of a single connected component. We note, that the connected component \([0]\) of \(0\) forms a group. If this group is discrete, it consists of a single element, and consequently, the group \(\Gamma(\Sigma)\) was discrete, hence \(\Sigma=\ul{\Sigma}\) and we are done. Assume that this is not the case. Then, consider the closure \(\ol{[0]}\) of \([0]\) in \(V\). 
    
    By a folklore result, \([0]\) is not dense in \(\ker \ul{\pi}_\Sigma\) if and only if there exists a non-zero linear functional on \(\ker \ul{\pi}_\Sigma\) which take integral values on \([0]\), see for instance \cite[Proposition 7]{Tausk_subgroups}. However, any such function pulls back to a linear function on \(\R^S\), which takes rational values on the standard basis vectors \(\{e_s\}_{s\in S}\). By the construction of the rationalization, it must take the same value on the space \(\ker \ul{\pi}_\Sigma\).
\end{proof}
\section{Cox rings of generalized fans and their graded automorphisms}\label{sec:cox_ring}
In this section, we introduce the Cox ring \(R(\Sigma)\) of a generalized fan \(\Sigma\). We describe the group \(\Aut_g(\Sigma)\) of the graded automorphisms of the ring \(R(\Sigma)\) in the case when the fan \(\Sigma\) is complete. The exposition in this section follows \cite{Cox_Aut}.
\subsection{Cox rings of generalized fans}
\begin{definition}
    \(\Sigma=(S,K,V,\rho)\) be a generalized fan. Consider the abelian group \(L_\Sigma\) arising from the following exact sequence.
    \[
        0\to \Gamma(\Sigma)^\vee\xrightarrow{(A_\Sigma^\Z)^\vee} \Z^S\to L_\Sigma\to 0.
    \]
\end{definition}
\begin{remark}
    Observe, that \(L_\Sigma\) is canonically isomorphic to the group \(\Hom(G_\Sigma,\CT)\) and consequently to the group \(\Hom(G_{\ul{\Sigma}},\CT)\).
\end{remark}
The group \(L_\Sigma\) gives a natural grading on the ring of polynomials in \(m\) variables. We describe the resulting graded ring below.
\begin{construction}\label{con:cox_ring}
    Let \(\Sigma=(S,K,V,\rho)\) be a generalized fan. We define \(L_\Sigma\)-grading \(|-|\) on each variable \(x_s\) in \(\C[S]\) as follows.
    \[
        |x_s|=e_s+(A_\Sigma^\Z)^\vee(\Gamma(\Sigma)),\quad \forall s\in S.
    \]
    The resulting graded ring is denoted by \(R(\Sigma)\) and called \emph{Cox ring} of the generalized fan \(\Sigma\).
\end{construction}
\begin{proposition}\label{prop:cox_isom}
    Let \(\Sigma=(S,K,V,\rho)\) be a generalized fan. Recall that by Construction \ref{con:rationalization}, one associates to it a complete rational generalized fan \(\ul{\Sigma}\) called rationalization of \(\Sigma\).
    There is a canonical isomorphism \(R(\Sigma)\cong R(\ul{\Sigma})\).
\end{proposition}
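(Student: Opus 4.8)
The plan is to notice that rationalization leaves the combinatorial data $(S,K)$ untouched, so $R(\Sigma)$ and $R(\underline{\Sigma})$ are literally the same polynomial ring $\C[S]=\C[x_s:s\in S]$; the whole content of the proposition is that the $L_\Sigma$- and $L_{\underline{\Sigma}}$-gradings coincide under a canonical identification $L_\Sigma\cong L_{\underline{\Sigma}}$ that fixes the class of each variable $x_s$. I would produce this identification from the canonical isomorphism $\underline{\pi}_\Sigma^*\colon\Gamma(\underline{\Sigma})^\vee\xrightarrow{\ \sim\ }\Gamma(\Sigma)^\vee$ supplied by Proposition \ref{prop:cont_hom}.

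The one computation to carry out is the commutativity of the square
\[
\begin{CD}
\Gamma(\underline{\Sigma})^\vee @>{(A_{\underline{\Sigma}}^\Z)^\vee}>> \Z^S\\
@V{\underline{\pi}_\Sigma^*}VV @|\\
\Gamma(\Sigma)^\vee @>{(A_\Sigma^\Z)^\vee}>> \Z^S
\end{CD}
\]
with left vertical map the isomorphism of Proposition \ref{prop:cont_hom}. This is immediate from the explicit formula of Lemma \ref{lem:integral_tori_dual_map}: for $\bar\gamma\in\Gamma(\underline{\Sigma})^\vee$ one has $(A_{\underline{\Sigma}}^\Z)^\vee(\bar\gamma)=\sum_{s\in S}\langle\bar\gamma,\underline{\rho}(s)\rangle\,e_s^\ast$ while $(A_\Sigma^\Z)^\vee(\underline{\pi}_\Sigma^\ast\bar\gamma)=\sum_{s\in S}\langle\bar\gamma\circ\underline{\pi}_\Sigma,\rho(s)\rangle\,e_s^\ast$, and these agree termwise because $\underline{\rho}(s)=\underline{\pi}_\Sigma(\rho(s))$ by Constructions \ref{con:pushforward} and \ref{con:rationalization}.

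Given the square, the rest is formal. Both horizontal maps are injective, being the left-hand maps of the defining short exact sequences of $L_\Sigma$ and $L_{\underline{\Sigma}}$; the left vertical map is an isomorphism and the right one is the identity, so the two horizontal maps have the same image in $\Z^S$. Passing to cokernels therefore yields a canonical isomorphism $L_\Sigma\xrightarrow{\ \sim\ }L_{\underline{\Sigma}}$ under which the class of $e_s$ on the left goes to the class of $e_s$ on the right, i.e. $|x_s|_\Sigma\mapsto|x_s|_{\underline{\Sigma}}$ for every $s\in S$. Transporting the grading of $R(\Sigma)$ along this isomorphism identifies it with that of $R(\underline{\Sigma})$, which is the asserted canonical isomorphism of graded rings. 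No step is a genuine obstacle; the only point requiring care is verifying that the abstract identification of dual lattices in Proposition \ref{prop:cont_hom} is compatible with the concrete presentation of $(A^\Z)^\vee$, which is exactly what the commuting square records.
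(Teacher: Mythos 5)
Your proposal is correct and follows essentially the same route as the paper: both deduce from Proposition \ref{prop:cont_hom} an identification of $\Gamma(\ul{\Sigma})^\vee$ with $\Gamma(\Sigma)^\vee$, use it to identify the grading groups $L_\Sigma\cong L_{\ul{\Sigma}}$, and conclude that the graded rings agree via the identity on the underlying polynomial ring $\C[S]$. The only difference is that you spell out the compatibility square (via Lemma \ref{lem:integral_tori_dual_map} and $\ul{\rho}=\ul{\pi}_\Sigma\circ\rho$) which the paper's ``Hence'' leaves implicit; this is a welcome elaboration, not a different argument.
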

\begin{proof}
    By Proposition \ref{prop:cont_hom} there is a natural isomorphism of of groups \(\Gamma(\ul{\Sigma})^\vee\cong \Gamma(\Sigma)^\vee\). Hence, there is an isomorphism of the quotient groups \(L_{\uSigma}=\Z^S/(A_{\ul{\Sigma}}^\Z)^\vee\Gamma(\ul{\Sigma})^\vee\) and \(L_{\Sigma}=\Z^S/(A_\Sigma^\Z)^\vee\Gamma(\Sigma)^\vee\). This isomorphism induces the isomorphism of the graded rings \(R(\Sigma)\) and \(R(\uSigma)\) given by the identity of the underlying commutative rings.
\end{proof}
\begin{remark}
    Proposition \ref{prop:cox_isom} can be viewed in light of the fact that Construction \ref{con:cox_ring} provides a contravariant functor from the category of generalized fans into the category of commutative rings with abelian grading.
\end{remark}
\begin{remark}
    Another observation is that by Proposition \ref{prop:cox_isom}, the Cox ring construction only reflects the rationalization of a given generalized fan, not the generalized fan itself.
\end{remark}
\begin{example}
    Consider the generalized fan \(\Sigma_{\CP^2}\) of Example \ref{ex:CP2_fan}. Then the Cox ring of \(\Sigma_{\CP^2}\) is the standard Cox ring of the \(2\)-dimensional projective space (see e.g.~\cite[\S 1]{Cox_Aut}).
    \begin{align*}
        R(\Sigma_{\CP^2})=\C[x_0,x_1,x_2],\quad |x_0|=|x_1|=|x_2|=e^\vee_0+\left\la\begin{pmatrix}
            1 & 0 & -1
        \end{pmatrix},\begin{pmatrix}
            1 & -1 &0
        \end{pmatrix}\right\ra\in \Z^{\{0,1,2\}}/\left\la\begin{pmatrix}
            1 & 0 & -1
        \end{pmatrix},\begin{pmatrix}
            1 & -1 &0
        \end{pmatrix}\right\ra.
    \end{align*}
    Now consider the projection fan \(\Sigma_{\CP^2}/l\) of the fan \(\Sigma_{\CP^2}\) along the irrational line \(l\) of Example \ref{ex:CP2_fan_proj}. Then the Cox ring of \(\Sigma_{\CP^2}/l\) coincides with the Cox ring of the rationalization \(\ul{\Sigma_{\CP^2}/l}\) of the fan \(\Sigma_{\CP^2}/l\). By Example \ref{ex:CP2_fan_proj_rat} the vector space of the fan \(\ul{\Sigma_{\CP^2}/l}\) is zero-dimensional. Consequently, the grading group \(L_{\Sigma_{\CP^2}/l}\) coincides with the group \(\Z^{\{0,1,2\}}\). As a result, we see that the Cox ring of \(\Sigma_{\CP^2}/l\) has the following form.
    \begin{align*}
        R(\Sigma_{\CP^2}/l)=\C[x_0,x_1,x_2],\quad |x_0|=e^\vee_0\in \Z^{\{0,1,2\}},\; |x_1|=e^\vee_1\in \Z^{\{0,1,2\}},\; |x_2|=e^\vee_2\in \Z^{\{0,1,2\}}.
    \end{align*}
\end{example}
\begin{construction}
    Let \(\Sigma=(S,K,V,\rho)\) be a generalized fan. 
    \begin{itemize}
        \item The set \(S\) can be split as \(S=S_1\sqcup\ldots\sqcup S_s\) where \(S_i\) consists of elements \(s\) with the same \(L_\Sigma\)-grading of \(x_s\) in \(R(\Sigma)\).
        \item We denote the grading arising from \(S_i\)'s by \(\alpha_1,\ldots,\alpha_s.\) Given \(\alpha_i\) we consider the abelian group \(R(\Sigma)_{\alpha_i}\) of homogeneous elements in \(R(\Sigma)\) of degree \(\alpha_i\). 
        \item The group \(R(\Sigma)_{\alpha_i}\) further splits into the direct sum of the following form.
        \begin{align}
            R(\Sigma)_{\alpha_i}=R(\Sigma)_{\alpha_i}^{g}\oplus R(\Sigma)_{\alpha_i}^d.
        \end{align}
        The subgroup \(R(\Sigma)_{\alpha_i}^{g}\) is spanned by \emph{generators}, i.e. variables \(x_s\) with \(s\in S_i\), and \(R(\Sigma)_{\alpha_i}^d\) is spanned by the remaining monomials, which are by definition \emph{decomposable}.
    \end{itemize}
\end{construction}
The following result is a simple, purely combinatorial analog of Proposition 1.1 of Cox \cite{Cox_Aut}.
\begin{proposition}
    Let \(\Sigma=(S,K,V,\rho)\) be a generalized fan. Let \(\ul{\Sigma}=(S,K,W,\ul{\rho})\) be the rationalization of \(\Sigma\). Let \(\alpha=(\alpha_s)_{s\in S}\in \Z^S\) be an integer vector. Denote by \(P_\alpha\) the subset of \(W^\vee\) of the following form.
    \[
        P_\alpha=\left\{\tau\in W^\vee \mid \la \tau, \rho(s)\ra\ge -\alpha_s\right\}.
    \]
     Then there is a bijection between the intersection \((A_\Sigma^\Z)^\vee (\Gamma(\Sigma)^\vee)\cap P_\alpha\) and the standard monomial basis of \(R(\Sigma)_\alpha\).
\end{proposition}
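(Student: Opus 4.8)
The plan is to exhibit the monomials of degree $\alpha$ in $R(\Sigma)$ directly and match them with integral points of $P_\alpha$. Recall from Construction \ref{con:cox_ring} that a monomial $x^\beta = \prod_{s\in S} x_s^{\beta_s}$ with $\beta\in\Z^S_{\ge 0}$ has degree $\beta + (A_\Sigma^\Z)^\vee(\Gamma(\Sigma))$ in $L_\Sigma$; in the notation of the proposition it lies in $R(\Sigma)_\alpha$ precisely when $\beta - \alpha \in (A_\Sigma^\Z)^\vee(\Gamma(\Sigma)^\vee)$, i.e. $\beta = \alpha + (A_\Sigma^\Z)^\vee(\gamma)$ for some $\gamma\in\Gamma(\Sigma)^\vee$. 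Since by Proposition \ref{prop:cont_hom} the rationalizing projection gives $\Gamma(\Sigma)^\vee \cong \Gamma(\ul{\Sigma})^\vee$, and since $\ul{\Sigma}$ has vector space $W$ with generalized lattice $\Gamma(\ul\Sigma)\sse W$, we may and do write such a $\gamma$ as a continuous homomorphism, equivalently (extending) an element $\tau\in W^\vee$ with $\la\tau,\rho(s)\ra \in\Z$ for all $s$; by Lemma \ref{lem:integral_tori_dual_map} we then have $(A_\Sigma^\Z)^\vee(\gamma) = \sum_{s\in S}\la\tau,\rho(s)\ra\, e_s^*$, so $\beta_s = \alpha_s + \la\tau,\rho(s)\ra$.

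From here the argument is essentially a change of variables. The standard monomial basis of $R(\Sigma)_\alpha$ consists of those $x^\beta$ with all exponents $\beta_s\ge 0$, and each such basis element corresponds under the above to a unique $\gamma = (A_\Sigma^\Z)^\vee\text{-preimage}$, hence (by injectivity of $(A_\Sigma^\Z)^\vee$ on $\Gamma(\Sigma)^\vee$, which follows from the defining exact sequence $0\to\Gamma(\Sigma)^\vee\to\Z^S\to L_\Sigma\to 0$) to a unique lattice point $\tau$. The constraint $\beta_s\ge 0$ translates exactly into $\alpha_s + \la\tau,\rho(s)\ra \ge 0$, i.e. $\la\tau,\rho(s)\ra \ge -\alpha_s$ for all $s\in S$, which is the defining condition of $P_\alpha$. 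So the map $x^\beta \mapsto \tau$ is a well-defined injection from the standard monomial basis of $R(\Sigma)_\alpha$ into $(A_\Sigma^\Z)^\vee(\Gamma(\Sigma)^\vee)\cap P_\alpha$; conversely, given $\tau$ in that intersection, setting $\beta_s := \alpha_s + \la\tau,\rho(s)\ra$ produces a nonnegative exponent vector, hence an honest monomial in $R(\Sigma)_\alpha$, and these two assignments are mutually inverse.

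The one point that needs a little care — and which I expect to be the only genuine subtlety — is keeping straight the identification $(A_\Sigma^\Z)^\vee(\Gamma(\Sigma)^\vee)$ as a subset of $\Z^S$ versus as a set of covectors $\tau$ on $W$: the proposition phrases $P_\alpha$ in terms of $\tau\in W^\vee$ but intersects it with $(A_\Sigma^\Z)^\vee(\Gamma(\Sigma)^\vee)$, so implicitly one uses that $(A_\Sigma^\Z)^\vee$ is injective to regard $\gamma$ (equivalently $\tau$) and its image $\sum_s\la\tau,\rho(s)\ra e_s^*$ interchangeably. Once this bookkeeping is fixed, invoking Proposition \ref{prop:cont_hom} to pass between $\Gamma(\Sigma)^\vee$ and $\Gamma(\ul\Sigma)^\vee$ (so that "$\tau\in W^\vee$ integral on all $\rho(s)$" really is the same data as a class in $(A_\Sigma^\Z)^\vee(\Gamma(\Sigma)^\vee)$), and unwinding Construction \ref{con:cox_ring} together with Lemma \ref{lem:integral_tori_dual_map}, the bijection is immediate. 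This is the combinatorial shadow of \cite[Proposition 1.1]{Cox_Aut}, and no convexity or completeness hypothesis on $\Sigma$ is used.
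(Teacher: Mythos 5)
Your argument is correct and follows essentially the same route as the paper's proof: unwind the $L_\Sigma$-grading, use the explicit formula for $(A_\Sigma^\Z)^\vee$ from Lemma \ref{lem:integral_tori_dual_map} together with the identification $\Gamma(\Sigma)^\vee\cong\Gamma(\ul{\Sigma})^\vee$ of Proposition \ref{prop:cont_hom}, and translate non-negativity of the exponent vector into the defining inequalities of the polyhedron. If anything, anchoring the change of variables at the degree vector $\alpha$ itself (rather than at a chosen reference monomial $x^\iota$ in the component, as the paper does) matches the statement of the proposition more directly, and your remark about the bookkeeping between $W^\vee$ and $(\Z^S)^\vee$ via injectivity of $(A_\Sigma^\Z)^\vee$ is exactly the point left implicit in the paper.
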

\begin{proof}
    By the explicit description of the map \((A_\Sigma^\Z)^\vee\) given in Lemma \ref{lem:integral_tori_dual_map}
    we see that an element \(\gamma\in \Gamma(\Sigma)^\vee\) is mapped into the following vector in \(\Z^S.\)
    \[
    \Gamma(\Sigma)^\vee\ni \gamma\xmapsto{(A_\Sigma^\Z)^\vee} \sum_{s\in S}\la \gamma, A_{\ul{\Sigma}}(e_s)\ra e_s^*\in \Z^S.
    \]
    Let \(x^\iota\) be a monomial that lies in the graded component \(R(\Sigma)_\alpha.\) Then for any other monomial \(x^\nu\) in the same component we have an element \(\gamma\) in \(\GS^\vee\) such that \(\iota+(A_\Sigma^\Z)^\vee\gamma=\nu.\) In particular, we have the following identity.
    \[
        \sum_{s\in S} \nu_s e_s = \sum_{s\in S}(\la \gamma, A_{\ul{\Sigma}}(e_s)\ra+\iota_s) e_s^*.
    \]
    Since, \(\iota\) has non-negative coordinates we see that \(\la \gamma, A_{\ul{\Sigma}}(e_s)\ra+\nu_s\ge 0\) for all \(s\). Consequently, \(\gamma\) belongs to the following subset of \(\Gamma(\Sigma)^\vee\).
    \begin{align}\label{eq:P_iota_subset}
        P_\iota=\{\tau\in \Gamma(\Sigma)^\vee\mid \la \tau, A_{\ul{\Sigma}}(e_s)\ra \ge -\iota_s\}.
    \end{align}
    The result is now immediate.
\end{proof}
\begin{corollary}\label{cor:fin_dim_of_cox}
    If \(\Sigma=(S,K,V,\rho)\) is a complete generalized fan, then the ring \(R(\Sigma)\) has finite-dimensional graded components.
\end{corollary}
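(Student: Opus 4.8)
The plan is to combine the combinatorial description of the graded components from the preceding Proposition with the completeness of the rationalization. By that Proposition, for every $\alpha\in\Z^S$ the standard monomial basis of the graded component $R(\Sigma)_\alpha$ is in bijection with
\[
    Q_\alpha=\{\gamma\in\Gamma(\Sigma)^\vee\mid \la\gamma,\rho(s)\ra\ge-\alpha_s\ \text{ for all } s\in S\}.
\]
Since the grading map $\Z^S\to L_\Sigma$ is surjective, every graded component of $R(\Sigma)$ equals $R(\Sigma)_\alpha$ for some $\alpha\in\Z^S$, so it is enough to show that $Q_\alpha$ is finite for each $\alpha\in\Z^S$.

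First I would pass to convex geometry in $W^\vee$, where $\ul{\Sigma}=(S,K,W,\ul{\rho})$ is the rationalization of $\Sigma$. The fan $\ul{\Sigma}$ is rational by construction, hence $\Gamma(\ul{\Sigma})$ is a discrete subgroup of $W$; since in addition the vectors $\ul{\rho}(s)=e_s+\ol{\ker A_\Sigma^\R}$ span $W$, the group $\Gamma(\ul{\Sigma})$ is a full-rank lattice in $W$ and its dual $\Gamma(\ul{\Sigma})^\vee$ is a full-rank lattice in $W^\vee$. By Proposition \ref{prop:cont_hom} the rationalizing projection identifies $\Gamma(\Sigma)^\vee$ with $\Gamma(\ul{\Sigma})^\vee$, and under this identification $Q_\alpha$ becomes the intersection of the discrete set $\Gamma(\ul{\Sigma})^\vee$ with the polyhedron
\[
    P_\alpha=\{\tau\in W^\vee\mid \la\tau,\ul{\rho}(s)\ra\ge-\alpha_s\ \text{ for all } s\in S\}.
\]
Therefore $Q_\alpha$ is finite as soon as $P_\alpha$ is bounded, and since $P_\alpha$ is a (possibly empty) polyhedron, boundedness is equivalent to the triviality of its recession cone $\{\tau\in W^\vee\mid \la\tau,\ul{\rho}(s)\ra\ge 0 \text{ for all } s\in S\}$, which does not depend on $\alpha$.

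The one place where completeness is used is in showing that this recession cone is $\{0\}$. By Lemma \ref{lem:image_of_complete_is_complete} the rationalization $\ul{\Sigma}$ is complete, so Lemma \ref{lemma:positive_combination} applied to $\ul{\Sigma}$ produces real numbers $\lambda_s>0$ with $\sum_{s\in S}\lambda_s\ul{\rho}(s)=0$. If $\tau$ lies in the recession cone, then pairing it with this relation gives $\sum_{s\in S}\lambda_s\la\tau,\ul{\rho}(s)\ra=0$, a sum of non-negative numbers with strictly positive coefficients, forcing $\la\tau,\ul{\rho}(s)\ra=0$ for all $s\in S$; as the $\ul{\rho}(s)$ span $W$, this gives $\tau=0$. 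Hence $P_\alpha$ is a polytope, $Q_\alpha$ is finite, and every $R(\Sigma)_\alpha$ is finite-dimensional.

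There is no serious obstacle here: the argument amounts to the statement that a complete fan yields bounded polyhedra $P_\alpha$, exactly as in the classical toric picture. The only point requiring a little care is the bookkeeping around the identification $\Gamma(\Sigma)^\vee\cong\Gamma(\ul{\Sigma})^\vee$ and the passage from $V$ to the rationalized space $W$ — once Lemma \ref{lemma:positive_combination} is available for the complete fan $\ul{\Sigma}$, the convex-geometric core is immediate.
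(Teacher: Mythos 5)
Your proposal is correct and follows essentially the same route as the paper: pass to the complete rationalization \(\ul{\Sigma}\) via Lemma \ref{lem:image_of_complete_is_complete} and Proposition \ref{prop:cont_hom}, observe that completeness yields a strictly positive vanishing combination of the \(\ul{\rho}(s)\) (Lemma \ref{lemma:positive_combination}), conclude that the polyhedron \(P_\alpha\) is bounded, and finish by discreteness of \(\Gamma(\ul{\Sigma})^\vee\) in \(W^\vee\). The only cosmetic difference is that you prove boundedness directly through the recession cone, whereas the paper simply cites Lemma \ref{lem:compact_intersection} for the same convex-geometric fact.
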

\begin{proof}
    
    Note that if \(\Sigma\) is complete by Lemma \ref{lem:image_of_complete_is_complete} the rationalization
    \(\ul{\Sigma}=(S,K,W,\ul{\rho})\) is also complete. Hence, the subset~\(P_\iota\) of \mbox{Equation \eqref{eq:P_iota_subset}} is compact by Lemma \ref{lem:compact_intersection}. Thus \(R(\Sigma)_\alpha\) is finite-dimensional over \(\C\) by discreteness of \(\Gamma(\ul{\Sigma})^\vee\) in \(W^\vee\).
\end{proof}
\begin{definition}
    We define the group \(\Aut_g(\Sigma)\) of graded automorphisms of the ring \(R(\Sigma)\) to be the group of automorphisms of the ring \(R(\Sigma)\) that preserve the \(L_\Sigma\)-grading.
\end{definition}
\subsection{Basic description}

In this section, we show how \cite[Proposition 4.3]{Cox_Aut} describing the structure of \(\Aut_g(\Sigma)\) for a complete embedded fan can be adapted to our context.
\begin{proposition}\label{prop:cox_struct}
    The group \(\Aut_g(\Sigma)\) of graded automorphism of the ring \(R(\Sigma)\) has the following properties:
    \begin{enumerate}[(i)]
        \item\label{prop:cox_struct_1} \(\Aut_g(\Sigma)\) is a connected affine algebraic group of dimension \[\left(\sum_{i=1}^s |S_i|\dim_\C R(\Sigma)_{\alpha_i}\right)+|S\setminus S^\red|\]
        \item The unipotent radical \(\mf{R}(\Sigma)\) of the group \(\Aut_g(\Sigma)\) is isomorphic as an affine algebraic variety to an affine space of dimension
        \[
            \dim_\C\mf{R}(\Sigma)=\sum_{i=1}^s |S_i|(\dim_\C R(\Sigma)_{\alpha_i}-|S_i|)
        \]
        \item The group \(\Aut_g(\Sigma)\) contains a reductive subgroup \(\mf{G_r}(\Sigma)\) isomorphic to the product:
        \[
            \mf{G_r}=\left(\prod_{i=1}^s GL(R(\Sigma)_{\alpha_i}^g)\right)\times (\CT)^{S\setminus S^\red}
        \]
        \item The group \(\Aut_g(\Sigma)\) is isomorphic to a semidirect product:
        \[
            \mf{G_r}(\Sigma)\ltimes \mf{R}(\Sigma)
        \]
    \end{enumerate}
\end{proposition}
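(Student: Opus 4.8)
The plan is to follow Cox's argument in \cite[Proposition 4.3]{Cox_Aut} almost verbatim, checking that the only new feature — the presence of ghost vertices in $S\setminus S^\red$ — contributes exactly the stated toric factor $(\CT)^{S\setminus S^\red}$ and nothing else to the unipotent radical or the $GL$-blocks. First I would fix the description of a graded endomorphism: by Construction \ref{con:cox_ring} a graded algebra endomorphism of $R(\Sigma)$ is determined by its values on the generators $x_s$, and $|x_s|=\alpha_i$ forces $x_s\mapsto f_s$ with $f_s\in R(\Sigma)_{\alpha_i}$. Thus the \emph{monoid} of graded endomorphisms is canonically the affine space $\prod_{i=1}^s \Hom_\C(\C^{S_i}, R(\Sigma)_{\alpha_i})$, whose dimension is $\sum_i |S_i|\dim_\C R(\Sigma)_{\alpha_i}$ — except that for a ghost vertex $s'\in S\setminus S^\red$ the variable $x_{s'}$ appears in no face of $K$, hence (because $\ul\Sigma$ and $\Sigma$ have the same $K$) $x_{s'}$ is not a zero-divisor constraint in $U(\Sigma)$, and one checks the grading $\alpha(s')$ is its own isolated grading with $R(\Sigma)^d_{\alpha(s')}=0$ and $S_i=\{s'\}$ a singleton; so such a generator can only be rescaled, $x_{s'}\mapsto \lambda_{s'} x_{s'}$. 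Writing the term $|S\setminus S^\red|$ separately in the dimension count in \ref{prop:cox_struct_1} is exactly bookkeeping for these singleton gradings.

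Next I would cut out the \emph{invertible} elements of this endomorphism monoid. Here I would invoke Corollary \ref{cor:fin_dim_of_cox} so that each $R(\Sigma)_{\alpha_i}$ is finite-dimensional and the whole endomorphism space is a genuine finite-dimensional affine variety; invertibility of a graded endomorphism is then the non-vanishing of the Jacobian-type determinant on the generators, which is a single polynomial condition, so $\Aut_g(\Sigma)$ is an affine algebraic group (a principal open in affine space). For connectedness: the matrix $(\partial f_s/\partial x_t)$ restricted to generators lies in $\prod_i GL(R(\Sigma)^g_{\alpha_i})$, and the map $\Aut_g(\Sigma)\to \prod_i GL(R(\Sigma)^g_{\alpha_i})$ sending an automorphism to its "linear part on generators" is a surjection of algebraic groups with fiber over the identity an affine space (the decomposable parts $R(\Sigma)^d_{\alpha_i}$, freely chosen); since $GL_n$ is connected and the fiber is connected, $\Aut_g(\Sigma)$ is connected. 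This also already gives the semidirect-product decomposition (iv): the kernel of that surjection is by definition $\mf R(\Sigma)$, it is unipotent (it is a group of upper-triangular-unipotent substitutions, filtered by the monomial degree), and choosing the obvious splitting that sends a tuple of generator-linear-maps to the corresponding purely-linear substitution exhibits $\Aut_g(\Sigma)\cong \mf G_r(\Sigma)\ltimes \mf R(\Sigma)$.

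It remains to identify the two pieces. For $\mf R(\Sigma)$: a unipotent substitution is $x_s\mapsto x_s + (\text{decomposable part in }R(\Sigma)^d_{\alpha_i})$ for $s\in S_i$, so as a variety $\mf R(\Sigma)\cong \prod_i \Hom_\C(\C^{S_i}, R(\Sigma)^d_{\alpha_i})$, giving $\dim_\C\mf R(\Sigma)=\sum_i |S_i|\dim_\C R(\Sigma)^d_{\alpha_i}=\sum_i |S_i|(\dim_\C R(\Sigma)_{\alpha_i}-|S_i|)$ since $\dim_\C R(\Sigma)^g_{\alpha_i}=|S_i|$ (the generators in $S_i$ are $\C$-linearly independent because distinct variables are linearly independent in a polynomial ring). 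That it is the full unipotent radical, and not merely a unipotent normal subgroup, follows because the quotient $\mf G_r(\Sigma)$ is reductive: it is $\bigl(\prod_i GL(R(\Sigma)^g_{\alpha_i})\bigr)\times(\CT)^{S\setminus S^\red}$ — the $GL$-factors coming from the non-ghost gradings where $|S_i|>1$ is possible, and the torus factor collecting the one-dimensional $GL_1$'s of the ghost (singleton) gradings, which I would pull out and name separately to match the statement. (Here one must double-check that no ghost grading $\alpha(s')$ coincides with a grading $\alpha_i$ carried by a non-ghost vertex; this holds because $(A_\Sigma^\Z)^\vee\Gamma(\Sigma)^\vee$ is spanned by the columns of $A_{\ul\Sigma}^\Z$, which only involve non-ghost relations, so a ghost standard basis vector $e_{s'}$ has trivial class intersection with the classes of the $e_s$, $s\in S^\red$ — this is exactly the point in Lemma \ref{lem:comb_ghost} phrased additively.)

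\textbf{Main obstacle.} The genuinely new case-checking, and the place I expect to spend the most care, is the ghost-vertex analysis: verifying that every $s'\in S\setminus S^\red$ sits in a singleton grading class $S_i=\{s'\}$ with $R(\Sigma)^d_{\alpha(s')}=0$, so that it contributes a $GL_1=\CT$ and nothing to $\mf R(\Sigma)$. In Cox's original setting there are no ghost vertices, so this is precisely the increment one has to supply; everything else is a faithful transcription of \cite{Cox_Aut} with $\ul\Sigma$ in place of the embedded fan and Proposition \ref{prop:cox_isom} ensuring $R(\Sigma)=R(\ul\Sigma)$ so that the finiteness (Corollary \ref{cor:fin_dim_of_cox}) and combinatorics go through unchanged.
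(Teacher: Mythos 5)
Your overall route is the same as the paper's: both transcribe Cox's proof of \cite[Proposition 4.3]{Cox_Aut}, with Corollary \ref{cor:fin_dim_of_cox} (i.e.\ completeness) supplying finite-dimensionality of the graded pieces, and with the ghost vertices producing the extra torus factor. The one methodological difference is the algebraicity step. You realize \(\Aut_g(\Sigma)\) as the principal open locus in the affine space \(\prod_i\Hom_\C(\C^{S_i},R(\Sigma)_{\alpha_i})\) where the induced linear maps on the generator subspaces \(R(\Sigma)^g_{\alpha_i}\) are invertible; this presupposes the equivalence \enquote{a graded endomorphism is invertible iff its linear parts are invertible}, which is precisely where Cox does real work and which you only gesture at (\enquote{filtered by the monomial degree}). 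To make this a proof you need that no variable and no nonconstant monomial has degree \(0\) (pair a putative relation with the strictly positive relation of Lemma \ref{lemma:positive_combination}), so that decomposable monomials of degree \(\alpha_i\) involve no variable of degree \(\alpha_i\) and the decomposable correction is nilpotent on each finite-dimensional graded piece. The paper sidesteps this at the algebraicity stage: it exhibits \(\End_g(R(\Sigma))\) as a closed submonoid of \(\prod_i\End_\C(R(\Sigma)_{\alpha_i})\) via block lower-triangular matrices cut out by explicit polynomial equations and invokes \cite{putcha} to conclude that the group of units is algebraic, deferring parts (ii)--(iv) to Cox verbatim.

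The concrete error is in the ghost-vertex step that you yourself flag as the main obstacle. The image of \((A_\Sigma^\Z)^\vee\) is not supported on the non-ghost coordinates: by Lemma \ref{lem:integral_tori_dual_map} its elements are \((\la\gamma,\rho(s)\ra)_{s\in S}\), and a ghost vertex still carries a vector \(\rho(s')\), so the \(s'\)-entry is in general nonzero; Lemma \ref{lem:comb_ghost} asserts something different and does not yield the degree separation. For instance, take \(S=\{1,2,3\}\), \(K=\{\emptyset,\{1\},\{2\}\}\), \(\rho(1)=1\), \(\rho(2)=\rho(3)=-1\) with \(3\) a ghost vertex: the image is spanned by \((1,-1,-1)\), and \(x_2x_3\) has the same degree as \(x_1\), so ghost variables can even appear inside non-ghost graded pieces -- the decoupling is not as complete as your \enquote{nothing else happens} narrative suggests, although this does not affect the dimension counts. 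What your argument actually needs, and what is true, is only that the degree class of a ghost variable \(x_{s'}\) contains no monomial other than \(x_{s'}\); this follows from completeness: if \(e_{s'}-\nu=(A_\Sigma^\Z)^\vee\gamma\) with \(\nu\in\N^S\), pairing with the positive relation of Lemma \ref{lemma:positive_combination} forces either \(\nu=e_{s'}\), or \(\nu_{s'}=0\), in which case \(\la\gamma,\rho(t)\ra\le 0\) for every non-ghost \(t\); since the non-ghost vectors positively span \(V\), the functional inducing \(\gamma\) then vanishes, contradicting \(\la\gamma,\rho(s')\ra=1\). With that justification replaced (and with the sum in (i) read over the degree classes of \(S^\red\), as the paper intends), your identification of \(\mf{G_r}(\Sigma)\), \(\mf{R}(\Sigma)\) and the semidirect product goes through as in Cox.
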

Our proof is essentially the same as the proof given in \cite[Proof of Proposition 4.3]{Cox_Aut}. However, certain formal care must be taken when dealing with generalized fans, since as we saw in Example \ref{ex:CP2_fan_proj}, not all rational complete generalized fans arise from embedded fans. The proof proceeds verbatim as in \cite{Cox_Aut}, and we don't think that reproducing it here in its entirety would be particularly useful. Hence, we only include the proof of the fact that \(\Aut_g(R(\Sigma))\) is a finite-dimensional algebraic group to illustrate the general approach.
\begin{proof}
    First, we observe that the component corresponding to the coordinates \(z_{s'}\) for \(s'\in S\setminus S^\red\) clearly splits off from the rest of the group. Hence, we can assume that \(S=S^\red\). By \(\C\)-linearity and preservation of grading, there is the following description of the monoid of grading-preserving \emph{endomorphisms} of the ring \(R(\Sigma)\).
    \begin{align}\label{eq:E1}
        \End_g(R(\Sigma))\cong \left(\prod_{i=1}^s\Hom_\C(R(\Sigma)_{\alpha_i}^g,R(\Sigma)_{\alpha_i})\right)
    \end{align}
    There is also the following embedding of algebraic monoids.
    \begin{align}\label{eq:E2}
        \iota:\End_g(R(\Sigma))\hookrightarrow \prod_{i=1}^s\End_\C(R(\Sigma)_{\alpha_i},R(\Sigma)_{\alpha_i}).
    \end{align}
    We first prove that the image of \(\End_g(R(\Sigma))\) is an algebraic subvariety of \(\prod_{i=1}^s\End_\C(R(\Sigma)_{\alpha_i},R(\Sigma)_{\alpha_i}).\) In the standard monomial basis of \(R(\Sigma)\) one presents elements of \(\End_g(R(\Sigma))\) by a collection of \(s\) matrices of the following form.
    \begin{align}\label{eq:E3}
        \begin{pmatrix}
            A_i & 0\\
            B_i & C_i
        \end{pmatrix},\quad i=1,\ldots s.
    \end{align}
    Note that the submatrices \(\begin{pmatrix}
        A_i\\
        B_i
    \end{pmatrix}\) correspond to the elements of the right-hand side in \eqref{eq:E1}. Hence, the matrices \(C_i\) are completely determined by \(A_i\) and \(B_i\). To prove the statement we will need to explicitly describe the composition law in \(\End_g(R(\Sigma))\) in terms of the matrix presentation \eqref{eq:E3}. Note that \(\iota\) of \eqref{eq:E2} is a homomorphism of monoids. Hence given two maps \(\vp,\psi\) with matrix presentations \(\begin{pmatrix}
        A_i & 0\\
        B_i & C_i
    \end{pmatrix}\) and \(\begin{pmatrix}
        A_i' & 0\\
        B_i' & C_i'
    \end{pmatrix}\) their composition \(\vp\circ\psi\) has the matrix presentation as follows.
    \[
        \begin{pmatrix}
            A_iA_i' & 0\\
            B_iA_i'+C_i B_i' & C_i C_i'
        \end{pmatrix}
    \]
    We now describe the equations that cut out \(\End_g(R(\Sigma))\) inside of the product \(\prod_{i=1} \End_\C(R(\Sigma)_{\alpha_i})\) in \eqref{eq:E2}. 
    We have the following two groups of equations.
    \begin{itemize}
        \item The upper-right corner of the matrices in \eqref{eq:E3} must be zero.
        \item Suppose that \(x^\iota,\; x^\nu\in R(\Sigma)_{\alpha_i}^d\) are two decomposable monomials in \(R(\Sigma)_{\alpha_i}\). If \(\vp\in \End_g(R(\Sigma))\) is a graded endomorphism of \(R(\Sigma)\) we have the following.
        \[
            \vp(x^\iota)=\ldots+c_{\iota\nu}^i x^\nu+\ldots.
        \]
        Here \(c_{\iota\nu}^i\) is the entry in the block \(C_i\) of the matrix representation \eqref{eq:E3}. Since \(x^\iota\) is decomposable it is a product of generating variables \(\prod_{j=1}^l x_{s_j}\) not in \(R(\Sigma)^g_{\alpha_i}\). Hence, we have the following equation on the coefficients \(c^i_{\iota\nu}\): the coefficient of \(x^\nu\) in \(\vp(x^\iota)\) must be equal to the coefficient of \(x^\nu\) in \(\prod_{j=1}^l\vp(x_{s_i})\). The images of \(x_{s_j}\)'s are expressed in terms of entries of the matrices \(A_{\alpha_{k_j}}, B_{\alpha_{k_j}}\) where \(\alpha_{k_j}=|x_{s_j}|\). Thus, we have polynomial equations on the entries of the matrix \(C_i\) in terms of the entries in the matrices \(A_j,B_j\).
    \end{itemize}
    Now it remains to use the fact that by Corollary \ref{cor:fin_dim_of_cox} each \(R(\Sigma)_\alpha\) is finite-dimensional to see that \(\End_g(R(\Sigma))\) is a linear algebraic monoid, and thus its group of invertible elements is an algebraic group, see \cite{putcha} for proof of this fact.
\end{proof}
\begin{corollary}\label{cor:aut_g_red_of_gv}
    There is the following isomorphism of algebraic group schemes.
    \[
        \Aut_g(\Sigma)\cong \Aut_g(\Sigma^\red)\times (\CT)^{S\setminus S^\red}.
    \]
\end{corollary}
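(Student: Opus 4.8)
The plan is to read off the claimed product decomposition directly from the structural description of $\Aut_g(\Sigma)$ established in Proposition \ref{prop:cox_struct} together with the normalizations made at the outset of its proof. First I would recall from Construction \ref{con:reduced_fan} that $\Sigma^\red$ has the same Cox ring as $\Sigma$ after deleting the ghost variables: since a ghost vertex $s'\in S\setminus S^\red$ lies in no face $\sigma\in K$, the variable $x_{s'}$ does not appear in any of the relations defining the grading group $L_\Sigma$ beyond contributing its own free generator, so $L_\Sigma\cong L_{\Sigma^\red}\oplus \Z^{S\setminus S^\red}$ and $R(\Sigma)\cong R(\Sigma^\red)[x_{s'}:s'\in S\setminus S^\red]$ as graded rings, with each ghost variable sitting in its own $1$-dimensional graded component $R(\Sigma)_{e_{s'}}^g$ spanned by $x_{s'}$ alone (there are no other monomials of that degree, decomposable or otherwise).

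The key step is then to observe that a graded endomorphism of $R(\Sigma)$ must send each ghost variable $x_{s'}$ to an element of $R(\Sigma)_{e_{s'}}$, which as just noted is the one-dimensional space $\C\cdot x_{s'}$; hence $\vp(x_{s'})=\lambda_{s'} x_{s'}$ for a scalar $\lambda_{s'}$, and invertibility forces $\lambda_{s'}\in\C^\times$. Conversely the grading group splitting means there is no monomial of ghost degree involving the non-ghost variables, so there is no mixing: the matrix presentation \eqref{eq:E3} of $\vp$ is block-diagonal with one block for the $\Sigma^\red$-part and a diagonal $(\CT)^{S\setminus S^\red}$-block for the ghosts — this is precisely the statement, made in the first line of the proof of Proposition \ref{prop:cox_struct}, that ``the component corresponding to the coordinates $z_{s'}$ for $s'\in S\setminus S^\red$ clearly splits off from the rest of the group.'' Packaging this, the assignment $\vp\mapsto (\vp|_{R(\Sigma^\red)},(\lambda_{s'})_{s'})$ is a homomorphism of algebraic groups, and its inverse is given by the obvious block-diagonal reassembly; both directions are visibly morphisms of schemes since they are polynomial in the matrix entries, so the bijection is an isomorphism of algebraic group schemes.

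I do not expect a genuine obstacle here — the corollary is essentially a bookkeeping consequence of the decomposition $R(\Sigma)\cong R(\Sigma^\red)\otimes_\C \C[x_{s'}:s'\in S\setminus S^\red]$ of graded rings. The one point that deserves a careful sentence rather than a hand-wave is why no decomposable monomial of non-ghost degree can involve a ghost variable and, symmetrically, why the only monomial of a ghost degree $e_{s'}$ is $x_{s'}$ itself: both follow because, in the grading $L_\Sigma\cong L_{\Sigma^\red}\oplus\Z^{S\setminus S^\red}$, the $\Z^{S\setminus S^\red}$-coordinate of $|x^\nu|$ is exactly the exponent vector $(\nu_{s'})_{s'\in S\setminus S^\red}$, so fixing that coordinate pins down the ghost exponents completely. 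Once this is said, the semidirect-product structure of Proposition \ref{prop:cox_struct}\eqref{prop:cox_struct_1}–(iv) restricted to each factor gives the algebraic-group structure on both sides and the stated isomorphism is immediate.
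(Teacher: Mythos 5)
Your overall route is the same as the paper's (show each ghost variable is merely rescaled, show there is no mixing of ghost variables into the images of the non-ghost ones, and package this as a direct product), but the justification you give for the decisive step is wrong, and this is a genuine gap. The grading group \(L_\Sigma\) of Construction \ref{con:cox_ring} is built solely from \((A_\Sigma^\Z)^\vee\), i.e.\ from the vectors \(\rho(s)\) and the dual lattice \(\Gamma(\Sigma)^\vee\); the simplicial complex \(K\) does not enter at all, so \enquote{\(s'\) lies in no face} is irrelevant to the grading. Since \(\Gamma(\Sigma)\) is generated by \(\rho(S)\) (Definition \ref{def:fan_lattice}), a ghost vertex with \(\rho(s')\neq 0\) does contribute to the relations: the \(s'\)-coordinate of \((A_\Sigma^\Z)^\vee\gamma\) is \(\la\gamma,\rho(s')\ra\). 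Hence your claimed splittings \(L_\Sigma\cong L_{\Sigma^\red}\oplus\Z^{S\setminus S^\red}\) and \(R(\Sigma)\cong R(\Sigma^\red)\otimes_\C\C[x_{s'}: s'\in S\setminus S^\red]\) of graded rings fail in general. Concretely, take \(V=\R\), \(S=\{1,2,3\}\), \(K=\{\emptyset,\{1\},\{2\}\}\), \(\rho(1)=1\), \(\rho(2)=-1\) and the ghost value \(\rho(3)=-2\): this generalized fan is complete, \(\Gamma(\Sigma)^\vee\) is generated by the identity, the image of \((A_\Sigma^\Z)^\vee\) is \(\Z\cdot(1,-1,-2)\), and so \(|x_1|=|x_2x_3^2|\) in \(L_\Sigma\). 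The map \(x_1\mapsto x_1+t\,x_2x_3^2\), \(x_2\mapsto x_2\), \(x_3\mapsto x_3\) is then a graded automorphism mixing a ghost variable into \(\vp(x_1)\); your proposed restriction homomorphism is not even defined on it, and indeed here \(\Aut_g(\Sigma)\) is a four-dimensional solvable group while \(\Aut_g(\Sigma^\red)\times\CT\cong GL(2,\C)\times\CT\) has dimension five. (A variant with \(\rho(3)=\tfrac12\), or with \(\rho(3)\) irrational, exhibits a second failure mode: a ghost vector can change \(\Gamma(\Sigma)\), and with it the grading induced on the non-ghost variables.)

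Two further remarks. First, the one conclusion of yours that is correct, \(\vp(x_{s'})=\lambda_{s'}x_{s'}\) for ghost \(s'\), holds for a different reason than the one you give and genuinely needs completeness, which you never invoke: if \(x^\nu\) had the same degree as \(x_{s'}\), then \(\nu-e_{s'}=(A_\Sigma^\Z)^\vee\gamma\) with \(\la\gamma,\rho(s)\ra=\nu_s\ge 0\) for every non-ghost \(s\); extending \(\gamma\) to a linear functional (via Proposition \ref{prop:cont_hom}) and using that the cones of a complete fan involve only non-ghost vertices and cover \(V\), one forces \(\gamma=0\) and \(\nu=e_{s'}\). Second, in fairness, the step that breaks is exactly the one the paper's own proof dismisses with \enquote{by definition \(\vp(x_s)\) does not contain any ghost variables}; so you have reproduced the paper's argument together with its gap, but supplied an incorrect reason for the unproved assertion. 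The bookkeeping you describe does go through verbatim under an extra hypothesis on the ghost vectors, e.g.\ \(\rho(s')=0\) for all \(s'\in S\setminus S^\red\), or more generally that \(\im(A_\Sigma^\Z)^\vee\) lies in \(\Z^{S^\red}\oplus 0\) and maps onto \(\im(A_{\Sigma^\red}^\Z)^\vee\); neither is guaranteed by the paper's definitions, so as written the argument (yours and the paper's) does not establish the corollary in the stated generality.
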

\begin{proof}
    Given a graded automorphism \(\vp\) of \(R(\Sigma)\) we immediately see that \(\vp(x_{s'})=\lambda_{s'} x_{s'}\) for any \(s'\in S\setminus S^{\red}\) and some \(\lambda_s\in \CT\). Additionally, by definition \(\vp(x_{s})\) does not contain any variables \(x_{s'}\) with \(s'\in S\setminus S^\red\). 
    Consequently, we have a homomorphism \(\Aut_g(\Sigma)\to \Aut_g(\Sigma^\red)\times (\CT)^{S\setminus S^\red}\) which is easily seen to be an isomorphism. Thus, the statement follows.
\end{proof}
\subsection{The roots}
We now turn to describing the roots of the group \(\Aut_g(\Sigma)\) in more detail.
\begin{definition}[{Demazure roots of a generalized fan}]\label{def:Demazure_roots}
    The set of {\it Demazure roots} of a generalized fan \(\Sigma\) is defined via the following formula.
    \begin{gather*}
    \mc{DR}(\Sigma)=\\=\left\{m\in \Gamma(\Sigma)^\vee:\;\exists s\in S\text{ such that } \la m, \rho(s)\ra = 1 \text{ and } \la m,\rho(s')\ra\le 0,\text{ for }s'\in S\setminus\{s\}\right\}.
    \end{gather*}
    The set of Demazure roots \(\DR(\Sigma)\) of a generalized fan \(\Sigma\) can be split into the two following disjoint subsets.
    \begin{itemize}
        \item The set of \emph{semisimple Demazure roots} of the generalized fan \(\Sigma\), denoted by \(\DR_s(\Sigma\), is defined as follows.
        \[
            \DR_s(\Sigma)=\DR(\Sigma)\cap -\DR(\Sigma).
        \]
        \item The set of \emph{unipotent Demazure roots} of the generalized fan \(\Sigma\) denoted by \(\DR_u(\Sigma)\) is defined as follows.
        \[
            \DR_u(\Sigma)=\DR(\Sigma)\setminus \DR_s(\Sigma).
        \]
    \end{itemize}
    If for a given element \(s\in S\) there exists a Demazure root \(m\in\DR(\Sigma)\) with \(\la m,\rho(s)\ra=1\) we denote \(m\) by \(m_s.\)
\end{definition}

\begin{construction}\label{con:aut_assoc_to_a_root}
    Let \(m_s\in\DR(\Sigma)\) be a Demazure root corresponding to the generalized fan \(\Sigma.\) Then we define a one-parametric subgroup of automorphisms of the affine space \(\C^S\) corresponding to \(m\) as follows.
    \[
    y_{m_s}(\lambda):\C^S\ni(z_{s'})_{s'\in S}\mapsto \left(z_s+\lambda\prod_{s'\neq s} z_{s'}^{\la -m_s,\rho(s')\ra},(z_{s'})_{s'\neq s}\right)\in \C^S,\quad \lambda \in \C.
    \]
\end{construction}
The following observation is implicit in the work of Cox \cite{Cox_Aut}. We include it here for the sake of completeness.
\begin{lemma}
    The subgroups \(y_m\) for \(m\in \DR(\Sigma)\) centralize the action of \(G_\Sigma.\)
\end{lemma}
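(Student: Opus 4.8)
The plan is to verify directly from the explicit formulas that $y_{m_s}(\lambda)$ commutes with the $G_\Sigma$-action on $\C^S$, where $G_\Sigma \subseteq (\CT)^S$ acts coordinatewise. So fix a Demazure root $m_s \in \DR(\Sigma)$, a parameter $\lambda \in \C$, and an element $t = (t_{s'})_{s'\in S} \in G_\Sigma$, i.e.\ $t \in \exp_\C(\ker A_\Sigma^\C)$. First I would recall from \eqref{eq:GStilde} that $G_\Sigma = \exp_\C(\ker A_\Sigma^\C)$, so that a character of $(\CT)^S$ that is trivial on $\Z^S$-pullbacks from $\Gamma(\Sigma)^\vee$ restricts trivially to $G_\Sigma$; concretely, for any $\gamma \in \Gamma(\Sigma)^\vee$ the monomial $\prod_{s' \in S} t_{s'}^{\la \gamma, \rho(s') \ra}$ equals $1$ for all $t \in G_\Sigma$, since $\sum_{s'} \la \gamma, \rho(s')\ra e_{s'} = (A_\Sigma^\Z)^\vee(\gamma)$ and $t$ lies in the kernel of the dual torus map. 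This is the only structural input needed.

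Next I would compute both composites. Acting by $t$ then by $y_{m_s}(\lambda)$ on $(z_{s'})$ sends it to the tuple whose $s$-th coordinate is $t_s z_s + \lambda \prod_{s' \neq s}(t_{s'} z_{s'})^{\la -m_s, \rho(s')\ra}$ and whose other coordinates are $t_{s'} z_{s'}$. Acting by $y_{m_s}(\lambda)$ first and then by $t$ gives $s$-th coordinate $t_s\bigl(z_s + \lambda \prod_{s'\neq s} z_{s'}^{\la -m_s,\rho(s')\ra}\bigr)$ and other coordinates again $t_{s'}z_{s'}$. The two agree precisely when
\[
    \prod_{s'\neq s} t_{s'}^{\la -m_s,\rho(s')\ra} = t_s^{-1} \cdot t_s = t_s^{\la m_s, \rho(s)\ra - 1} \cdot t_s,
\]
which, since $\la m_s, \rho(s)\ra = 1$ by definition of a Demazure root, reduces to the identity $\prod_{s' \in S} t_{s'}^{\la -m_s, \rho(s')\ra} = 1$. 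That identity holds for every $t \in G_\Sigma$ by the observation of the first paragraph applied to $\gamma = -m_s \in \Gamma(\Sigma)^\vee$. Hence $t \circ y_{m_s}(\lambda) = y_{m_s}(\lambda) \circ t$ on $\C^S$, and since $U(\Sigma)$ is a $G_\Sigma$-invariant open subset this holds on $U(\Sigma)$ as well. One should also note in passing that $y_{m_s}(\lambda)$ preserves $U(\Sigma)$ — this follows because the inequalities $\la -m_s, \rho(s')\ra \geq 0$ for $s' \neq s$ ensure the modifying monomial is a genuine polynomial, so $y_{m_s}(\lambda)$ is a polynomial automorphism of $\C^S$ with inverse $y_{m_s}(-\lambda)$, and it fixes the vanishing locus of each irrelevant ideal stratum; but this is really part of Construction~\ref{con:aut_assoc_to_a_root} being well-posed rather than part of the centralizing claim.

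I do not anticipate a genuine obstacle here: the statement is essentially a bookkeeping identity exploiting the defining equation $\la m_s, \rho(s)\ra = 1$ of a Demazure root together with the defining property of $G_\Sigma$ as the kernel of the exponentiated $A_\Sigma^\C$. The only point requiring mild care is keeping track of which exponents are pulled back from $\Gamma(\Sigma)^\vee$ versus living in $\Z^S$, i.e.\ making sure the vanishing of $\prod_{s'} t_{s'}^{\la -m_s, \rho(s')\ra}$ on $G_\Sigma$ is invoked with $-m_s$ viewed correctly as an element of $\Gamma(\Sigma)^\vee$ so that Lemma~\ref{lem:integral_tori_dual_map} applies; but this is immediate since Demazure roots are by Definition~\ref{def:Demazure_roots} elements of $\Gamma(\Sigma)^\vee$.
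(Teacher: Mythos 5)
Your proof is correct and is essentially the paper's own argument: a direct coordinate computation in which, using \(\la m_s,\rho(s)\ra=1\), commutation with an element \(t=\exp_\C(\wt{g})\), \(\wt{g}\in\ker A_\Sigma^\C\), reduces to the identity \(\prod_{s'\in S}t_{s'}^{\la -m_s,\rho(s')\ra}=1\), which the paper phrases as \(\sum_{s'\in S}\wt{g}_{s'}\la -m,\rho(s')\ra=0\) while verifying \(y_m(\lambda)\,g\,y_m(-\lambda)=g\). The only blemish is cosmetic: in your displayed condition the middle expression \(t_s^{-1}\cdot t_s\) should simply read \(t_s\); the final identity you derive from it is the correct one.
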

\begin{proof}
    Consider an element of \(g\in G_\Sigma\). By assumption there exists \((\wt{g}_s)_{s\in S}=\wt{g}\in \ker A_\Sigma^\C\sse \C^S\) such that \(\exp_\C(\wt{g})=g\). Then the transformation \(y_m(\lambda) g y_m(-\lambda)\) acts on \(\C^S\) as follows.
    \begin{equation}\label{eq:comp_norm}
    \adjustbox{scale=0.85,center}{
            \begin{tikzcd}
                {(z_{s'})_{s'\in S}} && {\left(z_s-\lambda\prod_{s'\neq s}z_{s'}^{\la -m_s,\rho(s')\ra},(z_{s'})_{s'\neq s}\right)} && {\left(e^{\wt{g}_s}z_s-\lambda e^{\wt{g}_s}\prod_{s'\neq s}z_{s'}^{\la -m_s,\rho(s')\ra},(e^{\wt{g}_{s'}}z_{s'})_{s'\neq s}\right)} \\
                && {\left(e^{\wt{g}_s}z_s-\lambda e^{\wt{g}_s}\prod_{s'\neq s}z_{s'}^{\la -m_s,\rho(s')\ra}+\lambda\prod_{s'\neq s}e^{\wt{g}_{s'}\la -m,\rho(s')\ra}z_{s'}^{\la -m_s,\rho(s')\ra},(e^{\wt{g}_{s'}}z_{s'})_{s'\neq s}\right)}
                \arrow["{y_m(-\lambda)}", maps to, from=1-1, to=1-3]
                \arrow["g", maps to, from=1-3, to=1-5]
                \arrow["{y_m(\lambda)}", maps to, from=1-5, to=2-3]
            \end{tikzcd}}
    \end{equation}
We note that since \(\wt{g}\) belongs to \(\ker A_\Sigma^\C\) we have the following identity on its coordinates.
\[
    \sum_{s\in S} \wt{g}_s\la -m, \rho(s)\ra=0.
\]
In addition, recall that by assumption \(\la m,\rho(s)\ra=1.\) Consequently, we can rewrite the result of the composite map \eqref{eq:comp_norm} as follows.
\begin{gather*}
    \left(e^{\wt{g}_s}z_s-\lambda e^{\wt{g}_s}\prod_{s'\neq s}z_{s'}^{\la -m_s,\rho(s')\ra}+\lambda\prod_{s'\neq s}e^{\wt{g}_{s'}\la -m,\rho(s')\ra}z_{s'}^{\la -m_s,\rho(s')\ra},(e^{\wt{g}_{s'}}z_{s'})_{s'\neq s}\right)=\\=\left(e^{\wt{g}_s}\left(z_s-\lambda \prod_{s'\neq s} z_{s'}^{\la -m_s,\rho(s')\ra}+\lambda \underbrace{\prod_{s'\in S}e^{\wt{g}_{s'}\la -m,\rho(s')\ra}}_{=1}\prod_{s'\neq s}z_{s'}^{\la -m_s,\rho(s')\ra}\right),(e^{\wt{g}_{s'}}z_{s'})_{s'\neq s}\right)=\\=\left(e^{\wt{g}_s} z_s,(e^{\wt{g}_{s'}}z_{s'})_{s'\neq s}\right)
\end{gather*}
The result is now evident.
\end{proof}
\begin{proposition}\label{prop:Aut_g_generation}
    The algebraic group \(\Aut_g(\Sigma)\) is generated by the torus \(\left(\CT\right)^S\) and one-parametric subgroups \(y_{m}\) for \(m\in \DR(\Sigma)\).
\end{proposition}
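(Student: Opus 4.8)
# Proof Proposal for Proposition \ref{prop:Aut_g_generation}

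The plan is to follow the strategy of \cite[Proof of Proposition 4.3]{Cox_Aut}, adapting each step to the setting of generalized fans. By Corollary \ref{cor:aut_g_red_of_gv} we may reduce to the case $S=S^\red$, since the factor $(\CT)^{S\setminus S^\red}$ is already contained in the big torus $(\CT)^S$, and the one-parametric subgroups $y_m$ involve only non-ghost variables. So assume $S=S^\red$ from now on. By Proposition \ref{prop:cox_struct}, $\Aut_g(\Sigma)$ is a semidirect product $\mf{G_r}(\Sigma)\ltimes \mf{R}(\Sigma)$, where $\mf{G_r}(\Sigma)\cong \prod_{i=1}^s GL(R(\Sigma)_{\alpha_i}^g)$ is reductive and $\mf{R}(\Sigma)$ is the unipotent radical, an affine space. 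Hence it suffices to show that (a) the torus $(\CT)^S$ together with the $y_m$ for $m\in \DR_s(\Sigma)$ generates $\mf{G_r}(\Sigma)$, and (b) the $y_m$ for $m\in \DR_u(\Sigma)$ generate $\mf{R}(\Sigma)$.

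For (a): the diagonal torus $(\CT)^S$ maps onto the maximal torus of each $GL(R(\Sigma)_{\alpha_i}^g)$ acting by scaling the generating variables $x_s$, $s\in S_i$. The semisimple Demazure roots $m\in \DR_s(\Sigma)$ are exactly those roots for which $-m$ is also a root; as in \cite{Cox_Aut}, these correspond to pairs $s,s'\in S_i$ of indices \emph{in the same grading block} with $\la m,\rho(s)\ra = 1$, $\la m,\rho(s')\ra = -1$ and $\la m,\rho(t)\ra = 0$ otherwise — here one must verify that $\la m,\rho(s)\ra = 1$ with $-m$ also a root forces $|x_s| = |x_{s'}|$, which is a short computation using $|x_s| = e_s + (A_\Sigma^\Z)^\vee(\Gamma(\Sigma)^\vee)$ and Lemma \ref{lem:integral_tori_dual_map}. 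For such $m$, the subgroup $y_m$ acts on $R(\Sigma)$ as the elementary transvection $x_{s'}\mapsto x_{s'}+\lambda x_s$ inside the block $R(\Sigma)_{\alpha_i}^g$ (and fixes all other generators). Since $GL_n$ over $\C$ is generated by its diagonal torus and elementary transvections $E_{kl}(\lambda)$, and all such transvections within each block arise from semisimple roots, the torus and the $y_m$, $m\in\DR_s(\Sigma)$, generate $\prod_i GL(R(\Sigma)_{\alpha_i}^g) = \mf{G_r}(\Sigma)$.

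For (b): the unipotent radical $\mf{R}(\Sigma)$ is, by the block-matrix description in the proof of Proposition \ref{prop:cox_struct}, identified with $\prod_{i=1}^s \Hom_\C(R(\Sigma)_{\alpha_i}^g, R(\Sigma)_{\alpha_i}^d)$, i.e.\ with the data of the blocks $B_i$ (with $A_i = \id$). A unipotent Demazure root $m = m_s$ with $\la m,\rho(s)\ra = 1$ and $\la m,\rho(s')\ra\le 0$ for $s'\ne s$ gives, via Construction \ref{con:aut_assoc_to_a_root}, the automorphism sending $x_s\mapsto x_s + \lambda\, x^{\mu}$ where $x^\mu = \prod_{s'\ne s} x_{s'}^{\la -m_s,\rho(s')\ra}$ is a monomial of degree $|x_s|$; one checks $x^\mu$ is decomposable precisely when $m$ is not semisimple. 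So each such $y_m$ is a one-parameter unipotent subgroup pointing in the coordinate direction of $\mf R(\Sigma)$ indexed by the pair (generator $x_s$, decomposable monomial $x^\mu$). As $m_s$ ranges over $\DR_u(\Sigma)$ with fixed $s$, the monomials $x^\mu$ that appear are exactly the standard-basis decomposable monomials of $R(\Sigma)_{|x_s|}^d$ — this is the combinatorial heart of the argument, and it follows from the bijection in the Proposition preceding Corollary \ref{cor:fin_dim_of_cox} between lattice points of $P_\alpha$ and monomials of $R(\Sigma)_\alpha$, together with the observation that $\gamma\in\Gamma(\Sigma)^\vee$ with $\la\gamma,\rho(s)\ra = \langle e_s^* - \mu, \cdot\rangle$-type constraints yield exactly the roots $m_s$. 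Thus the $y_m$, $m\in\DR_u(\Sigma)$, span a set of coordinate directions that generate all of $\mf R(\Sigma)$; since $\mf R(\Sigma)$ is a unipotent group that is filtered by the grading and these subgroups are compatible with that filtration, the subgroup they generate is all of $\mf R(\Sigma)$ (here one may invoke the standard fact that a unipotent algebraic group is generated by a collection of root subgroups whose Lie algebras span its Lie algebra).

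The main obstacle I anticipate is the bookkeeping in step (b): making precise that the monomials $x^\mu$ arising from unipotent roots $m_s$ exhaust the decomposable monomials in the relevant graded piece, and that the resulting root subgroups are ``independent enough'' to generate $\mf R(\Sigma)$ rather than merely a proper subgroup. In the embedded-fan case this is handled in \cite{Cox_Aut} via the correspondence between roots and lattice points in the polytopes $P_\alpha$; the generalized-fan version requires replacing $V^\vee$ by $\Gamma(\Sigma)^\vee$ and using Proposition \ref{prop:cont_hom} to ensure the count is unchanged under rationalization (so that one may as well work with $\ul\Sigma$, which is rational and complete, where the classical argument applies verbatim). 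Once that identification is in place, the generation statement is a formal consequence of the structure theory of Proposition \ref{prop:cox_struct}.
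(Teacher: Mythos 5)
Your proposal is correct in substance and follows exactly the route the paper takes: the paper's proof is simply the statement that the argument of Cox's Proposition 4.5 carries over verbatim, and your steps (a) and (b) are precisely that argument (semisimple roots give the transvections generating the reductive part $\prod_i GL(R(\Sigma)_{\alpha_i}^g)$, unipotent roots give the coordinate directions of the unipotent radical, with the polytope/lattice-point correspondence and completeness ensuring the decomposable monomials are exhausted). One small caveat: your parenthetical claim that the $y_m$ involve only non-ghost variables is not true in general (a root monomial can contain a ghost variable when a ghost ray shares a degree relation with the others), but this does not affect the argument since the reduction is licensed by Corollary \ref{cor:aut_g_red_of_gv}, and your steps (a)--(b) in fact go through with ghost vertices present, each ghost variable sitting alone in its graded block.
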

\begin{proof}
    The proof proceeds verbatim as in the proof of \cite[Proposition 4.5]{Cox_Aut}.
\end{proof}

\begin{lemma}\label{lem:action_on_root_s_of_the_torus}
    Let \(y_m\) be a one-parametric subgroup of \(\Aut_g(\Sigma)\) corresponding to a Demazure root \(m\in \DR(\Sigma).\) Then the torus \((\CT)^S\) acts by conjugation on \(y_m(\lambda)\) as follows.
    \[
        (t_s)_{s\in S}\circ y_m(\lambda)\circ (t_s)_{s\in S}^{-1}=y_m(\lambda t_s\prod_{s'\neq s} t_{s'}^{\la m,\rho(s')\ra}).
    \]
\end{lemma}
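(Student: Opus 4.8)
The plan is to verify the conjugation formula by direct computation, tracking what the composite automorphism $(t_s)_{s\in S}\circ y_m(\lambda)\circ (t_s)_{s\in S}^{-1}$ does to an arbitrary point $(z_{s'})_{s'\in S}\in \C^S$. First I would apply the inverse torus element $(t_s)^{-1}$, sending $(z_{s'})_{s'\in S}$ to $(t_{s'}^{-1} z_{s'})_{s'\in S}$. Then apply $y_m(\lambda)$ using Construction \ref{con:aut_assoc_to_a_root}: the $s$-th coordinate becomes $t_s^{-1} z_s + \lambda \prod_{s'\neq s}(t_{s'}^{-1} z_{s'})^{\la -m,\rho(s')\ra}$, while the other coordinates stay $t_{s'}^{-1} z_{s'}$. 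Finally apply $(t_s)$, which multiplies the $u$-th coordinate by $t_u$. The non-$s$ coordinates return cleanly to $z_{s'}$, and the $s$-th coordinate becomes $z_s + \lambda\, t_s \prod_{s'\neq s} t_{s'}^{-\la -m,\rho(s')\ra} z_{s'}^{\la -m,\rho(s')\ra}$. Since $-\la -m,\rho(s')\ra = \la m,\rho(s')\ra$, the scalar coefficient of the monomial $\prod_{s'\neq s} z_{s'}^{\la -m,\rho(s')\ra}$ is exactly $\lambda\, t_s \prod_{s'\neq s} t_{s'}^{\la m,\rho(s')\ra}$, which is the claimed value of the new parameter. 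Comparing with the definition of $y_m$, this shows the composite equals $y_m\big(\lambda t_s \prod_{s'\neq s} t_{s'}^{\la m,\rho(s')\ra}\big)$.

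The only genuine point requiring a word of care is the bookkeeping of signs in the exponents: Construction \ref{con:aut_assoc_to_a_root} writes the monomial with exponents $\la -m_s,\rho(s')\ra$, and conjugating by the torus element introduces compensating exponents of the $t_{s'}$; the reader should be warned that $t_{s'}^{-\la -m,\rho(s')\ra}=t_{s'}^{\la m,\rho(s')\ra}$, which is why the final formula is stated with $+\la m,\rho(s')\ra$ rather than $-\la -m,\rho(s')\ra$. I would also note in passing that all these exponents $\la m,\rho(s')\ra$ are non-positive for $s'\neq s$ by the defining condition of a Demazure root (Definition \ref{def:Demazure_roots}), so the monomials involved are genuine Laurent monomials regular on $\C^S$ after clearing — this is already implicit in $y_m$ being well defined.

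I do not expect any essential obstacle; this is a routine verification in the vein of the analogous computation in Cox \cite{Cox_Aut}, and the formula is the combinatorial record of how the root subgroup $y_m$ sits inside the Borel generated by $(\CT)^S$ and $y_m$. The proof can be written compactly as a short chain of substitutions, either inline or in a single \texttt{align*} display tracking the three maps, concluding with the identification of the parameter. If one wishes to be even more economical, one may simply remark that $y_m$ is the image of a root homomorphism for the character $\chi$ of $(\CT)^S$ given by $\chi(t)=t_s\prod_{s'\neq s}t_{s'}^{\la m,\rho(s')\ra}$, and that conjugation of a root subgroup by a torus scales its parameter by the corresponding character — but the explicit computation above is self-contained and I would include it.
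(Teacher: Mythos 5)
Your computation is correct and is precisely the ``straightforward computation'' that the paper's proof leaves to the reader: conjugating $y_m(\lambda)$ by the torus element fixes all coordinates with $s'\neq s$ and rescales the parameter by $t_s\prod_{s'\neq s}t_{s'}^{\la m,\rho(s')\ra}$, exactly as you track. No gap; the sign bookkeeping $t_{s'}^{-\la -m,\rho(s')\ra}=t_{s'}^{\la m,\rho(s')\ra}$ and the character-of-the-root remark are sound.
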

\begin{proof}
    A straightforward computation.
\end{proof}
\begin{corollary}\label{cor:non_triv_act_on_roots_sub}
    There are no Demazure roots such that \(y_m(\lambda)\) is centralized by \((\CT)^S\).
\end{corollary}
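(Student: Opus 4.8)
The plan is to deduce the statement immediately from the conjugation formula of Lemma \ref{lem:action_on_root_s_of_the_torus}. Fix a Demazure root \(m\in\DR(\Sigma)\) and let \(s\in S\) be the distinguished index with \(\la m,\rho(s)\ra=1\). By that lemma, conjugation by \((t_{s'})_{s'\in S}\in(\CT)^S\) sends \(y_m(\lambda)\) to \(y_m\!\left(\lambda\, t_s\prod_{s'\neq s}t_{s'}^{\la m,\rho(s')\ra}\right)\). So the one-parametric subgroup \(y_m\) is centralized by \((\CT)^S\) if and only if the character
\[
\chi_m\colon (\CT)^S\to\CT,\qquad (t_{s'})_{s'\in S}\mapsto t_s\prod_{s'\neq s}t_{s'}^{\la m,\rho(s')\ra},
\]
is identically trivial, since \(y_m(\lambda)=y_m(\mu)\) for all \(\lambda\) forces \(\lambda=\mu\) (the subgroup \(y_m\) is a faithful copy of the additive group, as is clear from Construction \ref{con:aut_assoc_to_a_root}).

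Next I would observe that \(\chi_m\) is visibly nontrivial: its exponent on the coordinate \(t_s\) is \(\la m,\rho(s)\ra=1\neq 0\). Concretely, evaluating at the point with \(t_s=2\) and \(t_{s'}=1\) for \(s'\neq s\) gives \(\chi_m=2\neq 1\), so conjugating \(y_m(\lambda)\) by this element yields \(y_m(2\lambda)\neq y_m(\lambda)\) for any \(\lambda\neq 0\). Hence \(y_m\) is not centralized by \((\CT)^S\), and since \(m\) was an arbitrary Demazure root the corollary follows.

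There is essentially no obstacle here; the only point requiring a line of care is that \(y_m(\lambda)=y_m(\mu)\iff\lambda=\mu\), which is immediate from the defining formula for \(y_{m_s}\), and that the relevant character of the torus is nonzero, which one reads off directly from the normalization \(\la m,\rho(s)\ra=1\) built into Definition \ref{def:Demazure_roots}. This corollary is exactly the statement that each root subgroup contributes a genuine root line in the forthcoming Lie-theoretic description of \(\Aut_g(\Sigma)\).
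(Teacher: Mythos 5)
Your argument is correct and is essentially the paper's own proof: both rely on the conjugation formula of Lemma \ref{lem:action_on_root_s_of_the_torus} and the specific evaluation \(t_s=2\), \(t_{s'}=1\) for \(s'\neq s\) to exhibit a nontrivial rescaling of the parameter. Your extra remark that \(\lambda\mapsto y_m(\lambda)\) is injective is a harmless elaboration of what the paper leaves implicit.
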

\begin{proof}
    This is immediately apparent from Lemma \ref{lem:action_on_root_s_of_the_torus}, since we can put \(t_{s'}=1\) for \(s'\neq s\) and \(t_s=2\).
\end{proof}
\subsection{A Lie-theoretic picture}\label{subsec:lie_theor_pic}
\begin{definition}
    Let \(\mf{g}\) be a Lie algebra over \(\C\). A maximal abelian subalgebra of \(\mf{g}\) is called a \emph{Cartan subalgebra} of~\(\mf{g}\). A linear functional \(\alpha\in \mf{h}^\vee\) is called a \emph{root} of \(\h\) if there exists a one-dimensional subalgebra \(\mf{g}_\alpha\), called a \emph{root subspace}, of \(\mf{g}\) such that the following holds.
    \[
        \ad_h(x)=\alpha(h)x, \quad \forall h\in \h,\; x\in \mf{g}_\alpha.
    \]
    We denote the (multi-)set of roots of \(\h\) in \(\g\) by \(\Phi(\h,\g)\).
\end{definition}
The following result is standard. 
\begin{proposition}[{\cite[8.17]{Borel}}]
    Let \(\mf{g}\) be a Lie algebra over \(\C\). Let \(\h\) be a Cartan subalgebra of \(\mf{g}\). Then the set of roots \(\Phi(\h,\g)\) is a finite multi subset of \(\h^\vee\). Moreover, the Lie algebra \(\mf{g}\) has a root decomposition with respect to the action of \(\h\). That is, there is a direct sum decomposition of \(\mf{g}\) as follows.
    \[
        \mf{g}=\mf{g}^\h\oplus \bigoplus_{\alpha\in \Phi(\h,\g)}\mf{g}_\alpha.
    \]
    Here \(\g^\h\) is the subalgebra of elements in \(\g\) that commute with \(\h\). Alternatively, one can think of \(\g^\h\) as a sum of root subspaces corresponding to the root \(\alpha=0\).
\end{proposition}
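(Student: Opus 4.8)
The plan is to reduce the statement to the simultaneous diagonalization of a commuting family of semisimple operators on a finite-dimensional vector space; this is exactly what \cite[8.17]{Borel} does for the adjoint action of a maximal torus of a linear algebraic group, and in the situation of interest \(\g=\Lie(\Aut_g(\Sigma))\) is the Lie algebra of such a group while \(\h\) is the Lie algebra of a maximal torus \(T\subseteq\Aut_g(\Sigma)\). In particular \(\g\) is finite-dimensional over \(\C\); this is the only place where finiteness of \(\g\) enters, and it will immediately yield finiteness of \(\Phi(\h,\g)\) at the end.

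First I would record the two facts about the adjoint action of \(\h\) on \(\g\) that drive the argument. Since \(\h\) is abelian, the Jacobi identity gives \([\ad_h,\ad_{h'}]=\ad_{[h,h']}=0\) for all \(h,h'\in\h\), so \(\{\ad_h\mid h\in\h\}\) is a commuting family of endomorphisms of \(\g\). Second, each \(\ad_h\) is diagonalizable on \(\g\): equivalently, the adjoint action restricted to the torus \(T\) with \(\Lie(T)=\h\) is a rational representation of the diagonalizable group \(T\), hence splits into characters of \(T\), and differentiating produces a common-eigenspace decomposition for \(\h\). Establishing this semisimplicity — i.e.\ that the abstract \enquote{maximal abelian} Cartan subalgebra really is the Lie algebra of a torus, rather than, say, a line of nilpotent elements — is the one point that genuinely uses the algebraic-group structure, and is where I would be most careful.

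Granting these two facts, a commuting family of diagonalizable operators on a finite-dimensional \(\C\)-vector space is simultaneously diagonalizable, so \(\g\) is the direct sum of the common eigenspaces
\[
    \g_\chi=\{x\in\g\mid \ad_h(x)=\chi(h)\,x,\ \forall h\in\h\},
\]
taken over the functions \(\chi\colon\h\to\C\) that actually occur. Each such \(\chi\) is automatically \(\C\)-linear, because \(h\mapsto\ad_h\) is linear and eigenvalues add under sums of commuting operators, so \(\chi\in\h^\vee\); and the component \(\g_0\) is precisely the centralizer \(\g^\h\).

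Finally I would pass to the multiset bookkeeping demanded by the definition of a root: for each occurring nonzero \(\chi\), choose a basis of the eigenspace \(\g_\chi\) and regard each resulting line as a one-dimensional root subspace \(\g_\alpha\) with \(\alpha=\chi\), so that \(\chi\) is listed \(\dim_\C\g_\chi\) times in \(\Phi(\h,\g)\). Since \(\dim_\C\g<\infty\), only finitely many \(\chi\) occur, whence \(\Phi(\h,\g)\) is a finite multisubset of \(\h^\vee\) and \(\g=\g^\h\oplus\bigoplus_{\alpha\in\Phi(\h,\g)}\g_\alpha\). Alternatively, one may simply invoke \cite[8.17]{Borel} verbatim, since nothing in the generalized-fan setting affects this purely Lie-theoretic fact.
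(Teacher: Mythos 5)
Your proposal is correct and matches the paper, which offers no proof at all for this proposition: it simply records it as standard and cites \cite[8.17]{Borel}, exactly as your final sentence suggests. Your reconstruction (commuting, semisimple \(\ad_h\) for \(\h=\Lie(T)\), simultaneous diagonalization, linearity of the eigenvalue functions, finiteness from \(\dim_\C\g<\infty\)) is the standard argument behind that citation, and your caution about where semisimplicity comes from is well placed, since for an arbitrary Lie algebra with a merely maximal abelian subalgebra the decomposition can fail; in the paper's application \(\g=\Lie(\Aut_g(\Sigma))\) and \(\h=\Lie((\CT)^S)\), so the algebraic-group hypothesis you invoke is indeed available.
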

\begin{lemma}
    Let \(\mf{g}\) be the Lie algebra of the algebraic group \(\Aut_g(\Sigma)\). Then the Lie algebra of the subgroup \((\CT)^S\) is a Cartan subalgebra of \(\mf{g}.\)
\end{lemma}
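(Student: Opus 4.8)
The plan is to use the paper's definition (a Cartan subalgebra is a maximal abelian subalgebra) and to prove maximality by computing the centralizer $Z_{\mathfrak g}(\mathfrak t)$ of $\mathfrak t:=\Lie((\CT)^S)$ inside $\mathfrak g:=\Lie(\Aut_g(\Sigma))$ and showing that it equals $\mathfrak t$. First, $\mathfrak t$ is abelian: it is the Lie algebra of an algebraic torus (concretely, its elements are the derivations of $R(\Sigma)$ that are diagonal in the monomial basis, and these commute). Now an abelian subalgebra $\mathfrak a$ is maximal among abelian subalgebras precisely when $Z_{\mathfrak g}(\mathfrak a)=\mathfrak a$, and any abelian subalgebra containing $\mathfrak t$ is automatically contained in $Z_{\mathfrak g}(\mathfrak t)$. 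So the whole statement reduces to the inclusion $Z_{\mathfrak g}(\mathfrak t)\subseteq\mathfrak t$.

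Next I would set up a weight decomposition for the $\mathrm{Ad}$-action of the torus $(\CT)^S$. Recall from the proof of Proposition \ref{prop:cox_struct} that $\Aut_g(\Sigma)$ sits as a closed subgroup of $\prod_{i=1}^s GL(R(\Sigma)_{\alpha_i})$, each factor being finite-dimensional over $\C$ by Corollary \ref{cor:fin_dim_of_cox}; differentiating, $\mathfrak g$ sits $(\CT)^S$-equivariantly inside $\prod_{i=1}^s\mathfrak{gl}(R(\Sigma)_{\alpha_i})$. The subtorus $(\CT)^S\subseteq\Aut_g(\Sigma)$ scales the monomial basis of each graded piece $R(\Sigma)_{\alpha_i}$ by $x^\mu\mapsto t^\mu x^\mu$, so the $\mathrm{Ad}$-action of $(\CT)^S$ on $\mathfrak g$ is diagonalizable, with weights among the differences $\mu-\mu'$ of exponent vectors of monomials of equal degree. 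Since we work over $\C$, a character of the torus $(\CT)^S$ is trivial if and only if its differential vanishes on $\mathfrak t$; hence $Z_{\mathfrak g}(\mathfrak t)$ is exactly the zero-weight subspace $\mathfrak g^{(\CT)^S}$, that is, the subspace of elements of $\mathfrak g$ that are simultaneously diagonal in all the monomial bases.

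Finally I would identify this zero-weight subspace with $\mathfrak t$. An element $D\in\mathfrak g$ is a degree-zero $\C$-linear derivation of $R(\Sigma)$, and being diagonal in every monomial basis means $D(x^\mu)=c_\mu x^\mu$ for some scalars $c_\mu$. The Leibniz rule forces $c_{\mu+\mu'}=c_\mu+c_{\mu'}$, so $c_\mu=\sum_{s\in S}\mu_s c_s$, where $c_s$ is the scalar by which $D$ scales $x_s$; conversely every tuple $(c_s)_{s\in S}\in\C^S$ arises this way, and the resulting derivation is exactly the element of $\Lie((\CT)^S)=\mathfrak t$ with these coordinates. (Equivalently, in the matrix presentation \eqref{eq:E3} of the proof of Proposition \ref{prop:cox_struct}, a diagonal element has vanishing decomposable blocks $B_i$ and diagonal blocks $A_i$, hence is determined by its action on the generators.) Thus $Z_{\mathfrak g}(\mathfrak t)=\mathfrak g^{(\CT)^S}=\mathfrak t$, and $\mathfrak t$ is maximal abelian. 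The only slightly delicate point is the identification $Z_{\mathfrak g}(\mathfrak t)=\mathfrak g^{(\CT)^S}$, which uses characteristic zero (injectivity of $\chi\mapsto d\chi$ on characters of a torus); everything else is bookkeeping already implicit in Proposition \ref{prop:cox_struct}.
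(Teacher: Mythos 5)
Your proposal is correct. For comparison: the paper's own ``proof'' is a one-line appeal to Proposition \ref{prop:cox_struct}, so it never spells out why \(\Lie((\CT)^S)\) is \emph{maximal} abelian; what you have written is essentially the verification that citation leaves implicit, organized slightly differently. The paper's intended route goes through the Levi-type decomposition \(\Aut_g(\Sigma)\cong\mf{G_r}(\Sigma)\ltimes\mf{R}(\Sigma)\): the torus is a maximal torus of \(\mf{G_r}\), its Lie algebra is self-centralizing in \(\prod_i\mf{gl}(R(\Sigma)_{\alpha_i}^g)\times\C^{S\setminus S^\red}\), and it has no nonzero fixed vectors on \(\Lie\mf{R}(\Sigma)\) (the same nonvanishing-weight fact that reappears in Corollary \ref{cor:non_triv_act_on_roots_sub}). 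Your argument bypasses the semidirect-product bookkeeping and works directly with the equivariant embedding \(\mf{g}\hookrightarrow\prod_i\mf{gl}(R(\Sigma)_{\alpha_i})\), identifying \(Z_{\mf g}(\mf t)\) with the zero-weight space and then, via the Leibniz rule, with \(\mf t\); the key observations that make this clean --- distinct monomials have distinct torus weights, and over \(\C\) a character is trivial iff its differential vanishes on \(\mf t\) --- are exactly the right ones, and the silent identification of \(\mf g\) with the degree-preserving derivations of \(R(\Sigma)\) is legitimate (dual numbers on the algebraic monoid \(\End_g(R(\Sigma))\)). Net effect: your version is more self-contained and arguably more honest than the paper's, at the cost of redoing a weight computation the paper defers to later statements.
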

\begin{proof}
    The result is immediate from Proposition \ref{prop:cox_struct}.
\end{proof}
\begin{lemma}
    Let \(\Sigma=\GF\) be a generalized fan. Then the roots of the Cartan subalgebra \(\Lie((\CT)^S)\) corresponding to the one-parametric subgroup \(y_m(\lambda)\) are linearly independent for different Demazure roots \(m\in \DR(\Sigma).\)
\end{lemma}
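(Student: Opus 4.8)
The plan is to make the root attached to each $y_m$ completely explicit, and then deduce independence of the corresponding root vectors from the direct-sum structure of the root decomposition. Write a torus element as $(t_s)_{s\in S}=\exp_\C(\xi)$ with $\xi=(\xi_s)_{s\in S}\in\Lie((\CT)^S)\cong \C^S$. By Lemma \ref{lem:action_on_root_s_of_the_torus}, conjugation by $(t_s)$ rescales the parameter of $y_m(\lambda)$ by the factor $t_s\prod_{s'\neq s}t_{s'}^{\la m,\rho(s')\ra}$, where $s$ is the distinguished index of the Demazure root $m$, so that $\la m,\rho(s)\ra=1$. Hence this factor equals $\exp\!\big(\sum_{s'\in S}\la m,\rho(s')\ra\,\xi_{s'}\big)$. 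Differentiating at the identity, I read off that the infinitesimal generator $dy_m\in\g=\Lie\Aut_g(\Sigma)$ of the subgroup $y_m$ is a root vector whose root $\chi_m\in\Lie((\CT)^S)^\vee$ is
\[
    \chi_m(\xi)=\sum_{s'\in S}\la m,\rho(s')\ra\,\xi_{s'}=\la m,A_\Sigma^\C(\xi)\ra,
\]
that is, $\chi_m=(A_\Sigma^\C)^\vee m$ is the pullback of $m$ along the operator $A_\Sigma^\C$.

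The next step is to show that distinct Demazure roots produce distinct roots. Suppose $\chi_m=\chi_{m'}$ for $m,m'\in\DR(\Sigma)$. Then $(A_\Sigma^\C)^\vee(m-m')=0$, so $m-m'$ annihilates $\im A_\Sigma^\C$, which is precisely the complex linear span of $\rho(S)$. But $m$ and $m'$ lie in $\Gamma(\Sigma)^\vee$, and by the definition of the continuous-homomorphism dual such functionals are determined by their values on the generators $\rho(S)$ of $\Gamma(\Sigma)$; therefore $m=m'$. Thus $m\mapsto\chi_m$ is injective, and by Corollary \ref{cor:non_triv_act_on_roots_sub} the torus centralizes no $y_m$, so each $\chi_m$ is nonzero and each $dy_m$ is a genuine nonzero root vector lying in the single root space $\g_{\chi_m}$.

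Finally I would invoke the root decomposition $\g=\g^\h\oplus\bigoplus_{\alpha\in\Phi}\g_\alpha$ of Proposition \cite[8.17]{Borel}. Since the roots $\chi_m$ are pairwise distinct and nonzero, the vectors $dy_m$ sit in pairwise distinct summands of this direct sum. Projecting any relation $\sum_m c_m\,dy_m=0$ onto the summand $\g_{\chi_{m_0}}$ kills every term except $c_{m_0}\,dy_{m_0}$ (injectivity guarantees that only $m=m_0$ contributes there), and since $dy_{m_0}\neq 0$ this forces $c_{m_0}=0$. Hence the root vectors attached to distinct Demazure roots are linearly independent. The step demanding the most care — and the genuine crux of the argument — is the injectivity of $m\mapsto\chi_m$: it rests on $\rho(S)$ spanning enough of $V$ for the elements of $\Gamma(\Sigma)^\vee$ to be separated by their values on $\rho(S)$, a subtlety special to generalized fans since $\Gamma(\Sigma)$ need not be discrete. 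Once that separation is secured, the direct-sum structure of the root decomposition does the rest, the only remaining bookkeeping being to confirm that no two distinct Demazure roots collapse onto a single root line.
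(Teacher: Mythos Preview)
Your argument is correct, though you read the lemma as asserting linear independence of the infinitesimal generators (root vectors) $dy_m$, whereas the paper's own proof concerns the roots $\alpha_m\in\Lie((\CT)^S)^\vee$ themselves. Both begin identically, extracting the root from Lemma~\ref{lem:action_on_root_s_of_the_torus} and identifying it with $\sum_{s'\in S}\la m,\rho(s')\ra\,\ve_{s'}$. From there the arguments diverge: the paper observes the sign pattern---the distinguished $s$-coordinate of $\alpha_m$ equals $+1$ while all others are $\le 0$---and concludes directly; you instead prove injectivity of $m\mapsto\chi_m$ by noting that an element of $\Gamma(\Sigma)^\vee$ is determined by its values on the generators $\rho(S)$, and then pass to linear independence of the $dy_m$ via the direct-sum root decomposition. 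Your route is slightly longer but makes the structural reason for injectivity explicit and isolates exactly what the downstream arguments require (namely that distinct $m$ give distinct roots, so that the $\Lie(y_m)$ occupy distinct summands). Indeed the paper's literal assertion of pairwise linear independence of the $\alpha_m$ is not quite right in the presence of semisimple Demazure roots, where $\alpha_{-m}=-\alpha_m$; what both arguments genuinely establish, and what the subsequent root decomposition actually uses, is that $m\neq m'$ forces $\alpha_m\neq\alpha_{m'}$.
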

\begin{proof}
    By Lemma \ref{lem:action_on_root_s_of_the_torus}, these roots are given by the following formula.
    \[
        \alpha_m=\ve_s+\sum_{s'\neq s} \la m,\rho(s')\ra \ve_{s'}.
    \]
    Note that \(\la m,\rho(s')\ra\le 0\) for \(s'\neq s\). Consequently, if \(m\neq m'\) then the roots \(\alpha_m\) and \(\alpha_{m'}\) are linearly independent.
\end{proof}
The following result is similar to the results of \cite[\S 9.4]{Borel}, and has a fairly standard proof. Which we record here for the sake of completeness.
\begin{proposition}\label{prop:Alisa_Lie}
    Let \(\mf{g}\) be a Lie algebra over \(\C\). Let \(\h\) be a fixed Cartan subalgebra of \(\mf{g}\).
    Consider a root decomposition of \(\mf{g}\) with respect to \(\h\)-action.
    \begin{gather}\label{eq:Lie_dec}
        \mf{g}=\mf{g}^\h\oplus \bigoplus_{\alpha\in \Phi(\h,\g)}\mf{g}_\alpha.
    \end{gather}
    Here \(\mf{g}_\alpha\) are one-dimensional roots subspaces of \(\mf{g}\) corresponding to the roots of \(\h\). The subalgebra \(\mf{g}^\h\) is the subalgebra of elements in \(\mf{g}\) that commute with \(\h\). Assume that \(\mf{k}\) is a Lie subalgebra of \(\mf{g}\) that contains \(\mf{g}^\mf{h}\). Then the decomposition \eqref{eq:Lie_dec} of \(\mf{g}\) is inherited by \(\mf{k}\). That is, we have a splitting of the following form.
    \[
        \mf{k}=\mf{g}^\h\oplus \bigoplus_{\alpha\in \Phi(\h,\g)'}\mf{g}_\alpha.
    \]
    Here \(\Phi(\h,\g)'\) is the subset of the set of roots of \(\mf{g}\) such that the corresponding roots subspace are in \(\mf{k}\).
\end{proposition}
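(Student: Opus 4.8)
The plan is to observe that $\mathfrak k$ is automatically invariant under the adjoint action of $\h$, and then to invoke the standard fact that an $\h$-invariant subspace of a module on which $\h$ acts diagonally splits as the direct sum of its intersections with the weight spaces. First I would record the inclusions $\h\subseteq\g^{\h}\subseteq\mathfrak k$: since $\h$ is abelian, every element of $\h$ commutes with all of $\h$, so $\h$ lies in its own centralizer $\g^{\h}$, which is contained in $\mathfrak k$ by hypothesis. Because $\mathfrak k$ is a subalgebra containing $\h$, it follows that $[\h,\mathfrak k]\subseteq\mathfrak k$, i.e.\ $\ad_h(\mathfrak k)\subseteq\mathfrak k$ for every $h\in\h$.

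The core step is to show that for every $x\in\mathfrak k$ each homogeneous component of $x$ with respect to the decomposition \eqref{eq:Lie_dec} again lies in $\mathfrak k$. Write $x=x_0+\sum_{j=1}^{n}x_j$ with $x_0\in\g^{\h}$ and $0\neq x_j\in\g^{\lambda_j}$, where $\lambda_1,\dots,\lambda_n\in\h^\vee$ are the distinct nonzero weights occurring in $x$ and $\g^{\lambda}$ denotes the $\lambda$-weight space for the $\ad$-action of $\h$. Since there are only finitely many weights and $\C$ is infinite, there is an $h_0\in\h$ for which $\lambda_1(h_0),\dots,\lambda_n(h_0)$ are pairwise distinct and all nonzero (the excluded locus is the finite union of the hyperplanes $\ker\lambda_j$ and $\ker(\lambda_i-\lambda_j)$, which cannot cover $\h$). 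Since $\ad_{h_0}(x_j)=\lambda_j(h_0)\,x_j$ and $\ad_{h_0}(x_0)=0$, we get $\ad_{h_0}^{\,p}(x)=\sum_{j=1}^{n}\lambda_j(h_0)^{p}\,x_j$ for $p\ge 1$; for $p=1,\dots,n$ the coefficient matrix $\bigl(\lambda_j(h_0)^{p}\bigr)_{1\le p,\,j\le n}$ is invertible, being, up to rescaling its columns by the nonzero scalars $\lambda_j(h_0)$, a Vandermonde matrix with distinct nodes. Inverting it expresses each $x_j$ as a $\C$-linear combination of $\ad_{h_0}(x),\dots,\ad_{h_0}^{\,n}(x)$, all of which lie in $\mathfrak k$; hence $x_j\in\mathfrak k$ for every $j$, and then $x_0=x-\sum_{j}x_j\in\mathfrak k$ too.

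It follows that $\mathfrak k=(\mathfrak k\cap\g^{\h})\oplus\bigoplus_{\lambda\neq 0}(\mathfrak k\cap\g^{\lambda})$. Here $\mathfrak k\cap\g^{\h}=\g^{\h}$ because $\g^{\h}\subseteq\mathfrak k$. For each nonzero weight $\lambda$ one may choose the decomposition of $\g^{\lambda}$ into one-dimensional root subspaces $\g_\alpha$ so that $\mathfrak k\cap\g^{\lambda}$ is the sum of those among them that it contains; this is automatic when $\g^{\lambda}$ is one-dimensional, and in general amounts to picking a basis of $\g^{\lambda}$ adapted to the subspace $\mathfrak k\cap\g^{\lambda}$. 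Putting $\Phi(\h,\g)'=\{\alpha\in\Phi(\h,\g):\g_\alpha\subseteq\mathfrak k\}$ then yields the claimed splitting $\mathfrak k=\g^{\h}\oplus\bigoplus_{\alpha\in\Phi(\h,\g)'}\g_\alpha$.

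I do not expect a serious obstacle: this is the familiar statement that a submodule of a semisimple $\h$-module is spanned by weight vectors. The only points needing a little care are that one is free to adapt the fixed family of one-dimensional root subspaces inside a weight space of multiplicity greater than one to the subspace $\mathfrak k\cap\g^{\lambda}$ (in the application to $\Aut_g(\Sigma)$ the relevant multiplicities are all one, so this is vacuous), and that the given root decomposition is a genuine simultaneous eigenspace decomposition for $\ad_{\h}$ — which is what makes $\ad_{h_0}(x_j)=\lambda_j(h_0)x_j$ valid — and this holds here because $\h$ is the Lie algebra of a torus.
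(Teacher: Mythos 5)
Your proof is correct and follows essentially the same route as the paper: note that $\g^{\h}\subseteq\mathfrak{k}$ forces $\ad_{\h}$-invariance of $\mathfrak{k}$, then extract the homogeneous components of any $x\in\mathfrak{k}$ by iterating $\ad_{h}$ and inverting a Vandermonde matrix, which is exactly the content of the paper's Lemma \ref{lem:Alisa_Vandermonde}. You are in fact slightly more careful than the paper on two implicit points — the generic choice of $h_0$ making the values $\lambda_j(h_0)$ distinct and nonzero, and the adaptation of the one-dimensional root subspaces inside a weight space of multiplicity greater than one — both of which are harmless (and vacuous in the intended application).
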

We reduce the proof of the statement to the following simpler claim.
\begin{lemma}\label{lem:Alisa_Vandermonde}
    In the setting of Proposition \ref{prop:Alisa_Lie} let \(\mf{s}=\la \h,\sum_{\alpha\in \Phi(\h,\g)}\lambda_\alpha e_\alpha\ra\) be a subalgebra of the Lie algebra \(\mf{k}\) generated by \(\h\) and a single element \(x=\sum_{i=1}^m\lambda_{\alpha_i} e_{\alpha_i}\) equal to a sum of elements of root subspaces corresponding to \emph{different roots} with non-zero coefficients. Then the subalgebra \(\mf{s}\) contains each of the summands \(e_{\alpha_i}\) present in the decomposition of~\(x\).
\end{lemma}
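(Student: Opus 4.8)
The plan is to exploit the $\h$-action on the element $x$ by iterated brackets and then solve a Vandermonde-type linear system to isolate each summand $e_{\alpha_i}$. First I would fix $h\in\h$ and observe that, since each $e_{\alpha_i}$ lies in the root subspace $\g_{\alpha_i}$, we have $\ad_h(x)=\sum_{i=1}^m \alpha_i(h)\lambda_{\alpha_i}e_{\alpha_i}$, and more generally $\ad_h^{\,k}(x)=\sum_{i=1}^m \alpha_i(h)^k\lambda_{\alpha_i}e_{\alpha_i}$ for every $k\ge 0$. Each of these elements lies in $\mf{s}$ because $\mf{s}$ is a subalgebra containing $\h$ and $x$. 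So $\mf{s}$ contains the span of $\{\ad_h^{\,k}(x)\}_{k=0}^{m-1}$, which is the image of the Vandermonde matrix built from the scalars $\alpha_1(h),\ldots,\alpha_m(h)$ applied to the vector $(\lambda_{\alpha_1}e_{\alpha_1},\ldots,\lambda_{\alpha_m}e_{\alpha_m})$.

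The second step is to choose $h$ so that the scalars $\alpha_1(h),\ldots,\alpha_m(h)$ are pairwise distinct. Since the roots $\alpha_1,\ldots,\alpha_m$ are distinct elements of $\h^\vee$, each difference $\alpha_i-\alpha_j$ (for $i\ne j$) is a nonzero linear functional on $\h$; the union of the finitely many hyperplanes $\ker(\alpha_i-\alpha_j)$ is a proper subset of $\h$ (as $\h$ is a vector space over the infinite field $\C$), so a suitable $h$ exists outside all of them. For this $h$ the Vandermonde matrix $\big(\alpha_i(h)^k\big)_{0\le k\le m-1,\,1\le i\le m}$ is invertible, hence from the $m$ elements $\ad_h^{\,k}(x)$, $k=0,\ldots,m-1$, one recovers each $\lambda_{\alpha_i}e_{\alpha_i}$ as a $\C$-linear combination. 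Since all $\lambda_{\alpha_i}\ne 0$, this gives $e_{\alpha_i}\in\mf{s}$ for every $i$, as claimed.

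Finally I would record how this lemma yields Proposition \ref{prop:Alisa_Lie}: given the subalgebra $\mf{k}\supseteq\g^\h$, take any $x\in\mf{k}$ and write it along the decomposition \eqref{eq:Lie_dec} as $x=x_0+\sum_{i}\lambda_{\alpha_i}e_{\alpha_i}$ with $x_0\in\g^\h$ and the sum over the roots appearing with nonzero coefficient; since $x_0\in\g^\h\subseteq\mf{k}$, the element $x-x_0$ lies in $\mf{k}$, and applying Lemma \ref{lem:Alisa_Vandermonde} to the subalgebra generated by $\h$ and $x-x_0$ (which is contained in $\mf{k}$) shows each $e_{\alpha_i}\in\mf{k}$. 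Hence $\mf{k}=\g^\h\oplus\bigoplus_{\alpha\in\Phi(\h,\g)'}\g_\alpha$ where $\Phi(\h,\g)'$ is exactly the set of roots whose root subspace meets $\mf{k}$ nontrivially.

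I expect the only real subtlety to be the second step — verifying that an $h\in\h$ separating all the roots $\alpha_i$ exists; this is where the hypothesis that the $\alpha_i$ are \emph{distinct} (not merely nonzero) is used, and it is why the statement is phrased in terms of distinct roots. Everything else is the standard Vandermonde argument and is routine.
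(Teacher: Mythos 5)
Your proof is correct and follows essentially the same route as the paper: iterate $\ad_h$ on $x$ and use a Vandermonde determinant to isolate each summand $e_{\alpha_i}$. In fact you are slightly more careful than the paper, which leaves implicit the need to pick $h\in\h$ with the values $\alpha_i(h)$ pairwise distinct — your separating-hyperplanes argument supplies exactly that missing detail.
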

\begin{proof}
    Consider the action of \(\h\) on the element \(x\). Let \(h\in\h\), then by definition of the root subspaces, we have the following. 
    \[
        \ad_h(x)=\sum_{i=1}^m \lambda_i \alpha_i(h) e_{\alpha_i}.
    \]
    Iterating the action of \(\ad_h\) we obtain the following formula.
    \[
        \ad_h^k(x)=\sum_{i=1}^m \lambda_i \alpha_i(h)^k e_{\alpha_i}.
    \]
    Consequently, using the Vandermonde determinant, we see that the elements \(\ad^k_h(x)\) for \(0\le k\le m\) are linearly independent. Thus, the subalgebra \(\mf{s}\) contains each of the vectors present in the decomposition of \(x\).
\end{proof}
The proposition now follows from Lemma \ref{lem:Alisa_Vandermonde}.
\begin{lemma}
    The Lie algebra of the group \(\Aut_g(\Sigma)\) has a root decomposition with respect to the Cartan subalgebra \(\Lie\left((\CT)^S\right)\). The root subspaces are the Lie subalgebras of the one-parametric subgroups \(y_m(\lambda)\) for \(m\in \DR(\Sigma).\) That is we have the following splitting.
    \[
        \Lie\left(\Aut_g(\Sigma)\right)=\Lie\left((\CT)^S\right)\oplus \bigoplus_{m\in \DR(\Sigma)}\Lie(y_m(\lambda)).
    \]
\end{lemma}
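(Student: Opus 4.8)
The plan is to combine the abstract root–space formalism above (the Borel-type proposition, together with the fact just proved that $\mathfrak{h}:=\Lie((\CT)^S)$ is a Cartan subalgebra of $\mathfrak{g}:=\Lie(\Aut_g(\Sigma))$) with the explicit description of $\Aut_g(\Sigma)$ from Proposition \ref{prop:cox_struct}; completeness of $\Sigma$ is in force throughout, as it is for the whole of Section \ref{sec:cox_ring}. Since $(\CT)^S$ is a torus, $\ad\mathfrak{h}$ acts semisimply on $\mathfrak{g}$, so $\mathfrak{g}=\bigoplus_{\alpha\in\mathfrak{h}^\vee}\mathfrak{g}_\alpha$ is the honest weight decomposition and $\mathfrak{g}_0$ is the centralizer $Z_\mathfrak{g}(\mathfrak{h})$. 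As $\mathfrak{h}$ is a Cartan subalgebra it is self-normalizing, and being abelian it lies in its own centralizer; hence $\mathfrak{g}^\mathfrak{h}:=\mathfrak{g}_0=\mathfrak{h}$. This produces the $\mathfrak{g}^\mathfrak{h}$-summand of the asserted decomposition and reduces the statement to identifying the nonzero weight spaces $\mathfrak{g}_\alpha$ with the lines $\Lie(y_m)$.

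Next I would place the subgroups $y_m$ inside this decomposition. For $m\in\DR(\Sigma)$ with distinguished vertex $s=s(m)$, Lemma \ref{lem:action_on_root_s_of_the_torus} shows that $(\CT)^S$ acts on $y_m$ by the character $t\mapsto t_s\prod_{s'\neq s}t_{s'}^{\langle m,\rho(s')\rangle}$; differentiating, $\ad\mathfrak{h}$ acts on the one-dimensional subspace $\Lie(y_m)$ by the functional $\alpha_m=\varepsilon_s+\sum_{s'\neq s}\langle m,\rho(s')\rangle\varepsilon_{s'}$, which is nonzero, so $0\neq\Lie(y_m)\subseteq\mathfrak{g}_{\alpha_m}$. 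By the lemma recorded just above, the $\alpha_m$ are pairwise distinct as $m$ runs over $\DR(\Sigma)$, so $\bigoplus_{m\in\DR(\Sigma)}\Lie(y_m)$ is a direct sum of distinct weight lines contained in $\bigoplus_{\alpha\neq 0}\mathfrak{g}_\alpha$. It remains to see that these lines exhaust the nonzero weight spaces and that each $\mathfrak{g}_\alpha$, $\alpha\neq 0$, is one-dimensional.

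For this I would use Proposition \ref{prop:cox_struct}(iv): $\Aut_g(\Sigma)=\mathfrak{G}_r(\Sigma)\ltimes\mathfrak{R}(\Sigma)$, so $\mathfrak{g}=\Lie(\mathfrak{G}_r)\oplus\Lie(\mathfrak{R})$ with $\Lie(\mathfrak{R})$ an $\ad\mathfrak{h}$-stable ideal. After reducing to $S=S^\red$ (the ghost coordinates split off as a central torus by Corollary \ref{cor:aut_g_red_of_gv}, contributing only to $\mathfrak{h}$), one has $\sum_i|S_i|=|S|$, so $(\CT)^S$ is a \emph{maximal} torus of the reductive group $\mathfrak{G}_r=\prod_i GL(R(\Sigma)_{\alpha_i}^g)$; the roots of $(\CT)^S$ on $\Lie(\mathfrak{G}_r)$ are therefore the $\mathfrak{gl}$-type roots $\varepsilon_s-\varepsilon_t$ with $s,t$ lying in a common block $S_i$, each root space one-dimensional, and these are exactly the $\alpha_m$ for the semisimple roots $m\in\DR_s(\Sigma)$. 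For the unipotent part, Proposition \ref{prop:cox_struct}(ii)--(iii) identifies $\mathfrak{R}(\Sigma)$ $(\CT)^S$-equivariantly with $\prod_i\Hom_\C(R(\Sigma)_{\alpha_i}^g,R(\Sigma)_{\alpha_i}^d)$, whose weight decomposition under $(\CT)^S$ is spanned by the rank-one maps $x_s\mapsto x^\nu$ indexed by a generator $x_s$ and a decomposable monomial $x^\nu$ of the same $L_\Sigma$-degree, the corresponding weight being $\sum_{s'}\nu_{s'}\varepsilon_{s'}-\varepsilon_s$. The crucial point — and the only place completeness is genuinely used — is that such an $x^\nu$ is never divisible by $x_s$: writing $\nu-e_s=(A_\Sigma^\Z)^\vee(\gamma)$ and using $\nu\ge 0$ would give $\langle\gamma,\rho(t)\rangle\ge 0$ for all $t$, whence pairing with the strictly positive relation $\sum_t\lambda_t\rho(t)=0$ of Lemma \ref{lemma:positive_combination} forces $\gamma=0$, i.e. $x^\nu=x_s$, which is not decomposable. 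Consequently $\nu_s=0$, the entry $-1$ at $s$ of the weight $\sum_{s'}\nu_{s'}\varepsilon_{s'}-\varepsilon_s$ is its unique negative entry, so the weight recovers the pair $(x_s,x^\nu)$; the weights are thus pairwise distinct, each weight space is one-dimensional, and each such pair corresponds bijectively (exactly as in Definition \ref{def:Demazure_roots} and Construction \ref{con:aut_assoc_to_a_root}) to a unipotent Demazure root $m$ with weight $\alpha_m$. Putting the two pieces together, every nonzero weight space of $\mathfrak{g}$ is a one-dimensional line of weight $\alpha_m$ for a unique $m\in\DR(\Sigma)$; since $\Lie(y_m)$ is a nonzero subspace of it, it \emph{is} it, and $\mathfrak{g}=\mathfrak{h}\oplus\bigoplus_{m\in\DR(\Sigma)}\Lie(y_m)$. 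As a cross-check, summing dimensions returns $\dim\Aut_g(\Sigma)=|S|+\sum_i|S_i|(\dim_\C R(\Sigma)_{\alpha_i}-1)$, matching Proposition \ref{prop:cox_struct}(i).

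The main obstacle is precisely this last identification. Abstractly, the Lie subalgebra generated by $\mathfrak{h}$ and the $\Lie(y_m)$ — which is all of $\mathfrak{g}$ by Proposition \ref{prop:Aut_g_generation} and the characteristic-zero correspondence between connected subgroups and subalgebras — could a priori acquire extra weight spaces of weight $\alpha_m+\alpha_{m'}$ not itself of the form $\alpha_{m''}$, and ruling this out requires genuine input: here it is the matrix description of $\Aut_g(\Sigma)$ from Proposition \ref{prop:cox_struct} together with the completeness argument above. Everything else is bookkeeping with the $L_\Sigma$-grading, including the reinstatement of the ghost coordinates as a central torus at the end.
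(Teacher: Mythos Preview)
Your proof is correct and considerably more explicit than the paper's, which reads in its entirety ``The result is immediate from Proposition \ref{prop:Aut_g_generation}.'' The paper thus relies on the generation statement together with the surrounding lemmas (distinctness of the weights \(\alpha_m\), the Borel-type root decomposition, and implicitly the dimension formula of Proposition \ref{prop:cox_struct}\ref{prop:cox_struct_1}) without spelling out why generation by the torus and the \(y_m\)'s forces the vector-space direct sum; you correctly flag that brackets \([\Lie(y_m),\Lie(y_{m'})]\) could in principle produce new weight spaces, and you close this gap by a different route: you use the Levi decomposition of Proposition \ref{prop:cox_struct}(iv) to compute \emph{all} weight spaces of \(\mathfrak{g}\) directly, identifying those in \(\Lie(\mathfrak{G}_r)\) with the semisimple Demazure roots via the \(\mathfrak{gl}\)-type roots \(\varepsilon_s-\varepsilon_t\), and those in \(\Lie(\mathfrak{R})\) with the unipotent Demazure roots via the rank-one maps \(x_s\mapsto x^\nu\). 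Your completeness argument showing \(\nu_s=0\) (pairing \(\gamma\) against the positive relation of Lemma \ref{lemma:positive_combination}) is the genuinely new ingredient, and it is what guarantees that the unipotent weights are pairwise distinct and hence one-dimensional --- this is exactly the content hidden behind the paper's ``immediate''. One minor terminological slip: you invoke ``self-normalizing'' for the Cartan subalgebra, but in this paper's convention \(\mathfrak{h}\) is merely a \emph{maximal abelian} subalgebra; what you actually need and use is that it equals its own centralizer, which does follow from maximality. The cross-check against the dimension formula at the end is a nice sanity check that the paper's approach would also ultimately rest on.
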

\begin{proof}
    The result is immediate from Proposition \ref{prop:Aut_g_generation}.
\end{proof}

\section{Normalizers of Fan Tori}\label{sec:Norm_of_GH}
In this section, we a prove a GAGA-type result for holomorphic automorphisms of the Cox construction of a complete generalized fan normalizing the fan tori. That is, we show that all such automorphisms are algebraic. 
The following basic result from convex geometry will be extremely useful in the sequel.
\begin{lemma}[{\cite[Corollary 1.2.8]{bupa15}}]\label{lem:compact_intersection}
    The intersection of halfspaces \(\{x\in \R^n\mid \la x,\alpha_i\ra \ge b_i\}\) for \(1\le i\le m\) is compact if and only if there exists a vanishing linear combination of the vectors \(\alpha_i\) with positive coefficients.
\end{lemma}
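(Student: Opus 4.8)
The plan is to deduce this from the elementary geometry of recession cones together with conic (Farkas-type) duality; note that the constants $b_i$ play no role in boundedness, so everything is controlled by the vectors $\alpha_i$. Write $P$ for the intersection of the halfspaces and $C=\Rge\langle\alpha_1,\dots,\alpha_m\rangle$ for the closed convex cone they generate. I would treat the only interesting case $P\neq\emptyset$, which is automatic in every application in this paper because there $\{\alpha_i\}$ equals the set $\rho(S)$ of a complete generalized fan and hence spans the ambient space.

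For the \enquote{only if} direction, suppose $P$ is compact. If some $y\neq 0$ satisfied $\langle\alpha_i,y\rangle\ge 0$ for all $i$, then for any $x_0\in P$ the ray $x_0+\Rge y$ would lie entirely in $P$, contradicting boundedness; hence the dual cone $C^\vee=\{y:\langle\alpha_i,y\rangle\ge 0\ \forall i\}$ equals $\{0\}$. Since $C$ is a closed convex cone it equals its bidual, so $C=\{0\}^\vee=\R^n$. In particular each $-\alpha_j$ lies in $C$, say $-\alpha_j=\sum_i c_{ij}\alpha_i$ with $c_{ij}\ge 0$; summing the relations $\alpha_j+\sum_i c_{ij}\alpha_i=0$ over $j$ gives $\sum_i\bigl(1+\sum_j c_{ij}\bigr)\alpha_i=0$, a vanishing combination all of whose coefficients are $\ge 1>0$.

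For the \enquote{if} direction, assume $\sum_i\lambda_i\alpha_i=0$ with all $\lambda_i>0$. The set $P$ is closed, being an intersection of closed halfspaces, so it suffices to bound it. Fix $x\in P$; for each $j$ the identity $\lambda_j\langle\alpha_j,x\rangle=-\sum_{i\neq j}\lambda_i\langle\alpha_i,x\rangle$ together with $\langle\alpha_i,x\rangle\ge b_i$ and $\lambda_i>0$ forces $\langle\alpha_j,x\rangle\le -\lambda_j^{-1}\sum_{i\neq j}\lambda_i b_i$, so each linear form $\langle\alpha_j,\cdot\rangle$ is bounded on $P$ both below (by $b_j$) and above. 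Picking a subset of the $\alpha_j$ that is a basis of $\R^n$ then displays $P$ inside a bounded box in those coordinates, so $P$ is bounded, hence compact. The one point that needs care — and the only obstacle worth flagging — is precisely this spanning step: without $\operatorname{span}\{\alpha_i\}=\R^n$ the assertion is false (in $\R^2$ the slab $0\le x_1\le 1$ has $\alpha_1+\alpha_2=0$ with positive coefficients yet is noncompact), and nonemptiness of $P$ is likewise used in the recession-cone argument above. I would therefore either carry $P\neq\emptyset$ and $\operatorname{span}\{\alpha_i\}=\R^n$ as standing hypotheses, or, as is done here, only ever invoke the lemma through the rationalization of a complete generalized fan, where Lemma \ref{lemma:positive_combination} supplies the positive relation and completeness supplies the spanning; beyond this bookkeeping there is no real difficulty.
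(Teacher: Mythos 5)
The paper does not actually prove this lemma: it is quoted verbatim from \cite[Corollary 1.2.8]{bupa15} and used as a black box, so there is no internal argument to compare yours against. Your self-contained proof is correct: the \enquote{only if} direction via the recession cone (a nonzero \(y\) with \(\la \alpha_i,y\ra\ge 0\) for all \(i\) gives a ray in \(P\), hence the dual cone is \(\{0\}\), biduality gives \(C=\R^n\), and summing the expressions of the \(-\alpha_j\) produces the positive relation) and the \enquote{if} direction by pairing each inequality against the relation \(\sum_i\lambda_i\alpha_i=0\) to bound every \(\la\alpha_j,\cdot\ra\) on \(P\) from both sides, then extracting a basis from the \(\alpha_j\). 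This is essentially the standard argument, and the summation trick in your first half mirrors how the paper itself proves Lemma \ref{lemma:positive_combination}.

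Your two caveats are genuine and worth stating: as literally quoted the \enquote{if} direction fails without \(\operatorname{span}\{\alpha_i\}=\R^n\) (your slab example) and the \enquote{only if} direction fails for \(P=\emptyset\), so the clean statement needs \(P\neq\emptyset\) and the spanning hypothesis (or one should read them into the citation). They are harmless here: the paper only ever invokes the compactness (\enquote{if}) direction — in Corollary \ref{cor:fin_dim_of_cox} and Proposition \ref{prop:Norm_of_G} — and in both places the normals span, either because the \(\rho(s)\) of a complete generalized fan span the ambient space or because the standard basis vectors occur among the normals of the orthant; Lemma \ref{lemma:positive_combination} supplies the positive relation exactly as you say. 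So beyond making those hypotheses explicit, your proof is complete and matches how the lemma is used.
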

\subsection{Normalizer of the complex fan torus}

\begin{proposition}\label{prop:Norm_of_G}
    Let \(\Sigma=(S,K,V,\rho)\) be a complete generalized fan.  Let \(\phi\) be a holomorphic automorphism of~\(\C^S\) normalizing the group \(G_\Sigma\). Then \(\phi\) is a polynomial automorphism.
\end{proposition}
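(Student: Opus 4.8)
The plan is to show that each component of $\phi$ is forced to be a $G_\Sigma$-eigenfunction for the coordinatewise action, and that over a complete fan such eigenspaces are finite-dimensional, hence consist of polynomials. Write $\phi=(\phi_s)_{s\in S}$, so $\phi_s=\phi^{*}z_s$ is an entire function on $\C^S$ with a globally convergent power series $\phi_s=\sum_{\alpha\in\N^S}c_{s,\alpha}z^\alpha$; since $\phi$ is a biholomorphism no $\phi_s$ is constant, in particular $\phi_s\not\equiv0$. Recall that $G_\Sigma\hookrightarrow(\CT)^S$ acts on $\C^S$ coordinatewise, so $z^\alpha(g\cdot z)=\bigl(\prod_{s}g_s^{\alpha_s}\bigr)z^\alpha(z)$ for $g\in G_\Sigma$, and two monomials $z^\alpha,z^\beta$ have the same $G_\Sigma$-weight precisely when $\alpha\equiv\beta$ in $L_\Sigma$ (this is the identification $L_\Sigma\cong\Hom(G_\Sigma,\CT)$). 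Equivalently, the monomials of any fixed $G_\Sigma$-weight $\beta$ are exactly the standard monomial basis of the graded component $R(\Sigma)_\beta$.

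Now I would feed in the normalizer hypothesis. Conjugation by $\phi$ is a group automorphism $c\colon G_\Sigma\to G_\Sigma$, $g\mapsto\phi\circ g\circ\phi^{-1}$, so from $\phi\circ g=c(g)\circ\phi$ we get, for all $g\in G_\Sigma$ and $z\in\C^S$,
\[
    \phi_s(g\cdot z)=\bigl(\phi(g\cdot z)\bigr)_s=\bigl(c(g)\cdot\phi(z)\bigr)_s=(c(g))_s\,\phi_s(z).
\]
Expanding the left-hand side as a power series and comparing the coefficient of each $z^\alpha$ yields $c_{s,\alpha}\prod_{s'}g_{s'}^{\alpha_{s'}}=(c(g))_s\,c_{s,\alpha}$ for all $\alpha$ and all $g$; hence whenever $c_{s,\alpha}\neq0$ the character $g\mapsto\prod_{s'}g_{s'}^{\alpha_{s'}}$ coincides with the single character $g\mapsto(c(g))_s$. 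So all monomials occurring in $\phi_s$ share one common $G_\Sigma$-weight: fixing $\alpha_0$ with $c_{s,\alpha_0}\neq0$ and putting $\beta_s=[\alpha_0]\in L_\Sigma$, the support of $\phi_s$ lies in the monomial basis of $R(\Sigma)_{\beta_s}$. Since $\Sigma$ is complete, Corollary \ref{cor:fin_dim_of_cox} gives $\dim_\C R(\Sigma)_{\beta_s}<\infty$, so $\phi_s$ is a polynomial. Applying the same argument to $\phi^{-1}$, which also normalizes $G_\Sigma$, shows $\phi^{-1}$ is polynomial; therefore $\phi$ is a polynomial automorphism of $\C^S$.

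The only step that genuinely uses the hypothesis — and the crux of the whole argument — is the finiteness of $R(\Sigma)_{\beta_s}$, i.e.\ Corollary \ref{cor:fin_dim_of_cox}, which rests on the compactness of the Newton polytope of a homogeneous element via Lemma \ref{lem:compact_intersection} together with Lemma \ref{lemma:positive_combination}; everything else is routine character bookkeeping. Completeness cannot be dropped: for a too-small $G_\Sigma$, e.g.\ a diagonal $\CT\subset(\CT)^2$ with the $G_\Sigma$-invariant monomial $z_1z_2$, the map $(z_1,z_2)\mapsto\bigl(z_1e^{z_1z_2},\,z_2e^{-z_1z_2}\bigr)$ is a non-polynomial biholomorphism of $\C^2$ that even commutes with $G_\Sigma$.
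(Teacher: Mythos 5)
Your argument is correct, and at bottom it is the paper's own proof with the finiteness step repackaged. The coefficient identity $c_{s,\alpha}\,g^{\alpha}=(c(g))_s\,c_{s,\alpha}$ is exactly the system of equations the paper extracts from the normalizing condition; where the paper then applies Lemma \ref{lemma:positive_combination} and Lemma \ref{lem:compact_intersection} directly to the level sets $\la\Re(w),\iota\ra=\mathrm{const}$ in the nonnegative orthant, you instead observe that the support of each $\phi_s$ lies in a single $L_\Sigma$-class and quote Corollary \ref{cor:fin_dim_of_cox}, whose proof rests on those same two lemmas. What your route buys is uniformity: the components of an automorphism of $\C^S$ are entire, so all exponents lie in $\N^S$ and ghost vertices need no separate treatment, whereas the paper's induction on ghost vertices (allowing Laurent-type exponents) is really tailored to automorphisms of $U(\Sigma)$. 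The one point you leave implicit is that two monomials restricting to the same character of $G_\Sigma$ are congruent modulo $\im(A_\Sigma^\Z)^\vee$; this does hold, because for $m\in\Z^S$ trivial on $G_\Sigma$ the $\C$-linear functional $w\mapsto\la m,w\ra$ on $\ker A_\Sigma^\C$ has image $\{0\}$ or all of $\C$, and only $\{0\}$ fits inside $2\pi i\Z$, so $m=\xi\circ A_\Sigma^\R$ for some $\xi\in V^\vee$, and $\xi\vert_{\Gamma(\Sigma)}\in\Gamma(\Sigma)^\vee$ exhibits $m\in\im(A_\Sigma^\Z)^\vee$ via Lemma \ref{lem:integral_tori_dual_map}. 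Finally, a small slip in your closing aside: $z_1z_2$ is invariant under the antidiagonal subtorus $\{(t,t^{-1})\}$, not the diagonal one; with that correction your example does show that completeness cannot be dropped.
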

\begin{proof}
    First, we assume that \(K\) has no ghost vertices. The condition for \(\phi\) to normalize the group \(G_\Sigma\) is equivalent to the following system of equations:
    \begin{gather*}
        \wh{g}^j \cd \phi^s=(g^*\phi)^s,\quad s\in S. 
    \end{gather*}
    Here \(g\) is a fixed element of \(G_\Sigma\) and the element \(\wh{g}\) is some element of \(G_\Sigma\). Consider the Taylor series expansion of \(\phi\) at the origin:
    \begin{gather}\label{eq:aut_series_decomposition}
        \phi^j(z)=\sum_{\iota\in\N^m} a_\iota^j z^\iota.
    \end{gather}
    The system of equations above can be reformulated in terms of the Taylor series of~\(\phi\) at the origin as follows. 
    \begin{gather}\label{formula:TS_exp_G}
        \wh{g}^s \sum_{\iota\in\N^S} a_\iota^s z^\iota=\sum_{\iota\in\N^S} a_\iota^s (g z)^\iota, \quad s\in S.
    \end{gather}
    Each element of the group \(G_\Sigma\) has the form \(\exp(w)\) for some \(w\in\C^S.\) Thus system of equation \eqref{formula:TS_exp_G} can be reformulated in the following form.
    \begin{gather}\label{formula:TS_exp_G_sep}
        \exp({\wh{w}^s}a^s_\iota) = a^j_\iota \exp({\la w, \iota\ra}),\quad \iota\in\N^S,\quad s\in S.
    \end{gather}
    Here the function \(\la w,\iota\ra\) is defined by the following formula.
    \begin{gather*}
        \la w,\iota\ra=\sum_{s\in S} \iota^s \cd w^s.
    \end{gather*}
    Now system \eqref{formula:TS_exp_G_sep} is equivalent to the following system.
    \begin{gather*}
        \begin{cases}
        e^{\wh{w}^s}=e^{\la w,\iota\ra}\\
        \text{or }\\
        a_\iota^s=0
        \end{cases},\quad \iota\in\N^S,\quad s\in S.
    \end{gather*}
    Further, we have the following equivalent system of equations.
    \begin{gather*}
    \begin{cases}
    \wh{w}^s=\la w,\iota\ra + 2\pi i \Z.\\
    \text{or }\\
    a_\iota^s=0
    \end{cases},\quad \iota\in\N^S,\quad s\in S.
    \end{gather*}
    By considering the imaginary and real parts separately, we obtain the following system of equations.
    \begin{gather*}
    \begin{cases}
    \Re(\wh{w}^s)=\la \Re(w),\iota\ra, \text{ and } \Im(\wh{w}^s)=\la \Im(w),\iota\ra + 2\pi \Z.\\
    \text{or }\\
    a_\iota^s=0
    \end{cases},\quad \iota\in\N^S,\quad s\in S.
    \end{gather*}
    Here \(\la\Re(w),\iota\ra\) and \(\la\Im(w),\iota\ra\) are the standard dot products in \(\R^S.\) Since the fan \(\Sigma\) is complete the Lemma \ref{lemma:positive_combination} implies existence of an element \(w\in \ker A_\Sigma^{\C}\) with strictly positive real and imaginary parts. Hence, the intersection of the level set \(\la w,\iota\ra=c\) for any \(c\in \R\) with the non-negative orthant is compact by Lemma \ref{lem:compact_intersection}.
    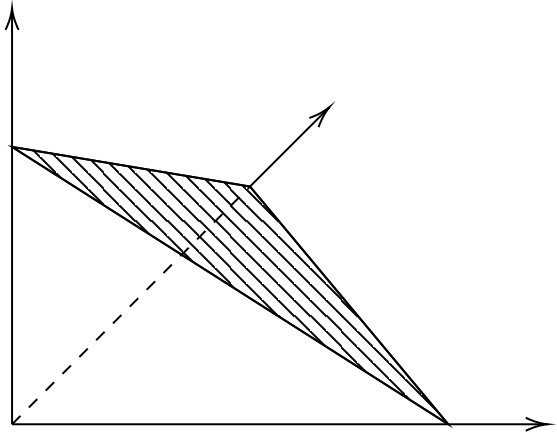
\begin{figure}[H]

 
\tikzset{
pattern size/.store in=\mcSize, 
pattern size = 5pt,
pattern thickness/.store in=\mcThickness, 
pattern thickness = 0.3pt,
pattern radius/.store in=\mcRadius, 
pattern radius = 1pt}
\makeatletter
\pgfutil@ifundefined{pgf@pattern@name@_jum3ag698}{
\pgfdeclarepatternformonly[\mcThickness,\mcSize]{_jum3ag698}
{\pgfqpoint{0pt}{-\mcThickness}}
{\pgfpoint{\mcSize}{\mcSize}}
{\pgfpoint{\mcSize}{\mcSize}}
{
\pgfsetcolor{\tikz@pattern@color}
\pgfsetlinewidth{\mcThickness}
\pgfpathmoveto{\pgfqpoint{0pt}{\mcSize}}
\pgfpathlineto{\pgfpoint{\mcSize+\mcThickness}{-\mcThickness}}
\pgfusepath{stroke}
}}
\makeatother
\tikzset{every picture/.style={line width=0.75pt}} 

\begin{tikzpicture}[x=0.75pt,y=0.75pt,yscale=-1,xscale=1]

\draw    (90,220) -- (358,220) ;
\draw [shift={(360,220)}, rotate = 180] [color={rgb, 255:red, 0; green, 0; blue, 0 }  ][line width=0.75]    (10.93,-3.29) .. controls (6.95,-1.4) and (3.31,-0.3) .. (0,0) .. controls (3.31,0.3) and (6.95,1.4) .. (10.93,3.29)   ;
\draw    (90,220) -- (90,12) ;
\draw [shift={(90,10)}, rotate = 90] [color={rgb, 255:red, 0; green, 0; blue, 0 }  ][line width=0.75]    (10.93,-3.29) .. controls (6.95,-1.4) and (3.31,-0.3) .. (0,0) .. controls (3.31,0.3) and (6.95,1.4) .. (10.93,3.29)   ;
\draw    (90,80) -- (210,100) ;
\draw    (90,80) -- (310,220) ;
\draw    (210,100) -- (310,220) ;
\draw  [dash pattern={on 4.5pt off 4.5pt}]  (90,220) -- (210,100) ;
\draw  [pattern=_jum3ag698,pattern size=6pt,pattern thickness=0.75pt,pattern radius=0pt, pattern color={rgb, 255:red, 0; green, 0; blue, 0}] (90,80) -- (210,100) -- (310,220) -- (90,80) -- cycle ;
\draw    (210,100) -- (248.59,61.41) ;
\draw [shift={(250,60)}, rotate = 135] [color={rgb, 255:red, 0; green, 0; blue, 0 }  ][line width=0.75]    (10.93,-3.29) .. controls (6.95,-1.4) and (3.31,-0.3) .. (0,0) .. controls (3.31,0.3) and (6.95,1.4) .. (10.93,3.29)   ;

\end{tikzpicture}
        \caption{The case of no ghost vertices.}
    \end{figure}
    \par From this observation, it follows that there is only a finite number of possible vectors \(\iota\) which satisfy the equation \(\Re(\wh{w}^j)=\la \Re(w),\iota\ra\). Hence, only a finite number of non-zero terms exist in the Taylor expansion of \(\phi\). Thus \(\phi\) is a polynomial automorphism.
    
    The proof of the general case proceeds by induction. Assume that we proved the result for the fan \(\Sigma=\GF\). We now want to prove for the fan \(\Sigma\cup\{(s',v')\}\). We note that since \(s\) is a ghost vertex, the possible powers in the coordinate \(s\) in the presentation \eqref{eq:aut_series_decomposition} can be negative. 

    By induction, we can assume that there exist only finitely many values \(\iota\in \Z^S\) such \(a_\iota^{s}\neq 0\). We now observe that since \(\Sigma\) is complete by Lemma \ref{lem:comb_ghost} both \(v\) and \(-v\) can be expressed as positive combinations of \(\rho(s)\) for \(s\in S\). We denote such combinations by \(w_v\) and \(w_{-v}\). By construction, and the inductive assumption the intersection \(\{\la w_{v},\iota\ra=c, \la w_{-v},\iota\ra=c'\}\) with the appropriate semi-orthant is compact. Hence, the automorphism in question is polynomial. See Figures \ref{fig:ghost_1}, \ref{fig:ghost_2} below for an illustration of the argument.
\end{proof}
\begin{figure}[H]

\tikzset{every picture/.style={line width=0.75pt}} 

\begin{tikzpicture}[x=0.75pt,y=0.75pt,yscale=-1,xscale=1]

\draw    (200,160) -- (478,160) ;
\draw [shift={(480,160)}, rotate = 180] [color={rgb, 255:red, 0; green, 0; blue, 0 }  ][line width=0.75]    (10.93,-3.29) .. controls (6.95,-1.4) and (3.31,-0.3) .. (0,0) .. controls (3.31,0.3) and (6.95,1.4) .. (10.93,3.29)   ;
\draw    (200,160) -- (200,22) ;
\draw [shift={(200,20)}, rotate = 90] [color={rgb, 255:red, 0; green, 0; blue, 0 }  ][line width=0.75]    (10.93,-3.29) .. controls (6.95,-1.4) and (3.31,-0.3) .. (0,0) .. controls (3.31,0.3) and (6.95,1.4) .. (10.93,3.29)   ;
\draw    (200,160) -- (200,288) ;
\draw [shift={(200,290)}, rotate = 270] [color={rgb, 255:red, 0; green, 0; blue, 0 }  ][line width=0.75]    (10.93,-3.29) .. controls (6.95,-1.4) and (3.31,-0.3) .. (0,0) .. controls (3.31,0.3) and (6.95,1.4) .. (10.93,3.29)   ;
\draw    (200,260) -- (280,180) -- (310,150) ;
\draw    (200,50) -- (310,150) ;
\draw  [dash pattern={on 0.84pt off 2.51pt}]  (310,150) -- (450,10) ;
\draw  [dash pattern={on 0.84pt off 2.51pt}]  (310,150) -- (480,300) ;
\draw  [dash pattern={on 0.84pt off 2.51pt}]  (200,260) -- (160,300) ;
\draw  [dash pattern={on 0.84pt off 2.51pt}]  (150,0) -- (200,50) ;

\draw (241,72.4) node [anchor=north west][inner sep=0.75pt]    {$\langle w_{v} ,\iota \rangle =c$};
\draw (251,202.4) node [anchor=north west][inner sep=0.75pt]    {$\langle w_{-v} ,\iota \rangle =c'$};

\end{tikzpicture}
    \caption{The case of a single ghost vertex.}
    \label{fig:ghost_1}
\end{figure}
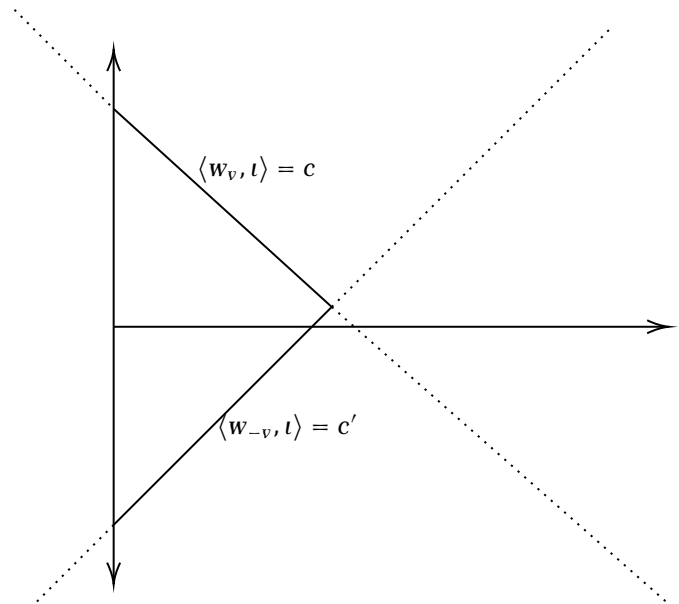
\begin{figure}[H]

 
\tikzset{
pattern size/.store in=\mcSize, 
pattern size = 5pt,
pattern thickness/.store in=\mcThickness, 
pattern thickness = 0.3pt,
pattern radius/.store in=\mcRadius, 
pattern radius = 1pt}
\makeatletter
\pgfutil@ifundefined{pgf@pattern@name@_mmn6l2qjp}{
\pgfdeclarepatternformonly[\mcThickness,\mcSize]{_mmn6l2qjp}
{\pgfqpoint{0pt}{0pt}}
{\pgfpoint{\mcSize+\mcThickness}{\mcSize+\mcThickness}}
{\pgfpoint{\mcSize}{\mcSize}}
{
\pgfsetcolor{\tikz@pattern@color}
\pgfsetlinewidth{\mcThickness}
\pgfpathmoveto{\pgfqpoint{0pt}{0pt}}
\pgfpathlineto{\pgfpoint{\mcSize+\mcThickness}{\mcSize+\mcThickness}}
\pgfusepath{stroke}
}}
\makeatother

 
\tikzset{
pattern size/.store in=\mcSize, 
pattern size = 5pt,
pattern thickness/.store in=\mcThickness, 
pattern thickness = 0.3pt,
pattern radius/.store in=\mcRadius, 
pattern radius = 1pt}
\makeatletter
\pgfutil@ifundefined{pgf@pattern@name@_cvm5rd9om}{
\pgfdeclarepatternformonly[\mcThickness,\mcSize]{_cvm5rd9om}
{\pgfqpoint{0pt}{-\mcThickness}}
{\pgfpoint{\mcSize}{\mcSize}}
{\pgfpoint{\mcSize}{\mcSize}}
{
\pgfsetcolor{\tikz@pattern@color}
\pgfsetlinewidth{\mcThickness}
\pgfpathmoveto{\pgfqpoint{0pt}{\mcSize}}
\pgfpathlineto{\pgfpoint{\mcSize+\mcThickness}{-\mcThickness}}
\pgfusepath{stroke}
}}
\makeatother

 
\tikzset{
pattern size/.store in=\mcSize, 
pattern size = 5pt,
pattern thickness/.store in=\mcThickness, 
pattern thickness = 0.3pt,
pattern radius/.store in=\mcRadius, 
pattern radius = 1pt}
\makeatletter
\pgfutil@ifundefined{pgf@pattern@name@_vk4b4ayr3 lines}{
\pgfdeclarepatternformonly[\mcThickness,\mcSize]{_vk4b4ayr3}
{\pgfqpoint{0pt}{0pt}}
{\pgfpoint{\mcSize+\mcThickness}{\mcSize+\mcThickness}}
{\pgfpoint{\mcSize}{\mcSize}}
{\pgfsetcolor{\tikz@pattern@color}
\pgfsetlinewidth{\mcThickness}
\pgfpathmoveto{\pgfpointorigin}
\pgfpathlineto{\pgfpoint{\mcSize}{0}}
\pgfusepath{stroke}}}
\makeatother

 
\tikzset{
pattern size/.store in=\mcSize, 
pattern size = 5pt,
pattern thickness/.store in=\mcThickness, 
pattern thickness = 0.3pt,
pattern radius/.store in=\mcRadius, 
pattern radius = 1pt}
\makeatletter
\pgfutil@ifundefined{pgf@pattern@name@_ebmq33xwo}{
\pgfdeclarepatternformonly[\mcThickness,\mcSize]{_ebmq33xwo}
{\pgfqpoint{0pt}{-\mcThickness}}
{\pgfpoint{\mcSize}{\mcSize}}
{\pgfpoint{\mcSize}{\mcSize}}
{
\pgfsetcolor{\tikz@pattern@color}
\pgfsetlinewidth{\mcThickness}
\pgfpathmoveto{\pgfqpoint{0pt}{\mcSize}}
\pgfpathlineto{\pgfpoint{\mcSize+\mcThickness}{-\mcThickness}}
\pgfusepath{stroke}
}}
\makeatother
\tikzset{every picture/.style={line width=0.75pt}} 

\begin{tikzpicture}[x=0.75pt,y=0.75pt,yscale=-1,xscale=1]

\draw    (200,160) -- (478,160) ;
\draw [shift={(480,160)}, rotate = 180] [color={rgb, 255:red, 0; green, 0; blue, 0 }  ][line width=0.75]    (10.93,-3.29) .. controls (6.95,-1.4) and (3.31,-0.3) .. (0,0) .. controls (3.31,0.3) and (6.95,1.4) .. (10.93,3.29)   ;
\draw    (200,160) -- (200,22) ;
\draw [shift={(200,20)}, rotate = 90] [color={rgb, 255:red, 0; green, 0; blue, 0 }  ][line width=0.75]    (10.93,-3.29) .. controls (6.95,-1.4) and (3.31,-0.3) .. (0,0) .. controls (3.31,0.3) and (6.95,1.4) .. (10.93,3.29)   ;
\draw    (200,160) -- (200,288) ;
\draw [shift={(200,290)}, rotate = 270] [color={rgb, 255:red, 0; green, 0; blue, 0 }  ][line width=0.75]    (10.93,-3.29) .. controls (6.95,-1.4) and (3.31,-0.3) .. (0,0) .. controls (3.31,0.3) and (6.95,1.4) .. (10.93,3.29)   ;
\draw    (200,160) -- (111.41,248.59) ;
\draw [shift={(110,250)}, rotate = 315] [color={rgb, 255:red, 0; green, 0; blue, 0 }  ][line width=0.75]    (10.93,-3.29) .. controls (6.95,-1.4) and (3.31,-0.3) .. (0,0) .. controls (3.31,0.3) and (6.95,1.4) .. (10.93,3.29)   ;
\draw    (200,160) -- (298.59,61.41) ;
\draw [shift={(300,60)}, rotate = 135] [color={rgb, 255:red, 0; green, 0; blue, 0 }  ][line width=0.75]    (10.93,-3.29) .. controls (6.95,-1.4) and (3.31,-0.3) .. (0,0) .. controls (3.31,0.3) and (6.95,1.4) .. (10.93,3.29)   ;
\draw  [pattern=_mmn6l2qjp,pattern size=6pt,pattern thickness=0.75pt,pattern radius=0pt, pattern color={rgb, 255:red, 0; green, 0; blue, 0}] (200,90) -- (260,100) -- (330,160) -- (200,90) -- cycle ;
\draw  [pattern=_cvm5rd9om,pattern size=6pt,pattern thickness=0.75pt,pattern radius=0pt, pattern color={rgb, 255:red, 0; green, 0; blue, 0}] (130,230) -- (200,90) -- (330,160) -- cycle ;
\draw  [pattern=_vk4b4ayr3,pattern size=6pt,pattern thickness=0.75pt,pattern radius=0pt, pattern color={rgb, 255:red, 0; green, 0; blue, 0}] (130,230) -- (200,250) -- (330,160) -- cycle ;
\draw  [pattern=_ebmq33xwo,pattern size=6pt,pattern thickness=0.75pt,pattern radius=0pt, pattern color={rgb, 255:red, 0; green, 0; blue, 0}] (200,250) -- (260,100) -- (330,160) -- cycle ;

\end{tikzpicture}
    \caption{Our ability to change the sign in the ghost-vertex coordinates gives us a compact region of possible non-zero coefficients.}
    \label{fig:ghost_2}
\end{figure}
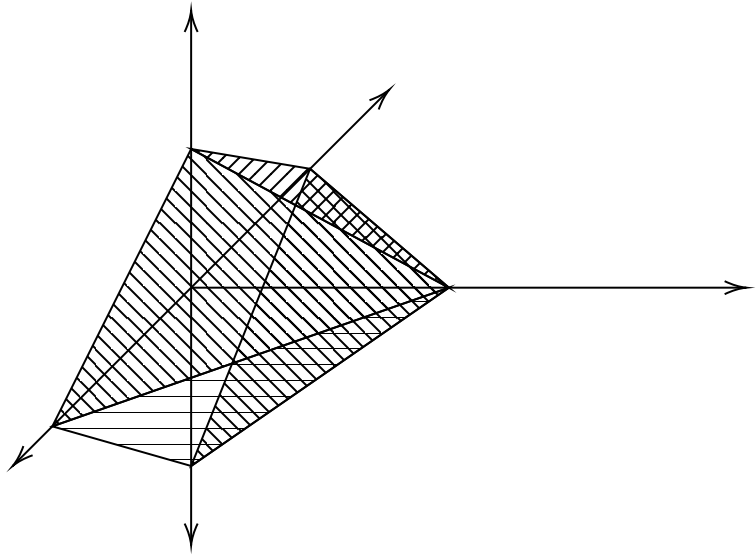
\subsection{Normalizer of the real fan torus}
\begin{proposition}
    Let \(\Sigma=(S,K,V,\rho)\) be a complete generalized fan.  Let \(\phi\) be a holomorphic automorphism of \(\C^S\) normalizing the group \(H_\Sigma\). Then \(\phi\) is a polynomial automorphism. 
\end{proposition}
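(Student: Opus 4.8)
The plan is to run the argument of Proposition~\ref{prop:Norm_of_G} essentially verbatim, replacing the subspace \(\ker A_\Sigma^\C\subseteq\C^S\) throughout by the complex subspace \(\mf{h}_\Sigma\subseteq\C^S\) of Construction~\ref{con:complex_struct}. Two preliminary observations make the substitution work. First, the defining property of \(\mf{h}_\Sigma\) is that the coordinate-wise real part restricts to an \(\R\)-linear \emph{isomorphism} \(\Re\colon\mf{h}_\Sigma\xrightarrow{\ \sim\ }\wt{H}_\Sigma^\R=\ker A_\Sigma^\R\). Second, \(\exp_\C\) restricts to a bijection \(\mf{h}_\Sigma\to H_\Sigma\): its kernel \(\mf{h}_\Sigma\cap(2\pi i\Z)^S\) consists of vectors with vanishing real part, hence lies in \(\ker(\Re|_{\mf{h}_\Sigma})=0\).

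I would first treat the case where \(K\) has no ghost vertices. Expand \(\phi\) at the origin, \(\phi^s(z)=\sum_{\iota}a_\iota^s z^\iota\). The hypothesis that \(\phi\) normalizes \(H_\Sigma\) says that for each \(\mf{u}\in\mf{h}_\Sigma\) there is a (unique) \(\wh{\mf{u}}\in\mf{h}_\Sigma\) with \(\phi\circ\exp_\C(\mf{u})=\exp_\C(\wh{\mf{u}})\circ\phi\), and the assignment \(\mf{u}\mapsto\wh{\mf{u}}\) is continuous with \(\wh{0}=0\) (it is in fact a holomorphic group automorphism of \(H_\Sigma\cong\mf{h}_\Sigma\), so \(\C\)-linear). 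Comparing coefficients of \(z^\iota\) on the two sides yields, for every \(s\in S\) and every \(\iota\) with \(a_\iota^s\neq0\), the identity \(e^{\la\mf{u},\iota\ra}=e^{\wh{\mf{u}}_s}\) for all \(\mf{u}\in\mf{h}_\Sigma\); since \(\mf{h}_\Sigma\) is connected and the difference \(\la\mf{u},\iota\ra-\wh{\mf{u}}_s\) takes values in the discrete set \(2\pi i\Z\) and vanishes at \(\mf{u}=0\), it vanishes identically, giving \(\la\mf{u},\iota\ra=\wh{\mf{u}}_s\) for all \(\mf{u}\in\mf{h}_\Sigma\).

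Completeness now enters just as before. By Lemma~\ref{lemma:positive_combination} there is a vector in \(\ker A_\Sigma^\R\) with strictly positive coordinates, and by the isomorphism of Construction~\ref{con:complex_struct} I lift it to some \(\mf{u}_0\in\mf{h}_\Sigma\) with \(\Re(\mf{u}_0)=(\lambda_s)_{s\in S}\), all \(\lambda_s>0\). Taking real parts in \(\la\mf{u}_0,\iota\ra=\wh{(\mf{u}_0)}_s\) gives \(\sum_{s\in S}\lambda_s\iota_s=\Re\!\big(\wh{(\mf{u}_0)}_s\big)\), a fixed real constant, so by Lemma~\ref{lem:compact_intersection} the slice of the non-negative orthant by this affine hyperplane is compact and hence contains only finitely many \(\iota\in\N^S\). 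Therefore each \(\phi^s\) is a polynomial, and \(\phi\), being an automorphism, is a polynomial automorphism. The general case, with ghost vertices present, follows by exactly the same induction on the number of ghost vertices as in the proof of Proposition~\ref{prop:Norm_of_G}: Lemma~\ref{lem:comb_ghost} and the remark following it supply, for each ghost vertex, the two positive linear relations among the \(\rho(s)\) needed to cut out a compact — hence finite — set of admissible exponents \(\iota\) in the appropriate semi-orthant.

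I do not expect a genuine obstacle here, since the argument is structurally the one used for the complex fan torus. The only two points that warrant a sentence — neither of them hard — are (i) that the lift \(\Re|_{\mf{h}_\Sigma}^{-1}\) indeed lands in \(\mf{h}_\Sigma\) and has the prescribed strictly positive real part, which is immediate from Construction~\ref{con:complex_struct}, and (ii) the passage from \(e^{\la\mf{u},\iota\ra}=e^{\wh{\mf{u}}_s}\) to \(\la\mf{u},\iota\ra=\wh{\mf{u}}_s\), which is the connectedness remark above. In effect this proposition is a corollary of the method of Proposition~\ref{prop:Norm_of_G}.
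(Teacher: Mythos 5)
Your proposal is correct and follows essentially the same route as the paper: Taylor-expand \(\phi\), compare coefficients against the normalization condition for \(H_\Sigma\), use Lemma \ref{lemma:positive_combination} to produce a strictly positive vector in \(\Re(\h_\Sigma)=\ker A_\Sigma^\R\), invoke Lemma \ref{lem:compact_intersection} to bound the admissible exponents, and handle ghost vertices by the same induction as in Proposition \ref{prop:Norm_of_G}. Your connectedness argument for removing the \(2\pi i\Z\) ambiguity is a slightly more careful version of the paper's appeal to injectivity of \(\exp_\C\) on \(\h_\Sigma\), but it is not a different method.
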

\begin{proof}
    The proof proceeds much like the proof of Proposition \ref{prop:Norm_of_G} does. Consider the Taylor expansion of \(\phi\). Then the condition \(\phi H\phi^{-1}=H\) is equivalent to the following system of equations for all \(w\in\ker A_\Sigma\), and some \(\wt{w}\in A\).
    \[
        \begin{cases}
        e^{\wh{w}^s}=e^{\la w,\iota\ra}\\
        \text{or }\\
        a_\iota^s=0
        \end{cases},\quad \iota\in\N^S,\quad j\in S.
    \]
    Since the exponential map \(e\) is injective on \(\h_\Sigma\), we have the following system of equations.
    \[
    \begin{cases}
    \wh{w}^s=\la w,\iota\ra\\
    \text{or }\\
    a_\iota^s=0
    \end{cases},\quad \iota\in\N^S,\quad s\in S.
    \]
    Once again, from Lemma \ref{lemma:positive_combination}, we deduce the existence of \(w\in\Re(\h_\Sigma)\) with strictly positive coordinates. Hence, the intersection of the level set \(\Re\la w,\iota\ra=c\) for any \(c\in \R\) with the non-negative orthant is compact by Lemma \ref{lem:compact_intersection}. Thus, it follows that there is only a finite number of vectors \(\iota\in\N^S\) satisfying the equation
    \[
        \wh{w}^s=\la w,\iota\ra.
    \]
    This, in turn, implies that there is only a finite number of non-zero coefficients in the Taylor series expansion of \(\phi\). Hence, \(\phi\) is polynomial. The case of a fan with ghost vertices is handled similarly to the proof of Proposition \ref{prop:Norm_of_G}.
\end{proof}
\subsection{Zariski closures and fan rationalization}
Note that the group $H_\Sigma$ is never algebraic. The natural question to ask is what is Zariski closure of the group $H_\Sigma$ in the torus $(\C^\times)^S$. The following proposition provides an answer.
	\begin{proposition}\label{prop:zariski_closure}
		Denote by $\ol{H}_\Sigma$ Zariski closure of the group $H_\Sigma$ in the torus $(\C^\times)^S.$ Then the following holds.
		$$
			\ol{H}_\Sigma=G_{\ul{\Sigma}}.
		$$
	\end{proposition}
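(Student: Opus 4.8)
The plan is to compare $\ol{H}_\Sigma$ and $G_{\ul\Sigma}$ through their characters, using the standard fact that the Zariski closure of any subgroup $N\subseteq(\CT)^S$ equals $\bigcap\{\ker\chi_m:\ m\in\Z^S,\ \chi_m\vert_N=1\}$, where $\chi_m(t)=\prod_{s\in S}t_s^{m_s}$. So it suffices to show that $H_\Sigma$ and $G_{\ul\Sigma}$ are annihilated by exactly the same integral characters, together with the fact that $G_{\ul\Sigma}$ is already Zariski closed (which also gives, for free, one of the two inclusions). Write $\la m,z\ra=\sum_{s\in S}m_sz_s$ for $m\in\Z^S$, $z\in\C^S$.

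First I would record the inclusion $\ol{H}_\Sigma\subseteq G_{\ul\Sigma}$. By Construction \ref{con:rationalization}, $\wt G_{\ul\Sigma}=\ker A^\C_{\ul\Sigma}=\ol{\ker A^\R_\Sigma}\otimes_\R\C$ is a \emph{rational} complex subspace of $\C^S$, so $G_{\ul\Sigma}=\exp_\C(\wt G_{\ul\Sigma})$ is a closed algebraic subtorus of $(\CT)^S$, namely the intersection of the kernels of all $\chi_m$ with $m$ real-orthogonal to $\ol{\ker A^\R_\Sigma}$ (here one uses that $\ol{\ker A^\R_\Sigma}\cap\Z^S$ is a saturated sublattice, so no finite-index subtlety arises). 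On the other hand, $\mf h_\Sigma$ is a complex subspace of $\C^S$ with $\Re(\mf h_\Sigma)=\wt H^\R_\Sigma=\ker A^\R_\Sigma$ by Construction \ref{con:complex_struct}; applying this to $i\,\mf h_\Sigma=\mf h_\Sigma$ also gives $\Im(\mf h_\Sigma)=-\Re(i\,\mf h_\Sigma)\subseteq\ker A^\R_\Sigma$, so $\mf h_\Sigma\subseteq\ker A^\R_\Sigma+i\ker A^\R_\Sigma\subseteq\ol{\ker A^\R_\Sigma}\otimes_\R\C=\wt G_{\ul\Sigma}$. Exponentiating, $H_\Sigma=\exp_\C(\mf h_\Sigma)\subseteq G_{\ul\Sigma}$, and since $G_{\ul\Sigma}$ is Zariski closed this forces $\ol{H}_\Sigma\subseteq G_{\ul\Sigma}$.

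For the reverse inclusion I would show that $H_\Sigma$ and $G_{\ul\Sigma}$ have the same annihilator lattice in $\Z^S$. Fix $m\in\Z^S$. Then $\chi_m\vert_{H_\Sigma}=1$ iff $e^{\la m,z\ra}=1$ for all $z\in\mf h_\Sigma$, i.e. the $\C$-linear functional $z\mapsto\la m,z\ra$ takes values in $2\pi i\Z$ on the subspace $\mf h_\Sigma$; a $\C$-linear functional on a vector space has image $0$ or all of $\C$, so this holds iff $\la m,z\ra=0$ for all $z\in\mf h_\Sigma$. Since $m$ is real, this in turn implies $m$ is orthogonal to $\ol{\mf h_\Sigma}$, hence to $\mf h_\Sigma+\ol{\mf h_\Sigma}\supseteq\Re(\mf h_\Sigma)=\ker A^\R_\Sigma$; conversely, using $\Im(\mf h_\Sigma)\subseteq\ker A^\R_\Sigma$ again, $m\perp\ker A^\R_\Sigma$ gives $\la m,z\ra=\la m,\Re z\ra+i\la m,\Im z\ra=0$ for every $z\in\mf h_\Sigma$. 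Thus $\chi_m\vert_{H_\Sigma}=1$ iff $m\in(\ker A^\R_\Sigma)^\perp\cap\Z^S$. Finally, because $m$ is \emph{rational}, its kernel is a rational subspace, so $m\perp\ker A^\R_\Sigma$ iff $m\perp\ol{\ker A^\R_\Sigma}$ iff $\chi_m\vert_{G_{\ul\Sigma}}=1$. Hence the same characters kill $H_\Sigma$ and $G_{\ul\Sigma}$; intersecting the corresponding kernels and using that $G_{\ul\Sigma}$ coincides with that intersection yields $\ol{H}_\Sigma=G_{\ul\Sigma}$.

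The main obstacle is the bookkeeping in the last step: the passage between $\ker A^\R_\Sigma$ and its rational hull $\ol{\ker A^\R_\Sigma}$ is precisely where the rationality built into $\ul\Sigma$ is essential, and one must be careful that $\exp_\C$ of a rational complex subspace is genuinely an algebraic subtorus (closed, connected) and equals the full intersection of the relevant character kernels. A pleasant consequence worth noting is that $\ol{H}_\Sigma$ does not depend on the choice of complex structure $\mf h_\Sigma$ made in Construction \ref{con:complex_struct}, since only $\Re(\mf h_\Sigma)=\ker A^\R_\Sigma$ enters the computation.
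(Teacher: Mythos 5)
Your proof is correct and follows essentially the same route as the paper: both establish $H_\Sigma\subseteq G_{\ul{\Sigma}}$ and then show that every integral character of $(\CT)^S$ trivial on $H_\Sigma$ must vanish on $\ker A^\R_\Sigma$ (via surjectivity of $\Re\colon \mf{h}_\Sigma\to\ker A^\R_\Sigma$) and hence, by rationality, is trivial on $G_{\ul{\Sigma}}$. The paper packages this last step as a factorization of the character through $A^\R_\Sigma$ together with the exact sequence $0\to\Z^n\to\Z^S\to\mf{X}(G_{\ul{\Sigma}})\to 0$, whereas you phrase it as equality of annihilator lattices using the rational hull $\ol{\ker A^\R_\Sigma}$; these are the same computation.
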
 
    \begin{proof}
        Let $\ol{H}_\Sigma$ denote the closure of $H_\Sigma$ in the group $G_\Sigma.$ Since $H_\Sigma$ is connected it follows that $\ol{H}_\Sigma$ is connected. Thus $\ol{H}_\Sigma\ss G_{\ul{\Sigma}}.$ 
		Since $\ol{H}_\Sigma$ is a subtorus of an algebraic torus $G_{\ul{\Sigma}}$  it is an intersection of a finite collection of \(G_{\ul{\Sigma}}\)-characters. Consider such a presentation.
		$$
			\ol{H}_\Sigma=\bigcap_{i=1}^k \Ker\chi_i.
		$$  
		Here $\chi_i$ are characters of the group $G_{\ul{\Sigma}}.$
		By construction of the group \(G_{\ul{\Sigma}}\), we have the following exact sequence of finitely generated abelian groups.
		\begin{equation}\label{eq:exact_characters}
			0\to \Z^n\xrightarrow{A^*_\Z} \Z^S \xrightarrow{\pi} \mf{X}(G_{\ul{\Sigma}}) \to 0.
		\end{equation}
		Assume that $\chi$ is a character $\chi \in \mf{X}((\C^\times)^S)=\Z^S$ such that $\chi(H_\Sigma)=\{1\}.$ We have the following commutative diagram:
		\begin{equation*}
		\begin{tikzcd}
		\mf{c}\rar{\Psi}\arrow[swap]{rrrd}{0}&\C^S\rar[two heads]{\Re}&\ker A_\Sigma^{\R} \rar[hook]& \R^S \rar{A_\Sigma^\R}\dar[swap]{\chi\otimes\R} & \R^n \arrow[dashed]{dl}{\overline{\chi\otimes\R}}\\
		& & & \R&
		\end{tikzcd}
		\end{equation*}
		Since $A=A^\Z_\Sigma\otimes_\Z\R$ we have that $\chi\in \Im(A^*_\Z).$ It is clear that $A^*_\Z(\ol{\chi\otimes\R}\vert_{\Z^n})=\chi.$
		From the exact sequence \eqref{eq:exact_characters} we see that $\im A^*_\Z=\ker[\pi:\Z^S\to\mf{X}(G_{\ul{\Sigma}})].$ Hence $\chi(G_{\ul{\Sigma}})=\{1\}.$
		Since $\chi$ was an arbitrary character such that $\chi(\ol{H}_\Sigma)=\{1\}$ we see that $\ol{H}_\Sigma=G_{\ul{\Sigma}}.$
    \end{proof}
Proposition \ref{prop:zariski_closure} immediately implies the following useful corollary.
\begin{corollary}\label{cor:normalizers_coincide}
    The normalizer of the group \(H_\Sigma\) in the group of regular automorphisms of the variety \(U(\Sigma)\) is isomorphic to the normalizer of the group \(G_{\ul{\Sigma}}\).
\end{corollary}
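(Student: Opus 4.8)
\emph{Proof proposal.} The plan is to deduce the corollary from the sharper statement that, for a regular automorphism $\phi$ of $U(\Sigma)$, normalizing $H_\Sigma$ is equivalent to normalizing $G_{\ul{\Sigma}}$. Since the Cox construction of Construction \ref{con:uni_space} depends only on the pair $(S,K)$, we have $U(\Sigma)=U(\ul{\Sigma})$, and the chain $H_\Sigma\subseteq G_\Sigma\subseteq G_{\ul{\Sigma}}\subseteq(\C^\times)^S$ exhibits both groups as subgroups of the diagonal torus inside $\Aut(U(\Sigma))$; so this equivalence in fact identifies the two normalizers as a single subgroup of $\Aut(U(\Sigma))$. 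The only real inputs are Proposition \ref{prop:zariski_closure}, which says $H_\Sigma$ is Zariski dense in $G_{\ul{\Sigma}}$, and the principle that conjugation by a regular map respects Zariski closures; the latter needs a small amount of care, since $\Aut(U(\Sigma))$ is not visibly an algebraic group, and I would handle it by a generic-point argument.

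For the forward implication, suppose $\phi H_\Sigma\phi^{-1}=H_\Sigma$ and let $\mu\colon H_\Sigma\to H_\Sigma$ be the induced conjugation automorphism, so that $\phi\circ(t\,\cdot)\circ\phi^{-1}=(\mu(t)\,\cdot)$ as automorphisms of $U(\Sigma)$. Pick $p\in U(\Sigma)$ with $q:=\phi(p)\in(\C^\times)^S$; then $\tilde\mu\colon G_{\ul{\Sigma}}\to\C^S$, $g\mapsto\phi(g\cdot p)\cdot q^{-1}$, is a morphism of varieties that restricts to $\mu$ on $H_\Sigma$. Since $H_\Sigma$ is Zariski dense in $G_{\ul{\Sigma}}$, the homomorphism identity $\tilde\mu(g_1g_2)=\tilde\mu(g_1)\tilde\mu(g_2)$, which holds on the dense subset $H_\Sigma\times H_\Sigma$, holds on all of $G_{\ul{\Sigma}}\times G_{\ul{\Sigma}}$; as $\tilde\mu(e)=e$ this also forces $\tilde\mu$ to take values in $(\C^\times)^S$, so $\tilde\mu$ is a homomorphism of algebraic groups whose image is a closed subtorus containing the dense subgroup $H_\Sigma$, hence equal to $G_{\ul{\Sigma}}$. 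By the same density argument the identity $\phi\circ(g\,\cdot)\circ\phi^{-1}=(\tilde\mu(g)\,\cdot)$, valid on $H_\Sigma$ by construction of $\mu$, extends to all $g\in G_{\ul{\Sigma}}$; applying this to $\phi^{-1}$ shows $\tilde\mu$ is an automorphism of $G_{\ul{\Sigma}}$, so $\phi G_{\ul{\Sigma}}\phi^{-1}=G_{\ul{\Sigma}}$.

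The converse goes through the same machine: a regular $\phi$ with $\phi G_{\ul{\Sigma}}\phi^{-1}=G_{\ul{\Sigma}}$ conjugates the $G_{\ul{\Sigma}}$-action by an algebraic automorphism $\nu$ of the torus $G_{\ul{\Sigma}}$, and what remains is the equality $\nu(H_\Sigma)=H_\Sigma$. I expect this to be the main obstacle. An algebraic automorphism of $G_{\ul{\Sigma}}$ by itself need not preserve the transcendental subgroup $H_\Sigma$, so the argument must use that $\nu$ is induced by an automorphism of the variety $U(\Sigma)$ and not merely of its torus: such a $\phi$ is forced to permute the affine charts $U(\Sigma,\sigma)$ of Construction \ref{con:uni_space} compatibly with the simplicial complex $K$ and the vectors $\rho(s)$, and these data, together with the chosen complex structure of Construction \ref{con:complex_struct}, cut out $\ker A_\Sigma^\R$, hence the subalgebra $\h_\Sigma\subseteq\Lie(G_{\ul{\Sigma}})$, hence $H_\Sigma$. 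Squeezing $\nu(H_\Sigma)=H_\Sigma$ out of this compatibility is the delicate point of the argument; by contrast the Zariski-density bookkeeping in the forward direction is routine.
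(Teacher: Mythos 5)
Your forward direction is essentially the paper's entire (unwritten) argument: the paper simply declares the corollary an immediate consequence of Proposition \ref{prop:zariski_closure}, the point being that a regular automorphism which normalizes \(H_\Sigma\) must normalize its Zariski closure \(\ol{H}_\Sigma=G_{\ul{\Sigma}}\). Your elaboration of this half (pick a base point mapped into the open torus orbit, extend the conjugation cocycle \(\tilde\mu\) from \(H_\Sigma\) to \(G_{\ul{\Sigma}}\) by Zariski density, check multiplicativity and that it lands in the torus) is correct and is a careful version of exactly what the paper leaves implicit; so that part matches.

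The genuine gap is the converse, which you yourself call ``the delicate point'' and never prove: you only sketch a hope that the permutation of the charts \(U(\Sigma,\sigma)\) together with the data \((K,\rho)\) and Construction \ref{con:complex_struct} forces \(\nu(H_\Sigma)=H_\Sigma\). This cannot be made to work in the generality you aim for, because the equality of the two normalizers is false. Take the Calabi--Eckmann fan \(\Sigma_{p,p}\) with complex-structure parameter \(\alpha\in\C\setminus\R\): the swap of the two coordinate blocks of \(U(\Sigma)=(\C^{p+1}\setminus 0)\times(\C^{p+1}\setminus 0)\) is a regular automorphism coming from a symmetry of the fan, so it permutes the charts compatibly with \(K\) and \(\rho\) and normalizes \(G_\Sigma=G_{\ul{\Sigma}}\); yet it conjugates \(H_\Sigma=\exp_\C\{w\cdot(1,\ldots,1,\alpha,\ldots,\alpha)\}\) to the analogous subgroup with parameter \(\alpha^{-1}\), which differs from \(H_\Sigma\) since \(\alpha^2\neq 1\). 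Note also that your heuristic is off even as a heuristic: \(\ker A_\Sigma^\R\) is determined by the fan, but \(\h_\Sigma\) is an extra choice of complex subspace projecting onto it, so fan-combinatorial data cannot cut it out. What Zariski density actually yields is only the inclusion of the normalizer of \(H_\Sigma\) into that of \(G_{\ul{\Sigma}}\) (plus equality of the subgroups centralizing \(G_{\ul{\Sigma}}\), since those automatically commute with \(H_\Sigma\subseteq G_{\ul{\Sigma}}\)); the discrepancy in the example above lives in the symmetry-group part, which the paper treats separately (compare the ``group of components is a quotient of the symmetry group'' formulations). So your plan to upgrade the corollary to an if-and-only-if, and in particular the proposed mechanism for the reverse implication, would fail.
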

\section{Equivariant automorphisms of the Cox construction}\label{sec:equi_aut_of_cox}

\subsection{Reduction of ghost vertices}
\begin{construction}\label{con:ghost_reduction}
    Let \(\Sigma=\GF\) be a complete rational generalized fan. Recall that by Construction \ref{con:reduced_fan} there is a \emph{reduced} generalized fan \(\Sigma^\red\) associated to \(\Sigma\). The fan \(\Sigma^\red\) has no ghost vertices. Below we construct a fibration of the following form.
    \[\begin{tikzcd}
        {U(\Sigma)} \\
        \\
        {U(\Sigma^\red)}
        \arrow["\TT_\Sigma", from=1-1, to=3-1]
    \end{tikzcd}\]
    Here, \(\TT_\Sigma\) denotes an algebraic torus of complex dimension equal to the cardinality of the set of ghost vertices.
    
    Let \(s\) be a ghost vertex of \(\Sigma\). Observe that by Lemma \ref{lem:comb_ghost}, there exists a positive combination with integer coefficients that expresses \(e_s\) as a sum of \(e_{s'}\) for non-ghost \(s'\). We pick one such combination for each ghost vertex \(s\) and denote them by \(\lambda_{s}^{s'}\).
    
    Consequently, we can take the subgroup of \(G_\Sigma\) generated by exponentials of such combinations for all ghost vertices. We denote this subgroup by \(\TT_\Sigma\). Consider the following regular map. 
    \[
        \pi:U(\Sigma)\to U(\Sigma^\red),\quad \left((z_s)_{s\in S^\red},(t_{s'})_{s'\in S\setminus S^\red}\right)\xrightarrow{\pi} \left(z_{s'}\cdot \prod_{s\in S\setminus S^\red}t_{s}^{-\lambda_{s}^{s'}}\right)_{s'\in S^\red}.
    \]
    The idea is that the product on the right is canceling out the action of \(\TT_\Sigma\) on the variable \(z_{s'}\). The map \(\pi\) is clearly \(\TT_\Sigma\)-invariant. Moreover, since the value of \(t_{s}\) can be made equal to \(1\) through \(\TT_\Sigma\)-action we see that \(\pi\) is injective on~\(\TT_\Sigma\)-orbits. For the same reason, the action of the group \(\TT_\Sigma\) is free. Finally, the map \(\pi\) is clearly surjective, and consequently, \(U(\Sigma^\red)\) is a geometric quotient of \(U(\Sigma)\) with respect to~\(\TT_\Sigma\)-action.
\end{construction}
\begin{definition}
    Let \(\Sigma=(S,K,V,\rho)\) be a complete generalized fan.
    Consider the associated complex torus \(G_\Sigma.\) Then for any fan \(\Sigma\) we define a group
    \[
        \wt{\Aut}(\Sigma)=N_{\Aut_{alg}(U(\Sigma))}(G_\Sigma)
    \]
    to be the normalizer of the group \(G_\Sigma\) in algebraic automorphisms of the variety \(U_\Sigma.\) Denote by \(\NAut^0(\Sigma)\) the centralizer of the group \(G_\Sigma\) in the group \(\wt{\Aut}(\Sigma).\)
\end{definition}
\begin{proposition}\label{prop:red_of_gv}
    There is the following isomorphism of algebraic group schemes.
    \begin{align*}
        \wt{\Aut}^0(\Sigma)\cong \TT_\Sigma\times \wt{\Aut}^0(\Sigma^\red).
    \end{align*}
\end{proposition}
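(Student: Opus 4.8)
The plan is to exploit the fibration $\pi \colon U(\Sigma) \to U(\Sigma^{\red})$ constructed in Construction~\ref{con:ghost_reduction}, which exhibits $U(\Sigma^{\red})$ as a geometric quotient of $U(\Sigma)$ by the free action of the algebraic torus $\TT_\Sigma$. First I would observe that $\TT_\Sigma \subseteq G_\Sigma$, so $\TT_\Sigma$ centralizes $G_\Sigma$ and hence $\TT_\Sigma \subseteq \wt{\Aut}^0(\Sigma)$; moreover since $\pi$ is $\TT_\Sigma$-invariant, any element of $\wt{\Aut}^0(\Sigma)$ that commutes with the $\TT_\Sigma$-action descends to an automorphism of the quotient $U(\Sigma^{\red})$. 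The first key step is therefore to show that \emph{every} element of $\wt{\Aut}^0(\Sigma)$ does commute with the $\TT_\Sigma$-action. For this I would invoke Corollary~\ref{cor:aut_g_red_of_gv} (or rather its consequence for $\wt\Aut$): by the GAGA-type result of Section~\ref{sec:Norm_of_GH} together with Corollary~\ref{cor:normalizers_coincide}, elements of $\wt{\Aut}^0(\Sigma)$ extend to polynomial automorphisms of $\C^S$ centralizing $G_\Sigma$, and on the coordinates $z_{s'}$ with $s' \in S \setminus S^{\red}$ such an automorphism must act by scaling (this is exactly the content of the proof of Corollary~\ref{cor:aut_g_red_of_gv}, where $\varphi(x_{s'}) = \lambda_{s'} x_{s'}$ and $\varphi(x_s)$ contains no ghost variables for $s \in S^{\red}$). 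A scaling of the ghost coordinates is an element of $\TT_\Sigma$ times the identity on the rest, so it commutes with the $\TT_\Sigma$-action.

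The second key step is to build the homomorphism in the other direction and check it is inverse to descent. Given the splitting on coordinates just described, any $\phi \in \wt{\Aut}^0(\Sigma)$ factors canonically as $\phi = \tau \cdot \phi^{\red}$, where $\tau \in \TT_\Sigma$ records the scaling action on the ghost coordinates and $\phi^{\red}$ acts trivially on the ghost coordinates and restricts (via $\pi$) to a well-defined element of $\wt{\Aut}^0(\Sigma^{\red})$. Conversely, any element of $\wt{\Aut}^0(\Sigma^{\red})$ lifts to $U(\Sigma)$ by acting on the $z_{s'}$ coordinates for $s' \in S^{\red}$ and fixing the ghost coordinates; one must check this lift is well-defined (independent of representatives, i.e.\ genuinely descends through $\pi$) and that it normalizes, in fact centralizes, $G_\Sigma$ inside $\wt\Aut(\Sigma)$ — this uses that $G_\Sigma \cong G_{\Sigma^{\red}} \times \TT_\Sigma$ compatibly with the product decomposition of coordinates, which follows from $A_\Sigma^\C$ being block-structured relative to the splitting $S = S^{\red} \sqcup (S \setminus S^{\red})$. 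The two assignments $\phi \mapsto (\tau, \phi^{\red})$ and $(\tau,\psi) \mapsto \tau \cdot \tilde\psi$ are manifestly mutually inverse group homomorphisms, and both are morphisms of algebraic group schemes since they are given by polynomial formulas in the matrix entries (as in Proposition~\ref{prop:cox_struct}); hence we obtain $\wt{\Aut}^0(\Sigma) \cong \TT_\Sigma \times \wt{\Aut}^0(\Sigma^{\red})$.

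The main obstacle I anticipate is the bookkeeping around \emph{well-definedness of the lift} $\psi \mapsto \tilde\psi$: a choice of positive integer combination $\lambda_s^{s'}$ expressing $e_s$ as a sum of $e_{s'}$ was made in Construction~\ref{con:ghost_reduction}, and one must verify that the lifted automorphism $\tilde\psi$ (defined by fixing ghost coordinates and acting via $\psi$ on the quotient coordinates through $\pi$) is independent of this choice and really commutes with $G_\Sigma$ rather than merely normalizing it. This amounts to checking that the formula for $\pi$ intertwines the $G_{\Sigma^{\red}}$-action on $U(\Sigma^{\red})$ with the restriction of the $G_\Sigma$-action on $U(\Sigma)$ to a complementary subtorus — a direct but slightly fiddly computation with the exponential parametrization of $G_\Sigma$ and the relations $\sum_{s'} \lambda_s^{s'} e_{s'} = e_s$ in $\Gamma(\Sigma)$. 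Everything else is routine once this compatibility is in place.
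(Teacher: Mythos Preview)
Your overall shape (descent plus section) matches the paper, but the section you propose does not work, and the reason traces back to a false structural claim. You assert that \(A_\Sigma^\C\) is block-structured with respect to the splitting \(S=S^{\red}\sqcup(S\setminus S^{\red})\), and hence that \(G_\Sigma\cong G_{\Sigma^{\red}}\times\TT_\Sigma\) compatibly with the coordinate decomposition. This is wrong: for a ghost vertex \(s'\) the vector \(\rho(s')\in V\) is typically nonzero, so \(A_\Sigma^\C(e_{s'})\neq 0\) and no block structure exists. Concretely, take \(S=\{1,2,3\}\) with \(3\) a ghost vertex, \(V=\R\), \(\rho(1)=1,\ \rho(2)=-1,\ \rho(3)=1\). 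Then \(G_\Sigma=\{(a,ac,c)\}\), and your naive lift of the swap \((w_1,w_2)\mapsto(w_2,w_1)\) on \(U(\Sigma^{\red})\), namely \((z_1,z_2,z_3)\mapsto(z_2,z_1,z_3)\), does \emph{not} centralize \(G_\Sigma\). The genuine element of \(\wt{\Aut}^0(\Sigma)\) lying over the swap is \((z_1,z_2,z_3)\mapsto(z_2 z_3^{-1},\,z_1 z_3,\,z_3)\); so your two assignments are not mutually inverse. Relatedly, your appeal to Lemma~\ref{lem:ext_lem} to extend \(\phi\in\wt{\Aut}^0(\Sigma)\) to \(\C^S\) is illegitimate in the presence of ghost vertices: that lemma explicitly requires \(S=S^{\red}\).

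The paper avoids this by never claiming a coordinate-wise splitting. Instead it defines the section \(s(\psi)\) by prescribing it on the slice \(\{z_{s'}=1:s'\in S\setminus S^{\red}\}\) and then extending by the \(\TT_\Sigma\)-action; this forces \(\TT_\Sigma\)-equivariance by construction and one checks directly it centralizes \(G_\Sigma\). The kernel of the descent \(p\) is then identified not by reading off a scaling from Corollary~\ref{cor:aut_g_red_of_gv}, but by interpreting an element of \(\ker p\) as a gauge transformation of the \(\TT_\Sigma\)-bundle, i.e.\ a regular map \(U(\Sigma^{\red})\to\TT_\Sigma\), which extends to \(\C^{S^{\red}}\) by purity of the branch locus and is therefore constant. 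Your argument can be repaired by replacing the naive lift with this slice-and-extend section.
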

\begin{proof}
    First, we observe that the subgroup \(\TT_\Sigma\) is clearly normal in \(\wt{\Aut}^0(\Sigma)\). Hence, there is the following descent homomorphism to automorphisms of \(U(\Sigma)/\TT_\Sigma\). 
    \[p:\wt{\Aut}^0(\Sigma)\to \wt{\Aut}^0(\Sigma^\red).\]
    This homomorphism has a section \(s\). We construct \(s\) as follows. Let \(\vp\in \wt{\Aut}^0(\Sigma^\red)\) be an automorphism of \(U(\Sigma^\red)\). 
    \begin{itemize}
        \item We define \(s(\vp)\) on the points of the form \(\left((z_{s})_{s\in S^\red},(1)_{s'\in S\setminus S^\red}\right)\) as follows.
        \[
            s(\vp)\left((z_{s})_{s\in S^\red},(1)_{s'\in S\setminus S^\red}\right)=\left(\vp((z_s)_{s\in S^\red}), (1)_{s'\in S\setminus S^\red}\right)
        \]
        \item Now, for an arbitrary point \((z_s)_{s\in S^\red},(t_{s'})_{s'\in S\setminus S^\red}\) we extend the automorphism \(s(\vp)\) using \(\TT_\Sigma\)-action. Concretely, we define the image of a generic point \(((z_s)_{s\in S^\red},(t_{s'})_{s'\in S\setminus S^\red})\) under \(s(\vp)\) as follows. Let \(g\) be a unique element in \(\TT_\Sigma\) such that \(g_{S\setminus S^\red}\) coincides with \(\left(t_{s'}\right)_{s'\in S\setminus S^\red}\). Then the value of \(s(\vp)\) on \(((z_s)_{s\in S^\red},(t_{s'})_{s'\in S\setminus S^\red})\) is defined as follows.
        \[
            s(\vp)\left((z_s)_{s\in S^\red},(t_{s'})_{s'\in S\setminus S^\red}\right)=g\cd s(\vp)\left((g_s^{-1}z_s)_{s\in S^\red},(1)_{s'\in S\setminus S^\red}\right).
        \]
    \end{itemize}
    
    Consequently, the map \(p\) is surjective. Now, we study the kernel of the map \(p\). Let \(\psi\in \wt{\Aut}^0(\Sigma)\) be an automorphism of \(U(\Sigma)\) such that \(p(\psi)=\id_{U(\Sigma^\red)}.\) Then it provides a gauge transformation that commutes with \(\TT_\Sigma\)-action of the bundle \(U(\Sigma)\to U(\Sigma^\red)\). Consequently, it could be described as a regular map \(U(\Sigma^\red)\to \Aut_{\TT_\Sigma}(\TT_\Sigma).\) The group \(\Aut_{\TT_\Sigma}(\TT_\Sigma)\) is clearly isomorphic to \(\TT_\Sigma\) itself. It remains now to observe that by the purity of the branch locus, it extends to a non-vanishing regular function on \(\C^{S^\red}\); hence it is constant. As a result, the kernel of \(p\) coincides with the group \(\TT_\Sigma\). 
    
    Finally, since \(p\) has a section, there is a semidirect product splitting of \(\NAut^0(\Sigma)\). Moreover, since \(\TT_\Sigma \sse G_\Sigma\) the image of \(\NAut^0(\Sigma^\red)\) commutes with \(\TT_\Sigma\) and the proposition follows.
\end{proof}

\begin{definition}
    Let \(\Sigma=(S,K,V,\rho)\) be a complete generalized fan.
    Then to each element \(s\in S\) we associate its \emph{degree} the element \(e_s+\im (A^\Z_\Sigma)^\vee\) of the group \(\Z^S/\im (A^\Z_\Sigma)^\vee.\)
\end{definition}
\begin{definition}
    \par Define {\it symmetry group} \(\mf{S}(\Sigma)\) of the fan \(\Sigma\) to be a subgroup of \(\Aut(\Gamma(\Sigma))\) preserving the fan \(\Sigma\) and mapping ghost rays to ghost rays. Alternatively, this group could be described as the group of isomorphisms of fans \(\Sigma\to\Sigma\). Define {\it the inertia group} \(\mf{I}(\Sigma)\) of the fan \(\Sigma\) to be a subgroup in \(\Sm(\Sigma)\) given by the product of permutation groups over groups of rays of the same degree.
\end{definition}
\begin{lemma}
    Subgroup \(\mf{I}(\Sigma)\) is normal in \(\mf{S}(\Sigma).\)
\end{lemma}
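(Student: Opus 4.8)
The plan is to realize $\mf{I}(\Sigma)$ as the kernel of a homomorphism out of $\mf{S}(\Sigma)$, whence normality is automatic. The homomorphism records how a symmetry permutes the finitely many \emph{degree classes} of rays.

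First I would pin down the data of an element $g\in\mf{S}(\Sigma)$. As an isomorphism of fans $\Sigma\to\Sigma$ it is a simplicial automorphism of $(S,K)$, i.e.\ a permutation $\pi_g\in\Sym(S)$ carrying $S\setminus S^{\red}$ to itself, together with a linear automorphism $g\colon V\to V$ preserving $\Gamma(\Sigma)$ and satisfying $g\circ\rho=\rho\circ\pi_g$; since the rays $\rho(s)$ span $V$ for a complete fan, $g$ is in fact determined by $\pi_g$, so $\mf{S}(\Sigma)$ embeds into $\Sym(S)$. Through $\pi_g$ the group $\mf{S}(\Sigma)$ acts on $\Z^S$ by $e_s\mapsto e_{\pi_g(s)}$ and, dually, on $\Gamma(\Sigma)^\vee$ by precomposition with $g^{-1}$. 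The crux of the argument is that the map $(A_\Sigma^\Z)^\vee\colon\Gamma(\Sigma)^\vee\to\Z^S$ of Lemma \ref{lem:integral_tori_dual_map} intertwines these actions. This is a one-line computation from the explicit formula $(A_\Sigma^\Z)^\vee(\gamma)=\sum_{s\in S}\la\gamma,\rho(s)\ra e_s^*$: feeding in $g\cdot\gamma=\gamma\circ g^{-1}$, using $g^{-1}\circ\rho=\rho\circ\pi_g^{-1}$, and reindexing the sum along $\pi_g$ yields $\pi_g\cdot(A_\Sigma^\Z)^\vee(\gamma)$. Hence $\im(A_\Sigma^\Z)^\vee$ is $\mf{S}(\Sigma)$-stable, so the $\pi_g$-action descends to $L_\Sigma=\Z^S/\im(A_\Sigma^\Z)^\vee$, and the degree map $\deg\colon S\to L_\Sigma$, $s\mapsto e_s+\im(A_\Sigma^\Z)^\vee$, becomes $\mf{S}(\Sigma)$-equivariant: $\deg(\pi_g(s))=g\cdot\deg(s)$ for every $s$.

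Equivariance of $\deg$ says exactly that $\mf{S}(\Sigma)$ permutes the fibres of $\deg$, that is the degree classes $S=S_1\sqcup\dots\sqcup S_r$. As there are finitely many of them, this gives a homomorphism $\Theta\colon\mf{S}(\Sigma)\to\Sym(\{S_1,\dots,S_r\})$. An element $g$ lies in $\ker\Theta$ precisely when $\pi_g$ stabilizes each $S_i$ setwise, i.e.\ when the permutation underlying $g$ lies in $\prod_{i=1}^{r}\Sym(S_i)$ — which is the defining condition for membership in $\mf{I}(\Sigma)$. Therefore $\mf{I}(\Sigma)=\ker\Theta\trianglelefteq\mf{S}(\Sigma)$.

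The only places demanding care are bookkeeping: checking that the two descriptions of $\mf{S}(\Sigma)$ (linear automorphisms of $\Gamma(\Sigma)$ preserving $\Sigma$, and fan isomorphisms) yield an unambiguous $\pi_g$, and matching "product of permutation groups over groups of rays of the same degree" with "$\pi_g$ stabilizes every degree class". Neither is a real obstruction; the substantive point is the $\mf{S}(\Sigma)$-equivariance of $(A_\Sigma^\Z)^\vee$, which falls straight out of Lemma \ref{lem:integral_tori_dual_map}.
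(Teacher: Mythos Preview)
Your argument is correct and rests on the same observation as the paper's: elements of $\mf{S}(\Sigma)$ permute the degree classes $S_1,\dots,S_r$, so conjugation preserves the property of permuting \emph{within} degree classes. The paper states this in a single sentence (\enquote{the conjugate $\vp\psi\vp^{-1}$ also permutes rays of the same, possibly different, degree}), while you package the same fact as $\mf{I}(\Sigma)=\ker\Theta$ for the homomorphism $\Theta\colon\mf{S}(\Sigma)\to\Sym(\{S_1,\dots,S_r\})$ and supply the equivariance computation for $(A_\Sigma^\Z)^\vee$ that the paper leaves implicit. The kernel formulation is a mild repackaging rather than a different route; it has the small advantage of making the quotient $\ESm(\Sigma)=\mf{S}(\Sigma)/\mf{I}(\Sigma)$ visibly a subgroup of $\Sym(\{S_1,\dots,S_r\})$, which is used later anyway.
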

\begin{proof}
    Consider an arbitrary automorphism \(\vp\in\Sm(\Sigma).\) Then for each \({\psi\in\In(\Sigma)}\) the conjugate automorphism \(\vp\psi\vp^{-1}\) also permutes rays of the same (possibly different from the initial) degree.
\end{proof}
The previous lemma allows us to give the following definition. 
\begin{definition}\label{def:eff_symm_group}
    {Effective symmetry group} \(\ESm(\Sigma)\) is the quotient group:
    \[
        \ESm(\Sigma)=\Sm(\Sigma)/\In(\Sigma).
    \]
\end{definition}
\begin{example}
    Consider fan \(\Sigma_{\CP^2}\) defining \(\CP^2.\) Then the group \(\Sm(\Sigma)\) is isomorphic to \(\Sm_3.\) Since all rays of the fan \(\Sigma\) have the same degree we have \(\In(\Sigma)=\Sm(\Sigma).\) Thus the group \(\ESm(\Sigma)\) is trivial. 
\end{example}
\begin{proposition}
    The subgroup \(\NAut^0(\Sigma)\) is normal in \(\NAut(\Sigma)\).
\end{proposition}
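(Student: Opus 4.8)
The plan is to recognize the statement as a special case of the elementary group-theoretic fact that the centralizer of a subgroup is normal inside its normalizer, applied to $N=G_\Sigma$ sitting inside $G=\Aut_{alg}(U(\Sigma))$. Recall that $\NAut(\Sigma)$ is the normalizer of $G_\Sigma$ in $\Aut_{alg}(U(\Sigma))$ and that $\NAut^0(\Sigma)$ is the centralizer of $G_\Sigma$ inside $\NAut(\Sigma)$; since any automorphism centralizing $G_\Sigma$ in particular normalizes it, $\NAut^0(\Sigma)$ coincides with the full centralizer of $G_\Sigma$ in $\Aut_{alg}(U(\Sigma))$, and in particular it is a subgroup of $\NAut(\Sigma)$, so the statement makes sense.

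To prove normality, I would fix $\phi\in\NAut(\Sigma)$ and $\psi\in\NAut^0(\Sigma)$ and show $\phi\psi\phi^{-1}\in\NAut^0(\Sigma)$, that is, that $\phi\psi\phi^{-1}$ commutes with every $g\in G_\Sigma$. First, since $\phi$ normalizes $G_\Sigma$ one has $\phi^{-1}g\phi\in G_\Sigma$. Second, since $\psi$ centralizes $G_\Sigma$ it fixes this element, i.e. $\psi(\phi^{-1}g\phi)\psi^{-1}=\phi^{-1}g\phi$. Conjugating both sides by $\phi$ gives
\[
    (\phi\psi\phi^{-1})\,g\,(\phi\psi\phi^{-1})^{-1}=g,
\]
which is exactly the assertion that $\phi\psi\phi^{-1}$ centralizes $G_\Sigma$. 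Hence $\phi\psi\phi^{-1}\in\NAut^0(\Sigma)$, and therefore $\NAut^0(\Sigma)\trianglelefteq\NAut(\Sigma)$.

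There is essentially no obstacle here; the only points worth recording are that $\NAut^0(\Sigma)\subseteq\NAut(\Sigma)$ (because centralizing implies normalizing) and that conjugation by $\phi$ preserves the property of being an algebraic automorphism of $U(\Sigma)$, so $\phi\psi\phi^{-1}$ indeed lies in $\Aut_{alg}(U(\Sigma))$. One could equivalently phrase the entire argument inside the abstract group $\NAut(\Sigma)$ itself, making no reference to the ambient automorphism group.
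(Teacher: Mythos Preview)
Your proof is correct and follows exactly the same approach as the paper: the paper's proof is the single sentence ``This immediately follows from the description of \(\NAut^0(\Sigma)\) as the centralizer subgroup of \(G_\Sigma\) in \(\NAut(\Sigma)\),'' and you have simply written out the standard argument behind that sentence.
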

\begin{proof}
    This immediately follows from the description of \(\NAut^0(\Sigma)\) as the centralizer subgroup of \(G_\Sigma\) in \(\NAut(\Sigma).\)
\end{proof}
\begin{proposition}\label{prop:con_comp_of_norm}
    We have the following exact sequence of group schemes.
    \[
        1\to \wt{\Aut}^0(\Sigma)\to \wt{\Aut}(\Sigma)\to \ESm(\Sigma)\to 1.
    \]
\end{proposition}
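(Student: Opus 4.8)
The plan is to realize the sequence via the conjugation action of $\NAut(\Sigma)$ on the quasitorus $G_\Sigma$. Conjugation defines a homomorphism of algebraic groups $c\colon\NAut(\Sigma)\to\Aut(G_\Sigma)$ whose kernel is, by definition, the centralizer $\NAut^0(\Sigma)$; its normality in $\NAut(\Sigma)$ was already recorded, and $\NAut^0(\Sigma)$ is connected (after the ghost-vertex reduction of Proposition~\ref{prop:red_of_gv} it is identified with the graded automorphism group $\Aut_g(\Sigma^\red)$ of \S\ref{sec:cox_ring}, connected by Proposition~\ref{prop:cox_struct}), so it is the identity component of $\NAut(\Sigma)$. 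Since $G_\Sigma$ is diagonalizable with character lattice $L_\Sigma=\Z^S/\im(A_\Sigma^\Z)^\vee$, one has $\Aut(G_\Sigma)\cong\Aut(L_\Sigma)$, and the class $\alpha_s$ of $e_s$ is exactly the character through which $G_\Sigma$ acts on the coordinate $z_s$. It therefore remains to identify $\operatorname{im}(c)$ with $\ESm(\Sigma)$; I would do this by showing $\operatorname{im}(c)=\operatorname{im}(c\circ P)$ for a homomorphism $P\colon\Sm(\Sigma)\to\NAut(\Sigma)$, and computing the latter image.

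The easy inclusion comes from constructing $P$. A fan symmetry is a pair $(\pi,g)$ with $\pi$ a simplicial automorphism of $(S,K)$ and $g\in\Aut(\Gamma(\Sigma))$ satisfying $g\rho(s)=\rho(\pi(s))$; letting $\pi$ permute the coordinates of $\C^S$ gives an automorphism $P_\pi$ which preserves each affine piece $U(\Sigma,\sigma)$ — hence $U(\Sigma)$, by Lemma~\ref{lem:uni_space_struct} — and which satisfies $A_\Sigma^\C\circ P_\pi=g\circ A_\Sigma^\C$, so it preserves $\ker A_\Sigma^\C$ and therefore normalizes $G_\Sigma$. Thus $P_\pi\in\NAut(\Sigma)$. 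The composite $\Sm(\Sigma)\xrightarrow{\,P\,}\NAut(\Sigma)\xrightarrow{\,c\,}\Aut(L_\Sigma)$ is the descent of the coordinate permutation to $L_\Sigma$; since the classes $\alpha_s$ generate $L_\Sigma$, this composite kills $(\pi,g)$ precisely when $\pi$ preserves the degree partition of $S$, which is the defining condition for $\In(\Sigma)$ (see Definition~\ref{def:eff_symm_group} and the surrounding discussion). Hence $\operatorname{im}(c\circ P)\cong\Sm(\Sigma)/\In(\Sigma)=\ESm(\Sigma)$, and in particular $\operatorname{im}(c\circ P)\subseteq\operatorname{im}(c)$.

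The reverse inclusion $\operatorname{im}(c)\subseteq\operatorname{im}(c\circ P)$ is the main obstacle and carries all the content: it says that modulo $\NAut^0(\Sigma)$ every normalizing automorphism is a coordinate permutation coming from a fan symmetry. I would run the argument of Cox~\cite[Proposition~4.3--4.5 and the automorphism theorem]{Cox_Aut}, adapted to generalized fans as in \S\ref{sec:cox_ring}. After reducing to the ghost-free case (the ghost vertices are preserved by any fan symmetry, and the torus $\TT_\Sigma$ of Construction~\ref{con:ghost_reduction} splits off on both sides by Proposition~\ref{prop:red_of_gv}), the steps for $\varphi\in\NAut(\Sigma)$ are: (i) writing $\tau=c(\varphi)\in\Aut(L_\Sigma)$, the automorphism $\varphi$ of $R(\Sigma)$ carries $L_\Sigma$-degree to $L_\Sigma$-degree via $\tau$; completeness of $\Sigma$ and Lemma~\ref{lemma:positive_combination} force $\alpha_s\neq 0$ for all $s$, so $\varphi$ preserves the augmentation ideal $\mf{m}=(z_s)_{s\in S}$ and induces a graded automorphism of $\mf{m}/\mf{m}^2=\bigoplus_s\C\,\bar z_s$, whence $\tau$ permutes the multiset $\{\alpha_s\}_{s\in S}$, giving a bijection $\beta\colon S\to S$ with $\tau(\alpha_s)=\alpha_{\beta(s)}$, well defined up to $\In(\Sigma)$; (ii) $\varphi$ preserves $U(\Sigma)$, hence the closed set $Z(\Sigma)=\C^S\setminus U(\Sigma)$, whose irreducible components are the coordinate subspaces $\{z_s=0:s\in T\}$ for $T$ a minimal non-face of $K$, and tracking degrees through the induced permutation of these components shows $\beta$ is a simplicial automorphism of $(S,K)$; (iii) since $\tau$ is the descent of the permutation $\beta$ of $\Z^S$, the sequence $0\to\Gamma(\Sigma)^\vee\xrightarrow{(A_\Sigma^\Z)^\vee}\Z^S\to L_\Sigma\to 0$ shows $\beta$ restricts to an automorphism of $\Gamma(\Sigma)^\vee$, and dualizing (using Lemma~\ref{lem:integral_tori_dual_map}) produces $g\in\Aut(\Gamma(\Sigma))$ with $\rho(\beta(s))=g^{-1}\rho(s)$; as $\beta$ is simplicial, $g^{-1}$ maps each cone of $\Sigma$ onto a cone, so $(\beta,g^{-1})\in\Sm(\Sigma)$. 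Finally $c(P_{\beta^{\pm1}})$ and $\tau$ agree on the generators $\alpha_s$ of $L_\Sigma$, hence on all of $L_\Sigma$, so $c(\varphi)\in\operatorname{im}(c\circ P)$.

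Putting the two inclusions together, $c$ induces a surjection $q\colon\NAut(\Sigma)\twoheadrightarrow\ESm(\Sigma)$ with $\ker q=\ker c=\NAut^0(\Sigma)$, which is the claimed exact sequence of group schemes. I expect step~(ii) above — deducing from ``$\varphi$ preserves the irrelevant locus $Z(\Sigma)$'' that the degree-permutation $\beta$ is a combinatorial symmetry of $K$ — to be the delicate point, since a priori $\varphi$ mixes each variable $z_s$ with decomposable monomials of the same degree; this is precisely where one uses that the reductive and unipotent block ambiguities of $\Aut_g(\Sigma)$ isolated in Proposition~\ref{prop:cox_struct} are absorbed into $\NAut^0(\Sigma)$, so that only the permutation part survives in the quotient.
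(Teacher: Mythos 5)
Your overall architecture is sound where it overlaps with the paper: the embedding of $\Sm(\Sigma)$ into $\NAut(\Sigma)$ by coordinate permutations, and the computation that the kernel of $\Sm(\Sigma)\to\Aut(L_\Sigma)$ is $\In(\Sigma)$, are exactly the first and last steps of the paper's proof. But the reverse inclusion $\im(c)\subseteq\im(c\circ P)$ — which you correctly identify as carrying all the content — is where your sketch has a genuine gap, and your step (ii) does not close it. Knowing that $\varphi$ (extended to $\C^S$) permutes the irreducible components of $\C^S\setminus U(\Sigma)$ only constrains the \emph{linear part} of $\varphi^*$ blockwise; since $\varphi^*(z_s)$ is merely homogeneous (it may involve decomposable monomials and may mix all variables of the same degree), "tracking degrees through the permutation of components" gives, for each minimal non-face pair, an isomorphism of the corresponding spans, but it does not produce a single bijection $\beta$ that is simultaneously degree-compatible and carries minimal non-faces to minimal non-faces, nor does it show that $\varphi$ agrees with a coordinate permutation modulo $\NAut^0(\Sigma)$. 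That correction step is the heart of the matter, and the paper supplies it by a different mechanism: since $\NAut^0(\Sigma)$ is normal, $\varphi(\CT)^S\varphi^{-1}$ is a maximal torus of $\NAut^0(\Sigma)$, so by conjugacy of maximal tori one may replace $\varphi$ by $\psi\varphi$ normalizing $(\CT)^S$, and then Oda's theorem \cite[Theorem 1.13]{Oda} says such an automorphism is induced by an automorphism of $\wt{\Sigma}$, which descends to a symmetry of $\Sigma$. Your closing remark that the block ambiguities of Proposition \ref{prop:cox_struct} are "absorbed into $\NAut^0(\Sigma)$" names the right phenomenon but offers no substitute for this conjugacy argument.

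There is a second, independent problem in your step (iii) for irrational generalized fans. By Proposition \ref{prop:cont_hom}, $\Gamma(\Sigma)^\vee\cong\Gamma(\ul{\Sigma})^\vee$, so the action of $P_\beta$ on $\im(A_\Sigma^\Z)^\vee$ (equivalently on $L_\Sigma$) only sees the rationalization: it forces $P_\beta$ to preserve $\ol{\ker A_\Sigma^\R}$ but not $\ker A_\Sigma^\R$ itself, and "dualizing" cannot recover an element $g\in\Aut(\Gamma(\Sigma))\subseteq GL(V)$ when $\Gamma(\Sigma)$ is non-discrete. At best you obtain a symmetry of $\ul{\Sigma}$, whereas $\ESm(\Sigma)$ is built from honest symmetries of $\Sigma$. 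The fix is to use that the corrected (torus-normalizing) representative lies in $\NAut(\Sigma)$, hence preserves $\ker A_\Sigma^\C$, so the permutation descends along $A_\Sigma^\R$ to the required linear automorphism of $V$ preserving $\Gamma(\Sigma)$ and the fan — again the paper's route. The same caveat applies to your opening identification $\Aut(G_\Sigma)\cong\Aut(L_\Sigma)$: $G_\Sigma$ is not diagonalizable (not even closed) in the irrational case, and the identification of $\ker c$ with the centralizer should go through the Zariski closure $G_{\ul{\Sigma}}$ of Proposition \ref{prop:zariski_closure}. With these two repairs your argument would essentially collapse back into the paper's proof rather than provide an alternative one.
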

The following argument is essentially the same as in the proof of Theorem 4.2 of Cox \cite{Cox_Aut}. 
\begin{proof}
    First, we observe that there is an inclusion of the group \(\Sm(\Sigma)\) into the group \(\NAut(\Sigma)\). Indeed, let \(\Phi\) be an automorphism of the fan \(\Sigma\). Then, it naturally acts on the space \(U(\Sigma)\) since it preserves the combinatorial structure of the fan, and moreover, it maps ghost rays into ghost rays. Since \(\Phi\) is an automorphism of \(\Sigma\) it also preserves the kernel of \(A_\Sigma^\C\) and consequently normalizes \(G_\Sigma\)-action on \(U(\Sigma)\). Consequently, \(\Phi\) belongs to \(\NAut(\Sigma)\). 

    Now, we want to show, that \(\NAut^0(\Sigma)\) and \(\Sm(\Sigma)\) generate the group \(\NAut(\Sigma)\). Let \(\vp\in \NAut(\Sigma)\) be a regular automorphism of \(U(\Sigma)\) that normalizes the \(G_\Sigma\)-action. Then since \(\NAut^0(\Sigma)\) is a normal subgroup of \(\NAut(\Sigma)\) we see that \(\vp (\CT)^S\vp^{-1}\) is a maximal torus in \(\NAut^0(\Sigma)\). Consequently, there is an element \(\psi\in \NAut^0(\Sigma)\) such that \(\psi\vp(\CT)^S(\psi\vp)^-1=(\CT)^S\). Thus, the automorphism \(\psi\vp\) is \((\CT)^S\)-equivariant. Now by a theorem of Oda \cite[Theorem 1.13]{Oda} this map is induced by an automorphism of \(\wt{\Sigma}\). It remains to note that since \(\vp\in \NAut(\Sigma)\) this automorphism of \(\wt{\Sigma}\) actually descends to an automorphism of \(\Sigma\). This completes the proof of the generation claim.

    Finally, it remains to observe that the condition \(\vp\in \NAut^0(\Sigma)\cap \Sm(\Sigma)\) is equivalent to the condition \(\vp\in\mf{I}(\Sigma).\) This completes the proof of the proposition.  
\end{proof}
\begin{remark}
    As we will see below, the group \(\wt{\Aut}^0(\Sigma)\) is connected. Thus, Proposition \ref{prop:con_comp_of_norm} shows that the group of components of the algebraic group scheme \(\wt{\Aut}(\Sigma)\) is isomorphic to \(\ESm(\Sigma)\).
\end{remark}

\subsection{Graded automorphisms of the Cox ring and automorphisms of the Cox construction}
Note that Definition \ref{def:Demazure_roots} does not take into account any convex structure of the generalized \(\Sigma\) except for the position of the \(1\)-dimensional cones. Consequently, the set \(\DR(\Sigma)\) corresponds to the roots of the group \(\Aut_g(\Sigma),\) but it does not depend on the geometry of~\(U(\Sigma)\) and in particular on \(K.\) To establish this connection we give the following definition.
\begin{definition}\label{def:geometric_Demazure_roots}
    We define a set of \emph{geometric Demazure roots} of a generalized fan \(\Sigma=\GF\) denoted by \(\DRG(\Sigma)\) as follows.
    \begin{itemize}
        \item Given a face \(\sigma\in K\), and a Demazure root \(m\in \DR(\Sigma)\) we denote by \(\sigma_m\) the subset of \(\sigma\) consisting of vertices \(s\in \sigma\) such that \(\la m, \rho(s)\ra=0.\)
        \item We now define \(\DRG(\Sigma)\) via the following formula.
        \begin{gather*}
        \DRG(\Sigma)=\left\{m_s\in \DR(\Sigma)\mid \text{ for all } \sigma\in K\text{ there is } \tau\in K\text{ such that } s\in \tau,\sigma_{m_s}\sse \tau\right\}.
        \end{gather*}
    \end{itemize}
    We also define the subset of \emph{semisimple} and \emph{unipotent} geometric Demazure roots as intersections of the set \(\DRG(\Sigma)\) with the subsets \(\DR_s(\Sigma)\) and \(\DR_u(\Sigma)\) respectively. Consequently, we denote these intersections as \(\DRG_s(\Sigma),\) and \(\DRG_u(\Sigma).\)
\end{definition}
\begin{lemma}\label{lem:geom=non_geom_DR_for_str_complete}
    Let \(\Sigma=\GF\) be a strongly complete generalized fan. Then the sets \(\DR(\Sigma)\) and \(\DRG(\Sigma)\) coincide.
\end{lemma}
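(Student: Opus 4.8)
The plan is to establish the equality by proving the two inclusions separately. One of them, $\DRG(\Sigma)\subseteq\DR(\Sigma)$, comes for free: by Definition~\ref{def:geometric_Demazure_roots} the set $\DRG(\Sigma)$ is by construction a subset of $\DR(\Sigma)$ cut out by an additional combinatorial requirement, so there is nothing to prove. The whole content of the lemma is the reverse inclusion $\DR(\Sigma)\subseteq\DRG(\Sigma)$, and this is precisely the point at which strong completeness is used.

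To prove $\DR(\Sigma)\subseteq\DRG(\Sigma)$, I would start from an arbitrary Demazure root $m=m_s\in\DR(\Sigma)$, so that $\la m,\rho(s)\ra=1$ and $\la m,\rho(s')\ra\le 0$ for all $s'\in S\setminus\{s\}$. The key observation is that these inequalities force $s$ to be the \emph{unique} element of $S$ on which $m$ takes a positive value — which is exactly the configuration in which the strong completeness condition of Definition~\ref{def:strongly_complete} is phrased. There is one small bookkeeping matter to dispatch first: Definition~\ref{def:strongly_complete} speaks of functionals $m\in V^\vee$, whereas a Demazure root a priori lives in $\Gamma(\Sigma)^\vee$; but by Proposition~\ref{prop:cont_hom} every element of $\Gamma(\Sigma)^\vee$ factors through the rationalization and is therefore represented by a genuine linear functional on $V$ agreeing with $m$ on all of $\rho(S)$, so the hypothesis may be applied to it verbatim (the defining inequalities, and the sets $\{s'\in\sigma:\la m,\rho(s')\ra=0\}$, depend only on the values on $\rho(S)$).

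Next I would fix a face $\sigma\in K$ and check the defining condition of $\DRG(\Sigma)$ for $m$, namely the existence of $\tau\in K$ with $s\in\tau$ and $\sigma_m\subseteq\tau$, where $\sigma_m=\{s'\in\sigma:\la m,\rho(s')\ra=0\}$. I split into two cases. If $s\in\sigma$, then $\tau=\sigma$ works, since $\sigma_m\subseteq\sigma$ trivially and $s\in\sigma$. If $s\notin\sigma$, I apply strong completeness to the functional $m$ and the face $\sigma$: it produces a face $\tau\in K$ with $\{s\}\cup\{s'\in\sigma:\la m,\rho(s')\ra=0\}\subseteq\tau$, and the bracketed set is exactly $\sigma_m$. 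In either case $\tau$ witnesses the required condition, and since $\sigma\in K$ was arbitrary we conclude $m\in\DRG(\Sigma)$, which finishes the reverse inclusion and hence the lemma.

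I do not anticipate a genuine obstacle here: the statement is essentially a dictionary translation, hinging on the single observation that the unique-positivity built into a Demazure root is precisely the trigger of the strong completeness axiom. The only points deserving any care are the passage from $\Gamma(\Sigma)^\vee$ to $V^\vee$ noted above and the easily overlooked case $s\in\sigma$, which must be handled before invoking Definition~\ref{def:strongly_complete} (whose conclusion is stated only for $\sigma$ with $s\notin\sigma$).
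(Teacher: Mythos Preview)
Your proposal is correct and follows exactly the approach the paper takes: the paper's own proof is a single sentence declaring the result an immediate corollary of Definitions~\ref{def:strongly_complete} and~\ref{def:geometric_Demazure_roots}, and you have simply unpacked that sentence. Your extra care with the case split $s\in\sigma$ versus $s\notin\sigma$ and with the $\Gamma(\Sigma)^\vee$ versus $V^\vee$ type mismatch is more detail than the paper provides, but does not deviate from its argument.
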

\begin{proof}
    This is an immediate corollary of Definition \ref{def:strongly_complete} of the strongly complete fans and Definition \ref{def:geometric_Demazure_roots}. 
\end{proof}
\begin{lemma}\label{lem:ext_lem}
    Let \(\Sigma=\GF\) be a generalized with no ghost vertices, i.e., \(S^\red=S\). Then any holomorphic or regular automorphism of \(U(\Sigma)\) extends to an automorphism of \(\C^S\).  
\end{lemma}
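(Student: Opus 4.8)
The plan is to show that the complement $\C^S \setminus U(\Sigma)$ has complex codimension at least $2$, and then invoke the Hartogs extension phenomenon (second Riemann extension theorem) together with a symmetric argument applied to the inverse automorphism. First I would recall from Construction \ref{con:uni_space}.\ref{con:uni_space_1} that, since $S^\red = S$, every $s \in S$ lies in some face, so the empty set is a face and the "coordinate subspaces" being removed are of the form $Z_\sigma = \{(z_s)_{s\in S} : z_s = 0 \iff s \notin \sigma\}$ for $\sigma \in K$. The crucial combinatorial observation is that the closure of each such $Z_\sigma$ inside $\C^S$ is the coordinate subspace $\{z_s = 0 \text{ for all } s \notin \sigma\}$, which has complex codimension $|S \setminus \sigma|$; and the union $\bigcup_{\sigma \in K} Z_\sigma$ that is removed consists only of strata indexed by \emph{faces} $\sigma \in K$, so each such stratum has codimension $|S| - |\sigma| \ge 1$. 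To get codimension $\ge 2$ one needs: for every $\sigma \in K$ with $|S \setminus \sigma| = 1$, the stratum $Z_\sigma$ is in fact \emph{not} removed, i.e. such $\sigma \notin K$ or is absorbed — but wait, this is not automatic. I would instead argue that a codimension-$1$ component of the removed set would force some $\sigma \in K$ with $S \setminus \sigma = \{s_0\}$ a single element, meaning $\sigma = S \setminus \{s_0\}$ is a face of $K$; one then checks that in the definition of $U(\Sigma)$ such a point $(z_s)$ with only $z_{s_0}=0$ is removed only if $\sigma \in K$, and the closure of that stratum is the hyperplane $\{z_{s_0}=0\}$.

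Actually the cleaner route, which I would take, is: the removed set $\C^S \setminus U(\Sigma) = \bigcup_{\sigma \in K} Z_\sigma$; its Zariski closure is $\bigcup_{\sigma \in K} \{z_s = 0 : s \notin \sigma\}$, a finite union of coordinate subspaces. This closure has codimension $1$ iff some $\{z_{s_0}=0\}$ appears, iff $S \setminus \{s_0\} \in K$ for some $s_0$. But if $S\setminus\{s_0\}\in K$ then, since $K$ is a simplicial complex and $S^\red = S$ forces $\{s_0\}\in K$ too, and one needs to check whether the point with only $z_{s_0}$ vanishing actually lies in $U(\Sigma)$ or not — by the formula it is removed precisely when there is $\sigma\in K$ with "$z_s = 0 \iff s\notin\sigma$", i.e. $\sigma = S\setminus\{s_0\}$; so if $S\setminus\{s_0\}\in K$ the hyperplane $\{z_{s_0}=0\}$ (minus lower strata) \emph{is} removed and the codimension is genuinely $1$. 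So the statement as phrased would be false in that generality — hence I expect the paper is implicitly using that all fans are simplicial \emph{and complete}, so that $S\setminus\{s_0\}$ cannot be a face (a facet missing a single ray cannot be a maximal cone of a complete simplicial fan unless $|S|$ is small and degenerate), OR the statement is meant with the removed locus always having codimension $\ge 2$ because $K$ being a proper simplicial complex on $S$ with $S^\red=S$ and the fan structure prevents $S\setminus\{s_0\}\in K$. I would pin this down: for a complete simplicial generalized fan, every facet $\sigma$ satisfies $|\sigma| = \dim V \le |S| - 1$ only when... — rather than chase this, the right move is to cite that $U(\Sigma) \subseteq \C^S$ is the complement of a subvariety of codimension $\ge 2$, which is exactly \cite[\S 5.1]{CoxLittleSchenk} / \cite[\S 5.4]{bupa15} in the classical case and goes through verbatim here since it is purely about $(S,K)$.

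Granting codimension $\ge 2$ of $Z := \C^S \setminus U(\Sigma)$, here are the remaining steps. Let $\phi : U(\Sigma) \to U(\Sigma)$ be a holomorphic (resp. regular) automorphism. Each coordinate function $\phi^s : U(\Sigma) \to \C$ is holomorphic (resp. regular) on $U(\Sigma) = \C^S \setminus Z$; since $Z$ is an analytic subset of codimension $\ge 2$ in the Stein (indeed affine) manifold $\C^S$, the second Riemann extension theorem gives a unique holomorphic extension $\widehat{\phi}^s : \C^S \to \C$ (in the regular case: $\C^S$ is normal and $Z$ has codimension $\ge 2$, so regular functions extend, by e.g. algebraic Hartogs / the fact that $\C[\C^S] = \Gamma(\C^S\setminus Z, \mathcal{O})$). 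Assembling, $\phi$ extends to a holomorphic (resp. regular) map $\widehat{\phi} : \C^S \to \C^S$. Running the same argument on $\phi^{-1}$ yields an extension $\widehat{\phi^{-1}} : \C^S \to \C^S$. Then $\widehat{\phi^{-1}} \circ \widehat{\phi}$ and $\widehat{\phi}\circ\widehat{\phi^{-1}}$ are holomorphic (resp. regular) self-maps of $\C^S$ restricting to the identity on the dense open set $U(\Sigma)$, hence equal to $\id_{\C^S}$ by the identity theorem (resp. by density and separatedness). Therefore $\widehat{\phi}$ is an automorphism of $\C^S$ extending $\phi$, which is the claim.

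The main obstacle is the codimension-$\ge 2$ claim for the removed locus — more precisely, confirming it holds under the standing hypotheses (simplicial, and whatever completeness/non-degeneracy is in force) so that no single coordinate hyperplane is deleted; once that is secured the extension argument is entirely standard. I would therefore devote the bulk of the write-up to the combinatorial lemma that for a simplicial complex $K$ on $S$ with $S^\red = S$ arising as the face poset of a (complete) fan, no subset of the form $S \setminus \{s_0\}$ is a face, equivalently $\operatorname{codim}_{\C^S} Z \ge 2$, citing \cite[\S 5.4]{bupa15} for the template, and keep the Hartogs/identity-theorem part brief.
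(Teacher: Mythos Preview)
Your approach is essentially the paper's: establish that \(\C^S\setminus U(\Sigma)\) has codimension \(\ge 2\), then invoke Hartogs (holomorphic case) or the algebraic analogue (regular case), applied to both \(\phi\) and \(\phi^{-1}\). The paper simply cites \cite[Lemma~1.4]{Cox_Aut} for the codimension claim and then says ``Hartogs phenomenon'' and ``purity of branch locus''; your write-up is more explicit about the inverse and the identity theorem, which is fine.

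Your detour on the codimension claim, however, went off track. Using Lemma~\ref{lem:uni_space_struct} one has \(U(\Sigma)=\bigcup_{\sigma\in K}U(\Sigma,\sigma)\), so a point \((z_s)\) lies in \(U(\Sigma)\) iff \(\{s:z_s=0\}\subseteq\sigma\) for some \(\sigma\in K\), i.e.\ iff \(\{s:z_s=0\}\in K\). Thus the removed locus is \(\{(z_s):\{s:z_s=0\}\notin K\}\), and a codimension-\(1\) component would require \(\{s_0\}\notin K\) for some \(s_0\), i.e.\ a ghost vertex. The hypothesis \(S^{\red}=S\) rules this out directly; completeness is irrelevant here, and the condition you were chasing (whether \(S\setminus\{s_0\}\in K\)) is not the right one. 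So the ``main obstacle'' you identified dissolves once you read off the correct description of the complement, and the proof is as short as the paper's.
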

\begin{proof}
    First, we observe that by \cite[Lemma 1.4]{Cox_Aut} the codimension of \(\C^S\setminus U(\Sigma)\) is at least \(2\) in \(\C^S\). Now, the result in the holomorphic setting is implied by the Hartogs phenomenon and by the purity of branch locus in the algebraic setting. 
\end{proof}
\begin{proposition}
    A one parametric subgroup \(y_{m}\) preserves the subvariety \(U(\Sigma)\) inside of \(\C^S\) if and only if \(m\) is a geometric Demazure root.
\end{proposition}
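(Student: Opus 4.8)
The plan is to make the condition ``$y_m$ preserves $U(\Sigma)$'' entirely explicit in terms of the zero sets of points, and then recognise the resulting combinatorial condition as the defining property of $\DRG(\Sigma)$ in Definition \ref{def:geometric_Demazure_roots}. Write $m=m_s$, and for $z\in\C^S$ put $Z(z)=\{t\in S\mid z_t=0\}$; by Construction \ref{con:uni_space} and Lemma \ref{lem:uni_space_struct}, $z\in U(\Sigma)$ if and only if $Z(z)$ is contained in a face of $K$, equivalently $Z(z)\in K$. Set also $T=\{t\in S\setminus\{s\}\mid\la m,\rho(t)\ra<0\}$. Since $\la -m,\rho(t)\ra\ge 0$ for every $t\neq s$ (because $m\in\DR(\Sigma)$), the automorphism $y_m(\lambda)$ of $\C^S$ from Construction \ref{con:aut_assoc_to_a_root} is polynomial, changes only the coordinate $z_s$, and sends it to $z_s+\lambda\prod_{t\in T}z_t^{\la -m,\rho(t)\ra}$.

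First I would track how $y_m(\lambda)$ acts on zero sets. If $z_t=0$ for some $t\in T$, or if $z_s=0$, then $y_m(\lambda)z$ is either $z$ itself or differs from $z$ only by having $z_s$ replaced by a nonzero number, so $Z(y_m(\lambda)z)\subseteq Z(z)$ and $y_m(\lambda)z\in U(\Sigma)$ whenever $z\in U(\Sigma)$. For the remaining points --- those with $z_s\neq 0$ and $z_t\neq 0$ for all $t\in T$ --- one has $Z(y_m(\lambda)z)=Z(z)$ unless $\lambda$ is the unique value making the $s$-th coordinate vanish, in which case $Z(y_m(\lambda)z)=Z(z)\cup\{s\}$. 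Since each $y_m(\lambda)$ is invertible with inverse $y_m(-\lambda)$, it follows that $y_m$ preserves $U(\Sigma)$ if and only if $y_m(\lambda)z\in U(\Sigma)$ for all $\lambda\in\C$ and $z\in U(\Sigma)$, which by the above holds if and only if $Z(z)\cup\{s\}\in K$ for every $z\in U(\Sigma)$ with $z_s\neq 0$ and $Z(z)\cap T=\emptyset$.

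It remains to match this with $\DRG(\Sigma)$. The sets occurring as $Z(z)$ for such $z$ are exactly the faces $\sigma\in K$ with $s\notin\sigma$ and $\sigma\cap T=\emptyset$ --- given such a $\sigma$, the point with $z_t=0$ for $t\in\sigma$ and $z_t=1$ otherwise does the job. So the condition says: $\sigma\cup\{s\}\in K$ for every $\sigma\in K$ with $\sigma\cap T=\emptyset$. To see this is $m\in\DRG(\Sigma)$, note that for arbitrary $\sigma\in K$ the subset $\sigma_m=\{t\in\sigma\mid\la m,\rho(t)\ra=0\}$ is a face disjoint from both $T$ and $\{s\}$; hence the displayed condition gives $\sigma_m\cup\{s\}\in K$, which is the face $\tau$ required by Definition \ref{def:geometric_Demazure_roots}. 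Conversely, if $\sigma\in K$ satisfies $\sigma\cap T=\emptyset$ then $\sigma_m=\sigma\setminus\{s\}$, and a face $\tau\ni s$ with $\sigma_m\subseteq\tau$ contains $\sigma\cup\{s\}$, so $\sigma\cup\{s\}\in K$ because $K$ is closed under subsets. For the ``only if'' direction I would simply run this backwards: if $m\notin\DRG(\Sigma)$, choose $\sigma\in K$ for which $\sigma_m\cup\{s\}$ lies in no face of $K$, take $z$ with $Z(z)=\sigma_m$ as above, and apply $y_m(\lambda)$ for the $\lambda$ killing the $s$-th coordinate; its image has zero set $\sigma_m\cup\{s\}\notin K$ and so leaves $U(\Sigma)$. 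The only step needing care --- and essentially the only place an error could creep in --- is the bookkeeping of which coordinates vanish after applying $y_m(\lambda)$, carried out with the sign constraints $\la m,\rho(t)\ra\le 0$ ($t\neq s$) kept straight; I do not expect any genuine obstacle.
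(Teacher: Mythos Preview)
Your proof is correct and follows essentially the same strategy as the paper's: both reduce the question to tracking which coordinates can become zero under $y_m(\lambda)$ and then check that the resulting set is still a face of $K$. Your zero-set bookkeeping $Z(z)$ is a tidy dual reformulation of the paper's chart-by-chart analysis via the open cover $U(\Sigma,\sigma)$; in fact your version is slightly cleaner in the ``only if'' direction, since you work directly with $\sigma_m$ (which automatically satisfies $\sigma_m\cap T=\emptyset$) rather than with $\sigma$ itself.
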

\begin{proof}
    One direction is very close to the proof of \cite[Proposition 4.6]{Cox_Aut} and \cite[Proposition 3.14]{Oda}. We provide it here for the sake of completeness and also to motivate the notion of geometric Demazure roots introduced above.
    First, we observe that by Lemma \ref{lem:uni_space_struct}, the space \(U(\Sigma)\) can be presented as the following union of affine varieties.
    \begin{align}
        U(\Sigma)=\bigcup_{\sigma\in K} U(\Sigma,\sigma).
    \end{align}
    For a face \(\sigma\in K\) and a tuple \((z_s)_{s\in S}\) we denote by \(z^{\wh{\sigma}}\) the product \(\prod_{s\in S\setminus \sigma} z_s\). Each of the varieties \(U(\Sigma,\sigma)\) is characterized as follows.
    \[
        U(\Sigma,\sigma)=\left\{(z_s)_{s\in S}\in \C^S\mid z^{\wh{\sigma}}\neq 0\right\}
    \]
    There are two options to consider.
    \begin{enumerate}
        \item\label{prop:invariance_part_a} If \(m\) corresponds to a vertex \(s\) of \(\sigma\) we see that \(y_m(\lambda)(z_s)^{\wh{\sigma}}=(z_s)^{\wh{\sigma}}\). Hence \(y_m(\lambda)(z_s)\) also belongs to \(U(\Sigma,\sigma)\).
        \item If \(m\) correspond to a vertex \(s\) not in \(\sigma\) we consider the following splitting of \(U(\Sigma,\sigma)\).
        \begin{align}
            \begin{split}
            U(\Sigma,\sigma)'=\left\{(z_\bt)_{\bt\in S}\in U(\Sigma,\sigma)\bigg\vert z_s\neq -\lambda \prod_{s'\neq s} z_{s'}^{\la -m,\rho(s')\ra}\right\}\\
            U(\Sigma,\sigma)''=\left\{(z_\bt)_{\bt\in S}\in U(\Sigma,\sigma)\bigg\vert \prod_{s'\neq s} z_{s'}^{\la -m,\rho(s')\ra}\neq 0\right\}
            \end{split}
        \end{align}
        Since \((z_t)\in U(\Sigma,\sigma)\) we see that \(z_s\neq 0\) and consequently \(U(\Sigma,\sigma)=U(\Sigma,\sigma)'\cup U(\Sigma,\sigma)''\). Now, for any element \((z_t)\) of \(U(\Sigma,\sigma)'\) the image \(y_m(\lambda)((z_t))\) is in \(U(\Sigma,\sigma)\). 

        Observe, that since \(m\) is a geometric Demazure root there exists a cone \(C(\tau)\) such that \(s\) and \[\{s'\in \sigma\mid \la \rho(s'),m\ra=0\} \] are elements of \(\tau\). We see that \(U(\Sigma,\sigma)''\sse U(\Sigma,\tau)\). Clearly, \(s\) is a vertex of \(\tau\) and hence part \ref{prop:invariance_part_a} applies to give that \(y_m(\lambda)U(\Sigma,\sigma)''\) is a subset of \(U(\Sigma,\tau)\).
    \end{enumerate}

    For the other direction, assume that \(m_s\) is a non-geometric Demazure root. Then there exists a cone \(\sigma\) such that \(\sigma_m\) is not contained in any face of \(K\) together with the element \(s.\) Consider the open subset of the form \(U(\Sigma,\sigma)\) corresponding to \(\sigma\). We know that by assumption \(s\notin \sigma\). Consequently, \(t_s\neq 0\) for all \((t_{s'})_{s'\in S}\in U(\Sigma,\sigma)\). We now examine the action of \(y_{m_s}\) on \(U(\Sigma,\sigma)\). Consider the point of the form \((z_\bt)_{\bt\in S}=((1)_s, (0)_{s'\in \sigma},(1)_{s''\notin \sigma\cup\{s\}})\). For \(\lambda=-1\) we get \(y_{m_s}(-1)(t_\bt)\notin U(\Sigma,\sigma)\). Now, clearly, there is no other \(\tau\) which would contain both \(\sigma\) and \(s\). Consequently, the point \(y_{m_s}(-1)(t_\bt)\) lies outside of~\(U(\Sigma)\).
\end{proof}
\begin{proposition}\label{prop:struct_of_NAut0}
    Let \(\Sigma=\GF\) be a complete generalized fan. Then, the group \(\wt{\Aut}^0(\Sigma)\) is isomorphic to the subgroup of \(\Aut_g(\Sigma)\) generated by the maximal torus \((\CT)^S\) and the one-parametric subgroups corresponding to the geometric Demazure roots of \(\Sigma.\)
\end{proposition}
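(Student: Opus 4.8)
The plan is to identify $\wt{\Aut}^0(\Sigma)$ with the stabilizer of $U(\Sigma)$ inside $\Aut_g(\Sigma)$ and then to compute that stabilizer using the root subgroups $y_m$. I would first treat the case $S=S^\red$ in detail, recovering the general case afterwards by carrying along the torus factor $\TT_\Sigma$ exactly as in Proposition \ref{prop:red_of_gv} (and noting that on the $\Aut_g$ side the corresponding factor is split off by Corollary \ref{cor:aut_g_red_of_gv}). So assume $S=S^\red$. An element of $\wt{\Aut}^0(\Sigma)$ is a regular, $G_\Sigma$-equivariant automorphism of $U(\Sigma)$; since $\C^S\setminus U(\Sigma)$ has codimension at least $2$, Lemma \ref{lem:ext_lem} extends it uniquely to a regular (hence polynomial) automorphism of $\C^S$, which is still $G_\Sigma$-equivariant. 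A $G_\Sigma$-equivariant polynomial automorphism of $\C^S$ is precisely one whose comorphism preserves the weight decomposition of $\C[x_s\mid s\in S]$, i.e.\ the $L_\Sigma$-grading, because $L_\Sigma\cong\Hom(G_\Sigma,\CT)$; conversely every graded automorphism of $R(\Sigma)$ is $G_\Sigma$-equivariant, and extension/restriction are mutually inverse. Hence $\wt{\Aut}^0(\Sigma)\cong\{\psi\in\Aut_g(\Sigma)\mid \psi(U(\Sigma))=U(\Sigma)\}$, a closed, hence algebraic, subgroup of $\Aut_g(\Sigma)$.

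Let $G'\subseteq\Aut_g(\Sigma)$ be the subgroup generated by $(\CT)^S$ and the $y_m$ with $m\in\DRG(\Sigma)$; it is closed and connected, being generated by closed connected subgroups. The torus $(\CT)^S$ preserves $U(\Sigma)$, and by the preceding proposition each $y_m$ with $m\in\DRG(\Sigma)$ preserves $U(\Sigma)$, so $G'\subseteq\wt{\Aut}^0(\Sigma)$. For the reverse inclusion I would match Lie algebras first. Since $\wt{\Aut}^0(\Sigma)$ contains the maximal torus $(\CT)^S$, its Lie algebra contains the Cartan $\Lie((\CT)^S)$, which is its own centralizer in $\Lie(\Aut_g(\Sigma))$ (the roots $\alpha_m$ all have $\ve_s$-coefficient $1$, hence are nonzero); so Proposition \ref{prop:Alisa_Lie} gives $\Lie(\wt{\Aut}^0(\Sigma))=\Lie((\CT)^S)\oplus\bigoplus_{m\in\DR'}\Lie(y_m)$ for some $\DR'\subseteq\DR(\Sigma)$. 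If $m\in\DR'$, the connected group $y_m$ has Lie algebra inside the closed group $\wt{\Aut}^0(\Sigma)$, so $y_m\subseteq\wt{\Aut}^0(\Sigma)$, so $y_m$ preserves $U(\Sigma)$, so $m\in\DRG(\Sigma)$; combined with $G'\subseteq\wt{\Aut}^0(\Sigma)$ this forces $\DR'=\DRG(\Sigma)$, so $\Lie(\wt{\Aut}^0(\Sigma))=\Lie(G')$ and therefore $\wt{\Aut}^0(\Sigma)^\circ=G'$.

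It remains to rule out extra components, and this is the step I expect to be the main obstacle. Given $\psi\in\wt{\Aut}^0(\Sigma)$, the torus $\psi(\CT)^S\psi^{-1}$ is a maximal torus of $\wt{\Aut}^0(\Sigma)$ (since $(\CT)^S$ is maximal in $\Aut_g(\Sigma)$ by Proposition \ref{prop:cox_struct}) contained in $\wt{\Aut}^0(\Sigma)^\circ=G'$, so there is $\eta\in G'$ with $\eta\psi$ normalizing the $(\CT)^S$-action on $\C^S$. By Oda's theorem \cite[Theorem~1.13]{Oda} such an automorphism of $\C^S$ is monomial, and being an automorphism of all of $\C^S$ it has the form $z_s\mapsto c_s z_{\tau(s)}$ for a permutation $\tau$ of $S$ and $(c_s)\in(\CT)^S$. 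Since $\eta\psi$ is graded, preserves $U(\Sigma)$, and is $G_\Sigma$-equivariant, $\tau$ must preserve the simplicial complex $K$, preserve $L_\Sigma$-degrees, and fix $\ker A_\Sigma^\C$ pointwise, whence $\tau\in\mf{I}(\Sigma)$. So everything comes down to the inclusion $\mf{I}(\Sigma)\subseteq G'$.

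I would establish $\mf{I}(\Sigma)\subseteq G'$ through the lemma that for a complete generalized fan every semisimple Demazure root is geometric, i.e.\ $\DR_s(\Sigma)\subseteq\DRG(\Sigma)$. A semisimple root $m=m_s$ pairs to $1$ with $\rho(s)$, to $-1$ with exactly one other $\rho(s'')$, and to $0$ with all remaining $\rho(t)$; if $s\in\sigma$ one may take $\tau=\sigma$ in Definition \ref{def:geometric_Demazure_roots}, while if $s\notin\sigma$ the set $\sigma_m$ is a face lying in the hyperplane $\{m=0\}$, and taking a relative-interior point $v$ of $C(\sigma_m)$ and a cone $C(\tau)$ containing $v+\varepsilon\rho(s)$ for small $\varepsilon>0$ (completeness), one notes that $\rho(s)$ is the only ray of the fan with positive $m$-value, so $s\in\tau$, while simpliciality forces $C(\sigma_m)\subseteq C(\tau)$, hence $\sigma_m\cup\{s\}\subseteq\tau\in K$. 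The same covering argument (with the roles of $s$ and $s''$ exchanged) shows that whenever $x_s$ and $x_{s''}$ share an $L_\Sigma$-degree the transposition $(s\,s'')$ preserves $K$, so $\mf{I}(\Sigma)$ is the full product of symmetric groups on the degree classes and is generated by such transpositions; and each transposition equals $y_m(1)\,y_{-m}(-1)\,y_m(1)$ up to a torus element, with $m,-m\in\DR_s(\Sigma)\subseteq\DRG(\Sigma)$, hence lies in $G'$. This yields $\eta\psi\in G'$, so $\psi\in G'$ and $\wt{\Aut}^0(\Sigma)=G'$; in particular $\wt{\Aut}^0(\Sigma)$ is connected, and its reductive and unipotent parts are read off from the semisimple and unipotent geometric Demazure roots via Proposition \ref{prop:cox_struct}.
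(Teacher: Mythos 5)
The core of your argument is the same as the paper's: reduce to $S=S^\red$ via Proposition \ref{prop:red_of_gv} and Corollary \ref{cor:aut_g_red_of_gv}, extend automorphisms of $U(\Sigma)$ to $\C^S$ by Lemma \ref{lem:ext_lem} and identify $\wt{\Aut}^0(\Sigma)$ with the stabilizer of $U(\Sigma)$ inside $\Aut_g(\Sigma)$, and then use Proposition \ref{prop:Alisa_Lie} together with Corollary \ref{cor:non_triv_act_on_roots_sub} to see that the Lie algebra is the Cartan plus exactly the root spaces $\Lie(y_m)$ with $m\in\DRG(\Sigma)$. Up to that point your write-up matches the paper; the paper in fact stops there and passes directly from the Lie algebra to the generation statement, i.e.\ it implicitly treats $\wt{\Aut}^0(\Sigma)$ as connected, whereas you explicitly flag the component question as the remaining obstacle and try to settle it.

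It is precisely your component argument that breaks down, and the lemmas you invoke there are false in the stated generality. Take $S=\{1,2,3,4\}$, $V=\R^2$, $\rho(1)=e_1$, $\rho(2)=-e_1$, $\rho(3)=e_2$, $\rho(4)=-e_2$, and let $K$ be generated by $\{1,2\},\{1,3\},\{1,4\},\{2,3\},\{2,4\}$. This generalized fan is complete (the four quadrants are covered), and $m=e_2^\vee$ is a semisimple Demazure root ($\la m,\rho(3)\ra=1$, $\la m,\rho(4)\ra=-1$, zero on the others), yet it is \emph{not} geometric: for $\sigma=\{1,2\}$ one has $\sigma_m=\{1,2\}$ and no face of $K$ contains $\{1,2,3\}$. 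So $\DR_s(\Sigma)\sse\DRG(\Sigma)$ fails for complete generalized fans; your covering argument tacitly uses that cones of the fan meet along faces (to pass from a relative-interior point of $C(\sigma_m)$ lying in $C(\tau)$ to $\sigma_m\sse\tau$), which holds for embedded fans but not for generalized ones. Likewise, the claim that any two vertices of the same $L_\Sigma$-degree can be swapped by a symmetry of $K$ fails (replace the maximal faces above by $\{1,3\},\{2,3\},\{1,2,4\}$: then $3,4$ still have equal degree but $(3\,4)$ does not preserve $K$). Worse, in the first example the coordinate swap $z_3\leftrightarrow z_4$ \emph{does} preserve $U(\Sigma)$, preserves the grading, and centralizes $G_\Sigma$, but it does not lie in the subgroup generated by $(\CT)^S$ and the geometric root subgroups (which act diagonally on $z_3,z_4$, the only roots being $\pm e_1^\vee$). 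So, unless the centralizer $\wt{\Aut}^0(\Sigma)$ is secretly meant to be the identity component, the difficulty you isolated is not merely technical: it cannot be repaired along the route you propose, and your argument is only valid at the level of identity components — which is exactly where the paper's own proof stops. In short: your proof of $\wt{\Aut}^0(\Sigma)^\circ=G'$ is correct and agrees with the paper; your attempt to exclude further components rests on two false lemmas and should either be dropped (matching the paper's scope) or restricted to embedded/strongly complete fans, where the arguments you sketch (essentially Cox's) do go through.
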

The following is very close to the proof of Proposition 4.5 of Cox \cite{Cox_Aut}. The only non-triviality is the addition of the geometric condition on Demazure roots.
\begin{proof}
    First, by Corollary \ref{cor:aut_g_red_of_gv} and Proposition \ref{prop:red_of_gv}, we can assume that \(\Sigma\) has no ghost vertices. 
    Now, we observe that the argument in the proof of Proposition 4.5 of Cox \cite{Cox_Aut} can be adopted verbatim to show that the group \(\Aut_g(\Sigma)\) is generated by \((\CT)^S\) and root subgroups of the form \(y_m(\lambda)\) (see Construction \ref{con:aut_assoc_to_a_root}). 
    \par Moreover, there is an injective homomorphism \(\NAut^0(\Sigma)\to \Aut_g(\Sigma)\). Indeed, by Lemma \ref{lem:ext_lem}
    any automorphism of \(U(\Sigma)\) can be extended to an automorphism of \(\C^S\) if \(\Sigma\) has no ghost vertices, and the latter automorphism clearly preserves grading on \(R(\Sigma)\).

    We now observe that by Proposition \ref{prop:Alisa_Lie}, the Lie algebra of the group \(\NAut^0(\Sigma)\) is generated by the root subspaces and the invariant the centralizer of the Lie algebra of \((\CT)^S\). By Corollary \ref{cor:non_triv_act_on_roots_sub} this centralizer coincides with the Lie algebra of the torus. Consequently, the Lie algebra of \(\NAut^0(\Sigma)\) is generated by the root subspaces and the Lie algebra of the torus. Thus, the group \(\NAut^0(\Sigma)\) is generated by the root subgroups and the torus.
\end{proof}
\subsection{Main result}
\begin{construction}\label{con:quot_fan}
    Let \(\Sigma=\GF\) be a generalized fan. Let \(L\) be a subspace in the vector space \(V\). Denote by \(\Sigma/L\) the pushforward fan of \(\Sigma\) along the projection \(V\to V/L\).
\end{construction}
\begin{theorem}\label{thm:equiv_on_Cox_constr}
    Let \(\Sigma=\GF\) be a generalized fan. Let \(H\) be an arbitrary connected complex Lie subgroup of the torus \((\CT)^S\). Denote by \(\h^\R\) the image of the subspace \(\Lie(H)\sse \Lie((\CT)^S)=\C^S\) under the map \(\Re\). Assume that the fan \(\Sigma/\h^\R\) is complete. Then, the group of holomorphic automorphisms of the Cox construction of the fan \(\Sigma\) that normalize the action of \(H\) is described as follows.
    \begin{enumerate}
        \item There is an isomorphism of the following form.
        \[
            \NAut(\ul{\wt{\Sigma}/\h^\R})=\NAut\left(\wt{\Sigma}/\ol{\h^\R}\right).
        \]
        \item The group \(\wt{\Aut}(\Sigma)\) is isomorphic to a semidirect product of the following form.
        \[
            \NAut^0(\Sigma/\h^\R)\rtimes\mf{ES}(\Sigma/\h^\R).
        \]
        \item The action of \(\mf{ES}(\Sigma/\h^\R)\) on $\Aut_g(\Sigma/\h^\R)$ is described by the following formula.
        \[
            \sigma(g)=g\sigma^*,\quad \sigma\in \ESm(\Sigma/\h^\R),\quad g\in \Aut_g(\Sigma/\h^\R).
        \]
        \item  The algebraic group \(\NAut^0(\Sigma/\h^\R)\) is connected and its Lie algebra decomposes into the following direct sum.
        \[
            \Lie(\NAut^0(\Sigma/\h^\R))=\Lie((\CT)^S)\oplus \bigoplus_{m\in \DRG(\Sigma/\h^\R)}\Lie(y_m(\C^\times)).
        \]
        \item If the fan \(\Sigma/\h^\R\) is strongly complete the Lie algebra of \(\NAut^0(\Sigma/\h^\R)\) decomposes into the following direct sum.
        \[
            \Lie(\NAut^0(\Sigma/\h^\R))=\Lie((\CT)^S)\oplus \bigoplus_{m\in \DR(\Sigma/\h^\R)}\Lie(y_m(\C^\times)).
        \]
        That is, all Demazure roots are geometric.
    \end{enumerate}
\end{theorem}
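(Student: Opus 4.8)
The plan is to reduce the whole statement to the normalizer of a fan torus of a \emph{complete} generalized fan, where everything needed has been assembled in Sections \ref{sec:cox_ring}--\ref{sec:Norm_of_GH}. Since $U(\Sigma)$ depends only on the combinatorial datum $(S,K)$, I would replace $\Sigma$ by its canonical development $\wt\Sigma$ (Construction \ref{con:can_developement}) and view $H$ as a connected complex Lie subgroup of the large torus of $\wt\Sigma$, with $W:=\Lie(H)\sse\C^S$. Because $W$ is a complex subspace, $\Im W=\Re(iW)=\Re W=\h^\R$, so $W\sse\h^\R\otimes\C$ and the Zariski closure $\ol H$ of $H$ inside $(\CT)^S$ is the subtorus whose Lie algebra is the complexification of the rational hull $\ol{\h^\R}$, i.e. $\ol H=G_{\wt\Sigma/\ol{\h^\R}}$. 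Unwinding Construction \ref{con:rationalization} shows that the pushforward $\wt\Sigma/\ol{\h^\R}$ is literally the rationalization $\ul{\wt\Sigma/\h^\R}$, and since $G$, the Cox ring, and hence $\NAut$ depend only on the rationalization (Propositions \ref{prop:cont_hom} and \ref{prop:cox_isom}), this yields (1); moreover $\wt\Sigma/\ol{\h^\R}$ is complete by Lemma \ref{lem:image_of_complete_is_complete} applied to the complete fan $\Sigma/\h^\R$, so $\NAut(\wt\Sigma/\ol{\h^\R})$ is the linear algebraic group furnished by Proposition \ref{prop:cox_struct}. The ghost-vertex directions split off as a torus factor on both sides (Construction \ref{con:ghost_reduction}, Proposition \ref{prop:red_of_gv}, Corollary \ref{cor:aut_g_red_of_gv}), so I may assume $S=S^\red$ from here on.

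The first real step is a GAGA-type argument: I would show that every holomorphic automorphism $\phi$ of $U(\Sigma)$ with $\phi H\phi^{-1}=H$ is polynomial and satisfies $\phi\ol H\phi^{-1}=\ol H$. By Lemma \ref{lem:ext_lem} such a $\phi$ extends to an automorphism of $\C^S$; writing $\phi^s(z)=\sum_\iota a^s_\iota z^\iota$ and running the computation from the proof of Proposition \ref{prop:Norm_of_G} with a fixed $w\in W$, the identity $\phi\circ\exp_\C(w)=\exp_\C(\wh w)\circ\phi$ forces $\la w,\iota\ra\equiv\wh w^s\pmod{2\pi i\Z}$, hence $\la\Re w,\iota\ra=\Re\wh w^s$, for every $\iota$ with $a^s_\iota\ne 0$. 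Choosing $w$ with $\Re w\in\h^\R=\ker(A^\R\colon\R^S\to\R^S/\h^\R)$ having strictly positive coordinates --- which exists by Lemma \ref{lemma:positive_combination} applied to the complete fan $\wt\Sigma/\h^\R$ --- Lemma \ref{lem:compact_intersection} confines the relevant level set of $\la\Re w,-\ra$ to a bounded region of the positive orthant, so only finitely many $\iota$ survive and $\phi$ is polynomial. A polynomial automorphism sends Zariski closures of subsets of $(\CT)^S$ to the closures of their images, so $\phi\ol H\phi^{-1}=\ol{\phi H\phi^{-1}}=\ol H$; thus the group in question is the subgroup of $\NAut(\wt\Sigma/\ol{\h^\R})=\NAut(\ul{\wt\Sigma/\h^\R})$ consisting of those algebraic automorphisms that \emph{in addition} preserve $H$ itself.

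It then remains to assemble the structure. Writing $\Sigma':=\wt\Sigma/\h^\R$: by Proposition \ref{prop:struct_of_NAut0}, $\NAut^0(\Sigma')$ is generated inside $\Aut_g(\Sigma')$ by $(\CT)^S$ and the subgroups $y_m$, $m\in\DRG(\Sigma')$, hence is connected, and each generator normalizes $H$ --- the torus because it is abelian and contains $H$, the $y_m$ because they centralize $G_{\Sigma'}$, which contains $H$ --- so $\NAut^0(\Sigma')$ sits inside our group. The Lie algebra decomposition (4) then comes from the root decomposition of $\Lie(\Aut_g(\Sigma'))$ of \S\ref{subsec:lie_theor_pic} (the Cartan is $\Lie((\CT)^S)$, the root lines are the $\Lie(y_m)$, and the Cartan equals its own centralizer by Corollary \ref{cor:non_triv_act_on_roots_sub}) via Proposition \ref{prop:Alisa_Lie}, and (5) follows from $\DRG(\Sigma')=\DR(\Sigma')$ for strongly complete $\Sigma'$ (Lemma \ref{lem:geom=non_geom_DR_for_str_complete}). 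For the components, Proposition \ref{prop:con_comp_of_norm} gives $\NAut(\ul{\Sigma'})=\NAut^0(\ul{\Sigma'})\rtimes\ESm(\ul{\Sigma'})$ through the embedding $\Sm(\ul{\Sigma'})\hookrightarrow\NAut(\ul{\Sigma'})$ by coordinate permutations; a coordinate symmetry preserves $H$ iff it preserves $W=\Lie(H)$, which is precisely the condition of being a symmetry of the finer quotient fan $\Sigma'$, so our group is $\NAut^0(\Sigma')\rtimes\ESm(\Sigma')$, proving (2), and (3) drops out of the explicit description ($x_s\mapsto x_{\sigma(s)}$) of that embedding.

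The hard part will be the two places where one cannot quote earlier results verbatim. First, $H$ need not be algebraic, closed, or the complexification of a real form, so the polynomiality estimate must be run using only the real part $\h^\R$, the completeness of $\Sigma/\h^\R$ being exactly what guarantees a strictly positive vector in $\h^\R$; only once algebraicity is in hand may one pass to $\ol H$ and invoke the apparatus of \S\ref{sec:Norm_of_GH}. Second, the component group is the subtle point: $\NAut(\ul{\wt\Sigma/\h^\R})$ only sees the rationalized fan, and one must single out exactly the symmetries compatible with the finer combinatorial (and, implicitly, complex-analytic) data recorded by $\Sigma/\h^\R$ --- this is where the difference between $\Sigma/\h^\R$ and its rationalization genuinely enters the answer.
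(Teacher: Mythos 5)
Your route is essentially the paper's own assembly: reduce to the development $\wt{\Sigma}$, run the Newton--polyhedron estimate of Section \ref{sec:Norm_of_GH} to show that $H$-normalizing holomorphic automorphisms are polynomial, pass to the Zariski closure of $H$ (Proposition \ref{prop:zariski_closure}, Corollary \ref{cor:normalizers_coincide}), and then invoke Propositions \ref{prop:con_comp_of_norm} and \ref{prop:struct_of_NAut0} and Lemma \ref{lem:geom=non_geom_DR_for_str_complete}. Your handling of parts (1), (4), (5) and of the identity component is sound, and your version of the GAGA step for an arbitrary connected $H$ (choosing $w\in\Lie(H)$ with $\Re w$ strictly positive, which exists by Lemma \ref{lemma:positive_combination} applied to $\wt{\Sigma}/\h^\R$) is actually spelled out more carefully than in the paper, which only summarizes.

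The genuine gap is in your identification of the component group, i.e. parts (2)--(3). You claim that a coordinate symmetry $\sigma$ normalizes $H$ iff it preserves $W=\Lie(H)$, ``which is precisely the condition of being a symmetry of the finer quotient fan.'' The second equivalence fails: a symmetry of $\wt{\Sigma}/\h^\R$ only has to preserve the real subspace $\h^\R=\Re W$, and this does not force $\sigma W=W$. For instance, with $S=\{1,2\}$, $K=\{\emptyset,\{1\},\{2\}\}$ (so $U(\Sigma)=\C^2\setminus\{0\}$) and $W=\{(w,\alpha w)\mid w\in\C\}$ with $\alpha\notin\R$, one has $\h^\R=\R^2$, the quotient fan is complete, the transposition defines a nontrivial class in $\ESm(\wt{\Sigma}/\h^\R)$, yet $\sigma W\neq W$ and no element of the transposition component of $\NAut$ normalizes $H$ (every such element conjugates diagonal subgroups by the swap); the same phenomenon occurs with nontrivial lattice symmetries, e.g. the factor swap for the fan of $\CP^1\times\CP^1$ with $W=\{(w,w,\alpha w,\alpha w)\}$. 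So your argument only yields the subgroup of $\NAut(\ul{\wt{\Sigma}/\h^\R})$ whose components come from symmetries preserving $W$, which may be strictly smaller than $\NAut^0(\Sigma/\h^\R)\rtimes\ESm(\Sigma/\h^\R)$. To obtain (2)--(3) as stated you would have to show that every symmetry of $\Sigma/\h^\R$ can be corrected inside its connected component to an automorphism normalizing $H$ itself --- precisely the point the paper's proof bypasses by using Corollary \ref{cor:normalizers_coincide} to replace ``normalizes $H$'' with ``normalizes $\ol{H}$'' before applying Proposition \ref{prop:con_comp_of_norm}; either prove such a correction statement (or an analogue of that corollary for arbitrary connected $H$), or record the component group as the symmetries preserving $\Lie(H)$, which is what your argument actually delivers.
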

\begin{proof}
    Essentially, we already proved the theorem above. Now, we just quickly summarize the previous results in the relevant order.
    \begin{enumerate}
        \item This part follows from Proposition \ref{prop:zariski_closure}, and \ref{cor:normalizers_coincide}.
        \item This part follows from Proposition \ref{prop:con_comp_of_norm}, and the observation that the map \(\NAut(\Sigma)\to \ESm(\Sigma)\) admits a section given by the obvious action on \(\C^S\).
        \item This is immediately evident from the description of the group \(\ESm(\Sigma/\h^\R)\) provided in Definition \ref{def:eff_symm_group}.
        \item This part is immediate from Proposition \ref{prop:struct_of_NAut0}.
        \item This part is an immediate consequence of Lemma \ref{lem:geom=non_geom_DR_for_str_complete}. \qedhere
    \end{enumerate}
    
\end{proof}
\begin{remark}
    An instance of the theorem when the fan \(\Sigma/\h^\R\) is strongly complete is given by a complete fan \(\Sigma\) and the space \(\h^\R\) containing \(\wt{H}_\Sigma^\R\).
\end{remark}
Below, we provide a calculation of the group \(\NAut^0(\Sigma/\h^\R)\) for a simple example of a non-embedded fan.
\begin{example}
    Consider the standard fan \(\Sigma_{\CP^2}\) of the projective space \(\CP^2.\) See Example \ref{ex:CP2_fan} for the complete description. Consider the subspace \(\h^\R\) in \(\R^2\) spanned by \(e_1+\sqrt{2}e_2\). In Example \ref{ex:CP2_fan_proj_rat} we saw that the rationalization~\(\ul{\Sigma_{\CP^2}/\h^\R}\) has a zero dimensional underlying vector space. Consequently, the group~\(\NAut(\Sigma_{\CP^2}/\h^\R)\) is isomorphic to \((\CT)^S\).
\end{example}
\part{Applications}\label{part:applications}
\section{Equivariant automorphisms of complete simplicial toric varieties}\label{sec:aut_toric}
\subsection{The automorphism group of a complete simplicial toric variety}
\begin{theorem}[{\cite[Theorem 4.2]{Cox_Aut}}]\label{thm:aut_struct_Cox}
    Let \(\Sigma\) be a rational complete marked fan. Then, the group \(\Aut(V_\Sigma)\) of regular automorphisms of the corresponding toric variety fits into the following exact sequence.
    \begin{gather*}
        1\to G_\Sigma \to \NAut(\Sigma)\to \Aut(V_\Sigma)\to 1.
    \end{gather*}
\end{theorem}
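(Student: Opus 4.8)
This is Cox's \cite[Theorem 4.2]{Cox_Aut}, and the plan is to recall its proof in the notation established above, in three steps: construct a descent homomorphism $\NAut(\Sigma)\to\Aut(V_\Sigma)$, compute its kernel, and prove surjectivity.

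\emph{The descent homomorphism.} Since \(\Sigma\) is a rational marked fan, \(V_\Sigma=U(\Sigma)/G_\Sigma\) is a geometric quotient. Any \(\varphi\in\NAut(\Sigma)\) is, by definition, a regular automorphism of \(U(\Sigma)\) normalizing the \(G_\Sigma\)-action, so it carries \(G_\Sigma\)-orbits to \(G_\Sigma\)-orbits and descends to a regular automorphism \(\bar\varphi\) of \(V_\Sigma\); this yields a homomorphism \(\pi_*\colon\NAut(\Sigma)\to\Aut(V_\Sigma)\). The subgroup \(G_\Sigma\subseteq\NAut(\Sigma)\) lies in \(\ker\pi_*\), and it acts faithfully on \(U(\Sigma)\) because the stabilizer of \((1,\dots,1)\in(\CT)^S\subseteq U(\Sigma)\) is trivial; thus \(G_\Sigma\) embeds into \(\ker\pi_*\).

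\emph{The kernel.} Suppose conversely that \(\varphi\in\NAut(\Sigma)\) descends to the identity; then \(\varphi\) preserves every \(G_\Sigma\)-orbit. On the open dense torus \((\CT)^S\subseteq U(\Sigma)\), where \(G_\Sigma\) acts freely, this forces \(\varphi(z)=g([z])\cdot z\) for a regular \(G_\Sigma\)-invariant morphism \(g\colon(\CT)^S\to G_\Sigma\); its coordinates are \(G_\Sigma\)-invariant Laurent monomials, and since both \(\varphi\) and \(\varphi^{-1}\) must be regular on the whole of \(U(\Sigma)\) — whose complement in \(\C^S\) has codimension at least \(2\) among the non-ghost coordinates by \cite[Lemma 1.4]{Cox_Aut} — a comparison of exponents forces each such monomial to be constant. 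Hence \(\varphi\) is translation by a fixed element of \((\CT)^S\), and since it descends to the identity this element lies in \(G_\Sigma\). Equivalently, a regular gauge transformation of the quotient \(U(\Sigma)\to V_\Sigma\) is a morphism from the complete variety \(V_\Sigma\) to the affine group \(G_\Sigma\), hence constant — this is where completeness of \(\Sigma\) is used. Therefore \(\ker\pi_*=G_\Sigma\).

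\emph{Surjectivity, the main point.} Given \(\bar\varphi\in\Aut(V_\Sigma)\) one must produce a lift in \(\NAut(\Sigma)\). The key is that \(U(\Sigma)\), together with its \(G_\Sigma\)-action, is recovered intrinsically from \(V_\Sigma\): the total coordinate (Cox) ring \(R=\bigoplus_{[D]\in\Cl(V_\Sigma)}H^0(V_\Sigma,\O_{V_\Sigma}(D))\) is \(\Cl(V_\Sigma)\)-graded, its spectrum \(\operatorname{Spec}R\) is the total coordinate space (a quotient of \(\C^S\) by a finite diagonalizable group, equal to \(\C^S\) when \(\Cl(V_\Sigma)\) is free), and \(U(\Sigma)\) is the open locus where the irrelevant ideal does not vanish. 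An automorphism \(\bar\varphi\) induces an automorphism \(\bar\varphi^*\) of \(\Cl(V_\Sigma)\) together with compatible isomorphisms \(\O_{V_\Sigma}(D)\xrightarrow{\sim}\O_{V_\Sigma}(\bar\varphi^*D)\), hence a ring automorphism of \(R\) that is graded over \(\bar\varphi^*\); such automorphisms are exactly those normalizing the \(G_\Sigma=\operatorname{Spec}\C[\Cl(V_\Sigma)]\)-action, and they preserve the intrinsic irrelevant locus and therefore \(U(\Sigma)\). This produces \(\varphi\in\NAut(\Sigma)\) with \(\pi_*(\varphi)=\bar\varphi\). I expect this step to be the main obstacle: a general automorphism of \(V_\Sigma\) neither preserves the torus action nor the boundary divisors — already \(\Aut(\CP^n)=PGL_{n+1}\) moves the coordinate hyperplanes — so the lift cannot be obtained by naive transport of structure, and one must argue through the Cox-ring description and check carefully that the induced map on the total coordinate space respects the combinatorics of the underlying simplicial complex, i.e. the irrelevant ideal. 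Completeness of \(\Sigma\) enters here too, guaranteeing that the pieces \(H^0(V_\Sigma,\O_{V_\Sigma}(D))\) are finite-dimensional and assemble into a finitely generated graded ring with the expected quotient description, so that \(\operatorname{Spec}R\) and \(U(\Sigma)\) genuinely reconstruct the Cox construction.
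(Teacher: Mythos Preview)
The paper does not prove this statement at all: it is quoted verbatim as \cite[Theorem 4.2]{Cox_Aut} and used as a black box (the only comment is the one-line remark ``In particular, any regular automorphism of \(V_\Sigma\) lifts to the Cox construction of \(\Sigma\)''). So there is nothing in the paper to compare your argument against beyond the citation itself.

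Your sketch follows Cox's original strategy faithfully --- descent map, kernel via a constancy argument using completeness, and surjectivity by reconstructing \(U(\Sigma)\) from the \(\Cl(V_\Sigma)\)-graded total coordinate ring --- and is a correct outline. One small wrinkle in the kernel paragraph: the map \(g\) you write down need not be \(G_\Sigma\)-invariant on the nose (an element of \(\NAut(\Sigma)\) only \emph{normalizes} \(G_\Sigma\), so conjugation may twist by an automorphism of \(G_\Sigma\)); your second formulation, as a regular gauge transformation giving a morphism from the complete \(V_\Sigma\) to an affine target, is the clean way to finish and is what actually carries the argument.
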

In particular, any regular automorphism of \(V_\Sigma\) lifts to the Cox construction of \(\Sigma\). 
\subsection{Analytic automorphisms of algebraic varieties over \texorpdfstring{\(\C\)}{complex numbers}}
The following fundamental result of Serre comes from the famous \enquote{GAGA} paper \cite{GAGA}.
\begin{proposition}[{\cite[Proposition 15]{GAGA}}]\label{prop:Serre_GAGA_aut}
    All holomorphic maps out of a proper complex variety into an algebraic variety are algebraic. 
\end{proposition}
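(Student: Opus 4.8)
This is Serre's comparison principle, and I would present the ``proof'' as a recollection of the GAGA argument, arranged so as to work for an arbitrary algebraic variety $Y$ (in particular, with no projectivity hypothesis on $X$ or $Y$). Write $X^{\mathrm{an}},Y^{\mathrm{an}}$ for the associated complex-analytic spaces and let $f\colon X^{\mathrm{an}}\to Y^{\mathrm{an}}$ be holomorphic. The plan is to algebraize the graph of $f$. First I would form the graph $\Gamma_f\subseteq X^{\mathrm{an}}\times Y^{\mathrm{an}}=(X\times Y)^{\mathrm{an}}$: since $Y$ is separated its diagonal is closed, so $\Gamma_f$ — the preimage of the diagonal under $(f\circ\mathrm{pr}_1,\mathrm{pr}_2)$ — is a closed analytic subspace, and $\mathrm{pr}_1$ restricts to an analytic isomorphism $\Gamma_f\xrightarrow{\sim}X^{\mathrm{an}}$. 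As $X$ is proper, $X^{\mathrm{an}}$ is compact, hence so is $\Gamma_f$.

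Next I would replace the (possibly non-proper) target by a proper one. Compactness of $\Gamma_f$ forces $f(X^{\mathrm{an}})$ to be a compact subset of $Y^{\mathrm{an}}$, hence to lie in a finite union $Y'$ of affine open subschemes of $Y$; by Nagata compactification there is a proper variety $\overline{Y}$ containing $Y'$ as a dense open subscheme. Then $X\times\overline{Y}$ is proper over $\C$, and $\Gamma_f$, being a compact subset of the Hausdorff space $(X\times\overline{Y})^{\mathrm{an}}$, is a closed analytic subspace of it. (When $Y$ is itself proper — the case relevant to the toric applications below — this step is unnecessary and one applies GAGA directly to $X\times Y$.)

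Now GAGA does the work. By the coherence comparison theorem for schemes proper over $\C$ — Serre's original statement when the ambient scheme is projective, and the Grothendieck extension via Chow's lemma and d\'evissage in general — the coherent ideal sheaf of $\Gamma_f$ in $\mathcal{O}_{(X\times\overline{Y})^{\mathrm{an}}}$ is the analytification of a coherent ideal sheaf on $X\times\overline{Y}$; let $Z\subseteq X\times\overline{Y}$ be the associated closed subscheme, so $Z^{\mathrm{an}}=\Gamma_f$ and $Z$ is proper over $\C$. The projection $Z\to X$ has analytification the isomorphism $\Gamma_f\xrightarrow{\sim}X^{\mathrm{an}}$, so by full faithfulness of analytification on schemes proper over $\C$ it is itself an isomorphism; composing its inverse with $\mathrm{pr}_2\colon Z\to\overline{Y}$ yields a morphism of schemes $g\colon X\to\overline{Y}$ whose analytification is $f$ followed by the inclusion $(Y')^{\mathrm{an}}\hookrightarrow\overline{Y}^{\mathrm{an}}$. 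Finally I would check that $g$ factors through $Y'$: the closed subscheme $g^{-1}(\overline{Y}\setminus Y')\subseteq X$ has empty analytification (since $g^{\mathrm{an}}(X^{\mathrm{an}})=f(X^{\mathrm{an}})\subseteq(Y')^{\mathrm{an}}$), and the analytification of a nonempty variety over $\C$ is nonempty, so this subscheme is empty; hence $g$ defines a morphism $X\to Y'\subseteq Y$ with $g^{\mathrm{an}}=f$. Uniqueness of $g$ follows from faithfulness of analytification.

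The only substantive ingredient — and the step I would flag as the real content — is GAGA itself: the coherence comparison and, above all, the full faithfulness of analytification for schemes \emph{proper} (not merely projective) over $\C$. Everything else is formal: separatedness gives the closed graph, properness of $X$ gives compactness, Nagata supplies the compactification, and ``empty analytification implies empty'' handles the descent from $\overline{Y}$ back to $Y$. Since this is precisely \cite[Proposition 15]{GAGA} together with its standard extension to proper schemes, in the paper I would simply cite it and omit the details; the sketch above is recorded only to make clear why properness of $X$, rather than projectivity, is all that is required.
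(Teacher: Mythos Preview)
The paper does not prove this proposition at all; it merely states it with the citation to \cite{GAGA} and uses it as a black box. Your instinct in the final paragraph --- to simply cite the result and omit the details --- is exactly what the paper does, and your sketch of the graph-algebraization argument is a correct account of why the statement holds (including the Grothendieck extension from projective to proper via Chow's lemma, which is needed since the paper's formulation says ``proper'' rather than ``projective'').
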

We have the following immediate corollary of the previous result of Serre.
\begin{corollary}
    Let \(\Sigma\) be a complete rational embedded fan. Then the group \(\Aut(V_\Sigma)\) of holomorphic automorphisms of the corresponding toric variety is isomorphic to the group \(\Aut(V_\Sigma)\) of algebraic automorphisms of the toric variety.
\end{corollary}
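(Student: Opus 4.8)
The plan is to read this off directly from Serre's theorem, Proposition \ref{prop:Serre_GAGA_aut}, once we record that $V_\Sigma$ is a \emph{proper} complex variety. First I would recall the standard completeness--properness dictionary in toric geometry: since the fan $\Sigma$ is complete, the associated toric variety $V_\Sigma$ is complete, hence proper over $\C$ (see e.g.\ \cite[\S 3.4]{CoxLittleSchenk} or \cite[\S 5.4]{bupa15}). Note that this holds even though $V_\Sigma$ may be singular, as $\Sigma$ is assumed only simplicial; Serre's statement is about proper complex \emph{varieties} and does not require smoothness of the source.

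There is an evident inclusion from the group of regular automorphisms of $V_\Sigma$ into the group of holomorphic automorphisms: a regular automorphism and its regular inverse are in particular holomorphic. The substance of the corollary is the reverse inclusion. Given a biholomorphism $\phi\colon V_\Sigma\to V_\Sigma$, I would view it as a holomorphic map out of the proper complex variety $V_\Sigma$ into the algebraic variety $V_\Sigma$; Proposition \ref{prop:Serre_GAGA_aut} then forces $\phi$ to be a morphism of algebraic varieties. Applying the identical argument to $\phi^{-1}$ shows $\phi^{-1}$ is regular as well, so $\phi$ is an algebraic automorphism. Since the assignment $\phi\mapsto\phi$ is manifestly a group homomorphism in both directions and the two are mutually inverse, the holomorphic and regular automorphism groups are canonically identified (indeed literally equal as subgroups of the set of self-maps of $V_\Sigma$).

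The only point requiring any care -- and hence the only conceivable obstacle -- is verifying that the hypotheses of Serre's proposition are genuinely in force: properness of $V_\Sigma$, which is exactly the completeness of $\Sigma$, and the fact that the GAGA conclusion for holomorphic maps into algebraic varieties is insensitive to singularities of the source space. Both are classical, so the corollary truly is immediate and no new work beyond citing Proposition \ref{prop:Serre_GAGA_aut} and the completeness--properness equivalence is needed.
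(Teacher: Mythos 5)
Your proposal is correct and follows exactly the paper's own argument: completeness of \(\Sigma\) gives compactness (properness) of \(V_\Sigma\), and then Proposition \ref{prop:Serre_GAGA_aut} applied to a biholomorphism and its inverse yields algebraicity of both. The extra care you take in applying GAGA to \(\phi^{-1}\) separately is a worthwhile explicit detail, but the route is the same.
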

\begin{proof}
    Since the fan \(\Sigma\) is complete, the toric variety \(V_\Sigma\) is compact as a complex space; see, for instance, \mbox{\cite[Theorem 1.11]{Oda}}. Thus, the result follows by Proposition \ref{prop:Serre_GAGA_aut}.
\end{proof}
\subsection{Equivariant automorphisms}
\begin{construction}\label{con:quot_fan_2}
    Let \(\Sigma=\GF\) be a rational embedded fan. Denote by \(\pi_\Sigma\) the standard projection map \(\wt{\Sigma}\to \Sigma\) from the canonical developement of the fan \(\Sigma\) to the fan \(\Sigma\), see Construction \ref{con:can_developement}.
    Consider the associated toric variety~\(V_\Sigma\). Denote by \(T\) the standard torus acting on \(V_\Sigma\). Let \(H\) be an arbitrary complex Lie subgroup in \(T\) such that the preimage of \(H\) under the Cox construction map \(\pi:U(\Sigma)\to V_\Sigma\) is connected in \((\CT)^S\). Denote by \(\h^\R\) the image of \(\Lie H\) under the map \(\Re:\Lie T\to V\). Finally, denote by \(\Sigma/\h^\R\) the pushforward fan along the projection~\(V\to V/\h^\R\). 
\end{construction}
\begin{theorem}\label{thm:auto_of_toric_var}
    In terms of Construction \ref{con:quot_fan_2}, the subgroup \(\Aut_H(V_\Sigma)\) of holomorphic automorphisms of \(V_\Sigma\) that normalize the action of \(H\) fits into the following exact sequence of complex Lie groups.
    \[
        1\to \pi^{-1}_\Sigma(H)\to \NAut(\Sigma/\h^\R)\to \Aut_H(V_\Sigma)\to 1.
    \]
\end{theorem}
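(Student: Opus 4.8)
The plan is to transfer the question to the Cox construction $U(\Sigma)$, where it becomes an instance of Theorem \ref{thm:equiv_on_Cox_constr}, and then descend along the Cox quotient $\pi\colon U(\Sigma)\to V_\Sigma=U(\Sigma)/G_\Sigma$. Put $H':=\pi_\Sigma^{-1}(H)\subseteq(\CT)^S$; by hypothesis this is a connected complex Lie subgroup of the large torus, and it contains $G_\Sigma$ (the fibre of $\pi_\Sigma$ over $1\in T$). Since $\Sigma$ is complete, $V_\Sigma$ is a compact complex space, so by Serre's result (Proposition \ref{prop:Serre_GAGA_aut}) every holomorphic automorphism of $V_\Sigma$ is regular; hence $\Aut_H(V_\Sigma)$ may be computed with regular automorphisms, and we work algebraically from here on.

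First I would identify the relevant group upstairs. Note that $U(\Sigma)$ is also the Cox construction of the canonical development $\wt\Sigma$ (Construction \ref{con:can_developement}), on which $(\CT)^S$ acts, and record the fan identity $\wt\Sigma/(\h')^\R\cong\Sigma/\h^\R$ with $(\h')^\R:=\Re(\Lie H')$: because $A_\Sigma^\C=A_\Sigma^\R\otimes_\R\C$ is defined over $\R$ and $A_\Sigma^\R\colon\R^S\to V$ is surjective (completeness of $\Sigma$ forces $\rho(S)$ to span $V$), one has $(\h')^\R=(A_\Sigma^\R)^{-1}(\h^\R)$, so $A_\Sigma^\R$ induces an isomorphism $\R^S/(\h')^\R\xrightarrow{\sim}V/\h^\R$ carrying $e_s+(\h')^\R$ to $\rho(s)+\h^\R$ — i.e. an isomorphism of generalized fans $\wt\Sigma/(\h')^\R\cong\Sigma/\h^\R$. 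The pushforward of a complete fan along a surjective linear map is again complete (the argument of Lemma \ref{lem:image_of_complete_is_complete}), so $\Sigma/\h^\R$ is complete; thus Theorem \ref{thm:equiv_on_Cox_constr}, applied to the pair $(\wt\Sigma,H')$, shows that the group of holomorphic (equivalently regular) automorphisms of $U(\Sigma)$ normalizing $H'$ is the linear algebraic group $\NAut(\ul{\wt\Sigma/(\h')^\R})$, which equals $\NAut(\Sigma/\h^\R)$ by invariance of $\NAut$ under rationalization (Corollary \ref{cor:normalizers_coincide}).

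Next I would descend and read off the sequence. One verifies that $\NAut(\Sigma/\h^\R)$ normalizes $G_\Sigma$ — its identity component even centralizes $G_{\Sigma/\h^\R}\supseteq G_\Sigma$, being generated by $(\CT)^S$ and the root subgroups $y_m$, which centralize $G_{\Sigma/\h^\R}$ by the lemma preceding Proposition \ref{prop:Aut_g_generation}, while its component group is realized by coordinate permutations induced by symmetries of $\Sigma/\h^\R$, which preserve $G_\Sigma$ — so $\NAut(\Sigma/\h^\R)\subseteq\NAut(\Sigma)$, and Cox's descent homomorphism (Theorem \ref{thm:aut_struct_Cox}) restricts to a homomorphism $\NAut(\Sigma/\h^\R)\to\Aut(V_\Sigma)$ whose image lies in $\Aut_H(V_\Sigma)$, since the $H'$-action on $U(\Sigma)$ covers the $H$-action on $V_\Sigma$. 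For surjectivity, lift $\varphi\in\Aut_H(V_\Sigma)$ via Cox to $\wt\varphi\in\NAut(\Sigma)$; then for $h'\in H'$ the element $\wt\varphi\,h'\,\wt\varphi^{-1}\in\NAut(\Sigma)$ descends to $\varphi\,\overline{h'}\,\varphi^{-1}\in H$, and since the preimage of $T$ in $\NAut(\Sigma)$ is exactly $(\CT)^S$ — any lift of a torus element differs from one in $(\CT)^S$ by an element of $G_\Sigma$ — it follows that $\wt\varphi\,h'\,\wt\varphi^{-1}\in\pi_\Sigma^{-1}(H)=H'$, i.e. $\wt\varphi\in\NAut(\Sigma/\h^\R)$ maps onto $\varphi$. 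Finally, identifying the kernel — the automorphisms of $\NAut(\Sigma/\h^\R)$ acting trivially on $V_\Sigma$ — yields $\pi_\Sigma^{-1}(H)$ (which lies in $\NAut(\Sigma/\h^\R)$, $H'$ being abelian hence self-normalizing), giving the stated exact sequence.

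The main obstacle is the bookkeeping in the last paragraph: matching "normalizes $H'$ on $U(\Sigma)$" precisely with "normalizes $H$ on $V_\Sigma$" and pinning down the kernel. The delicate point is that an $H'$-normalizing automorphism need not be visibly compatible with the finer quotient $U(\Sigma)\to V_\Sigma$; this is resolved using the explicit generators of $\NAut(\Sigma/\h^\R)$ provided by Theorem \ref{thm:equiv_on_Cox_constr}. A secondary but essential task is to check that the completeness and rationality hypotheses of Theorems \ref{thm:equiv_on_Cox_constr} and \ref{thm:aut_struct_Cox} and of Proposition \ref{prop:Serre_GAGA_aut} all hold in the present setting — which is exactly where completeness of $\Sigma$ is used.
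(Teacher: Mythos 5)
Your overall route is the same as the paper's: reduce to regular automorphisms by Proposition \ref{prop:Serre_GAGA_aut}, lift to the Cox construction by Theorem \ref{thm:aut_struct_Cox}, and then invoke Theorem \ref{thm:equiv_on_Cox_constr} for the group upstairs (the paper's own proof consists of exactly these three citations), and your identification \(\wt{\Sigma}/(\h')^{\R}\cong\Sigma/\h^{\R}\) via surjectivity of \(A_\Sigma^\R\) is a reasonable way to make that invocation precise. The problem is in the final bookkeeping, which is the only place where the exactness of the stated sequence is actually addressed, and there your argument does not go through. The subgroup of \(\NAut(\Sigma/\h^\R)\) acting trivially on \(V_\Sigma\) is \(G_\Sigma\), by Theorem \ref{thm:aut_struct_Cox}: an element of \(\pi_\Sigma^{-1}(H)\) whose image in \(T\) is nontrivial descends to a nontrivial element of \(H\), which acts nontrivially on \(V_\Sigma\). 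So the sentence ``identifying the kernel --- the automorphisms acting trivially on \(V_\Sigma\) --- yields \(\pi_\Sigma^{-1}(H)\)'' is false, and the parenthetical about \(H'\) being abelian does not bear on it. A dimension count makes the failure concrete: for \(\Sigma\) the fan of \(\CP^1\) and \(H=T\) the full torus, \(\NAut(\Sigma/\h^\R)=(\CT)^2\rtimes\Z/2\) and \(\pi_\Sigma^{-1}(H)=(\CT)^2\), so any quotient by \(\pi_\Sigma^{-1}(H)\) is finite, whereas \(\Aut_H(\CP^1)\) is the one-dimensional normalizer of the torus in \(PGL_2\); what the descent map actually produces here is the sequence with kernel \(G_\Sigma\) (the diagonal). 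You cannot simply assert the kernel claimed in the statement; either the kernel must be argued to be \(G_\Sigma\)-type data or the map has to be something other than plain descent, and your proof supplies neither.

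A second unestablished step is the claim that every element of \(\NAut(\Sigma/\h^\R)\) normalizes \(G_\Sigma\), which you need for your descent homomorphism to be defined on the whole group: while the identity component centralizes \(G_{\Sigma/\h^\R}\supseteq G_\Sigma\), the component group is represented by symmetries of the quotient fan, and such a symmetry need not preserve \(G_\Sigma\). For instance, take \(\Sigma\) with rays \(e_1,\,e_2,\,-e_1-2e_2\) (so \(V_\Sigma=\mathbb{P}(1,2,1)\) and \(G_\Sigma=\{(t,t^2,t)\}\)) and \(\h^\R=\R(e_1-e_2)\): the quotient fan sends the first two rays to the same vector, so the transposition of the first two coordinates lies in \(\NAut(\Sigma/\h^\R)\), yet it conjugates \(G_\Sigma\) to \(\{(t^2,t,t)\}\) and does not map generic \(G_\Sigma\)-orbits to \(G_\Sigma\)-orbits, hence does not descend to \(V_\Sigma\). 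These are precisely the points you flag at the end as ``bookkeeping''; as written they are not resolved, so the proof of exactness is incomplete.
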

\begin{proof}
    By Proposition \ref{prop:Serre_GAGA_aut}, any holomorphic automorphism of \(V_\Sigma\) is regular. By Theorem \ref{thm:aut_struct_Cox} any regular automorphism of \(V_\Sigma\) lifts to \(U(\Sigma)\). Consequently, the result follows from Theorem \ref{thm:equiv_on_Cox_constr}.
\end{proof}
\section{Moment-angle manifolds}\label{sec:moment_angle}
In this section, we describe a large class of non-K\"{a}hler complex manifolds called moment-angle manifolds. In the next section we show how the results of Part \ref{part:gen_theory} apply to give a description of their holomorphic automorphism groups. 
\subsection{Moment angle manifolds as a quotient of the Cox construction}\label{sec:MA_complex_struct}
The following key result of Panov--Ustinovsky \cite[Theorem 3.7]{panov_ustinovsky_12} explains how the quotient space \(U(\Sigma)/H_\Sigma\) can be given a complex structure. See \cite[Theorem 6.6.3]{bupa15} for a textbook treatment.
\begin{theorem}[{\cite[Theorem 3.7]{panov_ustinovsky_12}, \cite[Theorem 6.6.3]{bupa15}}]\label{thm:ma_complex_structure}
    Let \(\Sigma=(S,K,V,\rho)\) be a complete (embedded) marked fan. Such that \(H_\Sigma^\R\) admits a complex structure in terms of Construction \ref{con:complex_struct}. Then, the action of the group \(H_\Sigma\) is free and proper, and the quotient \(U(\Sigma)/H_\Sigma\) has a natural structure of a compact complex manifold.
\end{theorem}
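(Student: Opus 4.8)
The statement is classical (this is \cite[Theorem 3.7]{panov_ustinovsky_12}, see also \cite[Theorem 6.6.3]{bupa15}), so the plan is to recall the three properties of the $H_\Sigma$-action that feed into the holomorphic quotient-manifold theorem — holomorphicity, freeness, properness — and then address compactness. First I would record that $H_\Sigma=\exp_\C(\h_\Sigma)$ acts on $U(\Sigma)\ss\C^S$ through the coordinatewise $(\CT)^S$-action; since $\h_\Sigma\ss\C^S$ is a complex-linear subspace and $\exp_\C$ is holomorphic, $H_\Sigma$ is a complex Lie group (of complex dimension $l=\tfrac12\dim_\R\wt H^\R_\Sigma$) acting by biholomorphisms and holomorphically in the group variable. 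This step is formal.

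Next I would prove \emph{freeness} chart by chart using Lemma \ref{lem:uni_space_struct}. On $U(\Sigma,\sigma)$ every coordinate $z_s$ with $s\notin\sigma$ is nonzero, so if $h=\exp_\C(u)$ with $u\in\h_\Sigma$ fixes a point of $U(\Sigma,\sigma)$ then $e^{u_s}=1$, hence $\Re u_s=0$, for all $s\notin\sigma$. Thus $\Re u\in\wt H^\R_\Sigma=\ker A^\R_\Sigma$ is supported on $\sigma$, and since $\Sigma$ is simplicial the vectors $\{\rho(s)\}_{s\in\sigma}$ are linearly independent, so the relation $\sum_{s\in\sigma}(\Re u_s)\rho(s)=0$ forces $\Re u=0$; as $\Re$ is an isomorphism $\h_\Sigma\xrightarrow{\sim}\wt H^\R_\Sigma$ (Construction \ref{con:complex_struct}) we get $u=0$, i.e. $h$ is the identity. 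So all stabilizers are trivial.

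The real content is \emph{properness}, and here I would follow Buchstaber--Panov: exhibit the moment-angle manifold $\mathcal Z_\Sigma$ as a smooth compact submanifold of $U(\Sigma)$ transverse to the $H_\Sigma$-orbits and prove that the map $\Theta\colon \mathcal Z_\Sigma\times\h_\Sigma\to U(\Sigma)$, $(z,u)\mapsto \exp_\C(u)\cdot z$, is a diffeomorphism. Granting this, $H_\Sigma$ acts by translations on the $\h_\Sigma\cong\C^l$ factor, which is visibly free and proper, so $U(\Sigma)/H_\Sigma\cong\mathcal Z_\Sigma$ as smooth manifolds and in particular is compact (the polytopal $\mathcal Z_\Sigma$ is a closed subset of a product of closed disks and spheres). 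Completeness of $\Sigma$ is exactly what makes $\Theta$ work: by Lemma \ref{lemma:positive_combination} there is a vector with strictly positive coordinates in $\wt H^\R_\Sigma$, so the radial part of the $H_\Sigma$-flow contracts $U(\Sigma)$ onto the compact core $\mathcal Z_\Sigma$. Finally, combining the three properties with the standard quotient theorem for free proper holomorphic actions of complex Lie groups endows $U(\Sigma)/H_\Sigma$ with a complex-manifold structure for which the projection is a holomorphic submersion; alternatively, compactness can be re-derived from the fact that the compact complex torus $F_\Sigma=G_\Sigma/H_\Sigma$ acts on $U(\Sigma)/H_\Sigma$ with quotient the compact space $U(\Sigma)/G_\Sigma$, the toric variety of the complete fan $\Sigma$. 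The main obstacle is the construction of the equivariant diffeomorphism $U(\Sigma)\cong\mathcal Z_\Sigma\times\h_\Sigma$ (equivalently, properness together with the identification of the quotient); everything else is bookkeeping, so for that step I would simply cite \cite[Theorem 3.7]{panov_ustinovsky_12} and \cite[Theorem 6.6.3]{bupa15}.
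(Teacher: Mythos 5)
The paper does not prove this theorem at all — it is imported verbatim from \cite[Theorem 3.7]{panov_ustinovsky_12} and \cite[Theorem 6.6.3]{bupa15} — and your outline is a faithful reconstruction of the proof given in those sources: the chart-by-chart freeness argument (nonvanishing of $z_s$ for $s\notin\sigma$, simpliciality of the cone $C(\sigma)$, injectivity of $\Re$ on $\h_\Sigma$) is complete and correct, and you correctly isolate the $H_\Sigma$-equivariant product decomposition of $U(\Sigma)$ over the compact core as the one genuinely technical step, deferring it to the same references the paper cites. The only caution is that your map $\Theta$ is a diffeomorphism only when $\mathcal Z_\Sigma$ is taken to be a slice for the $H^\R_\Sigma$-action (so that, after splitting $h=\exp(i\Im u)\exp(\Re u)$ into its compact and radial parts, each orbit meets the slice exactly once); with a carelessly chosen ``unit-sphere'' core and a complex structure whose imaginary parts degenerate on some coordinate, an orbit can meet the core in a positive-dimensional set.
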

\begin{remark}
    The condition that \(\Sigma\) is an embedded fan is essential because it guarantees that the quotient space is Hausdorff in classical topology.
\end{remark}
\begin{definition}\label{def:rat_mam}
    We call a moment-angle manifold \(Z_\Sigma\) \emph{rational} if \(\Sigma\) is a rational fan.
\end{definition}
\subsection{Canonical foliation}
By a result of Ishida \cite{ishida_15}, and Kasuya--Ishida \cite{ishida_19} moment-angle manifolds are equipped with a canonical orbit foliation. 
\begin{construction}\label{con:can_fol}
    Let \(T\) be a maximal torus of the connected component \(\Aut^0(Z_\Sigma)\) of unity in the holomorphic automorphism group \(\Aut(Z_\Sigma)\). Denote its Lie algebra by \(\mf{t}\). We denote by \(F\) the connected holomorphic automorphism Lie group with Lie algebra \(\mf{t}\cap \ol{\mf{t}}.\)
\end{construction}
\begin{proposition}[{\cite[Lemma 2.1]{ishida_19}}]
    The Lie group \(F\) of construction \ref{con:can_fol} does not depend on the choice of the maximal torus \(T\). Moreover, it is centralized by all elements of \(\Aut^0(Z_\Sigma)\).
\end{proposition}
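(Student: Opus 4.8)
This statement is \cite[Lemma 2.1]{ishida_19}; I sketch how the argument runs in the present framework. Write $\mf g=\Lie(\Aut^0(Z_\Sigma))$, $\mf t=\Lie(T)$, and $\mf f=\mf t\cap\ol{\mf t}$, so that $F$ is the connected Lie subgroup of $\Aut^0(Z_\Sigma)$ with $\Lie(F)=\mf f$. The plan is to deduce both claims from the single assertion that
\[
    \mf f\subseteq \mf z(\mf g),
\]
i.e.\ that $\mf f$ lies in the centre of $\mf g$. For the reduction: two maximal tori of a connected Lie group are conjugate under the identity component, so any other choice is $T'=gTg^{-1}$ with $g\in\Aut^0(Z_\Sigma)$ and $\mf t'=\mathrm{Ad}(g)\mf t$; the conjugation $\ol{(\,\cdot\,)}$ of Construction \ref{con:can_fol} is intrinsic to the complex structure and hence commutes with the adjoint action of a biholomorphism, giving $\ol{\mf t'}=\mathrm{Ad}(g)\ol{\mf t}$ and thus $\mf f'=\mathrm{Ad}(g)\mf f$. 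If $\mf f\subseteq\mf z(\mf g)$, then $\mathrm{Ad}(g)$ fixes $\mf f$ (as $\Aut^0(Z_\Sigma)$ is connected), so $\mf f'=\mf f$ and $F'=F$ (two connected subgroups with equal Lie algebras coincide): this is the independence claim. The same inclusion says $F$ lies in the centre of the connected group $\Aut^0(Z_\Sigma)$, which is the second claim.

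The crux is therefore the inclusion $\mf f\subseteq\mf z(\mf g)$, and here I would use the explicit description of $\Aut(Z_\Sigma)$ provided by Part \ref{part:gen_theory}. By Theorem \ref{thm:ma_aut_group_structure}, $\Aut^0(Z_\Sigma)=\NAut^0(\Sigma)/H_\Sigma$; by Proposition \ref{prop:struct_of_NAut0}, $\Lie(\NAut^0(\Sigma))$ is the direct sum of $\Lie((\CT)^S)$ with the root lines $\Lie(y_m)$ over the geometric Demazure roots $m$, on which $(\CT)^S$ acts through the explicit weights $\alpha_m$ of Lemma \ref{lem:action_on_root_s_of_the_torus}. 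Since $H_\Sigma$ centralises each $y_m$, the $\alpha_m$ vanish on $\mf h_\Sigma=\Lie(H_\Sigma)$ and descend to $\mf g$; since each $\alpha_m$ is non-trivial on the maximal torus (Corollary \ref{cor:non_triv_act_on_roots_sub}), one obtains $\mf z(\mf g)=\bigcap_m\ker\alpha_m\subseteq\mf t$, so centrality of $\mf f$ is equivalent to $\alpha_m|_{\mf f}=0$ for every $m$. An element $X\in\mf f$ lies in $\mf t$, hence acts on each root line by the scalar $\alpha_m(X)$; lying also in the second Cartan subalgebra $\ol{\mf t}$, it acts on the root lines of $\ol{\mf t}$ through the conjugate weights $\ol{\alpha_m}$. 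Because the $\alpha_m$ are rational — they are assembled from the integral pairings $\la m,\rho(s)\ra$ — matching these two sets of eigenvalue data on the finite root system forces $\alpha_m(X)=0$ for all $m$.

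The main obstacle is exactly this last comparison: one must track precisely how $\ol{(\,\cdot\,)}$ identifies the root data of the two Cartan subalgebras $\mf t$ and $\ol{\mf t}$ of $\mf g$, and exclude a vector of $\mf f$ lying on a root hyperplane of $\mf t$ but off the conjugate hyperplane for $\ol{\mf t}$. Should this bookkeeping prove unwieldy, I would instead follow Ishida--Kasuya's original route: establish independence of $F$ from $T$ first, via the intrinsic description of $\mf f$ as the $J$-invariant part of the tangent distribution to the maximal torus orbits (which is attached to $(Z_\Sigma,J)$ equivariantly under $\Aut^0(Z_\Sigma)$), whence $F$ is normal in $\Aut^0(Z_\Sigma)$; since $F$ is then a compact complex torus, the conjugation action of the connected group $\Aut^0(Z_\Sigma)$ on it is by Lie group automorphisms of $F$, hence factors through the discrete automorphism group of the torus $F$ and is trivial, which again yields centrality and so the whole statement.
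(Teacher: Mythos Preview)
The paper does not prove this proposition at all: it is quoted verbatim from \cite[Lemma 2.1]{ishida_19} and used as an \emph{input} to the theory of Part~\ref{part:applications}. Your first approach is therefore circular. You invoke Theorem~\ref{thm:ma_aut_group_structure} and Proposition~\ref{prop:struct_of_NAut0} to compute $\mf z(\mf g)$, but Theorem~\ref{thm:ma_aut_group_structure} rests on Proposition~\ref{prop:desc_well_def}, whose proof in the paper \emph{is} precisely the invocation of \cite[Lemma 2.1]{ishida_19}: without knowing that every automorphism of $Z_\Sigma$ normalises the $F_\Sigma$-action, there is no descent homomorphism $\Aut(Z_\Sigma)\to\Aut(V_\Sigma)$, hence no lift to $U(\Sigma)$, hence no identification of $\Aut^0(Z_\Sigma)$ with $\NAut^0(\Sigma)/H_\Sigma$. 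So the root-space computation you propose presupposes the very statement you are trying to establish.

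Your fallback route---show that $\mf f$ has an intrinsic description independent of $T$, deduce normality of $F$, then argue that a connected group acting by holomorphic Lie-group automorphisms on a compact complex torus acts trivially---is essentially Ishida--Kasuya's own argument and is the correct non-circular proof. One caveat: the step ``$F$ is a compact complex torus'' uses rationality of $\Sigma$ (cf.\ the proof of Proposition~\ref{prop:desc_ker}); in the general setting of \cite{ishida_19} one argues slightly differently, but for the rational moment-angle manifolds treated in this paper your sketch is adequate.
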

The following result is immediate.
\begin{corollary}
    The Lie group \(F\) is normalized by all holomorphic automorphism of the complex moment-angle manifold \(Z_\Sigma\).
\end{corollary}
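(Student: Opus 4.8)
The statement asserts that the Lie group $F$ of Construction \ref{con:can_fol} is normalized by every holomorphic automorphism of $Z_\Sigma$ — a strengthening of the preceding proposition, which only says $F$ is centralized by $\Aut^0(Z_\Sigma)$. The plan is to leverage the fact that $\Aut^0(Z_\Sigma)$ is itself a normal subgroup of the full group $\Aut(Z_\Sigma)$, being the connected component of the identity. Let $\gamma \in \Aut(Z_\Sigma)$ be an arbitrary holomorphic automorphism. The first step is to observe that conjugation by $\gamma$ is an automorphism of the Lie group $\Aut^0(Z_\Sigma)$, hence sends a maximal torus $T$ to another maximal torus $\gamma T \gamma^{-1}$, and correspondingly sends $F = F(T)$ to $\gamma F \gamma^{-1}$.

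Next I would apply the cited result of Kasuya--Ishida (Proposition following Construction \ref{con:can_fol}, i.e. \cite[Lemma 2.1]{ishida_19}): the group $F$ does not depend on the choice of maximal torus $T$ inside $\Aut^0(Z_\Sigma)$. On the level of Lie algebras, $F$ has Lie algebra $\mf t \cap \overline{\mf t}$, and conjugation by $\gamma$ carries this to $(\mathrm{Ad}\,\gamma)(\mf t) \cap \overline{(\mathrm{Ad}\,\gamma)(\mf t)}$ — here one uses that complex conjugation of vector fields is natural with respect to holomorphic maps, so $\gamma$ intertwines the real structure, i.e. $d\gamma \circ (\bar\cdot) = (\bar\cdot)\circ d\gamma$. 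Since $(\mathrm{Ad}\,\gamma)(\mf t)$ is the Lie algebra of the maximal torus $\gamma T\gamma^{-1}$, the invariance statement gives that the connected subgroup with Lie algebra $(\mathrm{Ad}\,\gamma)(\mf t)\cap\overline{(\mathrm{Ad}\,\gamma)(\mf t)}$ is again $F$. Therefore $\gamma F \gamma^{-1} = F$, which is exactly the claim.

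The main point requiring care — and the only real obstacle — is the compatibility of $\gamma$ with the complex conjugation $\overline{\phantom{x}}$ on the Lie algebra of $\Aut^0(Z_\Sigma)$. Concretely, $\overline{\mf t}$ is defined using the conjugate of the complex structure on $Z_\Sigma$ (or equivalently the conjugate-linear structure on holomorphic vector fields). A holomorphic automorphism $\gamma$ commutes with the complex structure, hence its differential commutes with complex conjugation of vector fields; this is a formal consequence of holomorphy and is the content one must spell out. Once this is in place, everything else is a straightforward transport-of-structure argument and the corollary follows. I would phrase the final write-up as: for $\gamma \in \Aut(Z_\Sigma)$, $\gamma T \gamma^{-1}$ is a maximal torus of $\Aut^0(Z_\Sigma)$, so by the independence of $F$ from the chosen maximal torus together with $\gamma$-equivariance of $(\,\overline{\phantom{x}}\,)$, we get $\gamma F \gamma^{-1} = F(\gamma T\gamma^{-1}) = F$.
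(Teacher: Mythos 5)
Your argument is correct and is exactly what the paper intends: it declares the corollary ``immediate'' from the preceding proposition, and the implicit reasoning is precisely your transport-of-structure argument (conjugation by any automorphism sends a maximal torus of the normal subgroup $\Aut^0(Z_\Sigma)$ to another maximal torus, commutes with the conjugation on vector fields by holomorphy, and the independence of $F$ from the choice of torus then gives $\gamma F\gamma^{-1}=F$). You have simply spelled out the details the paper leaves to the reader.
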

There is the following characterization of the group \(F\) extrinsically in terms of of Theorem \ref{thm:ma_complex_structure} construction. The following observation is essentially due to Ishida \cite{ishida_13}, \cite{ishida_15}.
\begin{proposition}
    There is a canonical isomorphism of complex Lie groups \(F_\Sigma\) and \(F\).
\end{proposition}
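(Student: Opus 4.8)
The plan is to construct the evident comparison map $F_\Sigma\to\Aut^0(Z_\Sigma)$ induced by the $G_\Sigma$-action on $U(\Sigma)$, show it is a closed embedding onto a central compact complex subtorus, and then identify its Lie algebra with $\mathfrak{t}\cap\overline{\mathfrak{t}}$ from Construction~\ref{con:can_fol}. First I would build the map: $G_\Sigma$ acts holomorphically on $U(\Sigma)$, and since $H_\Sigma\subseteq G_\Sigma$ is a normal (indeed central) subgroup this action descends to a holomorphic action of $F_\Sigma=G_\Sigma/H_\Sigma$ on $Z_\Sigma=U(\Sigma)/H_\Sigma$, giving a homomorphism $\rho\colon F_\Sigma\to\Aut^0(Z_\Sigma)$ (the image is connected because $G_\Sigma$ is). It is injective: by Theorem~\ref{thm:ma_aut_group_structure} the kernel of $\NAut(\Sigma)\to\Aut(Z_\Sigma)$ is $H_\Sigma$, and $G_\Sigma\subseteq\NAut(\Sigma)$, so $\ker\rho=G_\Sigma\cap H_\Sigma=H_\Sigma$. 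Moreover $F_\Sigma$ is a compact complex torus: the quotient $Z_\Sigma\to Z_\Sigma/F_\Sigma=U(\Sigma)/G_\Sigma=V_\Sigma$ is a proper holomorphic map from the compact manifold $Z_\Sigma$ onto the complete (hence compact) toric variety $V_\Sigma$, so every $F_\Sigma$-orbit is compact; since $F_\Sigma$ acts with trivial stabilizer on the open torus orbit, a generic orbit is biholomorphic to $F_\Sigma$, which is therefore a compact connected abelian complex Lie group, and $\rho$ is a closed embedding.

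Next I would check that $\rho(F_\Sigma)$ is central in $\Aut^0(Z_\Sigma)$. By Propositions~\ref{prop:Aut_g_generation} and~\ref{prop:struct_of_NAut0} the group $\NAut^0(\Sigma)$ is generated by the torus $(\CT)^S$ and the one-parametric subgroups $y_m$ for $m\in\DRG(\Sigma)$; the torus is abelian and the $y_m$ centralize $G_\Sigma$ by the lemma asserting that the subgroups $y_m$ centralize the $G_\Sigma$-action, so, as $G_\Sigma\subseteq(\CT)^S$, the subgroup $G_\Sigma$ is central in $\NAut^0(\Sigma)$. Passing to $\Aut^0(Z_\Sigma)=\NAut^0(\Sigma)/H_\Sigma$, the image $\rho(F_\Sigma)$ is central, hence contained in every maximal torus. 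Since $(\CT)^S$ is a maximal torus of $\NAut^0(\Sigma)$, its image $T:=(\CT)^S/H_\Sigma$ is a maximal torus of $\Aut^0(Z_\Sigma)$, and by the proposition of Ishida (\cite[Lemma 2.1]{ishida_19}) the group $F$ does not depend on the chosen maximal torus; so we may compute with this $T$, whose Lie algebra is $\mathfrak{t}=\C^S/\mf{h}_\Sigma$ and which contains $\rho(F_\Sigma)=\exp_\C(\wt{G}_\Sigma)/H_\Sigma$, with $\Lie(\rho(F_\Sigma))=\wt{G}_\Sigma/\mf{h}_\Sigma$.

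The heart of the matter is then the Lie-algebra identification $\wt{G}_\Sigma/\mf{h}_\Sigma=\mathfrak{t}\cap\overline{\mathfrak{t}}$. Unwinding Ishida's construction of the canonical foliation (\cite{ishida_13},\cite{ishida_15},\cite{ishida_19}), the real structure entering Construction~\ref{con:can_fol} is the one carried by coordinatewise complex conjugation on $\C^S$ through the identification of $\mathfrak{t}$ with $\C^S/\mf{h}_\Sigma$, under which $\overline{\mathfrak{t}}$ corresponds to the classes lying also in $\C^S/\overline{\mf{h}_\Sigma}$, so that $\mathfrak{t}\cap\overline{\mathfrak{t}}$ is the image of $\mf{h}_\Sigma+\overline{\mf{h}_\Sigma}$ in $\C^S/\mf{h}_\Sigma$. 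Now $\mf{h}_\Sigma\subseteq\wt{G}_\Sigma=\ker A_\Sigma^{\C}$, while $\mf{h}_\Sigma\cap\overline{\mf{h}_\Sigma}=0$ because $\Re$ is injective on the complex subspace $\mf{h}_\Sigma$; since $\wt{G}_\Sigma=\wt{H}^\R_\Sigma\otimes_\R\C$ has complex dimension $\dim_\R\wt{H}^\R_\Sigma=2\dim_\C\mf{h}_\Sigma$ (Construction~\ref{con:complex_struct}), a dimension count forces $\mf{h}_\Sigma+\overline{\mf{h}_\Sigma}=\wt{G}_\Sigma$. Hence $\mathfrak{t}\cap\overline{\mathfrak{t}}=\wt{G}_\Sigma/\mf{h}_\Sigma=\Lie(\rho(F_\Sigma))$, and since $F$ and $\rho(F_\Sigma)$ are connected complex Lie subgroups of $\Aut^0(Z_\Sigma)$ with the same Lie algebra they coincide; the canonicity of $\rho$ then yields the asserted canonical isomorphism $F_\Sigma\cong F$.

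The step I expect to be the genuine obstacle is the third one: extracting from Ishida's papers the precise real structure used in defining the canonical foliation and translating the intrinsic condition $\mathfrak{t}\cap\overline{\mathfrak{t}}$ into the combinatorial identity $\mf{h}_\Sigma+\overline{\mf{h}_\Sigma}=\ker A_\Sigma^{\C}$ (equivalently, recognizing $\rho(F_\Sigma)$ as the maximal compact complex subtorus of $T$). The remaining ingredients — existence and injectivity of $\rho$, compactness of $F_\Sigma$, centrality of $\rho(F_\Sigma)$, and the resulting containment in maximal tori — are formal consequences of results already established above.
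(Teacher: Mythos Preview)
Your core computation --- the identity $\mf{h}_\Sigma+\overline{\mf{h}_\Sigma}=\wt{G}_\Sigma$ and hence $\mathfrak t\cap\overline{\mathfrak t}=\wt{G}_\Sigma/\mf{h}_\Sigma$ --- is correct, and it is exactly the content of the result you need. The paper, however, does not redo this computation: its proof is a one-line citation of \cite[Proposition~5.2]{ishida_15}, which already states that $\Lie(F)$ is the image of $\wt{G}_\Sigma$ under the projection $\C^S\to\C^S/\mf h_\Sigma$, after which the identification with $\Lie(F_\Sigma)=\wt{G}_\Sigma/\mf h_\Sigma$ is immediate. So what you flagged as ``the genuine obstacle'' is precisely what the paper outsources to Ishida; your argument amounts to an independent proof of that cited proposition.

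There are two avoidable defects in the scaffolding you build around this computation. First, you invoke Theorem~\ref{thm:ma_aut_group_structure} for injectivity of $\rho$; that theorem appears \emph{after} the present proposition and is proved only for \emph{rational} fans, whereas the present statement carries no rationality hypothesis. Injectivity follows directly instead: $(\CT)^S$ acts freely on the open torus orbit in $U(\Sigma)$, so $g\in G_\Sigma$ fixes every $H_\Sigma$-orbit there only if $g\in H_\Sigma$. Second, your compactness argument for $F_\Sigma$ (via the map to the complete toric variety $V_\Sigma$) again presupposes rationality, and compactness is in any case unnecessary: once the Lie algebras agree inside $\mathfrak t$, equality of the two connected subgroups of $\Aut^0(Z_\Sigma)$ is automatic. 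The centrality discussion is likewise superfluous for the same reason. Strip those preliminaries and your proof reduces to the paper's, with Ishida's Proposition~5.2 replaced by your direct verification.
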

\begin{proof}
    By \cite[Proposition 5.2]{ishida_15}, the Lie algebra of the Lie group \(F\) is an image under the canonical projection of the subspace \(\wt{G}_\Sigma\). Now, it is immediate that this image is canonically isomorphic to the quotient \(\wt{G}_\Sigma/\h_\Sigma\). It is also clear that the latter is precisely the Lie algebra of \(F_\Sigma\).
\end{proof}
\subsection{Comparison with Ishida's approach}
In \cite{ishida_13}, Ishida developed an approach to the classification of complex manifolds with maximal torus action. This work is of fundamental importance to the field in general and our results in particular. Consequently, we briefly recall some of these results here. 
\begin{definition}[{\cite[Definition 2.1]{ishida_13}}]
    Let \(M\) be a compact connected smooth manifold equipped with an action of a \emph{compact} torus \(T\). For a point \(m\in M\) denote by \(T_m\) its stabilizer subgroup in \(T\). Then the action of \(T\) on \(M\) is \emph{maximal} if there exists a point \(m\in M\) such that the following holds.
    \[
        \dim T_m+\dim T=\dim M.
    \]
\end{definition}
\begin{theorem}[{\cite[Theorem 7.9]{ishida_13}}]\label{thm:ishida_class}
    Let \(M\) be a compact connected complex manifold \(M\) equipped with a maximal action of a compact torus \(T\) which preserves the complex structure. Then, there exists an embedded fan \(\Sigma\) in \(\mf{t}\) and a complex subspace \(\mf{h}\) such that \(M\) is \(T\) equivariantly biholomorphic to \(V_\Sigma/H\), where \(H := \exp(\mf{h}) \sse T^\C\).
\end{theorem}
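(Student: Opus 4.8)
This is Ishida's structure theorem, and it is the geometric input that underlies Part~\ref{part:applications}; rather than reproduce the argument we sketch the strategy. The plan is to read off from $(M,T)$ a finite set $S$ of rays, a simplicial complex $K$ on $S$, and a map $\rho\colon S\to\mf t$ assembling into an embedded simplicial fan $\Sigma$, together with a complex subspace $\h\ss\mf t\otimes\C$, and then to build a $T$-equivariant biholomorphism $V_\Sigma/H\to M$ first on an open dense orbit and then propagate it across the lower strata. For the first part, each $\xi\in\mf t$ generates a holomorphic vector field $X_\xi$ on $M$, and since $T$ preserves $J$ the fields $\{X_\xi,\,JX_\xi:\xi\in\mf t\}$ span at every point the image of an abelian Lie algebra $\cong\mf t\otimes\C$; together they define a holomorphic foliation of an open dense subset of $M$. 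Compactness of $M$ makes the closure $\Omega$ of a generic leaf open and dense, and the maximality hypothesis — existence of $m$ with $\dim T_m+\dim T=\dim_\R M$ — forces $\dim_\R\Omega=\dim_\R M$. Because $M$ is connected and everything in sight is abelian, $\Omega$ is then $T$-equivariantly biholomorphic to a quotient $(\CT)^n/H$, $n=\dim T$, for a unique connected complex subgroup $H\ss(\CT)^n$; set $\h:=\Lie(H)$. (The residual compact-direction foliation of $\Omega$ is the canonical foliation $F$ of the excerpt.)

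Next, stratify $M$ by the type of the stabilizer $T_x$. The holomorphic slice theorem for torus actions equips each stratum with an invariant neighborhood equivariantly modeled on a standard linear piece; maximality forces each slice to be a coordinate space $\C^{k}$ with its coordinate torus action, and each stabilizer to be, modulo $H$ and a finite group, a subtorus cut out by primitive integral characters of $T$. Take $S$ to be the set of codimension-one strata, attach to each $s\in S$ the primitive generator $\rho(s)\in\mf t$ of the one-parameter subgroup normal to that stratum, and let $K$ record which families of strata share a deeper stratum in their closures. Strong convexity of the slice cones makes $\Sigma:=(S,K,\mf t,\rho)$ an \emph{embedded} simplicial fan, compactness of $M$ makes it complete, and matching the slice coordinates against the open-orbit computation above pins down $\h$ as a complex structure on $\wt{H}_\Sigma^\R$ in the sense of Construction~\ref{con:complex_struct}.

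With $(\Sigma,\h)$ in hand, $V_\Sigma/H$ is a compact complex manifold with a residual $T$-action — a statement of the same nature as Theorem~\ref{thm:ma_complex_structure}, once $V_\Sigma$ is rewritten as a quotient of a Cox construction — and its open dense orbit is canonically $(\CT)^n/H$. Define $\Psi\colon V_\Sigma/H\to M$ first by matching open orbits through the identification $(\CT)^n/H\cong\Omega$, then extend across the lower strata, where by construction of $(\Sigma,\h)$ the toric local models of $V_\Sigma/H$ coincide with the slice models of $M$ found above. The extension is forced and single-valued because its graph is an analytic subset of the product which agrees with a biholomorphism off a set of codimension $\ge 1$ (this uses precisely that both spaces are Hausdorff manifolds); bijectivity and $T$-equivariance descend from the open orbit, so $\Psi$ is the desired equivariant biholomorphism.

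The crux is the fan-extraction step: showing that the purely local orbit-type data patch into an \emph{embedded} fan — strongly convex cones meeting along common faces — and not merely a generalized one. This is exactly where Hausdorffness of $M$ enters essentially: it is the analytic counterpart of the separatedness that forces the cones to fit together without overlap, and it is what excludes "irrational" stabilizer subgroups that would only yield a generalized fan. A secondary, technical difficulty is the globalization in the first step — integrating the infinitesimal $\mf t\otimes\C$-action to an honest, though possibly non-closed, subgroup $H\ss(\CT)^n$ with no monodromy — which again rests on the compactness of $M$ together with the abelian structure of the complexified action.
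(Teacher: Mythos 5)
This statement is not proved in the paper at all: it is quoted verbatim as \cite[Theorem 7.9]{ishida_13} and used as a black box, so there is no internal argument to compare yours against. Judged on its own terms, your text is an outline of (roughly) Ishida's actual strategy --- complexify the infinitesimal $T$-action, identify a dense open leaf closure with $(\CT)^n/H$, read off cones from the stabilizer stratification, and glue --- and as a roadmap it is broadly faithful. But it is a sketch, not a proof, and two of the steps you wave at are precisely the hard content of Ishida's paper.

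First, the extension of $\Psi$ from the open orbit to all of $M$ cannot be justified by ``its graph is an analytic subset of the product which agrees with a biholomorphism off a set of codimension $\ge 1$'': the closure of the graph of a biholomorphism between dense open subsets need not be the graph of any map (a blow-down restricted to the complement of the exceptional locus is a biholomorphism onto a dense open set whose graph closure is not a graph). The extension in Ishida's argument is constructed chart by chart using the explicit quotient local models, and verifying that those models actually match --- i.e.\ that the cone attached to each stratum of $M$ coincides with the cone of the corresponding toric chart of $V_\Sigma/H$ --- is where the work lies. Second, the assertion that ``maximality forces each slice to be a coordinate space $\C^k$ with its coordinate torus action'' is the local standardness theorem for maximal holomorphic torus actions; it is a substantial result (it is also the only place rationality of the $\rho(s)$, hence embeddability of $\Sigma$ as a genuine fan in the lattice of $T$, can come from), and it cannot be taken for granted. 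If you intend this as a citation-level summary it is serviceable; as a proof it has these two genuine gaps.
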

Let \(\Re:\mf{t}^\C\to \mf{t}\otimes 1\cong \mf{t}\) be the projection map that takes the real part of an element of the tangent Lie algebra of \(T\). The pair \((\Sigma,\mf{h})\) of Theorem \ref{thm:ishida_class} satisfies the following two conditions.
\begin{enumerate}
    \item The map \(\Re\) restricted to \(\h\) is injective;
    \item The pushforward of \(\Sigma\) under the projection \(\mf{t}\to \mf{t}/\h\) is a complete embedded fan.
\end{enumerate}
\begin{proposition}[{\cite[Theorem 10.4]{ishida_13}}]
    Let \(Z_\Sigma\) be a moment-angle manifold associated with an embedded fan \({\Sigma=\GF}\). Then \(Z_\Sigma\) is a complex manifold with maximal torus action.
\end{proposition}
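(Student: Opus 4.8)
The plan is to treat the two assertions separately. That $Z_\Sigma$ is a (compact, connected) complex manifold is immediate: by definition $Z_\Sigma=U(\Sigma)/H_\Sigma$ for a complete embedded marked fan $\Sigma$ whose real fan torus admits a complex structure, and these are precisely the hypotheses of Theorem~\ref{thm:ma_complex_structure}, which delivers the compact complex structure; connectedness follows since $U(\Sigma)$ is connected and $H_\Sigma$ is connected. So the real work is to produce a \emph{maximal} torus action in Ishida's sense. The plan is to use the coordinate compact torus $\mathbb{T}^{S}:=(S^{1})^{S}\subseteq(\CT)^{S}$, check that it acts holomorphically and effectively on $Z_\Sigma$, and then verify the dimension equality $\dim\mathbb{T}^{S}_{m}+\dim\mathbb{T}^{S}=\dim_{\R}Z_\Sigma$ at a well-chosen point $m$ coming from a maximal cone.

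First I would check the action. Since $(\CT)^{S}$ is abelian, $\mathbb{T}^{S}$ commutes with $H_\Sigma$; both preserve $U(\Sigma)$, so $\mathbb{T}^{S}$ descends to a holomorphic action on the quotient. For effectiveness: if $t\in\mathbb{T}^{S}$ acts trivially, then on the open dense orbit $(\CT)^{S}\subseteq U(\Sigma)$, where the $H_\Sigma$-action is free, one gets $t\in\mathbb{T}^{S}\cap H_\Sigma$; but an element $\exp_\C(w)\in H_\Sigma$ with $w\in\h_\Sigma$ and all coordinates of modulus one has $\Re(w)=0$, hence $w=0$ because $\Re$ is injective on $\h_\Sigma$ by Construction~\ref{con:complex_struct}, so $t=1$. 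Next comes the bookkeeping: completeness of $\Sigma$ forces $\rho(S)$ to span $V$, so $A_\Sigma^\R$ is onto and $\dim_{\R}\wt{H}_\Sigma^\R=|S|-n$ with $n:=\dim_{\R}V$; since $Z_\Sigma$ is a moment-angle manifold this number is even, $\dim_\C H_\Sigma=(|S|-n)/2$, and $\dim_{\R}Z_\Sigma=2|S|-(|S|-n)=|S|+n$, while $\dim\mathbb{T}^{S}=|S|$. Thus maximality of the $\mathbb{T}^{S}$-action amounts to finding a point of $Z_\Sigma$ whose $\mathbb{T}^{S}$-stabilizer has real dimension $n$.

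Finally I would exhibit such a point. Fix a maximal cone $\sigma\in\Sigma$; as $\Sigma$ is simplicial and complete, $\sigma$ has exactly $n$ rays and $\{\rho(s)\}_{s\in\sigma}$ is a basis of $V$. Let $x_\sigma\in U(\Sigma,\sigma)\subseteq U(\Sigma)$ (Lemma~\ref{lem:uni_space_struct}) be the point with $(x_\sigma)_s=0$ for $s\in\sigma$ and $(x_\sigma)_s=1$ otherwise. Because $H_\Sigma$ scales coordinates and fixes the zero pattern of $x_\sigma$, a torus element $t$ stabilizes $[x_\sigma]\in Z_\Sigma$ iff the truncated tuple $(t_s)_{s\notin\sigma}$ lies in $\pi_{\wh\sigma}(H_\Sigma)\cap(S^{1})^{S\setminus\sigma}$, where $\pi_{\wh\sigma}$ drops the $\sigma$-coordinates. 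The crux of the whole proof is that this intersection is trivial: the map $\h_\Sigma\to\R^{S\setminus\sigma}$, $u\mapsto(\Re u_s)_{s\notin\sigma}$, is injective, being the composite of the isomorphism $\Re\colon\h_\Sigma\to\wt{H}_\Sigma^\R$ with the restriction to $\wt{H}_\Sigma^\R=\ker A_\Sigma^\R$ of the coordinate projection $\R^{S}\to\R^{S\setminus\sigma}$, which is injective since a kernel vector supported on $\sigma$ would be a linear relation among the independent $\{\rho(s)\}_{s\in\sigma}$. Hence an element of $\pi_{\wh\sigma}(H_\Sigma)$ with coordinates of modulus one, being $\exp_\C$ of an element of $\pi_{\wh\sigma}(\h_\Sigma)$ with vanishing real part, must be $1$. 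Therefore $\Stab_{\mathbb{T}^{S}}([x_\sigma])=\{t\in\mathbb{T}^{S}:t_s=1\text{ for }s\notin\sigma\}\cong(S^{1})^{\sigma}$ has real dimension $n$, and $\dim\Stab_{\mathbb{T}^{S}}([x_\sigma])+\dim\mathbb{T}^{S}=n+|S|=\dim_{\R}Z_\Sigma$, so the action is maximal. The main obstacle is exactly this triviality statement — i.e.\ controlling how the complex structure $\h_\Sigma$ chosen in Construction~\ref{con:complex_struct} sits relative to the coordinate subspaces — everything else being dimension counting. One can also sidestep the explicit computation by observing that $Z_\Sigma=V_{\wt\Sigma}/H_\Sigma$ has exactly the shape of quotient in Ishida's Theorem~7.9: the canonical development $\wt\Sigma$ satisfies $\wt\Sigma/\wt{H}_\Sigma^\R\cong\Sigma$ (complete, embedded) and $\Re$ is injective on $\h_\Sigma$, so the conclusion is Ishida's Theorem~10.4.
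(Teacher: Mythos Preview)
The paper does not prove this proposition at all; it is stated with attribution to \cite[Theorem~10.4]{ishida_13} and left without proof. Your proposal therefore supplies strictly more than the paper does.

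Your direct argument is correct. The effectiveness step ($\mathbb{T}^{S}\cap H_\Sigma=\{1\}$ via injectivity of $\Re$ on $\h_\Sigma$), the dimension count $\dim_\R Z_\Sigma=|S|+n$, and the stabilizer computation at $[x_\sigma]$ for a maximal cone $\sigma$ all go through. The linchpin you identify --- that a vector of $\ker A_\Sigma^\R$ supported on the rays of a top-dimensional simplicial cone must vanish, hence $\pi_{\wh\sigma}(H_\Sigma)\cap (S^1)^{S\setminus\sigma}=\{1\}$ --- is exactly right and is indeed where the simpliciality and completeness of $\Sigma$ enter. Your closing remark, that $Z_\Sigma=V_{\wt\Sigma}/H_\Sigma$ already has the form of Ishida's Theorem~7.9 with the two required hypotheses satisfied, is precisely the content of the citation the paper invokes; so your explicit computation is an unpacking of Ishida's result in this special case, while the paper simply defers to it.
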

\section{Automorphisms of rational moment-angle manifolds}\label{sec:aut_mam}
\subsection{Descent to the toric base}
\begin{proposition}\label{prop:desc_well_def}
    Let \(\Sigma=(S,K,V,\rho)\) be a rational generalized fan. Then there is the canonical {descent homomorphism} \({\pi_*:\Aut(Z_\Sigma)\to\Aut(V_\Sigma)}\) that maps holomorphic automorphisms of the moment-angle manifold to the automorphisms of the corresponding toric variety. That is any automorphism \(\vp\in \Aut(Z_\Sigma)\) normalizes the quotient action of \(F_\Sigma=G_\Sigma/H_\Sigma\) on \(Z_\Sigma\).
\end{proposition}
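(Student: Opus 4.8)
The plan is to reduce everything to a single fact about the canonical foliation subgroup $F\subseteq\Aut^0(Z_\Sigma)$ of Construction~\ref{con:can_fol}, namely that it is normalized by every holomorphic automorphism of $Z_\Sigma$ (the corollary recorded just above), together with the canonical isomorphism $F_\Sigma\cong F$ established above. Recall that $V_\Sigma=U(\Sigma)/G_\Sigma$ while $Z_\Sigma=U(\Sigma)/H_\Sigma$, so $F_\Sigma=G_\Sigma/H_\Sigma$ acts on $Z_\Sigma$ with orbit space $V_\Sigma$; write $\pi\colon Z_\Sigma\to V_\Sigma$ for the induced holomorphic quotient map, which is surjective, open, and identifies $\mathcal{O}_{V_\Sigma}$ with the subsheaf of $F_\Sigma$-invariants in $\pi_*\mathcal{O}_{Z_\Sigma}$. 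The assertion of the proposition is then equivalent to: every $\vp\in\Aut(Z_\Sigma)$ satisfies $\vp\circ g\circ\vp^{-1}\in F_\Sigma$ for all $g\in F_\Sigma$, after which the descent homomorphism is produced formally.

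For the normalization step I would argue as follows. By the proposition above, $F_\Sigma$ is an honest complex Lie subgroup of $\Aut^0(Z_\Sigma)$. A holomorphic automorphism $\vp$ of $Z_\Sigma$ acts by conjugation on $\Aut(Z_\Sigma)$, necessarily preserving its identity component $\Aut^0(Z_\Sigma)$; and by the corollary above this conjugation carries the canonical subgroup $F$ onto itself. Hence $\vp F_\Sigma\vp^{-1}=F_\Sigma$, the map $g\mapsto\vp g\vp^{-1}$ is an automorphism of the Lie group $F_\Sigma$, and in particular $\vp$ takes $F_\Sigma$-orbits to $F_\Sigma$-orbits. Therefore $\vp$ descends to a bijection $\pi_*(\vp)\colon V_\Sigma\to V_\Sigma$ characterized by $\pi_*(\vp)\circ\pi=\pi\circ\vp$.

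It remains to check regularity of $\pi_*(\vp)$ and functoriality of $\pi_*$. Holomorphy: for a local holomorphic function $f$ on $W\subseteq V_\Sigma$, the function $\vp^{*}(\pi^{*}f)$ is holomorphic on $\pi^{-1}\bigl((\pi_*\vp)^{-1}(W)\bigr)$ and, since $\vp$ normalizes the $F_\Sigma$-action, it is $F_\Sigma$-invariant, hence of the form $\pi^{*}\bigl((\pi_*\vp)^{*}f\bigr)$; this shows $\pi_*(\vp)$ is holomorphic, and applying the same to $\vp^{-1}$ shows it is biholomorphic. Since $V_\Sigma$ is a complete algebraic variety, Serre's GAGA (Proposition~\ref{prop:Serre_GAGA_aut}) upgrades $\pi_*(\vp)$ to a regular automorphism, so $\pi_*(\vp)\in\Aut(V_\Sigma)$. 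Uniqueness of the descended map over $\pi$ gives $\pi_*(\vp\circ\psi)=\pi_*(\vp)\circ\pi_*(\psi)$ and $\pi_*(\id)=\id$, so $\pi_*$ is a group homomorphism.

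The one genuine difficulty is the normalization of the $F_\Sigma$-action, and it has essentially already been done for us: a priori a biholomorphism of $Z_\Sigma$ need not preserve the torus $T$ whose real structure defines the complex structure, so there is no direct reason it should respect $F_\Sigma=G_\Sigma/H_\Sigma$. What makes the descent possible is the \emph{intrinsic}, automorphism-invariant description of $F$ via $\mathfrak{t}\cap\overline{\mathfrak{t}}$ due to Ishida and Kasuya--Ishida; everything downstream of that is bookkeeping with the quotient map and GAGA.
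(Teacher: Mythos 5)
Your proposal is correct and follows essentially the same route as the paper: the key input in both is Ishida--Kasuya's intrinsic, maximal-torus-independent description of the canonical foliation group $F\cong F_\Sigma$ via $\mathfrak{t}\cap\overline{\mathfrak{t}}$, which forces every biholomorphism of $Z_\Sigma$ to normalize the $F_\Sigma$-action. The paper's proof stops there, while you additionally spell out the (routine) descent bookkeeping — holomorphy of the induced map, GAGA, and functoriality — which is fine and consistent with the paper's conventions.
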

\begin{proof}
    By \cite[Lemma 2.1]{ishida_19}, the orbit foliation of this action can be identified with the intersection of the orbit foliation of a maximal torus of the group \(\Aut(Z_\Sigma)\) with its complex conjugate. Moreover, by the same Lemma, this foliation is independent of the choice of maximal tori. Hence, all holomorphic automorphisms of \(Z_\Sigma\) normalize \(G_\Sigma/H_\Sigma\)-action and thus descend to the quotient variety. 
\end{proof}
\begin{proposition}\label{prop:desc_ker}
    Under the assumptions of Proposition \ref{prop:desc_well_def}, there is a canonical identification of the following complex Lie subgroups in \(\Aut(Z_\Sigma)\).
    \[
        \ker \pi_*= F_\Sigma.
    \]
\end{proposition}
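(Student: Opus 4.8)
The plan is to show the two inclusions $\ker\pi_* \subseteq F_\Sigma$ and $F_\Sigma \subseteq \ker\pi_*$, working entirely with the identification of $Z_\Sigma$ as the quotient $U(\Sigma)/H_\Sigma$ and of $V_\Sigma$ as $U(\Sigma)/G_\Sigma$, so that the descent homomorphism $\pi_*$ of Proposition \ref{prop:desc_well_def} is the map induced by quotienting further by $F_\Sigma = G_\Sigma/H_\Sigma$. The inclusion $F_\Sigma \subseteq \ker\pi_*$ is the easy direction: the group $F_\Sigma$ acts on $Z_\Sigma = U(\Sigma)/H_\Sigma$ by holomorphic automorphisms (it is a quotient of the torus $G_\Sigma$ acting on $U(\Sigma)$, and this action descends since $H_\Sigma$ is normal in $G_\Sigma$), and by construction its orbits are exactly the fibers of $\pi_*$, so every element of $F_\Sigma$ maps to the identity in $\Aut(V_\Sigma)$.

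For the reverse inclusion, let $\vp \in \Aut(Z_\Sigma)$ with $\pi_*(\vp) = \id_{V_\Sigma}$. Then $\vp$ preserves every fiber of $\pi$, i.e. every $F_\Sigma$-orbit, setwise. The key structural input is the identification $F \cong F_\Sigma$ (the proposition preceding this one, drawing on \cite{ishida_15}) together with Ishida--Kasuya's result \cite[Lemma 2.1]{ishida_19} that $F$ is centralized by all of $\Aut^0(Z_\Sigma)$ and that its orbit foliation is canonical. So first I would argue that $\vp$ lies in $\Aut^0(Z_\Sigma)$ — or more precisely reduce to that case — using that $\vp$ fixes the base and hence acts "fiberwise", and that $F_\Sigma$ is connected and acts simply transitively on each fiber. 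Concretely, a fiber of $\pi$ is a single $F_\Sigma$-orbit, which is biholomorphic to $F_\Sigma$ itself (freeness of the $G_\Sigma$-action modulo $H_\Sigma$ on the relevant locus), so $\vp$ restricts on each fiber to a biholomorphism of $F_\Sigma$-torsors. The plan is then to show these fiberwise automorphisms assemble to a global element of $F_\Sigma$: lifting $\vp$ to a biholomorphism $\wt\vp$ of $U(\Sigma)$ commuting with the $G_\Sigma$-action (since $\vp$ descends to the identity on $V_\Sigma = U(\Sigma)/G_\Sigma$, such a lift exists and is a gauge transformation of the principal $G_\Sigma$-bundle $U(\Sigma)\to V_\Sigma$), one gets a holomorphic map $V_\Sigma \to G_\Sigma$; since $V_\Sigma$ is complete (the fan $\Sigma$ — or its reduction — being complete) this map is constant, so $\wt\vp$ is translation by a fixed $g\in G_\Sigma$, whence $\vp$ is translation by the image of $g$ in $F_\Sigma$. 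This last step is where one must be slightly careful: $V_\Sigma$ need only be a proper variety, and $G_\Sigma$ a torus, so a holomorphic (equivalently regular, by Proposition \ref{prop:Serre_GAGA_aut}) map from proper $V_\Sigma$ to the affine torus $G_\Sigma$ is indeed constant — but one should check that the lift $\wt\vp$ genuinely exists and is holomorphic, which follows because the obstruction to lifting lives in the (holomorphically trivial, along fibers) gerbe/torsor of bundle automorphisms, and here it is governed by $H^1$ with coefficients in the relevant sheaf, vanishing for degree reasons / by Hartogs-type extension as in Lemma \ref{lem:ext_lem}.

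The main obstacle I expect is exactly the existence and holomorphy of the global lift $\wt\vp$ of $\vp$ to a $G_\Sigma$-equivariant automorphism of $U(\Sigma)$: a priori $\vp$ only preserves fibers, and one must upgrade "preserves each fiber" to "is a single global translation", which requires knowing that the principal $G_\Sigma$-bundle $U(\Sigma)\to V_\Sigma$ has no nontrivial fiberwise-but-not-globally-constant automorphisms — this is precisely the completeness/properness of $V_\Sigma$ entering, and it is the one place the rationality hypothesis on $\Sigma$ (so that $V_\Sigma$ is an honest algebraic variety) is genuinely used. Once that lift is in hand, everything else is formal, and the identification $\ker\pi_* = F_\Sigma$ follows.
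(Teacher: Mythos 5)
Your easy inclusion \(F_\Sigma\subseteq\ker\pi_*\) is fine, and your overall instinct (a fiber-preserving automorphism over a compact base should be a constant translation) is the right one. But the hard inclusion has a genuine gap at exactly the step you flag: the existence of a \(G_\Sigma\)-equivariant holomorphic lift \(\wt{\vp}\) of \(\vp\) to \(U(\Sigma)\). Because \(G_\Sigma\) is abelian, a \(G_\Sigma\)-equivariant automorphism of \(U(\Sigma)\) covering \(\id_{V_\Sigma}\) is the same thing as a holomorphic map \(V_\Sigma\to G_\Sigma\), hence a constant translation; so the automorphisms of \(Z_\Sigma\) admitting such a lift are precisely the elements of \(F_\Sigma\). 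Asserting that the lift exists is therefore equivalent to the conclusion \(\vp\in F_\Sigma\), and your argument is circular unless the lift is produced by independent means. The justifications you sketch do not supply it: Lemma \ref{lem:ext_lem} extends automorphisms of \(U(\Sigma)\) to \(\C^S\) and says nothing about lifting along \(U(\Sigma)\to Z_\Sigma\), while the obstruction to lifting along that \(H_\Sigma\cong\C^l\)-principal bundle lives in \(H^1(Z_\Sigma,\O^{\oplus l})\), which is in general nonzero for these non-K\"ahler manifolds (already \(H^1(\O)\neq 0\) for Hopf surfaces). Note also that in the paper the lifting of automorphisms from \(Z_\Sigma\) to \(U(\Sigma)\) is Proposition \ref{prop:desc_surj}, whose proof invokes the present proposition, so appealing to it here would be circular for a second reason.

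The paper's proof avoids \(U(\Sigma)\) altogether and works directly with the \(F_\Sigma\)-bundle \(Z_\Sigma\to V_\Sigma\): rationality makes \(F_\Sigma=G_\Sigma/H_\Sigma\) a \emph{compact} complex torus, so \(\vp\) restricted to each fiber is an affine transformation, giving a holomorphic map \(\Upsilon:V_\Sigma\to F_\Sigma\rtimes GL(\Gamma)\); connectedness of \(V_\Sigma\) kills the discrete factor, finiteness of \(\pi_1(V_\Sigma)\) lifts the remaining map to the universal cover \(\wt{F}_\Sigma\cong\C^k\), and compactness of \(V_\Sigma\) forces that lift to be constant. This is where rationality genuinely enters (compactness of the fiber group), rather than through algebraicity of \(V_\Sigma\) as you suggest. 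To salvage your route you would have to replace the lifting step by this direct fiberwise analysis or by some other independent construction of \(\wt{\vp}\); as written, the central step is unproved and, in the form stated, cannot be proved without already knowing the result.
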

\begin{proof}
    Since \(\Sigma\) is rational \(F_\Sigma\) is a compact complex torus. Denote by \(\Gamma\) the lattice corresponding to \(F_\Sigma\). Consider an automorphism \(\vp\in \Aut(Z_\Sigma)\) such that \(\pi_*(\vp)=\id_{V_\Sigma}\). Then \(\vp\) defines a gauge transformation of the \(F_\Sigma\)-principal bundle \(Z_\Sigma\to V_\Sigma.\) Hence, it is given by a holomorphic map \(\Upsilon:V_\Sigma\to \Aut(F_\Sigma).\)We know, that the right-hand side is isomorphic to \(F_\Sigma\rtimes GL(\Gamma).\) Since, \(V_\Sigma\) is connected this map factors through \(F_\Sigma\). Now, because the fundamental group of \(V_\Sigma\) is finite there exists a lift \(\wt{\Upsilon}:V_\Sigma\to \wt{F}_\Sigma\rtimes GL(\Gamma_\Sigma)\). Hence, by compactness of \(V_\Sigma\), this map is constant, and we see that \(\vp\in F_\Sigma\). Thus, we have proved that \(\ker\pi_*\sse F_\Sigma\). The converse inclusion is obvious.
\end{proof}
\subsection{Lifts to the universal space}
\begin{proposition}\label{prop:desc_surj}
    Let \(\Sigma=(S,K,V,\rho)\) be a rational embedded complete fan. The following descent homomorphism is surjective. \[\pi^H_*:\wt{\Aut}(\Sigma)\to \Aut(Z_\Sigma)\]
\end{proposition}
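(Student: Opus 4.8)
The plan is to run a diagram chase over the three--story tower
\(U(\Sigma)\xrightarrow{\,q_H\,} Z_\Sigma\xrightarrow{\,\pi\,} V_\Sigma\), where \(q_H\) is the quotient by \(H_\Sigma\), the map \(\pi\) is the quotient by the residual \(F_\Sigma=G_\Sigma/H_\Sigma\)-action, and the composite \(\pi\circ q_H\) is the Cox quotient \(U(\Sigma)\to V_\Sigma\) by \(G_\Sigma\). Fix a holomorphic automorphism \(\psi\in\Aut(Z_\Sigma)\). By Proposition \ref{prop:desc_well_def} the automorphism \(\psi\) normalizes the \(F_\Sigma\)-action, hence descends to \(\bar\psi:=\pi_*(\psi)\in\Aut(V_\Sigma)\). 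Since \(\Sigma\) is complete and rational, \(V_\Sigma\) is a proper algebraic variety over \(\C\), so by Serre's GAGA (Proposition \ref{prop:Serre_GAGA_aut}) the automorphism \(\bar\psi\) is in fact regular.

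Next I would lift \(\bar\psi\) back up to the Cox construction. By Cox's exact sequence (Theorem \ref{thm:aut_struct_Cox}) the map \(\NAut(\Sigma)\to\Aut(V_\Sigma)\) is surjective, so there is \(\Phi\in\NAut(\Sigma)=\wt{\Aut}(\Sigma)\) lifting \(\bar\psi\). In the rational embedded case \(\ul{\Sigma}=\Sigma\), hence by Corollary \ref{cor:normalizers_coincide} every element of \(\wt{\Aut}(\Sigma)\) normalizes \(H_\Sigma\); in particular \(\Phi\) descends along \(q_H\) to \(\psi':=\pi^H_*(\Phi)\in\Aut(Z_\Sigma)\). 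Commutativity of the tower (and surjectivity of \(q_H\)) gives \(\pi_*(\psi')=\bar\psi=\pi_*(\psi)\), so \(\psi\circ(\psi')^{-1}\) lies in \(\ker\pi_*\), which equals \(F_\Sigma\) by Proposition \ref{prop:desc_ker}.

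It remains to absorb this discrepancy into the image of \(\pi^H_*\). Since \(G_\Sigma\) acts algebraically on \(U(\Sigma)\) and normalizes itself, \(G_\Sigma\subseteq\wt{\Aut}(\Sigma)\), and because \(H_\Sigma\) is normal in \(G_\Sigma\) the homomorphism \(\pi^H_*\) carries \(G_\Sigma\) onto the residual subgroup \(F_\Sigma=G_\Sigma/H_\Sigma\) of \(\Aut(Z_\Sigma)\). Choosing \(g\in G_\Sigma\) with \(\pi^H_*(g)=\psi\circ(\psi')^{-1}\) and using that \(\pi^H_*\) is a group homomorphism, we obtain \(\psi=\pi^H_*(g)\circ\pi^H_*(\Phi)=\pi^H_*(g\Phi)\), so \(\psi\) is in the image and \(\pi^H_*\) is surjective.

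Structurally the argument is soft: the real content is supplied by the three cited results — the descent \(\Aut(Z_\Sigma)\to\Aut(V_\Sigma)\), Cox's lifting theorem for complete simplicial toric varieties, and the identification \(\ker\pi_*=F_\Sigma\). The one place where care is genuinely needed is the compatibility at the middle level of the tower: one must know that the lift \(\Phi\), a priori only normalizing \(G_\Sigma\), actually normalizes \(H_\Sigma\), so that \(\pi^H_*(\Phi)\) makes sense. This is exactly the point where rationality of \(\Sigma\) (ensuring \(\ol{H}_\Sigma=G_\Sigma\)) enters, via Proposition \ref{prop:zariski_closure} and Corollary \ref{cor:normalizers_coincide}.
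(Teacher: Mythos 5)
Your proposal is correct and follows essentially the same route as the paper's proof: descend \(\psi\) to \(\Aut(V_\Sigma)\), lift along the Cox quotient using Theorem \ref{thm:aut_struct_Cox}, and observe via Proposition \ref{prop:desc_ker} that the two candidate automorphisms of \(Z_\Sigma\) differ by an element of \(F_\Sigma\), which is absorbed by the image of \(G_\Sigma\subseteq\wt{\Aut}(\Sigma)\). You simply make explicit two points the paper leaves implicit — the GAGA regularity of the descended automorphism and the fact that the lift normalizes \(H_\Sigma\) via Corollary \ref{cor:normalizers_coincide} — which is a welcome clarification rather than a different argument.
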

\begin{proof}
    Let \(\vp\in\Aut(Z_\Sigma)\) be an automorphism of the moment-angle manifold. Then there exists a descent \(\pi^F_*\vp\) of \(\vp\) to the variety \(V_\Sigma.\) We can then lift \(\pi^F_*\vp\) along \(\pi^G_*\) and obtain an automorphism \(\wt{\vp}\) of \(U(\Sigma).\) From Proposition \ref{prop:desc_ker} descent \(\pi^H_*\wt{\vp}\) differs from \(\vp\) by an element of \(F_\Sigma\). Hence \(\pi^H_*\) is surjective.
\end{proof}
\begin{proposition}\label{prop:ker_of_desc_from_uni}
    The kernel of the descent homomorphism \(\pi_*^H\) coincides with the group \(H_\Sigma\).
\end{proposition}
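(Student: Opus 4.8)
The plan is to identify the kernel of $\pi_*^H:\wt{\Aut}(\Sigma)\to \Aut(Z_\Sigma)$ directly from the defining quotient $Z_\Sigma=U(\Sigma)/H_\Sigma$. First I would observe the easy inclusion: every element $g\in H_\Sigma$ acts on $U(\Sigma)$ by the torus action, this action normalizes $G_\Sigma$ (since $H_\Sigma\subseteq G_\Sigma\subseteq (\CT)^S$ is abelian, so $H_\Sigma$ centralizes $G_\Sigma$), hence $H_\Sigma\subseteq \wt{\Aut}(\Sigma)$; and by construction the induced map on the quotient $U(\Sigma)/H_\Sigma=Z_\Sigma$ is the identity. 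So $H_\Sigma\subseteq \ker\pi_*^H$.

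For the reverse inclusion, let $\psi\in\wt{\Aut}(\Sigma)$ with $\pi_*^H(\psi)=\id_{Z_\Sigma}$. This means $\psi$ descends to the identity on $Z_\Sigma$, i.e.\ $\psi$ maps every $H_\Sigma$-orbit in $U(\Sigma)$ to itself. Thus $\psi$ is a \emph{gauge transformation} of the principal $H_\Sigma$-bundle $U(\Sigma)\to Z_\Sigma$, and as such it is classified by a holomorphic map $\Upsilon:Z_\Sigma\to \Aut_{H_\Sigma}(H_\Sigma)$, where $\Aut_{H_\Sigma}(H_\Sigma)$ denotes the group of $H_\Sigma$-equivariant automorphisms of $H_\Sigma$ — and since $H_\Sigma$ acts on itself by translation, this group is canonically $H_\Sigma$ itself (translation by $\Upsilon(p)\in H_\Sigma$ on the fiber over $p$). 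So $\psi$ is completely recorded by a holomorphic map $Z_\Sigma\to H_\Sigma$. The key point is then to show that this map is constant: if so, $\psi$ is a single global translation by an element of $H_\Sigma$, giving $\psi\in H_\Sigma$ as desired.

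The main obstacle, and the heart of the argument, is the constancy of $\Upsilon$. Here I would proceed as in the proof of Proposition \ref{prop:desc_ker}: the group $H_\Sigma$ is (a real form of $G_\Sigma$, and after passing to the complex structure) a connected complex Lie group, and the relevant target for gauge transformations — after accounting for the component group and the lattice in the compact directions — sits in an extension of $H_\Sigma$ by a discrete group $GL(\Gamma)$ of lattice automorphisms. Since $Z_\Sigma$ is connected, the map into this extension lands in the connected component, i.e.\ factors through $H_\Sigma$; and because $H_\Sigma$ is biholomorphic to $(\C^\times)^k\times(\text{compact complex torus})$, there are no nonconstant holomorphic maps from the \emph{compact} manifold $Z_\Sigma$ into the $(\C^\times)^k$-factor (a holomorphic $\C^\times$-valued function on a compact connected complex manifold is constant), and maps into the compact-torus factor lift to the universal cover (a $\C$-vector space) by finiteness/vanishing of $H^1$ considerations as in Proposition \ref{prop:desc_ker}, hence are constant too. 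Therefore $\Upsilon$ is constant, $\psi\in H_\Sigma$, and $\ker\pi_*^H=H_\Sigma$. I would flag that the one subtlety requiring care (just as the excerpt flags it in the ghost-vertex and $\pi_0$ discussions) is ensuring the target of the gauge map really is $H_\Sigma$ rather than a larger normalizer; but this is exactly the content of the identification $\Aut_{H_\Sigma}(H_\Sigma)\cong H_\Sigma$ for the translation action, together with connectedness of $Z_\Sigma$.
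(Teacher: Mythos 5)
Your route is genuinely different from the paper's. The paper's proof is a two-line reduction: an automorphism trivial on \(Z_\Sigma\) is a fortiori trivial on the further quotient \(V_\Sigma=U(\Sigma)/G_\Sigma\), so by the Cox exact sequence it lies in \(G_\Sigma\); and the elements of \(G_\Sigma\) acting trivially on \(Z_\Sigma\) are exactly \(H_\Sigma\) (freeness of the torus action on the dense orbit). You instead argue directly on the principal \(H_\Sigma\)-bundle \(U(\Sigma)\to Z_\Sigma\), mirroring the paper's own proof of Proposition \ref{prop:desc_ker}. That strategy can be made to work, but two points need repair. First, your description of \(H_\Sigma\) is wrong: since \(\Re\) is injective on \(\mf{h}_\Sigma\) and \(\ker\exp_\C=2\pi i\Z^S\) has zero real part, \(\exp_\C\) is injective on \(\mf{h}_\Sigma\) and \(H_\Sigma\cong\C^l\) as a complex Lie group (it is \(F_\Sigma=G_\Sigma/H_\Sigma\) that is the compact torus). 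This actually simplifies your constancy step to the maximum principle — a holomorphic map from the compact connected \(Z_\Sigma\) to \(\C^l\) is constant — and the \(\C^\times\)-factor and lattice-lifting discussion should be deleted. Second, and more seriously, writing \(\psi(u)=\Upsilon(\pi(u))\cdot u\) with \(\Upsilon\) defined on \(Z_\Sigma\) presupposes that \(\psi\) is \(H_\Sigma\)-equivariant, not merely fiber-preserving; elements of \(\wt{\Aut}(\Sigma)\) are only assumed to normalize \(G_\Sigma\). Without invariance of \(\Upsilon\) you only get a holomorphic map on the noncompact \(U(\Sigma)\), and compactness has nothing to act on. The cheapest fix is precisely the paper's first step: conclude \(\psi\in G_\Sigma\) from the \(V_\Sigma\)-level kernel, after which \(\psi\) centralizes \(H_\Sigma\) automatically (and indeed the whole gauge argument becomes unnecessary).
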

\begin{proof}
    By Proposition \ref{prop:Norm_of_G} the kernel consists of elements in \(\wt{\Aut}(\Sigma)\) that map to \(\id_{V_\Sigma}\) under the homomorphism \(\pi^G_*\). Hence, by Theorem \ref{thm:equiv_on_Cox_constr} they belong to \(G_\Sigma\). It remains to observe that the kernel of the restriction \(\pi_*^H\big\vert_{G_\Sigma}\) is precisely~\(H_\Sigma\).
\end{proof}
\subsection{Structure of the automorphism group}
\begin{theorem}\label{thm:ma_aut_group_structure}
    Let \(\Sigma=(S,K,V,\rho)\) be a complete rational generalized fan. Then there is a short exact sequence of complex Lie groups.
    \begin{gather*}
        1\to H_\Sigma\to \wt{\Aut}(\Sigma)\to \Aut(Z_\Sigma)\to 1.
    \end{gather*}
\end{theorem}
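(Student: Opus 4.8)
The plan is to obtain the sequence by assembling the descent and lifting results of \S\ref{sec:aut_mam}; essentially everything is in place and the proof is an exercise in bookkeeping, with one preliminary reduction. Since the moment-angle manifold $Z_\Sigma$ is constructed as a Hausdorff complex manifold only for embedded fans (Theorem \ref{thm:ma_complex_structure}), and since Proposition \ref{prop:desc_surj} is proved under that hypothesis, I would first reduce to the embedded case: strip off the ghost-vertex torus $\TT_\Sigma$, which splits compatibly from $\wt{\Aut}(\Sigma)$ by Construction \ref{con:ghost_reduction} and Proposition \ref{prop:red_of_gv} and from $\Aut(Z_\Sigma)$ via the induced torus bundle $Z_\Sigma\to Z_{\Sigma^\red}$, and then use Ishida's realization (Theorem \ref{thm:ishida_class}) to present $Z_\Sigma$ over an embedded fan. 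After this the sequence for $\Sigma$ follows from the one for the reduced embedded fan.

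The map $\wt{\Aut}(\Sigma)\to\Aut(Z_\Sigma)$ is the descent homomorphism $\pi^H_*$ sending an algebraic automorphism of $U(\Sigma)$ to the biholomorphism it induces on $Z_\Sigma=U(\Sigma)/H_\Sigma$. Its well-definedness I would verify from the generation of $\wt{\Aut}(\Sigma)$ in Theorem \ref{thm:equiv_on_Cox_constr}: the large torus $(\CT)^S$ is abelian and contains $H_\Sigma$ (Construction \ref{con:complex_struct}), the root one-parameter subgroups $y_m$ centralize $G_\Sigma\supseteq H_\Sigma$, and the symmetry group preserves the orbit foliation of the $H_\Sigma$-action, so every element of $\wt{\Aut}(\Sigma)$ normalizes $H_\Sigma$ and descends; the descent is holomorphic because $U(\Sigma)\to Z_\Sigma$ is a holomorphic principal $H_\Sigma$-bundle. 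Note $H_\Sigma\subseteq G_\Sigma\subseteq\wt{\Aut}(\Sigma)$, with $\pi^H_*(H_\Sigma)=\{1\}$ and $\pi^H_*(G_\Sigma)=F_\Sigma=G_\Sigma/H_\Sigma$. Exactness at $\wt{\Aut}(\Sigma)$ is Proposition \ref{prop:ker_of_desc_from_uni}: if $\psi\in\wt{\Aut}(\Sigma)$ descends to $\id_{Z_\Sigma}$, then a fortiori it descends to $\id_{V_\Sigma}$ on $V_\Sigma=U(\Sigma)/G_\Sigma$; Proposition \ref{prop:Norm_of_G} together with the structure Theorem \ref{thm:equiv_on_Cox_constr} forces $\psi\in G_\Sigma$; and an element of $G_\Sigma$ acting trivially on $U(\Sigma)/H_\Sigma$ lies in $H_\Sigma$, since $F_\Sigma$ acts freely on $Z_\Sigma$. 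Injectivity of $H_\Sigma\hookrightarrow\wt{\Aut}(\Sigma)$ is clear, and the sequence is one of complex Lie groups because $H_\Sigma$ is a closed complex subgroup and $\pi^H_*$ is holomorphic.

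Exactness at $\Aut(Z_\Sigma)$, i.e.\ surjectivity of $\pi^H_*$, is Proposition \ref{prop:desc_surj}. Given $\varphi\in\Aut(Z_\Sigma)$, Proposition \ref{prop:desc_well_def} descends it to $\ol{\varphi}\in\Aut(V_\Sigma)$; by GAGA (Proposition \ref{prop:Serre_GAGA_aut}) $\ol{\varphi}$ is regular, and Cox's Theorem \ref{thm:aut_struct_Cox} lifts it to some $\wt{\varphi}\in\wt{\Aut}(\Sigma)$. Since $\pi^H_*(\wt{\varphi})$ and $\varphi$ have the same image in $\Aut(V_\Sigma)$, Proposition \ref{prop:desc_ker} (which identifies $\ker[\Aut(Z_\Sigma)\to\Aut(V_\Sigma)]$ with $F_\Sigma$) shows they differ by an element of $F_\Sigma=\pi^H_*(G_\Sigma)$; correcting $\wt{\varphi}$ by a suitable element of $G_\Sigma\subseteq\wt{\Aut}(\Sigma)$ then produces a genuine preimage of $\varphi$.

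I expect the surjectivity step to be the crux, since it is the only place where one leaves the algebraic category of $U(\Sigma)$ and it rests on three inputs of different flavours: GAGA on the proper variety $V_\Sigma$, Cox's lifting theorem for complete simplicial toric varieties, and the fact that the resulting lift — a priori only an \emph{algebraic} automorphism of $U(\Sigma)$ normalizing $G_\Sigma$ — can be corrected \emph{within $G_\Sigma$}, and not merely within some larger holomorphic automorphism group, to a true lift of $\varphi$; this last point is exactly what Propositions \ref{prop:Norm_of_G} and \ref{prop:desc_ker} supply. A secondary but genuine subtlety is that every descent in the chain presupposes that $Z_\Sigma$ is a Hausdorff complex manifold, which is precisely why the reduction to embedded fans has to be carried out first.
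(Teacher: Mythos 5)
Your proof is correct and follows essentially the same route as the paper: the paper's own proof simply declares that the theorem "immediately follows" from Propositions \ref{prop:desc_ker}, \ref{prop:desc_surj}, \ref{prop:ker_of_desc_from_uni} and the description of $\Aut(V_\Sigma)$ in \S\ref{sec:aut_toric}, which is exactly the assembly you carry out in detail. Your preliminary reduction to embedded fans (to guarantee $Z_\Sigma$ is Hausdorff) and your explicit check that $\wt{\Aut}(\Sigma)$ normalizes $H_\Sigma$ are points the paper leaves implicit, but they do not change the argument.
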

\begin{proof}
    The result immediately follows from Propositions \ref{prop:desc_ker}, \ref{prop:desc_surj}, \ref{prop:ker_of_desc_from_uni} and the description of the automorphism group of the toric variety of \S \ref{sec:aut_toric}.
\end{proof}
\begin{corollary}\label{cor:aut_detect_complex_struct}
    Let \(\h_\Sigma\) and \(\h_\Sigma'\) be two complex structure on \(H_\Sigma^\R\) in Construction \ref{con:complex_struct}. Then \(U(\Sigma)/H_\Sigma\) is biholomorphic to \(U(\Sigma)/H_\Sigma'\) if and only if the quotients \(G_\Sigma/H_\Sigma\) and \(G_\Sigma/H_\Sigma'\) are isomorphic as complex Lie groups.
\end{corollary}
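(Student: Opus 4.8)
The plan is to derive this as a formal consequence of Theorem \ref{thm:ma_aut_group_structure}, which exhibits both $Z_\Sigma := U(\Sigma)/H_\Sigma$ and $Z'_\Sigma := U(\Sigma)/H'_\Sigma$ as quotients fitting into short exact sequences $1 \to H_\Sigma \to \wt{\Aut}(\Sigma) \to \Aut(Z_\Sigma) \to 1$ and $1 \to H'_\Sigma \to \wt{\Aut}(\Sigma) \to \Aut(Z'_\Sigma) \to 1$ respectively. The crucial point is that in \emph{both} sequences the middle term $\wt{\Aut}(\Sigma)$ is the \emph{same} group, namely the normalizer of $G_\Sigma$ in $\Aut_{alg}(U(\Sigma))$, because by Corollary \ref{cor:normalizers_coincide} this normalizer depends only on the Zariski closure $G_{\ul{\Sigma}}$ of $H_\Sigma$, which equals that of $H'_\Sigma$ since the real forms $H_\Sigma^\R$ and ${H'_\Sigma}^\R$ coincide and hence have the same rationalization. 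Thus the only data distinguishing the two exact sequences are the images of $H_\Sigma$ and $H'_\Sigma$ inside $\wt{\Aut}(\Sigma)$, which are precisely $G_\Sigma/H_\Sigma \cong F_\Sigma$ and $G_\Sigma/H'_\Sigma \cong F'_\Sigma$ modulo the common subgroup — more carefully, $H_\Sigma$ and $H'_\Sigma$ are the two (a priori different) complex-analytic subgroups of the common real Lie group $H_\Sigma^\R \hookrightarrow \wt{\Aut}(\Sigma)$.

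First I would treat the easy direction. If $\psi\colon Z_\Sigma \to Z'_\Sigma$ is a biholomorphism, then conjugation by $\psi$ carries the canonical foliation subgroup $F_\Sigma \subseteq \Aut(Z_\Sigma)$ (Construction \ref{con:can_fol}, which is canonically $G_\Sigma/H_\Sigma$ by the proposition identifying $F_\Sigma$ with $F$) isomorphically onto $F'_\Sigma \subseteq \Aut(Z'_\Sigma)$, since by the corollary following Construction \ref{con:can_fol} this subgroup is normalized by every holomorphic automorphism and is intrinsically defined. Hence $G_\Sigma/H_\Sigma \cong G_\Sigma/H'_\Sigma$ as complex Lie groups.

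For the converse, suppose $\theta\colon G_\Sigma/H_\Sigma \xrightarrow{\ \sim\ } G_\Sigma/H'_\Sigma$ is an isomorphism of complex Lie groups. Both quotients are compact complex tori (Proposition \ref{prop:desc_ker} uses rationality here — one should note the corollary is implicitly in the rational, embedded setting of Theorem \ref{thm:ma_aut_group_structure} where $Z_\Sigma$ is a genuine manifold), so $\theta$ lifts to an isomorphism of their universal covers $\wt{G}_\Sigma/\h_\Sigma \cong \wt{G}_\Sigma/\h'_\Sigma$ compatible with lattices; but both are quotients of $\wt{G}_\Sigma = \ker A_\Sigma^\C$, and an isomorphism between them respecting the images of $G_\Sigma$ gives an automorphism of the pair $(\wt{G}_\Sigma,\, \text{lattice})$ carrying $\h_\Sigma$ to $\h'_\Sigma$. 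I would then show this automorphism extends to an element of $\wt{\Aut}(\Sigma)$ — using that automorphisms of $G_\Sigma$ preserving the grading structure are realized inside $\Aut_g(\Sigma)$ by Proposition \ref{prop:cox_struct} — and that conjugation by it sends the subgroup $H_\Sigma \subseteq \wt{\Aut}(\Sigma)$ to $H'_\Sigma$. Passing to quotients via the two exact sequences of Theorem \ref{thm:ma_aut_group_structure} then yields a biholomorphism $Z_\Sigma \cong Z'_\Sigma$.

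\textbf{Main obstacle.} The delicate step is the converse: upgrading an abstract isomorphism $\theta$ of the \emph{quotient tori} to an automorphism of $U(\Sigma)$ that conjugates $H_\Sigma$ to $H'_\Sigma$. One must check that the lift of $\theta$ to $\wt{G}_\Sigma$ genuinely extends to the ambient torus $(\CT)^S$ compatibly with the full $G_\Sigma$-action and the open subset $U(\Sigma) \subseteq \C^S$ — i.e. that it lands in $\wt{\Aut}(\Sigma)$ rather than merely in $\Aut(G_\Sigma)$. This requires using the structure of $\wt{\Aut}^0(\Sigma)$ from Proposition \ref{prop:struct_of_NAut0} (the torus $(\CT)^S$ together with the geometric Demazure root subgroups) and arguing that the relevant automorphism of $\wt{G}_\Sigma$ is induced by conjugation by an element of the big torus $(\CT)^S \subseteq \wt{\Aut}(\Sigma)$, which acts trivially on $F_\Sigma = G_\Sigma/H_\Sigma$ but realizes the needed identification at the level of complex structures on $H_\Sigma^\R$. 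Handling this carefully, rather than the diagram-chase with the two exact sequences, is where the real content lies.
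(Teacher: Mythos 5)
Your first direction (a biholomorphism carries the intrinsically defined foliation subgroup $F_\Sigma\cong G_\Sigma/H_\Sigma$ of $\Aut(Z_\Sigma)$ to $F'_\Sigma$) is fine and is essentially what the paper dismisses as ``immediate.'' For the converse, however, you diverge from the paper: the paper argues through the \emph{center} of the automorphism group, identifying $Z(\Aut(Z_\Sigma))$ with $Z(\NAut(\Sigma))/H_\Sigma$ via Theorem \ref{thm:ma_aut_group_structure} and comparing these for the two complex structures, whereas you try to build an explicit element of $\wt{\Aut}(\Sigma)$ conjugating $H_\Sigma$ to $H'_\Sigma$ and then descend it.

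The gap is in that construction, and your proposed resolution of the ``main obstacle'' cannot work as stated. You suggest that the required identification is ``induced by conjugation by an element of the big torus $(\CT)^S\sse\wt{\Aut}(\Sigma)$.'' But $(\CT)^S$ is abelian and contains $G_\Sigma$, so conjugation by any of its elements is the identity on $G_\Sigma$ and in particular fixes $H_\Sigma$ as a set; the same holds for every element of $\NAut^0(\Sigma)$, which by definition centralizes $G_\Sigma$. Hence no element of the identity component of $\wt{\Aut}(\Sigma)$ can move $H_\Sigma$ to a different subgroup $H'_\Sigma$. The only elements acting nontrivially on $G_\Sigma$ lie over the finite group $\ESm(\Sigma)$ (monomial transformations coming from fan symmetries), so your argument would require showing that the lattice isomorphism underlying $\theta\colon G_\Sigma/H_\Sigma\to G_\Sigma/H'_\Sigma$ is realized by such a symmetry preserving $U(\Sigma)$ --- precisely the step you leave unaddressed, and one that does not follow from the mere existence of an abstract isomorphism of the quotient tori. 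As written, the converse direction of your proof is therefore not established; to repair it along the paper's lines you would instead extract $G_\Sigma/H_\Sigma$ from $\Aut(Z_\Sigma)$ as (essentially) its center using the exact sequence of Theorem \ref{thm:ma_aut_group_structure}, rather than attempting to lift $\theta$ upstairs.
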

\begin{proof}
    One direction is immediate. To prove the converse statement we observe that by Theorem \ref{thm:ma_aut_group_structure} the center of the group \(\Aut(Z_\Sigma)\) is isomorphic to the quotient of the center of \(\NAut(\Sigma)\) by the group \(H_\Sigma\). Consequently, the result follows from the fact that the center of the group \(\NAut(\Sigma)\) is isomorphic to the center of the group \(\NAut(\Sigma')\) if and only if the quotients \(G_\Sigma/H_\Sigma\) and \(G_\Sigma/H_\Sigma'\) are isomorphic as complex Lie groups.
\end{proof}
The above corollary shows that the holomorphic automorphism group of \(Z_\Sigma\) allows one to distinguish between different complex structures on the underlying smooth manifold \(Z_\Sigma\).
\section{Automorphisms of Calabi--Eckmann and Hopf manifolds}\label{sec:CE_Hopf}
\subsection{Automorphisms of rational Calabi--Eckmann manifolds}\label{sec:CE_aut}
\subsubsection{Definition of Calabi--Eckmann manifolds} In this section we generally follow exposition at \cite[Example 6.7.9]{bupa15}. Let \(\Sigma=\GF\) be the following generalized fan.
\begin{itemize}
    \item The set \(S\) consists of \(p+q+2\) elements, that we present as follows.
        \[
             S=\left\{0,\ldots,p,\ol{0},\ldots,\ol{q}\right\}.
        \]
    \item The simplicial complex \(K\) is a product of a \(p\)-simplex and a \(q\)-simplex with the standard triangulated structure.
    \item The vector space \(V\) is a product of \(\R^p\) and \(\R^q.\)
    \item The map \(\rho\) restricted to vertices \(\left\{0,1\ldots,p\right\}\) and corestricted to \(\R^p\times 0\) coincides with the map of the standard fan defining \(\CP^p\). Similarly, the restriction of \(\rho\) to \(\left\{\ol{0},\ol{1}\ldots,\ol{p}\right\}\) corestricted to \(0\times \R^q\) coincides with the map for the standard fan defining \(\CP^q\).
\end{itemize}
\begin{remark}
    The above definition is a particular case of the general construction of a product for generalized fans. We will treat this construction in greater detail in the sequel.
\end{remark}
In more concrete terms, the moment-angle manifold \(Z_\Sigma\) is the total space of a Hopf-type fibration over the product of \(\CP^p\) and \(\CP^q.\) Topologically, it is a product \(S^{2p+1}\times S^{2q+1}\) of two odd-dimensional spheres. We will denote this manifold by \(\CE(p,q).\)

We can also describe the fan operator more explicitly. Denote by \(\{g_1,\ldots g_p,g_{\ol{1}},\ldots,g_{\ol{q}}\}\) the standard basis of the vector space \(\R^p\times \R^q\). Then, we have the following description of the fan operator. 
\begin{gather*}
    A_\Sigma^\R(e_i)=\begin{cases}
        g_{i+1}, & \text{for } 0\le i< p,\\
        -g_1-\ldots - g_{p-1}, & i=p.
    \end{cases}\\
    A_\Sigma^\R(e_{\ol{i}})=\begin{cases}
        g_{\ol{i}+1}, & \text{for } 0\le i< q,\\
        -g_{\ol{1}}-\ldots - g_{\ol{q-1}}, & i=q.
    \end{cases}
\end{gather*}
By the previous description, the kernel of the operator \(A_\Sigma^\R\) is given by the following formula.
\[
    \ker A_\Sigma^\R=\la e_0+\ldots+e_p,e_{\ol{0}}+\ldots+e_{\ol{q}}\ra.
\]
We introduce the complex structure on the space \(\ker A_\Sigma^\R\) as follows.
\begin{itemize}
    \item Recall that we need to find a complex subspace \(\wt{\h}_\Sigma\) of \(\C^S\) that projects isomorphically to \(\ker A^\R_\Sigma\) under the map \(\Re:\C^S\to \R^S.\) 
    \item Fix a non-real complex number \(\alpha\in\C\setminus \R\) Consider a complex linear map \(\Psi\) defined via the following formula.
    $$
        \Psi:\C\to\C^{S},\quad w\mapsto(\underbrace{w,\ldots,w}_{p+1},\underbrace{\alpha w,\ldots,\alpha w}_{q+1}).
    $$
    \item We put \(\wt{\h}_\Sigma\) to be the image of the map \(\Psi\). It is now an easy calculation to see that the subspace \(\wt{\h}_\Sigma\) projects isomorphically to the subspace \(\ker A_\Sigma^\R\) under the map \(\Re\). 
\end{itemize}
	\begin{definition}
        By the above discussion, the moment-angle manifold $Z_\Sigma$ has a complex structure corresponding to the subspace $\wt{\h}_\Sigma.$ Complex manifold we just constructed is called a {\it Calabi--Eckmann manifold of type \((p,q)\)}. 
    \end{definition}

    These manifolds were first introduced by Calabi and Eckmann in \cite{Calabi_Eckmann}, as an early example of non-K\"ahler complex manifolds. The Dolbeault cohomology of Calabi--Eckmann manifolds was computed by Borel in \mbox{\cite[Appendix II, \S 9]{Hirz_appendix}}, as an application of his famous spectral sequence.
	\subsubsection{An abstract computation of an automorphism group}
    We will now compute the automorphism group of the Calabi--Eckmann manifold of type \((p,q)\) using the general theory developed in the previous sections. 
    \begin{notation}
        For notational convenience, we denote by \(\Sigma_{\CP^k}\) the standard generalized fan of \(\CP^k.\) Also, when we want to specify the type of the Calabi--Eckmann manifold, we will denote its fan by \(\Sigma_{p,q}.\)
    \end{notation}
	\begin{proposition}\label{Corollary:CE_abstract}
		$\Aut(\CE(p,q))\cong(GL(p+1)\times GL(q+1))/H_\Sigma.$
	\end{proposition}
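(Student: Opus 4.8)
By construction $\CE(p,q)$ is the complex moment-angle manifold $Z_{\Sigma_{p,q}}$, where $\Sigma_{p,q}$ is the product of the standard fans $\Sigma_{\CP^p}$ and $\Sigma_{\CP^q}$; this is an embedded, complete, rational generalized fan in $\R^p\times\R^q$. The plan is to feed $\Sigma_{p,q}$ into Theorem \ref{thm:ma_aut_group_structure}, which gives an exact sequence $1\to H_\Sigma\to\wt{\Aut}(\Sigma_{p,q})\to\Aut(\CE(p,q))\to 1$, and then to compute the middle term --- more precisely, its subgroup that preserves the complex-structure datum $H_\Sigma$.

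First I would compute the Cox ring. The generalized lattice is $\Gamma(\Sigma_{p,q})=\Z^p\oplus\Z^q$, the grading group $L_{\Sigma_{p,q}}$ is canonically $\Z^2$, and $R(\Sigma_{p,q})=\C[x_0,\dots,x_p,y_0,\dots,y_q]$ with $|x_i|=(1,0)$ and $|y_j|=(0,1)$. There are no ghost vertices, and in each of the two degrees $(1,0)$, $(0,1)$ the only monomials are the generators themselves, so the decomposable parts $R(\Sigma_{p,q})_{\alpha_i}^{d}$ vanish. By Proposition \ref{prop:cox_struct} the unipotent radical of $\Aut_g(\Sigma_{p,q})$ is then trivial and $\Aut_g(\Sigma_{p,q})\cong GL(p+1)\times GL(q+1)$, acting by block-linear substitutions of the $x$'s and of the $y$'s.

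Next I would pass to $\wt{\Aut}^0$. Since $\Sigma_{p,q}$ comes from an embedded complete fan it is strongly complete, so by Lemma \ref{lem:geom=non_geom_DR_for_str_complete} every Demazure root is geometric; combining Proposition \ref{prop:struct_of_NAut0} with Proposition \ref{prop:Aut_g_generation} gives $\wt{\Aut}^0(\Sigma_{p,q})=\Aut_g(\Sigma_{p,q})=GL(p+1)\times GL(q+1)$, a connected group whose center is $G_{\Sigma_{p,q}}=\{(tI_{p+1},sI_{q+1})\}$, and inside this center $H_\Sigma=\{(e^{w}I_{p+1},e^{\alpha w}I_{q+1}):w\in\C\}$. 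For the component group, Proposition \ref{prop:con_comp_of_norm} identifies $\wt{\Aut}(\Sigma_{p,q})/\wt{\Aut}^0(\Sigma_{p,q})$ with the effective symmetry group $\ESm(\Sigma_{p,q})$, which is trivial for $p\neq q$. The one genuine subtlety, which I expect to be the main obstacle, is the case $p=q$: there $\ESm(\Sigma_{p,p})\cong\Z/2$ is generated by the interchange of the two factors, and one must argue this class does not contribute to $\Aut(Z_\Sigma)$. The point is that the factor-interchange is linear on $U(\Sigma)$ and normalizes $G_\Sigma$, but it carries the complex-structure subspace $\wt{\h}_\Sigma=\Psi(\C)$ to the subspace built from the parameter $\alpha^{-1}\neq\alpha$; hence it does not normalize $H_\Sigma$ and does not descend to an automorphism of $Z_\Sigma$. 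Thus in every case the subgroup of $\wt{\Aut}(\Sigma_{p,q})$ normalizing $H_\Sigma$ is exactly $GL(p+1)\times GL(q+1)$.

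Feeding this into Theorem \ref{thm:ma_aut_group_structure} yields $\Aut(\CE(p,q))\cong(GL(p+1)\times GL(q+1))/H_\Sigma$. Aside from the $p=q$ symmetry point, the remaining steps are routine bookkeeping: the explicit identification of $L_{\Sigma_{p,q}}$, the verification that the two relevant graded pieces have no decomposable monomials (so the unipotent radical vanishes), and the check that $\Sigma_{p,q}$ is strongly complete. The degenerate cases $p=0$ or $q=0$, which produce the Hopf manifolds, go through verbatim, with the $\CP^0$ factor contributing a single $GL(1)$.
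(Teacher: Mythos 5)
Your proposal is correct and follows essentially the same route as the paper: compute the Cox ring of \(\CP^p\times\CP^q\), identify \(\Aut_g(\Sigma)\cong GL(p+1)\times GL(q+1)\) via Proposition \ref{prop:cox_struct}, and feed this into Theorems \ref{thm:equiv_on_Cox_constr} and \ref{thm:ma_aut_group_structure}. Your explicit treatment of the \(p=q\) factor-swap --- showing it normalizes \(G_\Sigma\) but carries \(\wt{\h}_\Sigma\) to the subspace with parameter \(\alpha^{-1}\neq\alpha\) and hence does not normalize \(H_\Sigma\) --- is a point the paper's proof passes over silently (it only remarks that the inertia subgroup causes no contribution), so this extra care is a genuine, welcome refinement rather than a deviation.
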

	\begin{proof}
		We know that the Cox ring of a product $\VS=\CP^p\times\CP^q$ is 
		\begin{gather*}
			R(\Sigma)=\C[x_0,\ldots, x_p;y_0,\ldots, y_q],\\ \deg(x_i)=(1,0),\; \deg(y_j)=(0,1),\;\text{for } 0\le i\le p,\;0\le j \le q. 
		\end{gather*}
		It follows from the fact that $S=\{0,\ldots,p\}\sqcup\{\ol{0},\ldots,\ol{q}\},$ and the grading group
		$$
		L_\Sigma=L_{\Sigma_{\CP^p}}\oplus L_{{\Sigma_{\CP^q}}}\cong\Z\oplus\Z.
		$$
		We have ${R(\Sigma)_{(1,0)}=\C\la x_i\;|\;0\le i\le p\ra,}$ and ${R(\Sigma)_{(0,1)}=\C\la y_j\;|\;0\le j\le q\ra.}$
		\par To describe the group $\NAut$ we use Theorem \ref{thm:ma_aut_group_structure}, Theorem \ref{thm:equiv_on_Cox_constr}, and Proposition \ref{prop:cox_struct}. 
        Since \(\Sigma\) is an embedded complete fan, we deduce that \(\NAut^0(\Sigma)\) coincides with \(\Aut_g(\Sigma)\). Consequently, by Proposition \ref{prop:cox_struct} there is an isomorphism 
        \[
            \Aut_g(\Sigma)\cong GL(p+1)\times GL(q+1).
        \]
        It remains to observe that the inertia subgroup \(\mf{I}(\Sigma)\) is contained in the group \(H_\Sigma.\) Hence, we have the following isomorphism.
        \[
            \NAut(\Sigma)\cong \Aut_g(\Sigma).
        \]
		Now, it is immediate from Theorem \ref{thm:ma_aut_group_structure} that the automorphism group of the Calabi--Eckmann manifold of type \((p,q)\) is isomorphic to the group \(\left(GL(p+1)\times GL(q+1)\right)/H_\Sigma.\)
	\end{proof}
    We immediately obtain the following description of the automorphism group of (diagonalizable) Hopf manifolds. 
    \begin{corollary}
        Automorphism groups of Hopf manifolds, i.e. Calabi--Eckmann manifolds of type \((p,0)\), are isomorphic to the following complex Lie groups.
        \[
            \Aut(\CE(p,0))\cong GL(p+1)\times \CT/H_{\Sigma_{p,0}}.
        \]
    \end{corollary}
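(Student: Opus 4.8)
The plan is to obtain the statement as a direct specialization of Proposition~\ref{Corollary:CE_abstract} to the case $q=0$. First I would recall that, with the complex structure furnished by a non-real $\alpha$ (see the construction preceding Proposition~\ref{Corollary:CE_abstract} and \cite[Example 6.7.9]{bupa15}), the Calabi--Eckmann manifold $\CE(p,0)=Z_{\Sigma_{p,0}}$ is precisely the diagonalizable Hopf manifold: it is the total space of an elliptic fibration over $\CP^p$, equivalently the quotient of $\C^{p+1}\setminus\{0\}$ by the cyclic group generated by a scalar contraction. This identification is what legitimizes the phrasing ``Hopf manifolds, i.e.\ Calabi--Eckmann manifolds of type $(p,0)$''.

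Next I would substitute $q=0$ into Proposition~\ref{Corollary:CE_abstract}, which yields
\[
    \Aut(\CE(p,0))\cong\bigl(GL(p+1)\times GL(0+1)\bigr)/H_{\Sigma_{p,0}}.
\]
Since $GL(0+1)=GL_1(\C)=\CT$, the right-hand side is exactly $\bigl(GL(p+1)\times\CT\bigr)/H_{\Sigma_{p,0}}$, which is the asserted formula. So the content of the corollary is genuinely just this substitution, and the heart of the matter lies in Proposition~\ref{Corollary:CE_abstract} together with Theorem~\ref{thm:ma_aut_group_structure}.

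The one point that deserves a line of justification, and the only place where the argument does not run verbatim as in the proof of Proposition~\ref{Corollary:CE_abstract}, is that for $q=0$ the generalized fan $\Sigma_{p,0}$ is \emph{not} an embedded fan: the vertex $\bar 0$ has $\rho(\bar 0)=0$ and is therefore a ghost vertex (this is forced, since $\ker A_{\Sigma_{p,0}}^\R$ must be two-dimensional in order to carry a complex structure of complex dimension one). I would handle this via the ghost-vertex reduction: by Corollary~\ref{cor:aut_g_red_of_gv} and Proposition~\ref{prop:red_of_gv} one has $\wt{\Aut}^0(\Sigma_{p,0})\cong\TT_{\Sigma_{p,0}}\times\wt{\Aut}^0(\Sigma_{p,0}^\red)$ with $\TT_{\Sigma_{p,0}}\cong\CT$ and $\Sigma_{p,0}^\red=\Sigma_{\CP^p}$; the reduced fan $\Sigma_{\CP^p}$ is a complete embedded fan, so $\wt{\Aut}^0(\Sigma_{\CP^p})\cong\Aut_g(\Sigma_{\CP^p})\cong GL(p+1)$ by Proposition~\ref{prop:cox_struct}. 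Since all variables $x_0,\dots,x_p$ have the same degree, the inertia subgroup $\mf{I}(\Sigma_{p,0})$ consists of permutations realized by scalar matrices and hence lies in $H_{\Sigma_{p,0}}$, so $\wt{\Aut}(\Sigma_{p,0})=\wt{\Aut}^0(\Sigma_{p,0})$ and $\NAut(\Sigma_{p,0})\cong GL(p+1)\times\CT$. Feeding this into the exact sequence of Theorem~\ref{thm:ma_aut_group_structure} gives $1\to H_{\Sigma_{p,0}}\to GL(p+1)\times\CT\to\Aut(\CE(p,0))\to 1$, i.e.\ the displayed isomorphism. The main obstacle is thus purely bookkeeping around the ghost vertex; no new geometric input is required.
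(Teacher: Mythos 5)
Your proposal is correct and follows the same route as the paper, which derives the corollary as an immediate specialization of Proposition~\ref{Corollary:CE_abstract} to $q=0$ (reading the right-hand side as $\bigl(GL(p+1)\times\CT\bigr)/H_{\Sigma_{p,0}}$). Your extra paragraph on the ghost vertex $\bar 0$ — invoking Corollary~\ref{cor:aut_g_red_of_gv} and Proposition~\ref{prop:red_of_gv} to replace the $GL(q+1)=GL(1)$ factor by the torus $\TT_{\Sigma_{p,0}}\cong\CT$ — is a genuine subtlety that the paper passes over silently, and it is handled correctly.
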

	\subsubsection{A concrete example}
	Consider a well-known example of two complex structures on a moment-angle manifold corresponding to the simplicial complex $\Delta^1\times \Delta^1\times \Delta^2\times \Delta^2,$ see \cite[Example 6.7.10]{bupa15}.
	These two structures come from two different decompositions:
	$$
		Z_\Sigma=\CE(1,1)\times \CE(2,2);\quad Z_\Sigma=\CE(1,2)\times \CE(1,2).
	$$
	\begin{corollary}
		For the first decomposition, we have the following isomorphism.
		$$\Aut(\CE(1,1)\times \CE(2,2))\cong(GL(2)\times GL(2))/H_{\Sigma_{1,1}}\times (GL(3)\times GL(3))/H_{\Sigma_{2,2}}.$$
		For the second decomposition, we have the following isomorphism.
		$$\Aut(\CE(1,2)\times \CE(1,2))\cong(GL(2)\times GL(3))/H_{\Sigma_{1,2}}\times (GL(2)\times GL(3))/H_{\Sigma_{1,2}}.$$
	\end{corollary}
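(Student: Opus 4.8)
The plan is to recognise each of the two complex manifolds as the moment-angle manifold of an explicit product fan and then feed that fan into Theorem \ref{thm:ma_aut_group_structure}. Both manifolds have the same underlying smooth manifold $S^{3}\times S^{3}\times S^{5}\times S^{5}$, which is the moment-angle manifold of the generalized fan $\Sigma$ whose simplicial complex is $\Delta^{1}\times\Delta^{1}\times\Delta^{2}\times\Delta^{2}$ and whose vector space and map $\rho$ are the products of the standard data for $\CP^{1},\CP^{1},\CP^{2},\CP^{2}$. The two complex structures correspond, via Construction \ref{con:complex_struct}, to two choices of the complex subspace $\h_{\Sigma}\subset\C^{S}$ lifting $\ker A_\Sigma^{\R}$: in the first it is $\wt{\h}_{\Sigma_{1,1}}\oplus\wt{\h}_{\Sigma_{2,2}}$, in the second $\wt{\h}_{\Sigma_{1,2}}\oplus\wt{\h}_{\Sigma_{1,2}}$. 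In either case the complex structure is a direct sum of the complex structures of the two Calabi--Eckmann factors, so the spaces $U(\Sigma)$, $G_\Sigma$ and $H_\Sigma$ all split as products indexed by the two factors $\Sigma',\Sigma''$; this is exactly what makes $Z_\Sigma$ biholomorphic to $\CE(1,1)\times\CE(2,2)$, respectively $\CE(1,2)\times\CE(1,2)$.

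Next I would compute $\NAut(\Sigma)$. The Cox ring is $R(\Sigma)\cong R(\Sigma')\otimes_{\C}R(\Sigma'')$ with grading group $L_\Sigma=L_{\Sigma'}\oplus L_{\Sigma''}$, and the variables coming from one factor lie in degrees that vanish on the other factor. A short degree count, identical to the one used in the proof of Proposition \ref{Corollary:CE_abstract}, shows that every graded component of $R(\Sigma)$ spanned by generators is contained in a single tensor factor; hence by Proposition \ref{prop:cox_struct}, applied to each factor, $\Aut_g(\Sigma)\cong\Aut_g(\Sigma')\times\Aut_g(\Sigma'')$, which equals $(GL(2)\times GL(2))\times(GL(3)\times GL(3))$ in the first case and $(GL(2)\times GL(3))\times(GL(2)\times GL(3))$ in the second. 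Since $\Sigma$ is an embedded complete fan without ghost vertices, Proposition \ref{prop:struct_of_NAut0} gives $\NAut^{0}(\Sigma)\cong\Aut_g(\Sigma)$, and, arguing as in the proof of Proposition \ref{Corollary:CE_abstract}, the effective symmetry group $\ESm(\Sigma)$ is absorbed into $H_\Sigma$, so $\NAut(\Sigma)\cong\Aut_g(\Sigma)$.

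Finally, Theorem \ref{thm:ma_aut_group_structure} yields $\Aut(Z_\Sigma)\cong\NAut(\Sigma)/H_\Sigma$. Because $\NAut(\Sigma)$ and $H_\Sigma$ split compatibly as products indexed by the two Calabi--Eckmann factors, this quotient is the product of the two factor quotients $\NAut(\Sigma')/H_{\Sigma'}$ and $\NAut(\Sigma'')/H_{\Sigma''}$, each of which is identified by Proposition \ref{Corollary:CE_abstract} with the corresponding $(GL(a+1)\times GL(b+1))/H$. Substituting the two decompositions gives the two displayed isomorphisms.

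The step I expect to be the main obstacle is the bookkeeping around $\ESm(\Sigma)$ and the inertia group $\In(\Sigma)$: for a product of several projective spaces one must check that the symmetries permuting rays of equal degree — and, in the balanced cases, any symmetry permuting whole $\CP^{k}$-blocks of the same degree — either lie in $\In(\Sigma)$ or act trivially after passing to the moment-angle quotient, so that no spurious finite factor survives in $\Aut(Z_\Sigma)$. Everything else is a formal consequence of Theorem \ref{thm:ma_aut_group_structure}, Theorem \ref{thm:equiv_on_Cox_constr} and Proposition \ref{prop:cox_struct}, together with the compatibility of the Cox ring, the fan tori and the complex-structure construction with products of generalized fans.
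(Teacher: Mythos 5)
Your proposal is correct and follows essentially the same route as the paper: identify the Cox ring and grading group of the product fan $\Delta^1\times\Delta^1\times\Delta^2\times\Delta^2$, deduce $\wt{\Aut}(\Sigma)\cong GL(2)\times GL(2)\times GL(3)\times GL(3)$ as in Proposition \ref{Corollary:CE_abstract}, and quotient by the copy of $H_\Sigma$ determined by each choice of complex structure, using that $H_\Sigma$ splits compatibly with the two Calabi--Eckmann factorizations. The loose end you flag about $\ESm(\Sigma)$ and $\In(\Sigma)$ is handled in the paper in exactly the same informal way (the inertia group is $\Sm_2\times\Sm_2\times\Sm_3\times\Sm_3$ and the residual symmetries are accounted for separately), so your treatment matches the paper's.
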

	\begin{proof}
		In both cases, the toric variety associated to the fan \(\Sigma\) is isomorphic to the following product.
		$$
			\VS=\CP^1\times\CP^1\times\CP^2\times\CP^2.
		$$
		Hence, the grading group \(L_\Sigma\) is isomorphic to $\Z^4,$ and the Cox ring is isomorphic to the following graded \(\C\)-algebra.
		\begin{gather*}
			\C[x_0,x_1;y_0,y_1;z_0,z_1,z_2;w_0,w_1,w_2],\\ \deg(x_i)=(1,0,0,0),\;\deg(y_j)=(0,1,0,0), \text{ for } i,j=0,1;\\ 
			\deg(z_k)=(0,0,1,0),\;\deg(w_s)=(0,0,0,1), \text{ for } k,s=0,1,2.
		\end{gather*}
		Similarly to the proof of Proposition \ref{Corollary:CE_abstract} we have that
		$$
			\wt{\Aut}(\Sigma)=GL(2)\times GL(2) \times GL(3)\times GL(3).
		$$
		Applying Proposition \ref{Corollary:CE_abstract}, we see that in the first case, there is the following isomorphism.
		$$
			\Aut^0(\CE(1,1)\times \CE(2,2))\cong(GL(2)\times GL(2))/H_{\Sigma_{1,1}}\times (GL(3)\times GL(3))/H_{\Sigma_{2,2}}.
		$$
		Analogously, in the second case, we have an isomorphism:
		$$
			\Aut^0(\CE(1,2)\times \CE(1,2))\cong(GL(2)\times GL(3))/H_{\Sigma_{1,2}}\times (GL(2)\times GL(3))/H_{\Sigma_{1,2}}.
		$$
        We observe that by the description of the Cox ring obtained above, the inertia subgroup of \(\Sigma\) is isomorphic to the following product of symmetric groups.
        \[
            \Im(\Sigma)=\Sm_2\times \Sm_2\times \Sm_3\times \Sm_3.
        \]
        Hence, the group of \(\ESm(\Sigma)\) is isomorphic to the quotient of the automorphism group of the product of the square, and two triangles modulo the group \(\Im(\Sigma)\).
	\end{proof}
    
\subsection{Comparison with automorphisms of Hopf surfaces}
\par In this section we provide computation of automorphism groups of Hopf surfaces based on \cite{namba}. This computation does not rely on Theorem \ref{thm:ma_aut_group_structure}. However, we provide it as an example of a previously known result that can be compared with the general theory we developed.
\par In terms of section \ref{sec:CE_aut} Hopf surfaces are Calabi--Eckmann manifolds of the type $\CE(1,0).$ However, there is a classical description of Hopf surfaces, which we state below.
\subsubsection{Matrix construction of Hopf surfaces}
	Let $GL(2)$ be the group of invertible $2\times 2$ complex matrices. This group acts from the right on a complex vector space $\C^2.$\\
	Consider a subset $M\ss GL(2)$ given by
	$$
		M= \left\{\begin{pmatrix}\alpha & 0\\ 0 & \beta \end{pmatrix} \Bigg\vert\; \alpha , \beta \in \C, \; 0 < |\alpha| < 1,\; 0<|\beta|< 1\right\}.
	$$
	The set $M$ has the structure of a complex manifold. Denote by $0$ the origin in~$\C^2.$ We are interested in the action of elements of the set $M$ on the space~$\C^2\setminus 0.$

    \begin{construction}
        For each element $u\in M$ consider a group ${G_u=\{u^n\,|\,n\in\Z\}\cong\Z.}$ The group $G_u$ acts on $\C^2\setminus 0$ properly discontinuous. We consider a complex manifold $\Hopf_u=(\C^2\setminus 0)/G_u.$
        The manifold $\Hopf_u$~is called {\it a Hopf surface.} 
    \end{construction}
	
    The holomorphic automorphism group $\Aut(\Hopf_u)$ of a Hopf surface $\Hopf_u$ is described in the following theorem.
	{\begin{theorem}[{\cite[\S 2 Cases 1-4]{namba}}]\label{theorem:aut_struct_Hopf}
		Let $C_u$ be the centralizer of the group $G_u$ in the group $GL(2).$
		\begin{enumerate}
			\item Assume that $G_u=\left\{\begin{pmatrix}\alpha^n & 0\\ 0 & \alpha^n\end{pmatrix}\Bigg\vert\; \alpha\neq 0,\; n\in\Z\right\},$ then we have an exact sequence:
			$$
				1\to G_u\to C_u\to \Aut(\Hopf_u)\to 1.
			$$\label{theorem:aut_struct_Hopf-1}
			Moreover, we have $C_u/G_u=GL(2)/G_u.$
			\item Assume that $G_u=\left\{\begin{pmatrix}\alpha^{mn} & 0\\ 0 & \alpha^n\end{pmatrix}\Bigg\vert\; \alpha\neq 0,\; n\in\Z\right\},$ where $m$ is some positive ineteger. Then we have an exact sequence:
			$$
				1\to G_u\to C_u\times\C\to \Aut(\Hopf_u)\to 1.
			$$\label{theorem:aut_struct_Hopf-2}
			\item Assume that \scalebox{0.85}{$G_u=\left\{\begin{pmatrix}\alpha^n & 0\\ 0 & \beta^n\end{pmatrix}\Bigg\vert\; \alpha,\beta\neq 0\text{ and }\alpha^q\neq\beta \text{ for any } q\in\Q,\; n\in\Z\right\}.$} Then we have an exact sequence:
			 $$
			1\to G_u\to C_u\to \Aut(\Hopf_u)\to 1.
			$$\label{theorem:aut_struct_Hopf-3}
		\end{enumerate}
	\end{theorem}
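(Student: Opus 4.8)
The plan is to follow Namba's classical argument and compute $\Aut(\Hopf_u)$ by passing to the universal cover $\C^2\setminus\{0\}$ and analyzing the biholomorphisms compatible with the deck group. First I would note that $\C^2\setminus\{0\}$ deformation retracts onto $S^3$, hence is simply connected, so it is the universal cover of $\Hopf_u=(\C^2\setminus\{0\})/G_u$ with deck group $G_u\cong\Z$. By covering space theory every holomorphic automorphism of $\Hopf_u$ lifts to a biholomorphism of $\C^2\setminus\{0\}$ normalizing $G_u$, every such biholomorphism descends, and two lifts of the same automorphism differ by an element of $G_u$; hence $\Aut(\Hopf_u)\cong N(G_u)/G_u$, where $N(G_u)$ is the normalizer of $G_u$ inside the biholomorphism group of $\C^2\setminus\{0\}$. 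Next, by Hartogs' extension theorem every biholomorphism of $\C^2\setminus\{0\}$ and its inverse extend holomorphically across the origin, and these extensions compose to the identity, so they are automorphisms of $\C^2$; since such an automorphism preserves $\C^2\setminus\{0\}$ it must fix $0$. Thus the biholomorphism group of $\C^2\setminus\{0\}$ is, via restriction, exactly $\Aut(\C^2,0)$, the group of holomorphic automorphisms of $\C^2$ fixing the origin, and the problem is reduced to computing the normalizer of $G_u$ in $\Aut(\C^2,0)$.

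Since $G_u$ is infinite cyclic, generated by $u$, conjugation by a normalizing $\varphi$ sends $u$ to $u$ or to $u^{-1}$; passing to derivatives at the common fixed point $0$ identifies the eigenvalue multiset of $u$ with that of $u^{\pm1}$, which is impossible for $u^{-1}$ because $u$ is a strict contraction ($0<|\alpha|,|\beta|<1$). Hence every normalizing $\varphi$ centralizes $u$, and $\Aut(\Hopf_u)\cong Z(u)/G_u$ with $Z(u)$ the centralizer of $u$ in $\Aut(\C^2,0)$. To compute $Z(u)$ I would write $u=\operatorname{diag}(\lambda_1,\lambda_2)$ and expand a centralizing $\varphi$ as an entire power series $\varphi=\sum_Q\varphi_Q z^Q$; matching Taylor coefficients in $\varphi\circ u=u\circ\varphi$ forces $(\varphi_Q)_i(\lambda^Q-\lambda_i)=0$, so only resonant monomials ($\lambda^Q=\lambda_i$) can occur. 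In each of the three regimes of the statement there are only finitely many such monomials, so $\varphi$ is automatically a polynomial automorphism of an explicit shape: when $u=\operatorname{diag}(\alpha,\alpha)$ is scalar the only resonances are linear and $Z(u)=GL(2)=C_u$; when $u=\operatorname{diag}(\alpha^m,\alpha)$ with $m\ge2$ the resonance $\alpha^m=\alpha^m$ contributes, beyond the diagonal torus $C_u$, the one-parameter family of shears $z_1\mapsto z_1+c z_2^m$, producing the group $C_u\times\C$ of part (2); and in the non-resonant case only linear resonances remain, so $Z(u)=C_u$.

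Feeding these descriptions of $Z(u)$ into $\Aut(\Hopf_u)\cong Z(u)/G_u$ then yields the three exact sequences, with the supplementary identity $C_u/G_u=GL(2)/G_u$ in the scalar case being automatic from $C_u=GL(2)$. The argument is essentially elementary; the points requiring care are the combination of the covering-space reduction with Hartogs extension (making sure the lifts genuinely become entire automorphisms of $\C^2$ fixing $0$), the short spectral argument ruling out $u\mapsto u^{-1}$, and — in the resonant case $m\ge2$ — the bookkeeping of how the torus $C_u$ acts on the shear parameter, so that the resulting group matches the extension by $\C$ recorded in the statement. I expect this last resonance bookkeeping to be the main place where one must be attentive, since it is precisely where the three cases genuinely diverge.
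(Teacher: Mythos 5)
This theorem is quoted from Namba's paper and the text gives no proof of it, so there is no in-paper argument to compare against. Your proposal is correct and is essentially the classical argument of the cited source: pass to the universal cover \(\C^2\setminus\{0\}\), extend lifts by Hartogs to automorphisms of \(\C^2\) fixing the origin, rule out \(u\mapsto u^{-1}\) by comparing the spectrum of the derivative at the fixed point with that of the contraction \(u\), and determine the centralizer from the resonance equation \((\varphi_Q)_i(\lambda^Q-\lambda_i)=0\), which in each of the three regimes leaves only finitely many admissible monomials. The one point worth making explicit is that the group appearing in case (2) is the semidirect product \(C_u\ltimes\C\) with the twisted multiplication \((v,b)\circ(v',b')=(vv',\,a'b+b'd^m)\) recorded in Lemma \ref{lemma:Hopf_mono}; the notation \(C_u\times\C\) in the statement refers to this product of sets, not to a direct product of groups. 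This is exactly the bookkeeping you flag at the end, and once it is carried out your computation of \(Z(u)\) in the three cases, fed into \(\Aut(\Hopf_u)\cong Z(u)/G_u\), yields the three exact sequences as claimed.
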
}
	\begin{lemma}\label{lemma:Hopf_mono}
		There is a monomorphism of the following form. 
		$$
			 C_u\times \C \hookrightarrow \NAut(\C^2\setminus \{0\}).
		$$
		Here $\NAut(\C^2\setminus 0)$ is the algebraic automorphism group of the variety $\C^2\setminus 0.$
	\end{lemma}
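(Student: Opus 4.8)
The plan is to build the monomorphism from two transparent pieces and then reconcile their product structure with the one appearing in Theorem \ref{theorem:aut_struct_Hopf}. As a preliminary remark I would observe that an algebraic automorphism of the quasi-affine variety $\C^2\setminus\{0\}$, together with its inverse, extends to a morphism $\C^2\to\C^2$ (regular functions extend across the codimension-two locus $\{0\}$), so $\NAut(\C^2\setminus 0)$ is in fact the group $\Aut(\C^2,0)$ of polynomial automorphisms of $\C^2$ fixing the origin — though for the lemma only the easy inclusion $\Aut(\C^2,0)\hookrightarrow\NAut(\C^2\setminus 0)$ is needed. The $C_u$-factor then embeds via the tautological linear action: $GL(2,\C)$ acts on $\C^2$ fixing $0$, this action is faithful (a linear map acting as the identity on $\C^2\setminus 0$ is the identity matrix), and restricting the resulting embedding to the closed subgroup $C_u\le GL(2,\C)$ gives a monomorphism $\iota\colon C_u\hookrightarrow\NAut(\C^2\setminus 0)$.

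For the $\C$-factor I would, after conjugating $u$ inside $GL(2,\C)$ (which merely replaces the pair $(\Hopf_u,C_u)$ by an isomorphic one), take $u=\operatorname{diag}(\alpha,\beta)$ with the resonance $\alpha=\beta^{m}$ that determines an exponent $m\ge 1$ in Namba's case distinction, and use the elementary one-parameter family $y_t\colon(z_1,z_2)\mapsto(z_1+t\,z_2^{m},z_2)$, $t\in\C$ — essentially the root subgroup introduced in Construction \ref{con:aut_assoc_to_a_root} for the fan underlying $\Hopf_u$. Each $y_t$ is a polynomial automorphism of $\C^2$ with polynomial inverse $y_{-t}$, its zero locus is exactly $\{z_1=z_2=0\}$, so it preserves $\C^2\setminus 0$, and $t\mapsto y_t$ is an injective homomorphism $(\C,+)\to\NAut(\C^2\setminus 0)$, call it $j$. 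A short computation shows that $y_t$ commutes with $G_u$, and that conjugation by $\iota(\operatorname{diag}(t_1,t_2))$ carries $y_t$ to $y_{t_1t_2^{-m}t}$, so $\iota(C_u)$ normalizes $j(\C)$; feeding this into the group law on $C_u\times\C$ as it is meant in Theorem \ref{theorem:aut_struct_Hopf}, the map $(g,t)\mapsto\iota(g)\circ y_t$ is a homomorphism, and it is injective because its restrictions to the two factors are injective while $\iota(C_u)$ (linear maps) meets $j(\C)$ (genuinely non-linear once $m\ge 2$) only in the identity.

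The step I expect to require the most care is precisely this last reconciliation: matching the product $C_u\times\C$ of the statement with the subgroup $\langle\iota(C_u),j(\C)\rangle\le\NAut(\C^2\setminus 0)$ generated by the two images — i.e. pinning down the conjugation character $\operatorname{diag}(t_1,t_2)\mapsto t_1t_2^{-m}$ correctly and checking injectivity uniformly over the cases of Theorem \ref{theorem:aut_struct_Hopf}, in particular that in the genuinely resonant case the elementary factor is not absorbed into $GL(2,\C)$. The remaining ingredients — faithfulness of the linear action, the fact that each $y_t$ preserves $\C^2\setminus 0$, and the commutation identities with $G_u$ — are routine polynomial manipulations.
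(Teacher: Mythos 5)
Your proposal is correct and follows essentially the same route as the paper: the paper simply writes down the explicit transformations $(z,w)\mapsto(az+bw^{m},dw)$ realizing $C_u\times\C$ inside $\NAut(\C^2\setminus 0)$, which is exactly the subgroup you generate from $\iota(C_u)$ and the root subgroup $y_t$. Your version merely supplies more of the verification (the conjugation character, the compatibility with Namba's twisted group law, and injectivity) that the paper leaves implicit.
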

	\begin{proof}
		To prove the assertion we describe explicitly the group $C_u\times\C.$ From the results of \cite[\S 2]{namba} this group consists of pairs:
		$$
		(v,b),\quad\text{ where } v=\begin{pmatrix}a & 0\\ 0 & d\end{pmatrix},\; ad\neq 0 \text{ and } b \in \C.
		$$
		These pairs form a group under multiplication $\circ$ defined by
		$$
			(v, b)\circ (v',b') = (vv', a'b + b'd^m).
		$$
		Here $v=\begin{pmatrix} a & 0 \\
		0 & d
		\end{pmatrix},$ $v'=\begin{pmatrix}
		a'&0\\
		0&d'
		\end{pmatrix},$ and $b,b'\in\C.$ Hence, an action of the group $C_u\times\C$ on the space $\C^2$ is given by
		$$
			\bigg(\begin{pmatrix} a & 0 \\ 0 & d\end{pmatrix}, b\bigg)\circ \begin{pmatrix} z \\ w \end{pmatrix} = \begin{pmatrix} a z + b w^m \\ d w\end{pmatrix},\quad \begin{pmatrix} z \\ w \end{pmatrix}\in \C^2.
		$$
		Thus we have a monomorphism $C_u\times \C \hookrightarrow \NAut(\C^2\setminus 0).$
	\end{proof}
	\subsubsection{Hopf surfaces and moment-angle manifolds}\label{subsec:hopf_and_ma}
	Let $K$ denote a simplicial complex on the set $\{1,2,3\}.$ The maximal simplices of $K$ are $\{1\},$ and $\{2\}.$ Consider a configuration $\mb{a}_1,\mb{a}_2,\mb{a}_3$ of real numbers corresponding to the fan of the complex $K.$ This means that we have two linear dependencies of the following form.
	$$
		\lambda_1\mb{a}_1+\lambda_2\mb{a}_2=0;\; \mu_1\mb{a}_1+\mu_2\mb{a}_2+\mb{a}_3=0,\; \text{ for some } \lambda_i, \mu_j>0.
	$$ 
	We endow moment-angle manifold $Z_\Sigma\cong S^3\times S^1$ with a complex structure given in \S \ref{sec:MA_complex_struct}. Recall that $Z_\Sigma$ is the quotient of the following form $U(\Sigma)/H_\Sigma.$ We have $U(\Sigma)=(\C^2\setminus 0)\times \C^\times,$ and $C_\Psi\cong\C$ as a complex Lie group.\\
	By definition, we define the group \(H_\Sigma\) as follows.
	$$
		H_\Sigma=\{(e^{\zeta_1 w}, e^{\zeta_2 w}, e^w), \text{ where }\zeta_j=\lambda_j+i\cdot \mu_j\}\subset (\C^\times)^3.
	$$
	If the fan $\GF$ is rational, we can describe the moment-angle manifold \(Z_\Sigma\) as a quotient of the following form. 
	\begin{gather*}
	\Hopf_u=(\C^2\setminus\{0\})\times\C^\times/\{(z_1,z_2,t)\sim(e^{2\pi i \zeta_1 w}z_1,e^{2\pi i \zeta_2 w}z_2,e^w t)\}\cong\\
	\cong(\C^2\setminus\{0\})/\{(z_1,z_2)\sim(e^{2\pi i \zeta_1}z_1,e^{2\pi i \zeta_2}z_2)\}\cong(\C^2\setminus\{0\})/G_u.
	\end{gather*}
	\subsubsection{An exact sequence for automorphism groups}
	Here, we relate the matrix approach to Hopf surfaces and the description of Hopf surfaces as moment-angle manifolds.
	If the numbers $\zeta_1,\zeta_2$ are rational, we have by Proposition \ref{prop:zariski_closure} the following isomorphism of algebraic groups.
	$$
	\NAut(\Sigma)=\ms{N}_{\Aut(U(\Sigma))}(G).
	$$
	Here $\ms{N}_{\NAut(U(\Sigma))}(H)$ denotes the normalizer of the group $H$ in the algebraic automorphism group $\NAut(U(\Sigma))$ of the variety $U(\Sigma).$ Denote by $\NAut$ the group $\NAut(\Sigma).$
	\begin{proposition}
		Assume that $\Hopf_u$ is a manifold given in \ref{subsec:hopf_and_ma}. Then there is an exact sequence 
		$$
			1 \to H_\Sigma \xrightarrow{\alpha} \NAut \xrightarrow{\beta} \Aut(\Hopf_u) \to 1.
		$$
	\end{proposition}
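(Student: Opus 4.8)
The plan is to obtain the statement as a direct specialization of Theorem \ref{thm:ma_aut_group_structure}, and then to reconcile the resulting sequence with the matrix picture of Lemma \ref{lemma:Hopf_mono} and with Namba's Theorem \ref{theorem:aut_struct_Hopf}. First I would record that the data of \S\ref{subsec:hopf_and_ma} --- the set $S=\{1,2,3\}$, the complex $K$ with maximal faces $\{1\},\{2\}$, the one-dimensional space $V$, and $\rho(i)=\mb{a}_i$ --- is a complete rational generalized fan $\Sigma=\GF$. Completeness holds because $\mb{a}_1$ and $\mb{a}_2$ have opposite signs (from $\lambda_1\mb{a}_1+\lambda_2\mb{a}_2=0$ with $\lambda_i>0$), so the two rays $C(\{1\})$ and $C(\{2\})$ already cover $V$; rationality is the standing hypothesis on the $\mb{a}_i$; and $3$ is a ghost vertex. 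The identification $U(\Sigma)=(\C^2\setminus\{0\})\times\C^\times$ and the quotient computation $Z_\Sigma=U(\Sigma)/H_\Sigma\cong(\C^2\setminus\{0\})/G_u=\Hopf_u$ have already been carried out in \S\ref{subsec:hopf_and_ma}.

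Given this, I would apply Theorem \ref{thm:ma_aut_group_structure} to $\Sigma$, which yields the exact sequence $1\to H_\Sigma\to\wt{\Aut}(\Sigma)\to\Aut(Z_\Sigma)\to 1$. In the rational case, Proposition \ref{prop:zariski_closure} together with Corollary \ref{cor:normalizers_coincide} identifies $\NAut=\NAut(\Sigma)$ with the normalizer of $G_\Sigma$ inside the \emph{algebraic} automorphism group of $U(\Sigma)$, that is, with $\wt{\Aut}(\Sigma)$; and $Z_\Sigma\cong\Hopf_u$. Hence the sequence becomes $1\to H_\Sigma\xrightarrow{\alpha}\NAut\xrightarrow{\beta}\Aut(\Hopf_u)\to 1$, where $\alpha$ is the inclusion of $H_\Sigma$ acting on $U(\Sigma)$ through the large torus $(\C^\times)^3$ and $\beta$ is the descent homomorphism of \S\ref{sec:aut_mam}. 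This already proves the proposition.

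To make the comparison with Namba explicit --- the point of the subsection --- I would next describe $\NAut(\Sigma)$ concretely via Theorem \ref{thm:equiv_on_Cox_constr}, Proposition \ref{prop:struct_of_NAut0} and Proposition \ref{prop:cox_struct}: its identity component is generated by the torus $(\C^\times)^3$ and the one-parameter root subgroups $y_m(\C^\times)$ attached to the geometric Demazure roots of $\Sigma$, and its component group is $\ESm(\Sigma)$. Transporting $y_m$ through $U(\Sigma)=(\C^2\setminus\{0\})\times\C^\times$ produces exactly the maps $(z,w)\mapsto(az+bw^m,\,dw)$ of Lemma \ref{lemma:Hopf_mono}, so that $\NAut(\Sigma)$ is isomorphic to $C_u\times\C$ when such a nontrivial root is present and to $C_u$ otherwise; this dichotomy matches the case division of Theorem \ref{theorem:aut_struct_Hopf}. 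One then checks, case by case, that $\beta$ is surjective with kernel exactly $H_\Sigma$, using $G_u\subset H_\Sigma$ and the fact that $H_\Sigma/G_u$ maps to the identity under descent.

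The main obstacle is this last bookkeeping step: matching the arithmetic trichotomy of Theorem \ref{theorem:aut_struct_Hopf} (presence or absence of a resonance $\alpha^q=\beta$, and whether the exponents are rational) with the combinatorics of the tiny fan $\Sigma$ that controls its Demazure roots and its symmetry group $\ESm(\Sigma)$ --- in particular, determining precisely when the extra root subgroup $y_m(\C^\times)\cong\C$ occurs and confirming that descent does not collapse it further. Everything else is a formal specialization of the general theory of Part \ref{part:gen_theory} and Part \ref{part:applications}.
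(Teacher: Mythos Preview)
Your argument is correct, but it takes the opposite route from the paper. The entire purpose of this subsection, stated explicitly in its opening paragraph, is to give a proof that \emph{does not} invoke Theorem~\ref{thm:ma_aut_group_structure}, so that Namba's classical computation can serve as an independent check on the general machinery. Accordingly, the paper argues by hand: it shows exactness at $\NAut$ by taking $\vp\in\NAut$ with $\beta(\vp)=\id$, observing that $\vp$ must preserve each $H_\Sigma$-orbit and hence each coordinate hypersurface $\{z_i=0\}\cap U(\Sigma)$, then using the structure of units in $\C[z_1,z_2,z_3,z_3^{-1}]$ to force $\vp^*(z_i)=\lambda_i z_3^{\nu_i}z_i$ and $\vp^*(z_3)=\lambda_3 z_3^{\pm1}$, and finally comparing exponents in the orbit equation to conclude $\vp\in H_\Sigma$. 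Surjectivity of $\beta$ is then read off directly from Namba's Theorem~\ref{theorem:aut_struct_Hopf} together with the explicit embedding of Lemma~\ref{lemma:Hopf_mono}.

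Your approach---specializing Theorem~\ref{thm:ma_aut_group_structure} and then reconciling with Namba---is shorter and perfectly valid as a \emph{corollary} of the general theory, and your second and third paragraphs are essentially what the paper does in the surrounding discussion after the proposition. But as a proof of this particular proposition it is circular relative to the paper's stated intent: the proposition is meant to be evidence \emph{for} Theorem~\ref{thm:ma_aut_group_structure}, not a consequence of it. If you want to match the paper, replace the appeal to Theorem~\ref{thm:ma_aut_group_structure} with the direct orbit-and-units computation sketched above; the case-by-case matching with Theorem~\ref{theorem:aut_struct_Hopf} that you outline at the end is then exactly what establishes surjectivity.
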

	\begin{proof}
		It is clear that the composite map $\beta\circ\alpha$ is constant. Further, by definition, $\alpha$ is a monomorphism.
		\par We prove exactness in $\NAut.$ Assume that $\vp$ is an automorphism 
		$$
		\vp\in \NAut.
		$$ 
		Moreover, let $\vp$ induce a trivial automorphism of the manifold $\Hopf_u.$ It implies that $\vp(A)=A$ for any $H_\Sigma$-invariant subset of the variety $U(\Sigma).$ Further, it is equivalent to
		\begin{equation}\label{eq:orbit_pres}
			\vp(H_\Sigma (a,b,c))=H_\Sigma (a,b,c),\text{ for any } (a,b,c)\in U(\Sigma).
		\end{equation}
		
		\par It is clear that $\{z_i=0\}\cap U(\Sigma)$ is dense in $\{z_i=0\}$ for $i=1,2.$ Further, $\{z_i=0\}\cap U(\Sigma)$ is $C_\Psi$-invariant. Hence we have
		$$\vp(\{z_i=0\}\cap U(\Sigma))=\{z_i=0\}\cap U(\Sigma).$$
		It implies that $\vp^*(z_i)=r_i z_i$ for some regular functions $r_i$ on $U(\Sigma).$ Applying similar considerations to $\vp^{-1}$ we get $r_i\in \C[z_1,z_2,z_3,z_3^{-1}]^\times.$ A well-known fact is that:
		$$
			\C[z_1,z_2,z_3,z_3^{-1}]^\times=\{\lambda z_3^\nu\;\vert\; \nu \in\Z,\; \lambda \in \C[z_1,z_2]^\times=\C^\times\}.
		$$
		Thus we obtain 
		$
		\vp^*(z_i)=\lambda_i z_3^{\nu_i} z_i,\; \text{for } i =1,2.
		$
		Since $\vp^*$ maps invertible elements of $\C[U(\Sigma)]$ to invertible elements it is clear that $\vp^*(z_3)=\lambda_3 z_3^{\nu_3}.$ Applying the similar considerations to $\vp^{-1}$ we get that $\nu_3=\pm 1.$
		\par Now we can rewrite an equation \eqref{eq:orbit_pres} explicitly:
		\begin{gather}\label{orb_eq}
		\frac{\phi^*(z_i)}{\phi^*(z_3)^{\zeta_i}}=\frac{\lambda_i z_3^{\nu_i} z_i}{\lambda_3^{\zeta_i} z_3^{\nu_3\cdot\zeta_i}}=\frac{z_i}{z_3^{\zeta_i}},\quad i=1,2.
		\end{gather}
		We can restate this in a simpler form:
		\begin{equation}
			\lambda_3^{\zeta_i} z_3^{(\nu_3-1)\zeta_i}=z_3^{\nu_i}\iff (\nu_3-1)\zeta_i=\nu_i;\; \lambda_i=\lambda_3^{\zeta_i},\; i=1,2.
		\end{equation}
		By definition, $\nu_i$ is an integer, and at least on of $\zeta_1, \zeta_2$ is a non-real complex number. It is clear that $\nu_3=1.$ Hence $\vp \in C_\Psi.$
		\par We now show that a map $\beta$ given in the assertion of the proposition is surjective. We recall that there is a monomorphism 
		$$
			\NAut(\C^2\setminus 0)\hookrightarrow \NAut(U(\Sigma)),\; \vp \mapsto \vp \times id_{\C^\times}.
		$$
		From Theorem \ref{theorem:aut_struct_Hopf} we know that in cases \ref{theorem:aut_struct_Hopf-1}, and \ref{theorem:aut_struct_Hopf-3} there is an epimorphism $C_u \twoheadrightarrow \Aut(\Hopf_u).$ Recall that $C_u$ is the centralizer of the group $G_u$ in the group $GL(2).$ Hence we have a monomorphism:
		$$
			C_u \hookrightarrow GL(2) \hookrightarrow \NAut(U(\Sigma)). 
		$$
		For the case \ref{theorem:aut_struct_Hopf-2} of Theorem \ref{theorem:aut_struct_Hopf} we have an epimorphism
		$$
			C_u\times\C \twoheadrightarrow \Aut(\Hopf_u). 
		$$
		Finally, we apply Lemma \ref{lemma:Hopf_mono} to see that there is a monomorphism:
		$$
			C_u\times\C \hookrightarrow \NAut.
		$$ 
		This completes a proof of the proposition.
	\end{proof}
The previous result shows that the automorphism group description of Namba provided by Theorem \ref{theorem:aut_struct_Hopf} fits into the general result of Theorem \ref{thm:ma_aut_group_structure}.
\bookmarksetup{startatroot}
\section{Concluding remarks and future results}\label{sec:conclusion}
\subsection{A brief review of stacks}
For the purposes of this paper, it is enough to consider only \(1\)-truncated stacks. These could essentially be described as homotopy sheaves valued in groupoids on the category of Stein spaces. More concretely, the notion of \(1\)-stack is described by the following definition.
\begin{definition}[{\cite[5.4.4]{Toen}}]
    Let \(\Cc\) be a hypercomplete site; see \cite{Toen} for the definition of a hypercomplete site.
     Then a~\emph{\(1\)-stack} is a presheaf \(\Gs:\Cc^\op\to \Grpd\) on \(\Cc\) valued in groupoids such that for any open cover \(\{U_i\to X\}\) on the object \(X\) we have the following equivalence of groupoids.
\[\begin{tikzcd}
	{\Gs(X)} && {\holim\Bigg(\prod\limits_i \Gs(U_i)} && {\prod\limits_{ij} \Gs(U_{ij})} && {\prod\limits_{ijk}\Gs(U_{ijk})\Bigg)}
	\arrow["\simeq", from=1-1, to=1-3]
	\arrow[shift right=2, from=1-3, to=1-5]
	\arrow[from=1-5, to=1-7]
	\arrow[shift left=2, from=1-3, to=1-5]
	\arrow[shift right=3, from=1-5, to=1-7]
	\arrow[shift left=3, from=1-5, to=1-7]
\end{tikzcd}\]
Here, the homotopy limit is taken in the category of groupoids; see \cite{Toen} for a description of the homotopical structure on this category.
\end{definition}
\begin{example}
    Denote by \(\Sch_\O\) the category of Stein spaces with Grothendieck topology of open covers. Let
    \(\Cc=\Sch_\O\) then the corresponding category of \(1\)-stacks are \emph{complex analytic \(1\)-stacks}. 
\end{example}
\begin{example}
    Denote by \(\Sch_\textrm{aff}\) the category of affine schemes of finite type with Grothendieck topology of Zariski open covers. Let
    \(\Cc=\Sch_\textrm{aff}\) then the corresponding category of \(1\)-stacks are \emph{algebraic \(1\)-stacks}. 
\end{example}
The following example is ubiquitous in geometry.
\begin{example}
    Let \(G\) be a complex Lie group acting on a complex analytic space \(M\). Then, we can form the quotient stack for this action. Let \(X\) be an object in the category \(\Sch_\O.\) Then the groupoid \(\left[M/G\right]\) is defined as follows.
    \begin{itemize}
        \item Objects are the diagrams of the following form, where the vertical arrow is a principal \(G\)-bundle, and the horizontal arrow is a \(G\)-equivariant map.
        \[\begin{tikzcd}
            {\wt{X}} && M \\
            \\
            X
            \arrow[from=1-1, to=1-3]
            \arrow["G"', from=1-1, to=3-1]
        \end{tikzcd}\]
        \item The morphisms are the following commutative diagrams.
        \[\begin{tikzcd}
            {\wt{X}'} \\
            && {\wt{X}} && M \\
            X \\
            && X
            \arrow[from=2-3, to=2-5]
            \arrow["G"', from=2-3, to=4-3]
            \arrow[from=1-1, to=2-3]
            \arrow["G"', from=1-1, to=3-1]
            \arrow["{\Id_X}", from=3-1, to=4-3]
            \arrow[curve={height=-12pt}, from=1-1, to=2-5]
        \end{tikzcd}\]
    \end{itemize}
    We then extend this to a functor valued in groupoids on the category \(\Sch_\O^\op\) using the evident base change definition. 
\end{example}
\subsection{Toric Stacks} The theory of \emph{quantum toric varieties} very closely related to the notion we describe below was developed in great detail in \cite{KLVM}. There, however, the fans used to describe the objects in question were presumed to be embedded, which corresponds to a certain separation property. Below, we give a construction that does not impose this restriction and provides a simultaneous generalization of the approach in \cite{KLVM}. This construction also generalizes other theories of generalized toric varieties found in the literature, such as \cite{prevarieties}, \cite{Borisov_Chen_Smith}. We will explain the precise relationship with each of the existing theories in future work.
\begin{definition}
    Let \(\Sigma=\GF\) be a generalized fan. We define the analytic \emph{toric stack} associated to \(\Sigma\) as the following quotient stack.
    \[
        \V_\Sigma=\left[U(\Sigma)/G_\Sigma\right].
    \]
\end{definition}
\begin{claim}
    The construction of the quotient stack \(\V_\Sigma\) describes all proper analytical stacky compactifications of the quotient group-stack \(\left[(\CT)^S/G_\Sigma\right]\). 
\end{claim}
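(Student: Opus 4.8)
The plan is to show that, for a fixed finite set $S$ and a fixed connected complex Lie subgroup $G\sse(\CT)^S$ with Lie algebra $\wt{G}\sse\C^S$, a proper analytical stacky compactification of the group-stack $\mathcal{T}:=\left[(\CT)^S/G\right]$ is canonically of the form $\V_{\Sigma}=\left[U(\Sigma)/G_\Sigma\right]$ for a complete generalized fan $\Sigma=(S,K,V,\rho)$ with $G_\Sigma=G$, the only free parameter being the simplicial complex $K$; and conversely that every such $\V_\Sigma$ is a proper stacky compactification of $\mathcal{T}$. Here, by a \emph{stacky compactification} of $\mathcal{T}$ I mean a proper geometric $1$-stack $\mathcal{X}$ together with a $\mathcal{T}$-action and a $\mathcal{T}$-equivariant dense open immersion $\mathcal{T}\hookrightarrow\mathcal{X}$; the precise separation hypotheses built into ``proper geometric $1$-stack'' are part of the statement, and I return to them below.

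First I would reconstruct a toric cover of $\mathcal{X}$. The tautological $G$-torsor $(\CT)^S\to\mathcal{T}$ is classified by a morphism $\mathcal{T}\to BG$, and the initial step is to extend it across $\mathcal{X}$ to a morphism $\mathcal{X}\to BG$. Since $G$ is a connected complex Lie group, its classifying stack behaves well under Hartogs-type extension, and after the analogue of the ghost-vertex reduction of Construction~\ref{con:ghost_reduction} and Proposition~\ref{prop:red_of_gv} one may arrange that the boundary $\mathcal{X}\setminus\mathcal{T}$ presents no obstruction; this is the same mechanism as in Lemma~\ref{lem:ext_lem}. Pulling back the universal torsor along $\mathcal{X}\to BG$ produces an analytic space $Y$ carrying a free $G$-action, a $G$-torsor $Y\to\mathcal{X}$, a dense open $G$-equivariant inclusion $(\CT)^S\hookrightarrow Y$, and — since $(\CT)^S$ is an extension of $\mathcal{T}$ by $G$ and $\mathcal{T}$ acts on $\mathcal{X}$ — an action of $(\CT)^S$ on $Y$ restricting to translation on the open dense orbit $(\CT)^S$. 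Thus $Y$ is a normal Hausdorff toric variety with torus $(\CT)^S$.

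Next I would apply the classical classification. By Sumihiro's theorem $Y\cong X_{\wt{\Sigma}}$ for a unique embedded fan $\wt{\Sigma}$ in the cocharacter space $\R^S$ of $(\CT)^S$, and $\C^S\setminus Y$ having codimension $\ge 2$ (a consequence of the geometricity of $\mathcal{X}$) identifies $Y$ with a Cox-type open subset of $\C^S$. Setting $V:=\C^S/\wt{G}$, $\rho(s):=e_s+\wt{G}$, and $K$ the abstract simplicial complex underlying $\wt{\Sigma}$, one obtains a generalized fan $\Sigma=(S,K,V,\rho)$ whose canonical development (Construction~\ref{con:can_developement}) is $\wt{\Sigma}$, whose fan torus is $G_\Sigma=G$, and with $Y=U(\Sigma)$, so that $\mathcal{X}=\V_\Sigma$. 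It then remains to match properness with completeness: $\mathcal{X}$ is proper iff the stacky quotient $\left[U(\Sigma)/G_\Sigma\right]$ is, which by the argument behind Lemma~\ref{lem:image_of_complete_is_complete} together with the compactness criterion underlying Theorem~\ref{thm:ma_complex_structure} (applied after base change) holds precisely when $\Sigma$ is complete. Conversely, for complete $\Sigma$ the stack $\V_\Sigma$ is such a compactification because $(\CT)^S$ is dense in $U(\Sigma)$ and $U(\Sigma)/G_\Sigma$ is proper; uniqueness of $\Sigma$ follows since the assignment $\mathcal{X}\mapsto\bigl(Y,\,(\CT)^S\text{-action}\bigr)$ inverts $\Sigma\mapsto\V_\Sigma$ by the uniqueness in Sumihiro's theorem and by reading $G\sse(\CT)^S$ off the generic inertia of $\mathcal{X}$.

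The hard part will be the toric-cover reconstruction, specifically proving that $Y$ is Hausdorff and normal — an honest toric variety rather than a toric prevariety or a more singular quotient. In the embedded-fan world this is automatic, but in the generalized world it is exactly what the properness and geometricity axioms on $\mathcal{X}$ are there to guarantee, so the strength of the claim is tied to the precise definition of the ambient $2$-category of stacks, and arranging that ``proper'' forces a Hausdorff Cox cover is the substantive choice. Secondary difficulties are the extension of $\mathcal{T}\to BG$ over the boundary strata (a purity/Hartogs argument of the type used in Lemma~\ref{lem:ext_lem} and Proposition~\ref{prop:red_of_gv}) and the bookkeeping relating completeness of $\Sigma$ to properness of the stacky quotient, both of which I have only indicated here.
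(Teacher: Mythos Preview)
The paper does not prove this claim. It appears in Section~\ref{sec:conclusion}, ``Concluding remarks and future results,'' as an unproved assertion about what the authors intend to establish in the sequel; no argument, sketch, or reference to a proof is given there. So there is nothing in the paper to compare your proposal against.

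That said, your outline is a plausible strategy and correctly isolates the genuine obstacles. A few points worth flagging. First, Sumihiro's theorem in its standard form is algebraic; for an analytic stacky compactification you will need either an analytic analogue or a GAGA-type reduction to the algebraic case, and this is not automatic since the torus $(\CT)^S$ is affine but the cover $Y$ need not a priori be algebraizable. Second, the extension of the classifying map $\mathcal{T}\to BG$ across the boundary is more delicate than the Hartogs argument of Lemma~\ref{lem:ext_lem}: that lemma applies to functions on a space with small complement, whereas here you are extending a morphism of stacks, and the boundary of $\mathcal{X}$ is not a priori of codimension~$\ge 2$ in any presentation until you have already built $Y$. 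Third, you are right that the crux is pinning down what ``proper geometric $1$-stack'' means so as to force a Hausdorff Cox cover; this is exactly the kind of definitional choice the paper defers to the sequel, and your proof cannot be completed without it. In short: your plan is reasonable as a roadmap, but the claim is genuinely open in this paper and the missing ingredients are substantial.
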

If \(\Sigma\) is a rational generalized fan. We can additionally recover an algebraic structure on \(\V_\Sigma\).
\begin{definition}
    Let \(\Sigma=\GF\) be a generalized fan. We define the algebraic \emph{toric stack} associated to \(\Sigma\) as the following quotient stack.
    \[
        \V_\Sigma=\left[U(\Sigma)/G_\Sigma\right].
    \]
\end{definition}
\subsection{Automorphisms of toric stacks}
In the coming work, we will address the following claim, which allows one to calculate automorphism group stacks of general toric stacks.
\begin{claim}\label{claim:exact_toric}
    Let \(\Sigma=(S,K,V,\rho)\) be a complete generalized fan. Then, the automorphism group stack of the associated toric stack \(V_\Sigma\) fits naturally into the following exact sequence.
    \begin{gather*}
        *\to \ul{G_\Sigma}\to \ul{\wt{\Aut}(\Sigma)}\to \ul{\Aut(V_\Sigma)}\to *.
    \end{gather*}
\end{claim}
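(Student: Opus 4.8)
The plan is to compute the automorphism group stack $\ul{\Aut(\V_\Sigma)}$ from the presentation $\V_\Sigma=[U(\Sigma)/G_\Sigma]$ by means of the crossed module $\partial\colon \ul{G_\Sigma}\to \ul{\NAut(\Sigma)}$, where $\partial$ sends an element $g$ to the corresponding translation of $U(\Sigma)$ and $\NAut(\Sigma)=N_{\Aut(U(\Sigma))}(G_\Sigma)$ acts on $G_\Sigma$ by conjugation. Sending an equivariant automorphism of $U(\Sigma)$ to the induced automorphism of $[U(\Sigma)/G_\Sigma]$ is functorial in the test object, so it defines a morphism of group stacks from the $2$-stack presented by this crossed module to $\ul{\Aut(\V_\Sigma)}$. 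The first observation is that the presented $2$-stack is $0$-truncated: its $\pi_1$ is $\ker\partial=\{g\in G_\Sigma:\ g\text{ acts trivially on }U(\Sigma)\}$, and this is trivial because $(\CT)^S$ acts faithfully on $\C^S$ and $U(\Sigma)$ is a nonempty invariant open subset, so acting trivially on $U(\Sigma)$ forces acting trivially on all of $\C^S$. Hence the presented $2$-stack coincides with its own $\pi_0$, the sheaf quotient $\ul{\NAut(\Sigma)/G_\Sigma}$; since $\NAut(\Sigma)$ normalizes $G_\Sigma$ by definition, $\ul{G_\Sigma}\hookrightarrow\ul{\NAut(\Sigma)}$ is a normal monomorphism, and the Claim reduces to the exactness of $1\to \ul{G_\Sigma}\to \ul{\NAut(\Sigma)}\to \ul{\NAut(\Sigma)/G_\Sigma}\to 1$ together with the statement that the natural map from $\ul{\NAut(\Sigma)/G_\Sigma}$ to $\ul{\Aut(\V_\Sigma)}$ is an equivalence.

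The fully faithful part of the last statement is a routine unwinding: a $2$-morphism of the presented $2$-stack between equivariant automorphisms $f,f'$ is an element $g\in G_\Sigma$ with $f'=(\text{translation by }g)\circ f$, and the induced natural transformation $[f]\Rightarrow[f']$ is, on the Cox atlas $U(\Sigma)\to\V_\Sigma$, multiplication by $g$; distinct $g$'s give distinct natural transformations, and conversely any natural transformation pulls back on the atlas to an isomorphism of $G_\Sigma$-torsors whose compatibility with the equivariant maps forces it to be multiplication by a single $g\in G_\Sigma$, using connectedness of $U(\Sigma)$. The substantive point, and what I expect to be the main obstacle, is essential surjectivity: every automorphism $\Phi$ of $\V_\Sigma$ must lift, locally on the test object, to a $G_\Sigma$-equivariant automorphism of $U(\Sigma)$. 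The strategy is to recognize the Cox atlas intrinsically. The group $L_\Sigma\cong\Hom(G_\Sigma,\CT)$ is canonically $\operatorname{Pic}(\V_\Sigma)$, and $U(\Sigma)$ is the relative spectrum over $\V_\Sigma$ of the Cox sheaf $\bigoplus_{\chi\in L_\Sigma}\mc{L}_\chi$, the total space of the $G_\Sigma$-torsor decomposed into the characters of $G_\Sigma$. Since $\Phi^{*}$ acts on $\operatorname{Pic}(\V_\Sigma)=L_\Sigma$ only by an automorphism of the indexing group, $\Phi^{*}$ of the Cox sheaf is isomorphic to the Cox sheaf as an $L_\Sigma$-graded sheaf of algebras, hence $\Phi^{*}U(\Sigma)$ and $U(\Sigma)$ are isomorphic $G_\Sigma$-torsors over $\V_\Sigma$, and any such isomorphism yields a lift $\wt{\Phi}$ of $\Phi$ to a holomorphic automorphism of $U(\Sigma)$ normalizing the $G_\Sigma$-action. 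After reducing the ghost vertices via Construction \ref{con:ghost_reduction} and extending $\wt{\Phi}$ across the codimension $\ge 2$ complement of $U(\Sigma)$ in $\C^S$ as in Lemma \ref{lem:ext_lem}, Proposition \ref{prop:Norm_of_G} shows $\wt{\Phi}$ is polynomial, so $\wt{\Phi}\in \NAut(\Sigma)$; this is the only place where completeness of $\Sigma$ is used, and, in the irrational case, one invokes Proposition \ref{prop:zariski_closure} and Corollary \ref{cor:normalizers_coincide} to make sense of $\NAut(\Sigma)$ as the normalizer of the algebraic torus $G_{\ul{\Sigma}}$.

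The hard part is therefore the intrinsic characterization of the Cox atlas of the stack $\V_\Sigma$ in the analytic, possibly non-rational setting: in the algebraic rational case this is essentially the content of Theorem \ref{thm:aut_struct_Cox} (Cox's theorem), but for generalized fans and analytic stacks it must be re-established, presumably by reconstructing $U(\Sigma)$ from $\V_\Sigma$ together with the class-group data $L_\Sigma$ and checking that this reconstruction is natural under self-equivalences. A secondary, more technical obstacle is making the crossed-module description and the lifting argument uniform in the test object, so that the resulting sequence is genuinely a short exact sequence of sheaves of groups (equivalently, of group stacks) and not merely an exact sequence of global section groups. Once these are in place, the Claim is the expected stacky refinement of Theorems \ref{thm:aut_struct_Cox} and \ref{thm:ma_aut_group_structure}.
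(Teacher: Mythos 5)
You should know at the outset that the paper does not actually prove this Claim: it is explicitly deferred to the sequel, and the only indication given is the single sentence that ``the proof is based on the results of Lurie \cite{Lurie} and Proposition \ref{prop:Norm_of_G}.'' So the intended route is Tannaka duality for geometric stacks (to identify self-equivalences of \([U(\Sigma)/G_\Sigma]\) with suitable data on the quotient presentation, and in particular to produce the lift to the atlas) combined with the GAGA-type polynomiality result. Your route is genuinely different in its key step: you propose to present \(\ul{\Aut(\V_\Sigma)}\) by the crossed module \(\ul{G_\Sigma}\to\ul{\NAut(\Sigma)}\) and to obtain essential surjectivity by reconstructing the Cox atlas intrinsically from \(\operatorname{Pic}(\V_\Sigma)\cong L_\Sigma\) and the Cox sheaf \(\bigoplus_{\chi}\mc{L}_\chi\). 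This is closer in spirit to Cox's original argument (Theorem \ref{thm:aut_struct_Cox}) and has the advantage of being explicit; the Tannakian route is better adapted to keeping everything functorial in the test object, which you correctly flag as a secondary obstacle. Your truncatedness and faithfulness observations are sound (faithfulness of the \((\CT)^S\)-action, and the fact that a \(2\)-morphism over \(\id_{\V_\Sigma}\) is a \(G_\Sigma\)-invariant map \(U(\Sigma)\to G_\Sigma\), though pinning that down to a constant needs a Hartogs/units argument as in Proposition \ref{prop:red_of_gv}, not just connectedness).

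The gap is exactly where you locate it, and it is worth naming two specific soft spots in the reconstruction step. First, \(\operatorname{Pic}(\V_\Sigma)\cong L_\Sigma\) is not free in the analytic, non-rational setting: line bundles on \([U(\Sigma)/G_\Sigma]\) are \(G_\Sigma\)-equivariant line bundles on \(U(\Sigma)\), and since \(G_\Sigma=\exp_\C(\ker A^\C_\Sigma)\) need not be closed in \((\CT)^S\), its holomorphic character group is in general strictly larger than \(L_\Sigma=\Z^S/\im(A^\Z_\Sigma)^\vee\); one must argue that only the characters extending to \((\CT)^S\) arise, e.g.\ via Proposition \ref{prop:zariski_closure}. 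Second, knowing that \(\Phi^*\) permutes \(\operatorname{Pic}\) gives \(\Phi^*\mc{L}_\chi\cong\mc{L}_{\phi(\chi)}\) piece by piece, but an isomorphism of \(L_\Sigma\)-graded \emph{algebras} requires choosing these isomorphisms compatibly with multiplication --- a cocycle condition that is precisely the obstruction to the torsor \(\Phi^*U(\Sigma)\to\V_\Sigma\) being isomorphic to \(U(\Sigma)\to\V_\Sigma\). Neither point is fatal, but neither is routine, and together they constitute the content that both your sketch and the paper leave open; the remainder (Construction \ref{con:ghost_reduction}, Lemma \ref{lem:ext_lem}, Proposition \ref{prop:Norm_of_G}, Corollary \ref{cor:normalizers_coincide}) is correctly assembled.
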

The proof is based on the results of Lurie \cite{Lurie} and Proposition \ref{prop:Norm_of_G}.
\begin{example}
    Consider the toric variety \(\AA^1\) with the standard torus action. Then, the automorphism group stack of the toric stack \(\left[\AA^1/\CT\right]\) fits into the following exact sequence.
    \[
        *\to \CT\to \CT\to \Aut(\left[\AA^1/\CT\right])\to *.
    \]
\end{example}
\subsection{Automorphism groups of general moment-angle manifolds}
A straightforward application of Claim \ref{claim:exact_toric} is the following description of the automorphism stack of an arbitrary moment-angle manifold.
\begin{claim}
    Let \(\Sigma=(S,K,V,\rho)\) be a complete rational marked fan. Then, the automorphism group of the associated moment-angle manifold \(Z_\Sigma\) fits naturally into the following exact sequence.
    \begin{gather}\label{eq:seq_aut_ma_gen}
        *\to H_\Sigma\to \wt{\Aut}(\Sigma)\to \Aut(Z_\Sigma)\to *.
    \end{gather}
\end{claim}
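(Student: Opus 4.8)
This statement is, essentially verbatim, the special case of Theorem \ref{thm:ma_aut_group_structure} in which the generalized fan \(\Sigma\) comes from an embedded (marked) fan, so the quickest route is to invoke that theorem; in the stacky language of \S\ref{sec:conclusion} the plan is also to read it off from Claim \ref{claim:exact_toric}. I would present both: first the direct argument, then the reduction to the toric stack.

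For the direct argument, recall that since \(\Sigma\) is a complete rational marked fan, Theorem \ref{thm:ma_complex_structure} realizes \(Z_\Sigma = U(\Sigma)/H_\Sigma\) as a compact complex manifold which is a holomorphic principal bundle over the toric variety \(V_\Sigma\) with structure group the compact complex torus \(F_\Sigma = G_\Sigma/H_\Sigma\). The descent homomorphism \(\pi_*^H\colon \wt{\Aut}(\Sigma)\to\Aut(Z_\Sigma)\) exists because elements of \(\wt{\Aut}(\Sigma)\) normalize \(H_\Sigma\subset G_\Sigma\), hence act on \(U(\Sigma)/H_\Sigma\). Surjectivity is Proposition \ref{prop:desc_surj}: any \(\varphi\in\Aut(Z_\Sigma)\) preserves the \(F_\Sigma\)-orbit foliation, identified intrinsically with the canonical foliation of Construction \ref{con:can_fol} by Ishida \cite{ishida_19}, hence descends to \(V_\Sigma\); by Cox's lifting theorem \ref{thm:aut_struct_Cox} that descent lifts to \(\wt{\Aut}(\Sigma)\), and the discrepancy with \(\varphi\) lies in \(F_\Sigma = \ker\pi_*\) (Proposition \ref{prop:desc_ker}), which itself lifts. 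Exactness at \(\wt{\Aut}(\Sigma)\) is Proposition \ref{prop:ker_of_desc_from_uni}: an automorphism descending to \(\id_{Z_\Sigma}\) descends to \(\id_{V_\Sigma}\), hence by Proposition \ref{prop:Norm_of_G} and Theorem \ref{thm:equiv_on_Cox_constr} lies in \(G_\Sigma\), and the subgroup of \(G_\Sigma\) acting trivially on \(Z_\Sigma\) is exactly \(H_\Sigma\). Finally I would upgrade the resulting exact sequence of abstract groups to one of complex Lie groups: \(\wt{\Aut}(\Sigma)\) is a linear algebraic, hence complex Lie, group by the structure theory of \S\ref{sec:equi_aut_of_cox}, \(H_\Sigma\) is a closed connected complex Lie subgroup, and \(\Aut(Z_\Sigma)\) is a complex Lie group since \(Z_\Sigma\) is compact, so \(\pi_*^H\) is a surjective morphism of complex Lie groups with kernel \(H_\Sigma\). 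The stacky route instead applies the automorphism sequence of \([U(\Sigma)/G_\Sigma]\) from Claim \ref{claim:exact_toric} to the intermediate quotient stack \([U(\Sigma)/H_\Sigma]\simeq Z_\Sigma\) and passes to global points.

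The main obstacle is the same one as in the toric-variety case, namely showing that every holomorphic automorphism of \(Z_\Sigma\) descends to \(V_\Sigma\). This is not formal: it rests on Ishida's intrinsic description of the canonical foliation (\cite{ishida_13}, \cite{ishida_15}, \cite{ishida_19}) rather than on toric combinatorics, and it must be combined with a GAGA step to convert the induced holomorphic automorphism of the complete toric variety \(V_\Sigma\) into a regular one before Cox's theorem applies. A secondary technical point is compatibility with the complex-analytic structures: one must verify that the a priori merely holomorphic group \(\Aut(Z_\Sigma)\) is genuinely the quotient \(\wt{\Aut}(\Sigma)/H_\Sigma\) of an algebraic group, which is exactly where Proposition \ref{prop:Norm_of_G} and the rationality hypothesis on \(\Sigma\) are used.
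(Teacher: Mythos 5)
Your proposal is correct and follows essentially the same route as the paper: the statement is the marked-fan case of Theorem \ref{thm:ma_aut_group_structure}, whose proof is exactly the chain you describe (descent to \(V_\Sigma\) via Ishida's characterization of the canonical foliation, identification of \(\ker\pi_*\) with \(F_\Sigma\), lifting via Cox's theorem together with GAGA, and identification of the kernel of the full descent with \(H_\Sigma\)). The paper's own one-paragraph proof of this particular claim is the compressed stacky reading you mention at the end, invoking Ishida's lemma and the argument of Proposition \ref{prop:desc_ker}, so both of your routes match what is in the text.
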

\begin{proof}
    Consider a moment-angle manifold \(Z_\Sigma\) associated with a complete generalized fan \(\Sigma\). Then by \cite[Lemma 2.1]{ishida_19} any automorphism of a complex moment-angle manifold descends to an automorphism of the toric stack that corresponds to \(\Sigma.\) It remains to apply the argument of Proposition \ref{prop:desc_ker} to see that the kernel consists precisely of the quotient \(F_\Sigma\) and see that the sequence \eqref{eq:seq_aut_ma_gen} is exact.
\end{proof}


\end{document}